\newcommand{\Addresses}{{
		\bigskip
		\footnotesize
		
	 \textsc{MIT, Dept. of Math., 77 Massachusetts Avenue, Cambridge, MA 02139-4307.}\par\nopagebreak
		\textit{E-mail address}, \texttt{ozuch@mit.edu}
	}}
\newtheorem{thm}{Theorem}[section]
\newtheorem{lem}[thm]{Lemma}
\newtheorem{prop}[thm]{Proposition}
\newtheorem{cor}[thm]{Corollary}
\newtheorem{defn}[thm]{Definition}
\newtheorem{conj}[thm]{Conjecture}
\newtheorem{exmp}[thm]{Example}
\newtheorem{quest}[thm]{Question}
\newtheorem{rem}[thm]{Remark}
\newtheorem{note}[thm]{Note}
\DeclareMathOperator{\Hess}{\operatorname{Hess}}
\DeclareMathOperator{\Rm}{\operatorname{Rm}}
\DeclareMathOperator{\R}{\operatorname{R}}
\DeclareMathOperator{\vol}{\operatorname{Vol}}
\DeclareMathOperator{\Ric}{\operatorname{Ric}}
\DeclareMathOperator{\tr}{\operatorname{tr}}
\DeclareMathOperator{\E}{\operatorname{E}}
\DeclareMathOperator{\Id}{\operatorname{Id}}
\author{Tristan OZUCH}
\affil{Massachusetts Institute of Technology}
\date{}
\title{Integrability of Einstein deformations and desingularizations}
\begin{document}

\maketitle
\vspace{-20pt}
\begin{center}
    \Large{Tristan Ozuch}\\
    \normalsize{Massachusetts Institute of Technology}
\end{center}

\begin{abstract}
    We study the question of the integrability of Einstein deformations and relate it to the question of the desingularization of Einstein metrics. Our main application is a negative answer to the long-standing question of whether or not every Einstein $4$-orbifold (which is an Einstein metric space in a synthetic sense) is limit of smooth Einstein $4$-manifolds. We more precisely show that spherical and hyperbolic $4$-orbifolds with the simplest singularities cannot be Gromov-Hausdorff limits of smooth Einstein $4$-metrics without relying on previous integrability assumptions. For this, we analyze the integrability of deformations of Ricci-flat ALE metrics through variations of Schoen's Pohozaev identity. Inspired by Taub's preserved quantity in General Relativity, we also introduce preserved integral quantities based on the symmetries of Einstein metrics. These quantities are obstructions to the integrability of infinitesimal Einstein deformations ``closing up'' inside a hypersurface -- even with change of topology. We show that many previously identified obstructions to the desingularization of Einstein $4$-metrics are equivalent to these quantities on Ricci-flat cones. In particular, all of the obstructions to desingularizations bubbling out Eguchi-Hanson metrics are recovered. This lets us further interpret the obstructions to the desingularization of Einstein metrics as a defect of integrability.
\end{abstract}
    
    \tableofcontents

\section*{Introduction}

An Einstein metric $\mathbf{g}$ satisfies, for some real number $\Lambda$, the equation
\begin{equation}
    \Ric(\mathbf{g})=\Lambda \mathbf{g}.\label{einstein}
\end{equation}
In dimension $4$, these metrics are considered optimal due to the homogeneity of their Ricci curvature but also as critical points of the Einstein-Hilbert functional with fixed volume, $g\mapsto\int_M \R_g dv_g$, and more importantly as minimizers of the $L^2$-norm of the Riemann curvature tensor, $g\mapsto \int_M |\Rm_g|^2dv_g$ often interpreted as an energy. 

From dimension $4$, even under natural assumptions of bounded diameter (compactness) and lower bound on the volume (non-collapsing) Einstein metrics can develop singularities. This first issue means that the set of unit-volume Einstein metrics is not complete for the usual Gromov-Hausdorff distance. Moreover, it has been proven that some infinitesimal Einstein deformations may not integrate into curves of actual Einstein metrics in dimension higher than $4$. This second issue shows that the moduli space of Einstein metrics itself might have singularities. 
\\

In this article, we exhibit links between the two seemingly unrelated above issues and apply the resulting analogy to the question of the desingularization of Einstein $4$-manifolds. 

\subsection*{Completion of the moduli space of Einstein $4$-manifolds}

One major goal for $4$-dimensional geometry is to understand the structure of the compactification of the moduli space of Einstein metrics on a differentiable manifold $M^4$ which is defined as
\begin{equation}
    \mathbf{E}(M^4) := \left\{(M^4,\mathbf{g})\;|\;\exists \Lambda\in \mathbb{R},\; \Ric(\mathbf{g})=\Lambda \mathbf{g},\; \vol(M^4,\mathbf{g})= 1\right\}\slash\mathcal{D}(M^4),\label{def moduli space}
\end{equation}
where $\mathcal{D}(M^4)$ is the group of diffeomorphisms of $M^4$ acting on metrics by pull-back. This space is classically equipped with the Gromov-Hausdorff distance, $d_{GH}$. The metric spaces which are limits of Einstein $4$-manifolds with uniformly controlled diameter and volume, as well as the associated singularity models, have been understood for a long time in the Gromov-Hausdorff sense \cite{and,bkn}: they are respectively \emph{Einstein orbifolds} and \emph{Ricci-flat ALE orbifolds}. The \emph{metric completion} of $(\mathbf{E}(M^4),d_{GH})$ is 
\begin{equation}
    \mathbf{E}(M^4)\;\cup\;\partial_o\mathbf{E}(M^4),\label{def frontière}
\end{equation}
where $\partial_o\mathbf{E}(M^4)$ is the set of $d_{GH}$-limits with bounded diameter (i.e. at finite $d_{GH}$-distance) of Einstein metrics on $M^4$. 
\\

The elements of $\partial_o\mathbf{E}(M^4)$ are Einstein orbifolds and a question which we answer here is the converse. Quoting Anderson \cite{andsurv}: ``It has long been an open question whether Einstein orbifold metrics can be resolved to smooth Einstein metrics close to them in the Gromov-Hausdorff topology.'' We will prove that this resolution is not possible for some of the simplest Einstein orbifolds: the spherical an hyperbolic ones. These orbifolds should therefore not really be considered as \emph{singular} Einstein metrics. In order to prove this, we use the analogy between such a resolution and an Einstein deformation of a flat cone, and we study the potential nonintegrability of Ricci-flat ALE deformations.

\subsection*{Preserved quantities and integrability} 
Let $\mathbf{g}$ be an Einstein manifold on an open subset $\mathcal{U}$ of a manifold. We say that $h$ is an \emph{infinitesimal Einstein deformation} of $\mathbf{g}$ on $\mathcal{U}$ if the perturbation $t\mapsto\mathbf{g}+th$ satisfies \eqref{einstein} at the infinitesimal level as $t\to 0$. We say that $h$ is an \emph{integrable Einstein deformation} if on any compact in $\mathcal{U}$, there exists a smooth curve $t\in[0,1]\mapsto \mathbf{g}_t$ such that \begin{itemize}
    \item $\mathbf{g}_0 = \mathbf{g}$,
    \item $\partial_t{\mathbf{g}_t}_{|t=0} = h$, and
    \item for any $t$, $\mathbf{g}_t$ satisfies \eqref{einstein} with constant $\Lambda(t)\in\mathbb{R}$.
\end{itemize}
It is clear that an integrable Einstein deformation is an infinitesimal Einstein deformation, but the converse is a very delicate question and is not always true. Counter-examples were found in \cite{koi2} in dimension strictly higher than $4$.

The question of integrability of Einstein deformations is crucial in understanding the structure of the moduli space of Einstein metrics, and a major question is whether or not such a moduli space can be really singular, see \cite[12.10]{bes}. It is also crucial for the behavior of Ricci flow near Einstein metrics: in the compact situation, having integrable deformations ensures optimal rates of convergence or divergence, see for instance \cite{has,hm}. In the noncompact situation, this moreover seems to be a necessary condition for the dynamical stability of Ricci-flat ALE metrics, see \cite{kp,do,do2}. 
\\

In the Lorentzian context of General Relativity, it has been proven that the question of integrability of Einstein deformations was completely ruled by the presence of symmetries.

Let us denote $\E^{(2)}_\mathbf{g}(h,h)$ the quadratic terms terms in the development of the Einstein tensor $h\mapsto \E(\mathbf{g}+h):= \Ric(\mathbf{g}+h) - \frac{\R(\mathbf{g}+h)}{2}(\mathbf{g}+h)$. For any Killing vector field $X$, a hypersurface $\Sigma$ with a unit normal $n_\Sigma$ and any infinitesimal Einstein deformation $h$, we define \emph{Taub's preserved quantity}:
\begin{equation}
    \mathcal{T}^\Sigma_{X}(h,h):=\int_\Sigma \big(\E^{(2)}_\mathbf{g}(h,h)\big)(X,n_\Sigma)dv_\Sigma.\label{eq def taub intro}
\end{equation}
The link with the integrability of $h$ is that if $h$ is an integrable \emph{Ricci-flat} deformation, then $\mathcal{T}_X^\Sigma(h,h)$ has to vanish. In this sense, Taub's preserved quantities for all Killing vector fields $X$ are \emph{obstructions} to the integrability of $h$.

A remarkable and beautiful result is that in the Lorentzian context, the above obstructions are the \emph{only} obstructions to the integrability of $h$, see \cite{fmm,amm}. Namely, if they vanish, then, under mild assumptions, one can construct a smooth curve of Ricci-flat metrics starting at $\mathbf{g}$ whose first jet is $h$. The hyperbolic nature of the Einstein equations in the Lorentzian context is an important aspect of the proof.

\subsection*{Einstein deformations of Ricci-flat cones}

Let $(C(S),\mathbf{g}_{C(S)})=(\mathbb{R}^+\times S, dr^2 + r^2 \mathbf{g}_S)$ be a $d$-dimensional Ricci-flat cone with link $(S,\mathbf{g}_S)$ satisfying $\Ric(\mathbf{g}_S)=(d-2)\mathbf{g}_S$. Such cones model the asymptotics of Einstein metrics at given points or at infinity. We will mostly focus on the usual Euclidean cone $(\mathbb{R}^d,\mathbf{e})=(C(\mathbb{S}^{d-1}),\mathbf{g}_{C(\mathbb{S}^{d-1})})$ and its quotients.

We extend the obstructions given by Taub's preserved quantities on the hypersurface $\Sigma = S$ to general Einstein deformations (not necessarily Ricci-flat this time).  The condition rewrites: for any Killing vector field $X$ of $(S,\mathbf{g}_S)$, if $h$ is integrable, then one has
\begin{equation}
    \mathcal{T}^S_{X}(h,h)=\int_S \big(\mathring{\Ric}^{(2)}_{\mathbf{g}_{C(S)}}(h,h)\big)(X,\partial_r)dv_S =0,\label{obst killing intro}
\end{equation}
where $\mathring{\Ric}$ is the traceless part of $\Ric$ and $ \mathring{\Ric}^{(2)} $ its second order variations.

We moreover introduce a similar quantity based on the \emph{conformal} Killing vector field $r\partial_r$ on the cone $C(S)$.
\begin{thm}[Informal, Proposition \ref{seconde obst conforme}]\label{thm intro}
    Let $h$ be an infinitesimal Einstein deformation of the Ricci-flat cone $(C(S),\mathbf{g}_{C(S)})$. Assume that $h$ is either defined and bounded on the interior of $S$ or on its exterior and decaying at infinity. Then, we have the following identity:
    \begin{equation}
        \int_S \big(\mathring{\Ric}^{(2)}_{\mathbf{g}_{C(S)}}(h,h)\big)(r\partial_r,\partial_r)  dv_S+ \textup{other terms} = 0.\label{obst conformal introo}
    \end{equation}
    In all of our situations of interest, we are able to find convenient gauges for $h$ in which these \emph{other terms} vanish thanks to some variations of Schoen's Pohozaev formula from \cite{sch}.
\end{thm}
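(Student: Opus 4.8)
The plan is to derive \eqref{obst conformal introo} as a Pohozaev-type integral identity, exploiting the fact that $r\partial_r$ generates the dilation symmetry of the cone $(C(S),\mathbf{g}_{C(S)})$ and that $\mathring{\Ric}$ is the (traceless) Euler--Lagrange-type operator whose linearization annihilates infinitesimal Einstein deformations. First I would set up the second-variation object: write $\mathbf{g}+h$ for the perturbation, expand $\mathring{\Ric}(\mathbf{g}+h) = \mathring{\Ric}_{\mathbf{g}}^{(1)}(h) + \mathring{\Ric}_{\mathbf{g}}^{(2)}(h,h) + O(h^3)$, and use the fact that $h$ is an infinitesimal Einstein deformation so $\mathring{\Ric}_{\mathbf{g}}^{(1)}(h)=0$ (after imposing a suitable gauge, e.g. a divergence/transverse-traceless normalization on the cone). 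The key algebraic input is a divergence identity: the Bianchi-type identity satisfied by $\mathring{\Ric}^{(2)}$ together with the conformal Killing equation $\mathcal{L}_{r\partial_r}\mathbf{g}_{C(S)} = 2\,\mathbf{g}_{C(S)}$ should produce
\begin{equation}
    \operatorname{div}_{\mathbf{g}_{C(S)}}\big(\mathring{\Ric}^{(2)}_{\mathbf{g}_{C(S)}}(h,h)(r\partial_r,\cdot)\big) = \langle \mathring{\Ric}^{(2)}_{\mathbf{g}_{C(S)}}(h,h), \tfrac12\mathcal{L}_{r\partial_r}\mathbf{g}_{C(S)}\rangle + (\text{terms involving }\mathring{\Ric}^{(1)}(h)=0),
\end{equation}
up to lower-order correction terms which I will track carefully; these correction terms are exactly what will become the ``other terms'' in \eqref{obst conformal introo}.

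Second, I would integrate this divergence identity over an annular region $\{r_0 \le r \le r_1\}\subset C(S)$ and apply the divergence theorem. This produces boundary terms on the two slices $\{r=r_0\}$ and $\{r=r_1\}$, each of which is (after the scaling of the volume form and the $r\partial_r$ factor) a copy of $\int_S \mathring{\Ric}^{(2)}_{\mathbf{g}_{C(S)}}(h,h)(r\partial_r,\partial_r)\,dv_S$ evaluated at that radius, plus a bulk term $\int \langle \mathring{\Ric}^{(2)}, \mathbf{g}_{C(S)}\rangle$ which, since $\mathring{\Ric}^{(2)}$ is traceless-part data, should either vanish or be controllable. In the two regimes of the hypothesis — $h$ bounded on the interior (so $r_0\to 0$, the inner boundary contribution vanishes) or $h$ decaying at infinity (so $r_1\to\infty$, the outer contribution vanishes) — one of the two slice terms drops out, leaving the single slice $\int_S \mathring{\Ric}^{(2)}_{\mathbf{g}_{C(S)}}(h,h)(r\partial_r,\partial_r)\,dv_S$ equal to minus the accumulated bulk/correction terms, i.e. exactly \eqref{obst conformal introo}. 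The decay/boundedness hypotheses are precisely the integrability and vanishing-of-boundary-term conditions needed to make the limiting process legitimate; I would state these as explicit pointwise bounds on $h$, $\nabla h$ in terms of powers of $r$.

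The main obstacle I anticipate is the bookkeeping of the ``other terms'': unlike the exactly conservative case of a genuine Killing field (which gives \eqref{obst killing intro} with no correction), the conformal factor $2$ in $\mathcal{L}_{r\partial_r}\mathbf{g}_{C(S)} = 2\mathbf{g}_{C(S)}$ means the naive divergence identity is not closed, and one must carefully compute the failure terms coming from (i) the non-parallelism of $r\partial_r$, (ii) the trace terms $\langle \mathring{\Ric}^{(2)}(h,h),\mathbf{g}_{C(S)}\rangle$ which relate to the second variation of the scalar curvature and the volume constraint, and (iii) any contribution from the gauge-fixing term used to kill $\mathring{\Ric}^{(1)}(h)$. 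Getting these into a form where they manifestly vanish in the relevant gauges is where I expect to invoke Schoen's Pohozaev trick from \cite{sch}: choosing a gauge adapted to the dilation (a ``conformal Bianchi gauge'' or a normalization making $h$ homogeneous of a fixed degree under scaling, or transverse-traceless with respect to $\mathbf{g}_S$ on each slice) so that the correction integrand is itself a total $r$-derivative or vanishes by the infinitesimal Einstein equation. Once the gauge is pinned down, the remaining steps are the routine divergence-theorem computation and the limiting argument justified by the decay hypotheses.
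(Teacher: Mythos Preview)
Your overall strategy is right and matches the paper's: use the conformal Killing field $r\partial_r$ in a divergence-theorem identity, integrate over the region where $h$ is defined, and let the other boundary vanish by boundedness/decay. But there is one concrete technical issue. You propose to apply the divergence identity directly to $\mathring{\Ric}^{(2)}_{\mathbf{g}}(h,h)$, invoking ``the Bianchi-type identity satisfied by $\mathring{\Ric}^{(2)}$''. There is no such clean identity for $\mathring{\Ric}$; the operator with $\delta_g T = 0$ is the \emph{Einstein tensor} $\E(g) = \Ric(g) - \tfrac{\R_g}{2}g$. Differentiating $\delta_g\E(g)=0$ twice at a Ricci-flat $\mathbf{g}$, using $\E^{(1)}_{\mathbf{g}}(h)+\lambda\mathbf{g}=0$, gives $\delta_{\mathbf{g}}\big(\E^{(2)}_{\mathbf{g}}(h,h)\big)=0$, and \emph{that} is the tensor to feed into your divergence identity with $X=r\partial_r$. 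Since $\delta^*_{\mathbf{g}}(r\partial_r)=\mathbf{g}$, the bulk term becomes $\int_\Omega \tr_{\mathbf{g}}\E^{(2)}_{\mathbf{g}}(h,h)\,dv_{\mathbf{g}}$, which equals $\tfrac{2-d}{2}\int_\Omega \R^{(2)}_{\mathbf{g}}(h,h)\,dv_{\mathbf{g}}$ because $\tr_g\E(g)=\tfrac{2-d}{2}\R_g$.

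The second step you should make precise is how this bulk term becomes a boundary term. The paper does this by differentiating the linear identity $\R^{(1)}_g(v)=\delta_g(\delta_g v + d\tr_g v) - \langle v,\Ric(g)\rangle$ at $g=\mathbf{g}$ in the direction $h$, which expresses $\int_\Omega \R^{(2)}_{\mathbf{g}}(h,h)\,dv_{\mathbf{g}}$ as an explicit boundary integral (these are the ``other terms''). To pass from $\E^{(2)}$ back to $\mathring{\Ric}^{(2)}$ one uses $(\E^{(2)})^\circ = \mathring{\Ric}^{(2)} - \lambda\mathring{k} - \mu\mathring{h}$ together with the Pohozaev version of the identity (applied to the divergence-free tensors $h,k$ themselves). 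Your paragraph about ``bookkeeping of the other terms'' is exactly where this lives, but you should not expect $\mathring{\Ric}^{(2)}$ to be divergence-free on its own; route everything through $\E^{(2)}$ and $\R^{(2)}$.
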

\begin{rem}
    In the above case, we do not need to assume that $h$ is integrable. This should rather be thought of as an obstruction to the existence of an Einstein metric which \emph{closes up} inside $S$, that is the existence of an Einstein deformation on a compact domain whose only boundary is $S$.
\end{rem}

As is well-known, a difficulty when considering deformations of cones is verifying that the resulting curve of metric is \emph{complete}, see \cite{bl} where this is discussed for hyperkähler deformations of cones. The main challenge is that such deformations typically require changes of topology. When allowing a change of topology, just as in the desingularization of Einstein $4$-manifolds, one needs to consider deformations of $\mathbb{R}^4\slash\Gamma$ with topology $N=(\mathbb{R}^4\slash\Gamma)\backslash\{0\}\cup \Sigma$ for some lower dimensional manifold $\Sigma$. The prototypical example is that of the Eguchi-Hanson metric which can be seen as an Einstein deformation of $\mathbb{R}^4\slash\mathbb{Z}_2$ on $T^*\mathbb{S}^2=(\mathbb{R}^4\slash\mathbb{Z}_2)\backslash\{0\}\cup \mathbb{S}^2$. Here, we may see the Euclidean metric $\mathbf{e}$ on $\mathbb{R}^4\slash\mathbb{Z}_2$ as living on $T^*\mathbb{S}^2$ while being \emph{degenerate} on $\mathbb{S}^2$, that is the restriction of $\mathbf{e}$ to the submanifold $\mathbb{S}^2$ vanishes.

In Section \ref{closing up EH}, we discuss the situation of an Einstein metric coming out of $\mathbb{R}^4\slash\mathbb{Z}_2$ and closing-up inside with topology $T^*\mathbb{S}^2$. Assume that there exists a smooth curve of nondegenerate Einstein metrics $t\in [0,1]\mapsto \mathbf{g}_t$ on $ \big(B_\mathbf{e}(0,1)\backslash\{0\}\big)\cup \mathbb{S}^2 \subset T^*\mathbb{S}^2 $ with $\mathbf{g}_0 =\mathbf{e}$ and $ {\partial_t}_{|t=0}\mathbf{g}_t = h$ outside $\mathbb{S}^2$ in well-chosen coordinates. Then we have the obstructions:
    \begin{equation}
        \int_{\mathbb{S}^{3}\slash\mathbb{Z}_2} \big(\mathring{\Ric}_\mathbf{e}^{(2)}(h,h)\big)(r\partial_r,\partial_r)dv_{\mathbb{S}^{3}\slash\mathbb{Z}_2} = 0,\label{obst conformal intro}
    \end{equation}
and the similar ones from Killing vector fields. In that situation, an Eguchi-Hanson metric bubbles-out of the cone $\mathbb{R}^4\slash\mathbb{Z}_2$ and the link with the question of desingularization becomes clear. For some related results about hyperkähler metrics on manifolds with boundaries, see \cite{fls,biqbord}. 
\\

A natural question would be whether one could find similar obstructions on typical cones of dimension higher than $4$ with codimension $4$ singularities.
\begin{quest}
    Can we find similar obstructions for Einstein deformations of cones on singular Einstein orbifolds like $\mathbb{R}^k\times( \mathbb{R}^4\slash\mathbb{Z}_2)$ for $k\geqslant 1$? 
\end{quest}

\subsection*{Vanishing of the obstructions in dimension $4$}

We then test the conditions \eqref{obst killing intro} and \eqref{obst conformal introo} considering simple Einstein deformations coming from two situations: the rescaling of Einstein metrics at a given point, and the rescaling of Ricci-flat ALE metrics at infinity, and show that they vanish for arbitrary deformations.

The vanishing of the obstructions is not surprising in the case of Einstein metrics around a given point: it has been shown that for any curvature satisfying the Einstein condition at the given point, there exists a germ of Einstein metric with the corresponding curvature in \cite{gas}. It implies that the obstructions \eqref{obst killing intro} and \eqref{obst conformal intro} in this situation actually vanish in any dimension. 

These results in Section \ref{section 4d} and the Appendix \ref{appendix mf} can also be seen as consequences of \cite{and08}. Their proofs have the merit to introduce natural systems of coordinates in which the computations of the quadratic terms of Ricci curvature are convenient (and where the \emph{other terms} of \eqref{obst conformal intro} are vanishing), we note that the ALE coordinates considered are based on \cite{bh}. These coordinates are crucial for the next case of the desingularization of an Einstein orbifold, where the obstructions do not vanish.

\subsection*{Obstruction to the desingularization of Einstein $4$-manifolds}

We finally study the degeneration of Einstein $4$-manifolds, that is the $d_{GH}$-convergence of metrics in $\mathbf{E}(M)$ to the boundary $\partial_o\mathbf{E}(M)$, and its reverse operation: the \emph{desingularization}. Given an element in $\partial_o\mathbf{E}(M)$, the desingularization consists in finding a $d_{GH}$-approximating sequence of metrics in $\mathbf{E}(M)$.

\emph{Any} smooth Einstein $4$-manifold close to a compact Einstein orbifold in a mere Gromov-Hausdorff sense has been recently been produced by a gluing-perturbation procedure \cite{ozu1,ozu2}. Here, this lets us understand the obstructions to the $d_{GH}$-desingularization of an Einstein orbifold through obstructions similar to \eqref{obst killing intro} and \eqref{obst conformal intro}. 
\begin{thm}[Theorem \ref{obstruction desing taub's}]\label{obst desing integr}
    Assume that there exists a sequence of Einstein metrics $(M,\mathbf{g}_n)_n$ converging in the Gromov-Hausdorff sense to an Einstein orbifold $(M_o,\mathbf{g}_o)$ with a singularity $\mathbb{R}^4\slash\Gamma$ at $p$ and satisfying $\Ric(\mathbf{g}_o) = \Lambda\mathbf{g}_o$. Assume that there exist numbers $t_n>0$ such that at $p$, $(M,\mathbf{g}_n/t_n)_n$ converges to a Ricci-flat ALE \emph{manifold} $(N,\mathbf{g}_b)$ asymptotic to $\mathbb{R}^4\slash\Gamma$ with \emph{integrable} infinitesimal deformations. Consider the following asymptotics in well-chosen gauges (in a so-called \emph{volume gauge} for $\mathbf{g}_b$):
    $$ \mathbf{g}_o = \mathbf{e} + H_2 + \mathcal{O}(r^3) \;\text{ and }\;\mathbf{g}_b = \mathbf{e} + H^4 + \mathcal{O}(r^{-5}),$$
    for $|H_2|_\mathbf{e} \sim r^2$ and $|H^4|_\mathbf{e} \sim r^{-4}$.
    
    Then, the following obstructions analogous to \eqref{obst killing intro} and \eqref{obst conformal intro} hold: for any $Y$ Killing vector field of $\mathbb{R}^4\slash\Gamma$:
    \begin{equation}
        \int_{\mathbb{S}^{3}/\Gamma} \big(\mathring{\Ric}^{(2)}_{\mathbf{e}}(H^4,H_2)\big)(Y,\partial_r)  dv_{\mathbb{S}^{3}/\Gamma}=0,\label{obst desing intro killing}
    \end{equation}
    and
    \begin{equation}
         \int_{\mathbb{S}^{3}/\Gamma} \big(\mathring{\Ric}^{(2)}_{\mathbf{e}}(H^4,H_2)\big)(r\partial_r,\partial_r)  dv_{\mathbb{S}^{3}/\Gamma}=0.\label{obst desing intro radial}
    \end{equation}
\end{thm}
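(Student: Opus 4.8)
The plan is to apply the conformal Pohozaev identity of Proposition~\ref{seconde obst conforme}, together with its analogue for Killing fields, to the Einstein metrics $\mathbf{g}_n$ themselves localized near the degenerating point, and to read off \eqref{obst desing intro killing}--\eqref{obst desing intro radial} from the single term that survives in the $t_n$-expansion. The starting point is the gluing--perturbation description of $\mathbf{g}_n$ from \cite{ozu1,ozu2}: in the distinguished gauges --- geodesic-type coordinates adapted to $\mathbf{g}_o$ on the orbifold side, the volume/Bianchi gauge of \cite{bh} adapted to $(N,\mathbf{g}_b)$ on the bubble side, matched along the neck as in Section~\ref{section 4d} and Appendix~\ref{appendix mf} --- one has, on the coordinate sphere $S_{\rho_n}=\{r=\rho_n\}$ around the singular point,
\[
\mathbf{g}_n=\mathbf{e}+H_2+t_n^{2}H^4+E_n,
\]
where $\lambda_n:=\sqrt{t_n}$ is the bubble scale, $\rho_n=\lambda_n\mu_n$ with $\mu_n\to\infty$ chosen slowly, $\Lambda_n\to\Lambda$, and $E_n$ collects the subleading orbifold ($\mathcal O(r^{3})$) and ALE ($\mathcal O(t_n^{5/2}r^{-5})$) terms together with the gluing and Einstein-perturbation errors. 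The integrability of the infinitesimal deformations of $(N,\mathbf{g}_b)$ is precisely what allows \cite{ozu1,ozu2} to describe $\mathbf{g}_n$ near the bubble as $\lambda_n^{2}\mathbf{g}_b+(\text{small})$ with $\mathbf{g}_b$ a genuine Ricci-flat ALE metric, and to keep $E_n$ this small; for $\mu_n$ in a suitable window one gets both $|\mathbf{g}_n-\mathbf{e}|_{\mathbf e}\to0$ on $S_{\rho_n}$ (so the flux below expands perturbatively) and $\mu_n\to\infty$ (so the bubble is felt only through its ALE asymptotics).

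Next I would set up the flux. For $X$ either a Killing field $Y$ of $\mathbb{R}^4/\Gamma$ or the conformal field $r\partial_r$, let $\mathcal Q_X(\mathbf g,S_r)$ denote the boundary flux of Proposition~\ref{seconde obst conforme} --- for $r\partial_r$ built from the variation of Schoen's Pohozaev formula of \cite{sch}, designed so that in the distinguished gauge the ``other terms'' of \eqref{obst conformal introo} and the scalar-curvature/$\Lambda$ contribution do not appear, while for a Killing rotation $Y$ this is automatic since $Y\perp\partial_r$ and $\mathcal Q_Y$ is just Taub's quantity. Applying the identity to the Einstein metric $\mathbf{g}_n$ on the region $\{\lambda_n\nu_0\le r\le\rho_n\}$ for fixed large $\nu_0$ shows that $\mathcal Q_X(\mathbf{g}_n,S_r)$ is essentially independent of $r$ there (the bulk integrand vanishes for Einstein metrics). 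Evaluating at the inner radius $r=\lambda_n\nu_0$, where $\mathbf{g}_n$ is $C^\infty$-close to the Ricci-flat ALE metric $\lambda_n^{2}\mathbf{g}_b$, and using that $\mathcal Q_X(\mathbf{g}_b,S_\nu)$ is conserved in $\nu$ while tending to $0$ as $\nu\to\infty$ --- its leading flux coefficient being exactly $\int_{\mathbb{S}^3/\Gamma}\mathring{\Ric}^{(2)}_{\mathbf{e}}(H^4,H^4)(X,\partial_r)\,dv$, which vanishes by the ALE-at-infinity computation of Section~\ref{section 4d} --- one concludes that the bubble carries no leading-order flux, i.e. $\mathcal Q_X(\mathbf{g}_n,S_{\rho_n})=o(t_n^{2})$.

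Finally I would expand $\mathcal Q_X(\mathbf{g}_n,S_{\rho_n})$ in the perturbation $h_n=H_2+t_n^{2}H^4+E_n$, which is small on $S_{\rho_n}$: the terms linear in $h_n$ and the ``other terms'' integrate to $0$ in the distinguished gauge, since $H_2$ and $H^4$ solve the traceless linearized Einstein equation of the flat cone (and the $E_n$-contribution is negligible for our choice of $\mu_n$); the cubic remainder is $o(t_n^{2})$; and the quadratic part splits as $\int_{S_{\rho_n}}\mathring{\Ric}^{(2)}_{\mathbf{e}}(H_2,H_2)(X,\partial_r)\,dv=0$ (vanishing of the obstruction for deformations of an Einstein metric at a point, Section~\ref{section 4d}), $t_n^{4}\int_{S_{\rho_n}}\mathring{\Ric}^{(2)}_{\mathbf{e}}(H^4,H^4)(X,\partial_r)\,dv=0$, negligible $E_n$-cross-terms, plus the genuinely new term
\[
2t_n^{2}\int_{S_{\rho_n}}\mathring{\Ric}^{(2)}_{\mathbf{e}}(H^4,H_2)(X,\partial_r)\,dv_{S_{\rho_n}}
=2t_n^{2}\int_{\mathbb{S}^3/\Gamma}\mathring{\Ric}^{(2)}_{\mathbf{e}}(H^4,H_2)(X,\partial_r)\,dv_{\mathbb{S}^3/\Gamma},
\]
where the equality is forced by homogeneity ($H^4$ of degree $-4$, $H_2$ of degree $2$, two derivatives in $\mathring{\Ric}^{(2)}$, $X$ of degree $1$, $dv_{S_r}\sim r^{3}dv_{\mathbb{S}^3/\Gamma}$), so this term is scale invariant. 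Comparing with $\mathcal Q_X(\mathbf{g}_n,S_{\rho_n})=o(t_n^{2})$ gives $\int_{\mathbb{S}^3/\Gamma}\mathring{\Ric}^{(2)}_{\mathbf{e}}(H^4,H_2)(X,\partial_r)\,dv=0$, which is \eqref{obst desing intro killing} for $X=Y$ and \eqref{obst desing intro radial} for $X=r\partial_r$.

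The hard part is the multi-scale bookkeeping underlying these steps: one must exhibit a nonempty window of radii $\rho_n$, equivalently of $\mu_n\to\infty$, in which simultaneously the perturbative expansion of $\mathcal Q_X$ on $S_{\rho_n}$ is legitimate with cubic remainder $o(t_n^{2})$, every subleading contribution (the $\mathcal O(r^{3})$ of $\mathbf{g}_o$, the $\mathcal O(r^{-5})$ of $\mathbf{g}_b$, the gluing and perturbation errors $E_n$) feeds only $o(t_n^{2})$ into the flux, and $\mu_n\to\infty$; this reduces to a direct check with the homogeneity degrees recorded above. Equally essential --- and the reason the precise coordinate systems of Section~\ref{section 4d} and \cite{bh} are imported rather than black-boxed --- is that only in those gauges do the ``other terms'' of Proposition~\ref{seconde obst conforme} and the scalar-curvature term in the Pohozaev variation vanish, so that both the conservation of $\mathcal Q_X$ along the neck and the vanishing of the linear terms in the expansion hold; handling the $\Lambda$-term in the conformal case is exactly the role played by Schoen's Pohozaev variations.
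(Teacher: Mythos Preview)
Your approach is genuinely different from the paper's. The paper does not run a conserved-flux argument on the degenerating metrics $\mathbf{g}_n$ at all. Instead it invokes the obstruction theory of \cite{ozu2}: the existence of the desingularizing sequence forces the solvability on $(N,\mathbf{g}_b)$ of $\mathring{\Ric}^{(1)}_{\mathbf{g}_b}(h_2)=0$ with $h_2\sim H_2$ at infinity, whose obstructions are the $L^2(\mathbf{g}_b)$-pairings against the cokernel elements $(\mathcal{L}_X\mathbf{g}_b)^\circ$ and $\mathcal{L}_{Y'}\mathbf{g}_b$. Using that in volume gauge $(\mathcal{L}_X\mathbf{g}_b)^\circ=-2H^4+o(r^{-4})$ (Lemma~\ref{lem volume gauge et asymptotiques o_i}), the paper rewrites those pairings as explicit boundary integrals at infinity, then separately computes $\int_{\mathbb S^3/\Gamma}\mathring{\Ric}^{(2)}_\mathbf{e}(H^4,H_2)(X,\partial_r)$ directly from the curvature formula~\eqref{Ric2 si scal cst} and checks the two answers agree term by term. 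The Pohozaev/Taub identities enter only afterwards, in the remark around~\eqref{annulation ipp}, to explain \emph{why} the two computations had to match.

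Your route has a real gap at the ``conservation'' step. You assert that $\mathcal Q_X(\mathbf{g}_n,S_r)$ is ``essentially independent of $r$'' because ``the bulk integrand vanishes for Einstein metrics'', but Propositions~\ref{integrability taub einstein} and~\ref{seconde obst conforme} give no conserved flux for a full Einstein metric: the divergence-freeness of $\E^{(2)}_\mathbf{e}(h,h)$ that drives the conservation (Proposition~\ref{gauge properties Bianchi}) requires $\E^{(1)}_\mathbf{e}(h)+\lambda\mathbf{e}=0$, and $h_n=\mathbf{g}_n-\mathbf{e}$ satisfies this only up to quadratic and $E_n$-errors on the neck --- errors which feed directly into the bulk term you are dropping. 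Likewise, at the inner boundary you invoke $\mathcal Q_X(\mathbf{g}_b,S_\nu)\to 0$, but that is a second-order statement about $H^4$, not about a flux of the full Ricci-flat metric $\mathbf{g}_b$; matching it to $\mathcal Q_X(\mathbf{g}_n,\cdot)$ at $r=\lambda_n\nu_0$ again needs the multi-scale control of $h_2$ and $H^4_2$ from \cite{ozu2,ozu3}. In short, the ``conservation'' you need is not exact but perturbative, and organizing the error terms is essentially equivalent to --- and no simpler than --- the obstruction calculus the paper uses as a black box. The flux picture is correct in spirit (the paper sketches exactly this in Section~\ref{closing up EH} and in the remark around~\eqref{annulation ipp}), but turning it into a proof requires that machinery or a substitute for it, not the one-line appeal to ``the bulk integrand vanishes''.
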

Comparing \eqref{obst killing intro} to \eqref{obst desing intro killing} and \eqref{obst conformal intro} to \eqref{obst desing intro radial}, we therefore interpret the obstructions to the desingularization of an Einstein orbifold as a defect of integrability of the infinitesimal Einstein deformation $h=H_2+H^4$. The proof of these different obstructions actually relies on the control of the same integration by parts as Theorem \ref{thm intro} once one notices that in so-called \emph{volume gauge}, the vector field $r\partial_r$ is $\mathbf{g}_b$-harmonic at an order higher than expected.

\begin{rem}
    Denote $ \mathcal{V}(\mathbf{g}_b) <0$ the \emph{reduced volume} of $(N,\mathbf{g}_b)$ introduced in \cite{bh}, as well as $ W^\pm_{\mathbf{g}_o}$ the Weyl curvatures of $\mathbf{g}_o$ at $p$ and $W^\pm_{\mathbf{g}_b}$ the asymptotic Weyl curvatures (the part decaying like $r^{-6}$) of $\mathbf{g}_b$, then \eqref{obst desing intro radial} may rewrite as:
    \begin{equation}
        \Lambda\mathcal{V}(\mathbf{g}_b)  + Q\big(W^+_{\mathbf{g}_b}, W^-_{\mathbf{g}_o}\big) + Q\big(W^-_{\mathbf{g}_b}, W^+_{\mathbf{g}_o}\big)=0\label{obst weyl volume}
    \end{equation}
    for some explicit quadratic form $Q$. A similar rewriting of \eqref{obst desing intro killing} yields $Q'\big(W^+_{\mathbf{g}_b}, W^-_{\mathbf{g}_o}\big) + Q'\big(W^-_{\mathbf{g}_b}, W^+_{\mathbf{g}_o}\big)=0$ for some other explicit quadratic form $Q'$.
\end{rem}

    \begin{rem}\label{obst pas sat sph}
        For $\mathbf{g}_o$ either spherical of hyperbolic, one has $\Lambda \neq 0$ and $W^\pm_{\mathbf{g}_o} =0$. The obstruction \eqref{obst weyl volume} is therefore \emph{never} satisfied. We also recover that the obstruction vanishes for the gluing of a hyperkähler ALE to a hyperkähler ALE since $\Lambda = 0$, $W^+_{{\mathbf{g}_o}} = 0$ and $W^+_{\mathbf{g}_b}=0$.
    \end{rem}
    We recover \emph{all} of the obstructions to the desingularization by Eguchi-Hanson metrics identified in \cite{biq1}, see Corollary \ref{obst EH} and an extension to higher dimension yields the obstruction of \cite{mv} as well, see Corollary \ref{mv obst}. In these articles, the obstructions were purely \emph{analytical} as projections on the cokernel of the linearized operator. Theorem \ref{obst desing integr} gives a new maybe more \emph{geometric} interpretation of them.
    
    \begin{rem}
        There are higher order obstructions to the existence of Einstein deformations with asymptotic developments $\mathbf{g}_t =\mathbf{g} + th_{1} + t^2h_2 +...$ for small $t$ which are very similar to the obstructions \eqref{obst desing intro radial} and \eqref{obst desing intro killing}.
    \end{rem}
    
    \begin{quest}
    Can these higher order obstructions recover the higher order obstructions of \cite{ozu3}? Can they help compute even higher order obstructions?
    \end{quest}
\subsection*{Desingularization of spherical and hyperbolic orbifolds}
    We next get to the main application of this article. We answer negatively the classical question of whether or not \emph{all} Einstein $4$-orbifolds can be $d_{GH}$-desingularized by smooth Einstein $4$-manifolds.
    
\begin{thm}\label{obst sph hyp}
    A spherical or hyperbolic $4$-orbifold with at least one singularity $\mathbb{R}^4\slash\mathbb{Z}_2$ cannot be limit of smooth Einstein metrics in the Gromov-Hausdorff sense.
\end{thm}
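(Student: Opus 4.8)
The plan is to derive Theorem~\ref{obst sph hyp} as a direct consequence of Theorem~\ref{obst desing integr} together with the structure theory for Gromov--Hausdorff limits of Einstein $4$-manifolds, arguing by contradiction. Suppose a spherical or hyperbolic $4$-orbifold $(M_o,\mathbf{g}_o)$ with a singular point $p$ of type $\mathbb{R}^4\slash\mathbb{Z}_2$ were a $d_{GH}$-limit of smooth Einstein metrics $(M,\mathbf{g}_n)$. First I would recall from \cite{and,bkn} that, after rescaling by suitable $t_n\to 0$, a nontrivial bubble forms at $p$: a Ricci-flat ALE orbifold $(N,\mathbf{g}_b)$ asymptotic to $\mathbb{R}^4\slash\mathbb{Z}_2$. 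The simplest such bubble asymptotic to $\mathbb{R}^4\slash\mathbb{Z}_2$ is the Eguchi--Hanson metric, and more generally one needs to know that the relevant bubble is a smooth manifold (not a further orbifold) with integrable infinitesimal Einstein deformations — this is where the hyperk\"ahler nature of ALE bubbles asymptotic to $\mathbb{R}^4\slash\mathbb{Z}_2$ (by Kronheimer's classification) and known integrability results for ALE hyperk\"ahler metrics enter. This verifies the hypotheses of Theorem~\ref{obst desing integr}.

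Next I would invoke Theorem~\ref{obst desing integr} to obtain the obstruction \eqref{obst desing intro radial}, or rather its reformulation \eqref{obst weyl volume}:
\begin{equation}
    \Lambda\,\mathcal{V}(\mathbf{g}_b) + Q\big(W^+_{\mathbf{g}_b},W^-_{\mathbf{g}_o}\big) + Q\big(W^-_{\mathbf{g}_b},W^+_{\mathbf{g}_o}\big) = 0.\notag
\end{equation}
The key observation, as in Remark~\ref{obst pas sat sph}, is that for a spherical or hyperbolic orbifold the metric $\mathbf{g}_o$ has constant sectional curvature, hence vanishing Weyl curvature $W^\pm_{\mathbf{g}_o} = 0$ at $p$, so the last two terms drop out. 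One is left with $\Lambda\,\mathcal{V}(\mathbf{g}_b) = 0$. But $\Lambda \neq 0$ for spherical ($\Lambda > 0$) and hyperbolic ($\Lambda < 0$) orbifolds, and the reduced volume satisfies $\mathcal{V}(\mathbf{g}_b) < 0$ strictly for any nontrivial Ricci-flat ALE metric (by the properties established in \cite{bh}, since the bubble is genuinely noncompact and non-flat). Hence $\Lambda\,\mathcal{V}(\mathbf{g}_b) \neq 0$, a contradiction. Therefore no such desingularizing sequence exists.

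The main obstacle I expect is not the final algebraic contradiction but the verification that the hypotheses of Theorem~\ref{obst desing integr} genuinely apply — specifically, controlling what bubbles can appear at a $\mathbb{R}^4\slash\mathbb{Z}_2$ singularity and ensuring one can pass to a scale at which the bubble is a \emph{smooth manifold} with \emph{integrable} deformations, and that the asymptotic expansions $\mathbf{g}_o = \mathbf{e} + H_2 + \mathcal{O}(r^3)$, $\mathbf{g}_b = \mathbf{e} + H^4 + \mathcal{O}(r^{-5})$ hold in the required \emph{volume gauge}. If several bubbles or an intermediate orbifold appear, one would argue inductively on the deepest bubble in a bubble tree, or apply the obstruction at the innermost scale where a smooth ALE manifold appears; in all cases the vanishing of $W^\pm_{\mathbf{g}_o}$ at the orbifold point is what makes the argument robust. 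A secondary point to check is that $\Gamma = \mathbb{Z}_2$ is covered by the hypothesis ``singularity $\mathbb{R}^4\slash\Gamma$'' in Theorem~\ref{obst desing integr}, which it is, and that the gluing--perturbation description of nearby Einstein metrics from \cite{ozu1,ozu2} underlying Theorem~\ref{obst desing integr} applies to orbifolds with this singularity type.
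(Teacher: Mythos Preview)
Your route differs from the paper's, and while it can be made to work for $\Gamma=\mathbb{Z}_2$, the justification you give is incomplete on exactly the point the paper regards as the crux. You invoke Theorem~\ref{obst desing integr} directly, which requires the bubble $(N,\mathbf{g}_b)$ to be a smooth Ricci-flat ALE \emph{manifold} with \emph{integrable} deformations. The paper treats this integrability as genuinely unknown and explicitly states that removing it is the main new content; its proof of Theorem~\ref{obst sph hyp} does \emph{not} identify the bubble but instead uses Proposition~\ref{prop orthogonal lie der} (the leading obstruction $\mathbf{o}_v$ is $L^2$-orthogonal to $(\mathcal{L}_X\mathbf{g}_b)^\circ$ and $\mathcal{L}_{Y'}\mathbf{g}_b$) together with the Einstein-modulo-obstructions expansion from \cite{ozu1,ozu2,ozu3} to force $\lambda_1=0$ without any integrability hypothesis, and then concludes via Corollary~\ref{obst sph hyp base}.

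Your shortcut can be salvaged for $\mathbb{Z}_2$, but not with the argument you sketch. Kronheimer's classification describes hyperk\"ahler ALE metrics; it does \emph{not} assert that every Ricci-flat ALE metric is hyperk\"ahler. What you need is (i) Bishop--Gromov on a Ricci-flat ALE orbifold asymptotic to $\mathbb{R}^4/\mathbb{Z}_2$ to see that any orbifold point would have group of order $\leq 2$, forcing equality and hence flatness --- so a nontrivial bubble must be a smooth manifold; (ii) Nakajima's self-duality result \cite{nak} to conclude that a Ricci-flat ALE manifold with $\Gamma\subset SU(2)$ is hyperk\"ahler, hence the bubble is Eguchi--Hanson; and (iii) the observation (see the remark after Proposition~\ref{def par rotation}) that all elements of $\mathbf{O}(\mathbf{eh})$ are of Lie-derivative type, so integrable. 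With these three ingredients in place your application of Theorem~\ref{obst desing integr} and Remark~\ref{obst pas sat sph} goes through. This is shorter for $\mathbb{Z}_2$ but does not generalize: the paper's approach via Proposition~\ref{prop orthogonal lie der} shows the integrability hypothesis in Theorem~\ref{obst desing integr} is superfluous for any single-bubble degeneration, which is the intended contribution.
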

    
\begin{exmp}
    Consider $\mathbb{S}^4\subset \mathbb{R}^5$ and the quotient by $\mathbb{Z}_2$ given by $(x_1,x_2,x_3,x_4,x_5) \sim (x_1,-x_2,-x_3,-x_4,-x_5)$. We will denote this space $\mathbb{S}^4\slash\mathbb{Z}_2$ which is an Einstein orbifold with two $\mathbb{R}^4\slash\mathbb{Z}_2$ singularities. It is often called the \emph{American football metric}. It is a \emph{synthetic} Einstein space in the sense of \cite{nab} for instance. By the above Theorem \ref{obst sph hyp}, it cannot be a Gromov-Hausdorff limit of smooth Einstein metrics.
\end{exmp}
Theorem \ref{obst sph hyp} was conjectured in the author's PhD thesis \cite{ozuthese} where it was proven under a technical assumption of integrability for the Ricci-flat ALE spaces. The main remaining difficulty here is thus to deal with the potential non integrability of these deformations. We more precisely prove that Theorem \ref{obst desing integr} holds without the integrability assumption on the Ricci-flat ALE spaces. Remark \ref{obst pas sat sph} then lets us conclude.

\begin{rem}
    It is often conjectured that the only Ricci-flat ALE metrics are Kähler, hence integrable. However, the motivation of this conjecture formulated in \cite{bkn} for instance, seems to be the analogous conjecture for the selfduality of Yang-Mills connections on $SU(2)$ bundles over $\mathbb{S}^4$. This analogous conjecture was disproved the same year in \cite{ssu}. 
\end{rem}

Let us present the main step of the proof of Theorem \ref{obst sph hyp} which is of independent interest. Let $(N,\mathbf{g}_b)$ be a Ricci-flat ALE orbifold asymptotic to $\mathbb{R}^4\slash\Gamma$ for $\Gamma\subset SO(4)$. Its space of $L^2$-infinitesimal deformations which are traceless and in divergence-free gauge is denoted $\mathbf{O}(\mathbf{g}_b)$. 
There are particular elements in $\mathbf{O}(\mathbf{g}_b)$ coming from the symmetries of the asymptotic cone $\mathbb{R}^4\slash\Gamma$. According to \cite{ozuthese}:
\begin{itemize}
    \item there exists $X$ a harmonic vector field on $(N,\mathbf{g}_b)$ asymptotic to the \emph{conformal Killing vector field} $r\partial_r$, and 
    $(\mathcal{L}_X\mathbf{g}_b)^\circ\in \mathbf{O}(\mathbf{g}_b),$
    where for a symmetric $2$-tensor $h$, $ (h)^\circ$ denotes the traceless part of $h$,
    \item for any \emph{Killing vector field} $Y$ there exists $Y'$ a harmonic vector field on $(N,\mathbf{g}_b)$ asymptotic to $Y$, and 
    $\mathcal{L}_{Y'}\mathbf{g}_b\in \mathbf{O}(\mathbf{g}_b).$
\end{itemize}

The main step in the proof of Theorem \ref{obst sph hyp} is to show that these deformations which are elements of the kernel of the linearization of the Ricci curvature should not be thought of as elements of the cokernel when dealing with Ricci-flat ALE deformations, at least at leading order. More precisely, for $v\in\mathbf{O}(\mathbf{g}_b)$, let us consider $ g_v $ the unique solution to:
\begin{equation}
    \begin{aligned}
        \mathbf{\Phi}_{\mathbf{g}_b}(g_v) = \E(g_v) + \delta^*_{\mathbf{g}_b}\delta_{\mathbf{g}_b}g_v\in \mathbf{O}(\mathbf{g}_b).
    \end{aligned}
\end{equation}
satisfying $g_v - (\mathbf{g}_b+v)\perp_{L^2(\mathbf{g}_b)} \mathbf{O}(\mathbf{g}_b) $. We call \emph{Einstein modulo obstructions metrics} such deformations which have been constructed in \cite{ozu2} (see also \cite{koi} in the smooth compact case). We study the leading order of the obstruction along curves $s\mapsto g_{sv}$ for $s\in (-1,1)$ at $s=0$.

\begin{prop}\label{prop orthogonal lie der}
    Let $(N,\mathbf{g}_b)$ be a Ricci-flat ALE metric which has \emph{nonintegrable} Ricci-flat ALE deformations. Then, for any $v\in \mathbf{O}(\mathbf{g}_b)$, there exists $l\geqslant 2$ such that  $\partial^k_{s^k|s=0}\mathbf{\Phi}_{\mathbf{g}_b}(g_{sv}) = 0$ for all $k\leqslant l-1$ and $\partial^l_{s^l|s=0}\mathbf{\Phi}_{\mathbf{g}_b}(g_{sv}) \neq  0$. The leading order obstruction $\partial^l_{s^l|s=0}\mathbf{\Phi}_{\mathbf{g}_b}(g_{sv})$ is $L^2(\mathbf{g}_b)$-orthogonal to the vector subspace of $\mathbf{O}(\mathbf{g}_b)$ spanned by the above elements $(\mathcal{L}_X\mathbf{g}_b)^\circ$ and $\mathcal{L}_{Y'}\mathbf{g}_b$.
\end{prop}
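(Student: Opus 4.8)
The plan is to exploit the variational structure of the Einstein-modulo-obstructions construction together with the preserved-quantity identities of Theorem~\ref{thm intro} and Proposition~\ref{seconde obst conforme}, adapted to the ALE setting. First I would set up the formal power series for $g_{sv}$: since $g_{sv}$ solves $\mathbf{\Phi}_{\mathbf{g}_b}(g_{sv})\in\mathbf{O}(\mathbf{g}_b)$ with the normalization $g_{sv}-(\mathbf{g}_b+sv)\perp_{L^2}\mathbf{O}(\mathbf{g}_b)$, one has $g_{sv}=\mathbf{g}_b+sv+s^2 w_2+\cdots$ where the $w_j$ are determined recursively by inverting the linearization $\mathbf{\Phi}'_{\mathbf{g}_b}$ (an isomorphism transverse to $\mathbf{O}(\mathbf{g}_b)$ by the ALE Fredholm theory of \cite{ozu2}). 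The obstruction $\mathbf{\Phi}_{\mathbf{g}_b}(g_{sv})$ is by definition the $\mathbf{O}(\mathbf{g}_b)$-component of $\mathbf{\Phi}_{\mathbf{g}_b}$ evaluated on this series; expanding and using $\mathbf{\Phi}'_{\mathbf{g}_b}v=0$ for $v\in\mathbf{O}(\mathbf{g}_b)$, the first nonvanishing derivative $\partial^l_{s^l|s=0}\mathbf{\Phi}_{\mathbf{g}_b}(g_{sv})$ is, up to a combinatorial constant and lower-order-in-$s$ substitutions that all vanish by minimality of $l$, the $\mathbf{O}(\mathbf{g}_b)$-projection of a universal quadratic-or-higher expression in $v$ and the $w_j$; the nonvanishing for \emph{some} $l$ is precisely the nonintegrability hypothesis. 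That the leading order is reached at a finite $l\geqslant 2$ follows because if all derivatives vanished, $s\mapsto g_{sv}$ would be a genuine curve of Ricci-flat metrics, contradicting nonintegrability of the deformations of $(N,\mathbf{g}_b)$.

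The heart of the matter is the orthogonality statement. Write $\Theta_l:=\partial^l_{s^l|s=0}\mathbf{\Phi}_{\mathbf{g}_b}(g_{sv})$. I would test $\langle\Theta_l, (\mathcal{L}_X\mathbf{g}_b)^\circ\rangle_{L^2(\mathbf{g}_b)}$ and $\langle\Theta_l, \mathcal{L}_{Y'}\mathbf{g}_b\rangle_{L^2(\mathbf{g}_b)}$ directly. Since $(\mathcal{L}_X\mathbf{g}_b)^\circ$ and $\mathcal{L}_{Y'}\mathbf{g}_b$ are, respectively, (traceless parts of) Lie derivatives along the harmonic fields $X\sim r\partial_r$ and $Y'\sim Y$, pairing against them is, after integration by parts, a boundary integral at infinity on large spheres $\mathbb{S}^3/\Gamma$ of radius $R\to\infty$ of exactly the shape appearing in \eqref{obst killing intro} and \eqref{obst conformal introo} — a flux of the second-variation-of-Einstein tensor contracted with $Y$ or $r\partial_r$ against the radial normal. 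The key technical inputs are: (i) that $\Theta_l$, being the leading obstruction, is $L^2$ and decays fast enough (it lies in $\mathbf{O}(\mathbf{g}_b)$, whose elements decay like $r^{-4}$) so that the interior terms converge and only the boundary flux survives; (ii) that $\mathbf{g}_b$ is \emph{exactly} Ricci-flat, so the bulk Bianchi-type identities used to derive \eqref{obst killing intro}–\eqref{obst conformal introo} apply verbatim with $\mathbf{g}_b$ in place of a cone; and (iii) the observation flagged after Theorem~\ref{obst desing integr} that in the volume gauge $r\partial_r$ is $\mathbf{g}_b$-harmonic to higher order than naively expected, which is what makes the ``other terms'' in \eqref{obst conformal introo} vanish and turns the Pohozaev-type integration by parts of \cite{sch} into a clean identity. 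Running that argument shows the flux equals the corresponding preserved quantity of an \emph{integrable} deformation, hence vanishes — but one must be careful that the deformation being tested here is $g_{sv}$ at order $l$, not a globally integrable one; the point is that the \emph{boundary} quantity only sees the asymptotics, which are controlled by $H_2$-type and $H^4$-type terms for which integrability-of-the-cone-deformation (true on $\mathbb{R}^4/\Gamma$) forces vanishing, exactly as in the vanishing results of Section~\ref{section 4d} and Appendix~\ref{appendix mf}.

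Concretely the steps are: (1) establish the formal expansion of $g_{sv}$ and identify $l$ and $\Theta_l$, using the ALE Fredholm setup; (2) show $\Theta_l\in\mathbf{O}(\mathbf{g}_b)$ with the decay/gauge properties needed, in particular that $g_{sv}$ can be taken in divergence-free, volume gauge order by order; (3) for a Killing field $Y$, integrate $\langle\Theta_l,\mathcal{L}_{Y'}\mathbf{g}_b\rangle$ by parts, convert to a flux at infinity, and match it to the Taub-type integral \eqref{obst killing intro} on $\mathbb{S}^3/\Gamma$ which vanishes because the relevant asymptotic deformation of $\mathbb{R}^4/\Gamma$ is integrable (germ-existence \`a la \cite{gas}, or directly the vanishing computations of Section~\ref{section 4d}); (4) repeat for $X\sim r\partial_r$ using Proposition~\ref{seconde obst conforme} and the higher-order harmonicity of $r\partial_r$ in volume gauge to kill the ``other terms,'' concluding $\langle\Theta_l,(\mathcal{L}_X\mathbf{g}_b)^\circ\rangle=0$; (5) combine by linearity over the span. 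The main obstacle I anticipate is step (3)–(4): justifying that the integration by parts produces \emph{only} the advertised boundary flux with no leftover interior contribution. This requires pinning down the precise decay of $\Theta_l$ and of the harmonic fields $X,Y'$ (their subleading terms), and checking that the gauge can be propagated to all orders $\leqslant l$ so that the Pohozaev identity of \cite{sch} applies without error terms — the same delicate integration by parts underlying Theorem~\ref{thm intro} and Theorem~\ref{obst desing integr}, here transplanted from a cone to a genuinely curved ALE space.
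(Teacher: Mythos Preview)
Your scaffolding (real-analytic expansion of $g_{sv}$, identification of a finite $l\geqslant 2$ by nonintegrability, and testing $\Theta_l$ against $(\mathcal{L}_X\mathbf{g}_b)^\circ$ and $\mathcal{L}_{Y'}\mathbf{g}_b$ via an integration by parts producing a flux at infinity) matches the paper's proof. The gap is in \emph{why} that boundary flux vanishes.

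You propose to integrate $\langle\Theta_l,(\mathcal{L}_X\mathbf{g}_b)^\circ\rangle$ by parts directly, obtain the flux $\lim_{R\to\infty}\int_{R\,\mathbb{S}^3/\Gamma}\Theta_l(X,n)$, and then argue that this flux is a Taub-type preserved quantity on the asymptotic cone, vanishing by the cone-integrability and Section~\ref{section 4d} computations. But $\Theta_l\in\mathbf{O}(\mathbf{g}_b)$ decays only like $r^{-4}$, $|X|\sim r$, and the sphere has area $\sim r^3$, so the flux is $O(1)$ a priori; and there is no reason the leading $r^{-4}$ part of a generic $\Theta_l$ should coincide with a $\mathring{\Ric}^{(2)}$ of asymptotic data. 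The identification you invoke does not exist for a general $l$-th order obstruction.

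The paper's mechanism is different and simpler. One first writes
\[
\Theta_l \;=\; S^{(l)}(v) \;+\; \mathbf{\Phi}_{\mathbf{g}_b}^{(1)}\big(\partial^l_{s^l|s=0}g_{sv}\big),
\]
where $S^{(l)}(v)$ collects the genuinely multilinear ($k\geqslant 2$) contributions. The linear piece is automatically $L^2$-orthogonal to $\mathbf{O}(\mathbf{g}_b)$ by self-adjointness of $\mathbf{\Phi}^{(1)}_{\mathbf{g}_b}$, so one only needs $\langle S^{(l)}(v),(\mathcal{L}_X\mathbf{g}_b)^\circ\rangle_{L^2}=0$. The point that drives the whole argument is a \emph{decay gain}: each $w_k:=\frac{1}{k!}\partial^k_{s^k|s=0}g_{sv}$ lies in $C^{2,\alpha}_\beta$ with $\beta$ close to $2$ (the paper takes $\beta=1.9$), and since $S^{(l)}(v)$ is a finite sum of terms $\E^{(k)}_{\mathbf{g}_b}(w_{j_1},\dots,w_{j_k})$ with $k\geqslant 2$, one has $|S^{(l)}(v)|=O(r^{-2-k\beta})=o(r^{-4})$. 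One checks separately that $S^{(l)}(v)$ is divergence-free (using $\partial^k_{s^k|s=0}\E(g_{sv})=0$ for $k\leqslant l-1$ and Bianchi), so the integration by parts \eqref{ipp div free 2tensor} applied to $S^{(l)}(v)$ gives a boundary flux that vanishes purely by this improved decay. The leftover trace term $\int_N\tr_{\mathbf{g}_b}S^{(l)}(v)\,dv_{\mathbf{g}_b}$ is handled by a dedicated lemma (Lemma~\ref{terme bord premiere derivees}) showing it too is a boundary integral of schematic form $\nabla w_{j_1}*w_{j_2}*\cdots$, hence $o(1)$ at infinity for the same reason. No appeal to cone integrability, Taub's quantity on $\mathbb{R}^4/\Gamma$, or the Section~\ref{section 4d} vanishing results is needed; what you are missing is precisely this splitting and the fact that the nonlinear remainder decays \emph{strictly faster} than elements of $\mathbf{O}(\mathbf{g}_b)$.
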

The proof relies on careful integrations by parts similar to that of the proof of Theorem \ref{thm intro}. 

\subsection*{Acknowledgements}
The author would like to thank Olivier Biquard and Hans-Joachim Hein for inspiring discussions and for sharing some of their early results which lead to \cite{bh}.

\section{Symmetries of Einstein metrics and integrability}

In this section, we recall well-known properties of the two first derivatives of the Einstein operator and apply them to define the so-called \emph{Taub's preserved quantity} as introduced in \cite{tau}. It is a central quantity in the study of the integrability of Einstein deformations in the Lorentzian context.

\begin{note}
    All along the article, we will denote by $F_g^{(m)}$ the $m$-linear terms of the development of a functional $h\mapsto F(g+h)$ at $0$, we more precisely have (at least formally) for any small enough $2$-tensor $h$:
    $$ F(g+h) = \sum_{m\in\mathbb{N}}F_g^{(m)}\underbrace{(h,...,h)}_{m \text{ times }}.$$
\end{note}
\subsection{Gauge and reparametrization properties}
Let us start by recalling various consequences of the Bianchi identity: for any Riemannian metric $g$, one has
\begin{equation}
    B_g(\Ric(g)) = 0.\label{bianchi identity}
\end{equation}
where for any $2$-tensor $h$, we defined the \emph{Bianchi operator} $ B_g = \delta_g\big(h - \frac{1}{2}\textup{tr}(h)g \big) = \delta_g + \frac{1}{2}d\textup{tr}(h)$, where $\delta_g$ is the divergence with the convention that in coordinates, for a $1$-form $\omega$, $\delta_g\omega := -g^{ij}\nabla_j\omega_{i}$ and for a symmetric $2$-tensor $h$, $(\delta_gh)_k := -g^{ij}\nabla_jh_{ik}$. Denoting the Einstein tensor $\E(g):=\Ric(g) - \frac{\R_g}{2}g$, the equation \eqref{bianchi identity} rewrites
\begin{equation}
    \delta_g(\E(g)) = 0.\label{Einstein divergence identity}
\end{equation}
\begin{prop}[{\cite{fmm}}]\label{gauge properties Bianchi}
    Let us assume that $\mathbf{g}$ is a Ricci-flat metric on some open domain $\mathcal{U}$ of a Riemannian manifold $M$, and let $h$ be a symmetric $2$-tensor on $\mathcal{U}$. We have the following gauge properties:
    \begin{itemize}
        \item without assumption, one has 
        \begin{equation}
            B_\mathbf{g}\big(\Ric_\mathbf{g}^{(1)}(h)\big)= 0, \text{ and } \delta_\mathbf{g}\big(\E_\mathbf{g}^{(1)}(h)\big)= 0,\label{bianchi div 1er ordre}
        \end{equation}
        \item if $\Ric_\mathbf{g}^{(1)}(h)=\Lambda g$ for $\Lambda\in \mathbb{R}$ or equivalently $\E_\mathbf{g}^{(1)}(h)+ \lambda g=0$ for $\lambda\in \mathbb{R}$, then one has
        \begin{equation}
          B_\mathbf{g}\big(\Ric_\mathbf{g}^{(2)}(h,h)\big)= 0, \text{ and } \delta_\mathbf{g}\big(\E_\mathbf{g}^{(2)}(h,h)\big)= 0.\label{bianchi div 2nd ordre}
        \end{equation}
    \end{itemize}
\end{prop}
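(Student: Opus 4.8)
The plan is to differentiate the Bianchi identity \eqref{Einstein divergence identity} and the traced Bianchi identity \eqref{bianchi identity} along the formal one-parameter family $g+th$ and read off the terms order by order in $t$. Concretely, applying $t\mapsto \delta_{g+th}\big(\E(g+th)\big)=0$ and expanding both the divergence operator $\delta_{g+th}$ and the Einstein tensor $\E(g+th)$ in powers of $t$, the coefficient of $t^0$ gives $\delta_\mathbf{g}(\E(\mathbf{g}))=0$, which holds trivially since $\mathbf{g}$ is Ricci-flat (so $\E(\mathbf{g})=0$). The coefficient of $t^1$ reads
\begin{equation}
    \delta_\mathbf{g}\big(\E^{(1)}_\mathbf{g}(h)\big) + \big(\delta^{(1)}_\mathbf{g}(h)\big)\big(\E(\mathbf{g})\big) = 0,
\end{equation}
and the second term vanishes because $\E(\mathbf{g})=0$; this is exactly the right-hand identity of \eqref{bianchi div 1er ordre}, and the corresponding computation with $B_{g+th}$ instead of $\delta_{g+th}$ (using $B_g = \delta_g + \frac12 d\,\mathrm{tr}_g$ together with $\mathrm{tr}_g\mathrm{Ric}(g) = \R_g$) gives $B_\mathbf{g}\big(\Ric^{(1)}_\mathbf{g}(h)\big)=0$ after accounting for the fact that the linearization of $\R_g$ only enters through $\E$, which vanishes at $\mathbf{g}$.

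For the second bullet, I would push the expansion to order $t^2$. The coefficient of $t^2$ in $\delta_{g+th}(\E(g+th))=0$ is
\begin{equation}
    \delta_\mathbf{g}\big(\E^{(2)}_\mathbf{g}(h,h)\big) + \big(\delta^{(1)}_\mathbf{g}(h)\big)\big(\E^{(1)}_\mathbf{g}(h)\big) + \big(\delta^{(2)}_\mathbf{g}(h,h)\big)\big(\E(\mathbf{g})\big) = 0.
\end{equation}
The last term vanishes as before. The middle term is a contraction of the linearized divergence (a first-order operator with coefficients built from $\nabla h$) against $\E^{(1)}_\mathbf{g}(h)$. Under the hypothesis $\E^{(1)}_\mathbf{g}(h) + \lambda \mathbf{g} = 0$, i.e. $\E^{(1)}_\mathbf{g}(h) = -\lambda\mathbf{g}$ is a constant multiple of the metric, I would show that $\big(\delta^{(1)}_\mathbf{g}(h)\big)(\mathbf{g}) = 0$: the linearized divergence applied to the parallel tensor $\mathbf{g}$ produces only terms involving $\nabla$ of the (constant) coefficient and traces of $\nabla h$ paired with $\mathbf{g}$, which should collapse — alternatively, one observes that $\delta_{g+th}\big((g+th)\cdot \text{const}\big)$ can be handled directly since $\delta_k(fk) = -df$ is a clean expression. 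Hence $\delta_\mathbf{g}\big(\E^{(2)}_\mathbf{g}(h,h)\big)=0$. The translation to $B_\mathbf{g}\big(\Ric^{(2)}_\mathbf{g}(h,h)\big)=0$ then follows from the identity $\E = \Ric - \frac12 \R\, g$ together with the bookkeeping of how $\mathrm{tr}$ and the scalar curvature expand; equivalently, one can run the whole argument with the traced Bianchi operator $B_{g+th}(\Ric(g+th))=0$ from the start, which is cleaner since $B$ is the natural operator for $\Ric$.

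The main obstacle I anticipate is the careful verification that the ``cross term'' $\big(\delta^{(1)}_\mathbf{g}(h)\big)\big(\E^{(1)}_\mathbf{g}(h)\big)$ — equivalently $\big(\delta^{(1)}_\mathbf{g}(h)\big)(\mathbf{g})$ after substituting the hypothesis — genuinely vanishes rather than merely being of lower order; this requires writing out the Christoffel-symbol variation $\Gamma^{(1)}_\mathbf{g}(h) = \frac12 g^{-1}(\nabla h + \nabla h - \nabla h)$ and checking the contractions. A clean way to sidestep the explicit computation is to use reparametrization invariance: the identity $\delta_{g'}(\E(g'))=0$ holds for \emph{every} metric $g'$, so if $t\mapsto \mathbf{g}_t$ were an actual curve of Einstein metrics with $\mathbf{g}_0=\mathbf{g}$, $\partial_t\mathbf{g}_t|_0 = h$, differentiating twice would immediately give the claim; the content of the proposition is that the conclusion depends only on the \emph{infinitesimal} Einstein condition on $h$, so the honest route is the order-by-order expansion above, and I would present it that way, citing \cite{fmm} for the underlying computations of $\delta^{(1)}_\mathbf{g}$ and $\E^{(2)}_\mathbf{g}$.
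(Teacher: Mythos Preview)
Your approach---differentiating the identities $\delta_{g}\E(g)=0$ and $B_{g}\Ric(g)=0$ along $g_t=\mathbf{g}+th$ and reading off the $t$- and $t^2$-coefficients---is exactly the standard argument, and the paper itself gives no proof beyond the citation to \cite{fmm}. Your treatment of the first bullet is correct.

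There is, however, a genuine error in your handling of the cross term at order $t^2$. You assert that $(\delta^{(1)}_\mathbf{g}(h))(\mathbf{g})=0$, but this is false: differentiating the identity $\delta_{g_t}(g_t)=0$ at $t=0$ gives
\[
(\delta^{(1)}_\mathbf{g}(h))(\mathbf{g}) \;=\; -\,\delta_\mathbf{g} h,
\]
and the analogous computation yields $(B^{(1)}_\mathbf{g}(h))(\mathbf{g})=-B_\mathbf{g} h$. Your ``alternative'' via $\delta_k(fk)=-df$ does not help either: that computes the variation of $\delta_{g_t}(-\lambda g_t)$ with \emph{both} operator and argument moving, whereas the cross term requires $(\delta^{(1)}_\mathbf{g}(h))$ acting on the \emph{fixed} tensor $-\lambda\mathbf{g}$. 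The upshot is that your expansion actually gives
\[
\delta_\mathbf{g}\big(\E^{(2)}_\mathbf{g}(h,h)\big)=-\lambda\,\delta_\mathbf{g} h,\qquad
B_\mathbf{g}\big(\Ric^{(2)}_\mathbf{g}(h,h)\big)=\Lambda\, B_\mathbf{g} h,
\]
which do not vanish in general when $\Lambda\neq 0$.

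This is not merely a gap in your write-up: the second bullet as literally stated is only correct in the original \cite{fmm} setting $\Lambda=0$ (vacuum deformations), where the cross term disappears because $\E^{(1)}_\mathbf{g}(h)=0$ outright. For $\Lambda\neq 0$ one needs the extra gauge hypothesis $\delta_\mathbf{g} h=0$ (resp.\ $B_\mathbf{g} h=0$), and indeed the paper's later applications---notably Proposition~\ref{obst hk a scal2 nul}---explicitly assume $\delta_\mathbf{e} h=0$ before invoking divergence-freeness of $\E^{(2)}_\mathbf{e}(h,k)$. So: keep your strategy, fix the cross-term computation, and state the $\Lambda\neq 0$ case with the additional gauge condition.
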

\begin{proof}[ ]
    
\end{proof}

Let us continue with some identities when $g$ is perturbed in the direction of a Lie derivative. These come from differentiation of the following identity: for any diffeomorphism $\phi:M\to M$ and any metric $g$ on M
\begin{equation}
    \Ric(\phi^*g) = \phi^*(\Ric(g)) \text{ and } \E(\phi^*g) = \phi^*(\E(g)).
\end{equation}

\begin{prop}[{\cite{fmm}}]\label{covariance reparam}
Let us assume that $g$ is a Riemannian metric on some bounded open domain $\mathcal{U}$ of a Riemannian manifold $M$, let $h$ be a symmetric $2$-tensor on $\mathcal{U}$ and $X$ be a vector field on $\mathcal{U}$. We have the following reparametrization properties for the derivatives of $\Ric$: without assumption, one has 
        \begin{equation}
            \Ric_g^{(1)}(\mathcal{L}_Xg)= \mathcal{L}_X(\Ric(g)) \text{ and } \E_g^{(1)}(\mathcal{L}_Xg)= \mathcal{L}_X(\E(g)),
        \end{equation}
        \begin{equation}
            \Ric_g^{(2)}(h,\mathcal{L}_Xg) + \Ric_g^{(1)}(\mathcal{L}_Xh)= \mathcal{L}_X(\Ric_g^{(1)}(h)),
        \end{equation}
        and
        \begin{equation} \E_g^{(2)}(h,\mathcal{L}_Xg) + \E_g^{(1)}(\mathcal{L}_Xh)= \mathcal{L}_X(\E_g^{(1)}(h))\label{gauge and E2}
        \end{equation}
\end{prop}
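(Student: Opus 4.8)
The statement to prove is Proposition \ref{covariance reparam}, which collects reparametrization identities for the first and second derivatives of $\Ric$ and $\E$ under perturbation by a Lie derivative. Let me sketch the proof.

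The plan is to start from the exact covariance identity $\Ric(\phi^*g) = \phi^*(\Ric(g))$, valid for every diffeomorphism $\phi$ and every metric $g$, and differentiate it along a well-chosen family of diffeomorphisms. First I would fix a vector field $X$ on $\mathcal{U}$ and let $(\phi_s)_{s}$ be its local flow, so that $\frac{d}{ds}\big|_{s=0}\phi_s^*g = \mathcal{L}_X g$ for any fixed metric $g$; more generally $\frac{d}{ds}\big|_{s=0}\phi_s^* T = \mathcal{L}_X T$ for any tensor field $T$. Applying $\frac{d}{ds}\big|_{s=0}$ to $\Ric(\phi_s^*g) = \phi_s^*(\Ric(g))$ and using the chain rule on the left (the curve $s\mapsto\phi_s^*g$ passes through $g$ with velocity $\mathcal{L}_X g$) gives $\Ric_g^{(1)}(\mathcal{L}_X g) = \mathcal{L}_X(\Ric(g))$. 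The identity for $\E$ follows identically from $\E(\phi^*g)=\phi^*(\E(g))$, or simply by taking traces and combining, since $\E = \Ric - \frac{\R}{2}g$ and the scalar curvature and the metric are themselves natural.

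For the second-order identities I would differentiate the first-order identity $\Ric_g^{(1)}(\mathcal{L}_X g) = \mathcal{L}_X(\Ric(g))$, but now letting the background metric vary: replace $g$ by $g+th$ and apply $\frac{d}{dt}\big|_{t=0}$. On the left, $\frac{d}{dt}\big|_{t=0}\Ric_{g+th}^{(1)}(\mathcal{L}_X(g+th))$ splits by bilinearity of the second variation into two terms: differentiating the outer slot produces $\Ric_g^{(2)}(h,\mathcal{L}_X g)$ (plus, from differentiating the direction, $\Ric_g^{(1)}$ applied to $\frac{d}{dt}\big|_{t=0}\mathcal{L}_X(g+th) = \mathcal{L}_X h$, since $\mathcal{L}_X$ is linear and $t$-independent), i.e. $\Ric_g^{(2)}(h,\mathcal{L}_X g) + \Ric_g^{(1)}(\mathcal{L}_X h)$. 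On the right, $\frac{d}{dt}\big|_{t=0}\mathcal{L}_X(\Ric(g+th)) = \mathcal{L}_X(\Ric_g^{(1)}(h))$, again because $\mathcal{L}_X$ commutes with $\frac{d}{dt}$. This yields the second displayed identity, and the corresponding statement for $\E$ follows the same way from $\E_g^{(1)}(\mathcal{L}_X g) = \mathcal{L}_X(\E(g))$, giving \eqref{gauge and E2}.

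The only genuine care needed — and the closest thing to an obstacle — is bookkeeping the bilinear expansion: one must be sure that $F(g+h) = \sum_m F_g^{(m)}(h,\dots,h)$ with $F_g^{(2)}$ symmetric bilinear, so that $\frac{d}{dt}\big|_{t=0} F_{g+th}^{(1)}(k) = 2 F_g^{(2)}(h,k)$ versus the convention where the factor of $2$ is absorbed; here the stated identities use $\Ric_g^{(2)}(h,\mathcal{L}_X g)$ directly, matching the convention in the \textbf{Note} above where $F(g+h)=\sum_m F_g^{(m)}(h,\dots,h)$, so the polarization constants are already built into the definition and no stray factors appear. One should also note that these are pointwise, algebraic-in-jets identities, so no integration, no boundedness of $\mathcal{U}$, and no curvature assumption on $g$ is used — the hypothesis that $\mathcal{U}$ is bounded is harmless and not needed. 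I expect the write-up to be short; the substance is entirely in the naturality of $\Ric$ and the commutation of $\mathcal{L}_X$ with $\frac{d}{dt}$.
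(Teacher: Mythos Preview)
Your approach is correct and is the standard one: the paper gives no proof of this proposition, simply citing \cite{fmm}, and your argument---differentiate the naturality identity $\Ric(\phi_s^*g)=\phi_s^*\Ric(g)$ along the flow of $X$ for the first-order identity, then vary the background metric along $g+th$ and differentiate at $t=0$ for the second---is exactly the argument underlying that citation. One small point: your discussion of the factor of $2$ is slightly tangled; with the convention $F(g+h)=\sum_m F_g^{(m)}(h,\dots,h)$ one does get $\frac{d}{dt}\big|_{t=0}F_{g+th}^{(1)}(k)=2F_g^{(2)}(h,k)$, so the stated identity would strictly carry a $2$ in front of $\Ric_g^{(2)}(h,\mathcal{L}_Xg)$---the paper is simply loose about this constant throughout, and it never affects the vanishing/nonvanishing conclusions that are actually used.
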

\begin{proof}[ ]

\end{proof}

\subsection{Taub's preserved quantities and obstructions}

Let us start by stating the following classical integrations by parts which are at the core of our proofs.
\begin{lem}\label{divergence killing}
    Let $g$ be a metric on an open set $\mathcal{U}$, $T$ be a \emph{divergence-free} symmetric $2$-tensor, and let $X$ be a vector field on $\mathcal{U}$. Then for any smooth compact subset $\Omega\subset \mathcal{U}$ with boundary, we have:
    \begin{equation}
        \int_{\partial\Omega}T(X,n) dv_{g_{|\partial\Omega}}= \int_{\Omega}\left\langle T,\delta_g^*X \right\rangle_g dv_g =  \frac{1}{2}\int_{\Omega}\left\langle T,\mathcal{L}_Xg \right\rangle_g dv_g,\label{ipp div free 2tensor}
    \end{equation}
    where $n$ is the outward unit normal to $\partial\Omega$, and where $\delta_g^*X := \frac{1}{2}\mathcal{L}_{X}g$ is the formal adjoint of the divergence $\delta_g$.
    
    We also have the following identity close to Schoen's Pohozaev equality \cite{sch}. Denote $ \mathring{h} $ or $(h)^\circ$ the traceless part of a symmetric $2$-tensor $h$. If $T$ is divergence-free, then we have:
    \begin{equation}
    \begin{aligned}
        \int_{\partial\Omega}\mathring{T}(X,n) dv_{g_{|\partial\Omega}} &=  \frac{1}{2}\int_{\Omega}\Big(\left\langle \mathring{T},\mathcal{L}_Xg  \right\rangle_g - \frac{\mathcal{L}_X (\tr_g T)}{d}\Big) dv_g \\
        &=  \frac{1}{2}\int_{\Omega}\Big(\left\langle T,(\mathcal{L}_Xg)^\circ \right\rangle_g- \frac{\mathcal{L}_X (\tr_g T)}{d}\Big) dv_g,
    \end{aligned}\label{schoen poho}
    \end{equation}
    
    \end{lem}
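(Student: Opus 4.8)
The plan is to derive both displayed identities from a single pointwise formula together with the divergence theorem on the compact domain $\Omega$. The master identity is that for any smooth symmetric $2$-tensor $S$ and vector field $X$ on $\mathcal{U}$, the $1$-form $S(X,\cdot)$ has divergence
$$\delta_g\big(S(X,\cdot)\big) = \langle \delta_g S, X\rangle_g - \langle S, \delta_g^* X\rangle_g .$$
First I would check this in local coordinates: writing $\big(S(X,\cdot)\big)_i = S_{ik}X^k$ and expanding $-g^{ij}\nabla_j(S_{ik}X^k)$ by the Leibniz rule produces exactly two terms; the first is $\langle\delta_g S,X\rangle_g$ by definition of $\delta_g$, and in the second the symmetry of $S$ lets one replace $\nabla_j X_k$ by its symmetrization $\tfrac12(\mathcal L_X g)_{jk}$ under contraction against $S$, giving $-\langle S,\tfrac12\mathcal L_X g\rangle_g = -\langle S,\delta_g^*X\rangle_g$ since $\delta_g^* X=\tfrac12\mathcal L_X g$ is the formal adjoint of $\delta_g$.

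For \eqref{ipp div free 2tensor} I would take $S=T$. Since $\Omega$ is a smooth compact subset of $\mathcal{U}$ with boundary and everything is smooth near $\Omega$, the divergence theorem in the sign convention matching the paper's $\delta_g$ (namely $\int_\Omega \delta_g\omega\,dv_g = -\int_{\partial\Omega}\omega(n)\,dv_{\partial\Omega}$ for a $1$-form $\omega$) applied to $\omega = T(X,\cdot)$ gives
$$-\int_{\partial\Omega} T(X,n)\,dv_{\partial\Omega} = \int_\Omega\big(\langle\delta_g T,X\rangle_g - \langle T,\delta_g^* X\rangle_g\big)\,dv_g ,$$
and the hypothesis $\delta_g T=0$ together with $\delta_g^* X=\tfrac12\mathcal L_X g$ yields the claim. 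For \eqref{schoen poho} I would run the same argument with $S=\mathring T = T - \tfrac{\tr_g T}{d}\,g$, which is again symmetric. The only extra input is the divergence of the pure-trace part: from $\big(\delta_g(f g)\big)_k = -g^{ij}\nabla_j(f\,g_{ik}) = -\partial_k f$ one gets $\delta_g(fg)=-df$, hence $\delta_g\mathring T = \delta_g T + \tfrac1d\,d(\tr_g T) = \tfrac1d\,d(\tr_g T)$ when $T$ is divergence-free. Substituting into the master formula and the divergence theorem, and using $\langle d(\tr_g T),X\rangle_g = \mathcal L_X(\tr_g T)$, produces the boundary identity; the equality of the two forms of the right-hand side follows because both $\mathring T$ and $(\mathcal L_X g)^\circ$ are traceless, so the trace parts of $T$ and of $\mathcal L_X g$ drop out of the pairing, i.e. $\langle\mathring T,\mathcal L_X g\rangle_g = \langle\mathring T,(\mathcal L_X g)^\circ\rangle_g = \langle T,(\mathcal L_X g)^\circ\rangle_g$.

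There is no genuine analytic difficulty here, since $\Omega$ is compact with boundary and no completeness or decay hypothesis on $X$ or $T$ is needed. The main thing to be careful about is bookkeeping: fixing once and for all the sign in the divergence theorem compatible with the paper's convention that $\delta_g$ carries a minus sign (this is what makes the boundary integral appear without an extra sign in the final statements), and tracking the traceless projections so that the $\tfrac1d$ factors and the two equivalent right-hand sides in \eqref{schoen poho} come out consistently.
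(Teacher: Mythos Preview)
Your proposal is correct and follows essentially the same approach as the paper: both hinge on the pointwise identity $\delta_g(T(X,\cdot)) = \langle \delta_g T, X\rangle_g - \langle T, \delta_g^* X\rangle_g$, verified in coordinates using the symmetry of $T$, followed by the divergence theorem. For the Pohozaev-type identity you apply this master formula directly to $S=\mathring T$ after computing $\delta_g\mathring T = \tfrac{1}{d}\,d(\tr_g T)$, whereas the paper applies it to $T$ and then removes the pure-trace contribution via a separate integration by parts on $\int_\Omega \tfrac{\tr_g T}{d}\langle g,\mathcal L_X g\rangle_g\,dv_g$; these are two bookkeepings of the same computation.
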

    
    \begin{rem}
        We will often abusively apply our operators to vector fields or $1$-forms indifferently, the identification will always be done thanks to the metric involved in the operator. More precisely, a vector field $X$ is identified with the $1$-form $g(X,.)$.
    \end{rem}
\begin{proof}
    The key to this formula is the identity:
    \begin{equation}
        \delta_g\big(T(X)\big)= \delta_g(T)(X) - \langle T,\delta_g^*X \rangle_g = \delta_g(T)(X) - \frac{1}{2} \langle T,\mathcal{L}_Xg \rangle_g\label{divergence }
    \end{equation}
    which may be proven in coordinates using the symmetry of $T$. From the identity \eqref{divergence }, using the fact that $T$ is divergence-free and the divergence theorem, we find the result by the divergence theorem.
    
    The second equality \eqref{schoen poho} then follows by noting that $$ \int_\Omega \frac{\tr_g T}{n}\langle g, \mathcal{L}_Xg \rangle_gdv_g = -\frac{1}{n} \int_\Omega(\tr_g T) \delta_g(X) dv_g = -\int_\Omega\mathcal{L}_X(\tr_g T) dv_g + \int_{\partial\Omega} (\tr_g T) g(X,n) .$$
\end{proof}

Let $\mathbf{g}$ be an Einstein metric on an open subset $\mathcal{U}$. For a vector field $X$, a closed orientable hypersurface $\Sigma \subset \mathcal{U}$ and symmetric $2$-tensors $h$ and $k$ on $\mathcal{U}$ we define the following quantity:
\begin{equation}
    \mathcal{B}_X^\Sigma(h):=\int_\Sigma \big(\E^{(1)}_\mathbf{g}(h)\big)(X,n_\Sigma)dv_\Sigma,\label{eq def int Ein 1}
\end{equation}
where $n_\Sigma$ is the normal to $\Sigma$. We also define the so-called \emph{Taub's preserved quantity} introduced in \cite{tau}:
\begin{equation}
    \mathcal{T}_X^\Sigma(h,k):=\int_\Sigma \big(\E^{(2)}_\mathbf{g}(h,k)\big)(X,n_\Sigma)dv_\Sigma.\label{eq def taub}
\end{equation}
Together with the gauge properties of Proposition \ref{gauge properties Bianchi}, we use Lemma \ref{divergence killing} to prove the following properties for $\mathcal{T}$ and $\mathcal{B}$.
\begin{prop}\label{prop B T}
    Let $\mathbf{g}$ be an Einstein metric on an open subset $\mathcal{U}$, $ X $ be a Killing vector field, $h$ a $2$-tensor on $\mathcal{U}$, and $\Sigma$ and $\Sigma'$ two closed  hypersurfaces in $\mathcal{U}$ bounding an open subset $\Omega\subset\mathcal{U}$. Then, we have the following properties:
    \begin{enumerate}
        \item without additional assumption, 
        \begin{equation}
            \mathcal{B}_X^\Sigma(h) = 0\label{annulation B}
        \end{equation}
        \item if  $\E^{(1)}_\mathbf{g}(h)=0 $, then one has: $
            \mathcal{T}_X^{\Sigma'}(h,h) = \mathcal{T}_{X}^\Sigma(h,h)$
        \item if  $\E^{(1)}_\mathbf{g}(h)=0 $, and for any vector field $Y$ on $\mathcal{U}$, we have $
            \mathcal{T}_X^{\Sigma}(h + \mathcal{L}_Y\mathbf{g},h+ \mathcal{L}_Y\mathbf{g}) = \mathcal{T}_{X}^\Sigma(h,h).$
    \end{enumerate}
\end{prop}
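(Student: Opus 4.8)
The plan is to derive the three items from the gauge identities of Proposition~\ref{gauge properties Bianchi} (which hold verbatim for $\mathbf{g}$ Ricci-flat), together with the integration by parts \eqref{ipp div free 2tensor} of Lemma~\ref{divergence killing} and the reparametrization identity \eqref{gauge and E2} of Proposition~\ref{covariance reparam}. The point throughout is that $X$ being Killing gives $\mathcal{L}_X\mathbf{g}=0$, so the right-hand side of \eqref{ipp div free 2tensor} vanishes as soon as the symmetric $2$-tensor fed into it is divergence-free.

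\emph{Items (1) and (2).} For (1): $T:=\E^{(1)}_\mathbf{g}(h)$ is divergence-free by \eqref{bianchi div 1er ordre}, so applying \eqref{ipp div free 2tensor} on a domain $\Omega\subset\mathcal{U}$ bounded by $\Sigma$ yields $\mathcal{B}^\Sigma_X(h)=\frac{1}{2}\int_\Omega\langle T,\mathcal{L}_X\mathbf{g}\rangle_\mathbf{g}\,dv_\mathbf{g}=0$; equivalently, $\E^{(1)}_\mathbf{g}(h)(X,\cdot)$ is a divergence-free $1$-form, hence has vanishing flux through $\Sigma$. For (2): the hypothesis $\E^{(1)}_\mathbf{g}(h)=0$ forces $\Ric^{(1)}_\mathbf{g}(h)=0$ after taking the trace, so the hypothesis of the second bullet of Proposition~\ref{gauge properties Bianchi} is met and $T:=\E^{(2)}_\mathbf{g}(h,h)$ is divergence-free by \eqref{bianchi div 2nd ordre}. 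Applying \eqref{ipp div free 2tensor} on the domain $\Omega$ with $\partial\Omega=\Sigma\cup\Sigma'$, the right-hand side vanishes once more, so the total flux of $T(X,\cdot)$ through $\partial\Omega$ is zero; with the unit normals of $\Sigma$ and $\Sigma'$ oriented consistently, this reads $\mathcal{T}^{\Sigma'}_X(h,h)=\mathcal{T}^{\Sigma}_X(h,h)$.

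\emph{Item (3).} Expanding by bilinearity and symmetry of $\E^{(2)}_\mathbf{g}$,
\[
\E^{(2)}_\mathbf{g}(h+\mathcal{L}_Y\mathbf{g},h+\mathcal{L}_Y\mathbf{g})=\E^{(2)}_\mathbf{g}(h,h)+2\,\E^{(2)}_\mathbf{g}(h,\mathcal{L}_Y\mathbf{g})+\E^{(2)}_\mathbf{g}(\mathcal{L}_Y\mathbf{g},\mathcal{L}_Y\mathbf{g}),
\]
I would rewrite the two cross terms with \eqref{gauge and E2} applied to the field $Y$. Since $\E^{(1)}_\mathbf{g}(h)=0$ by hypothesis and $\E^{(1)}_\mathbf{g}(\mathcal{L}_Y\mathbf{g})=\mathcal{L}_Y\big(\E(\mathbf{g})\big)=0$ by the first identity of Proposition~\ref{covariance reparam} (as $\E(\mathbf{g})=0$), \eqref{gauge and E2} gives $\E^{(2)}_\mathbf{g}(h,\mathcal{L}_Y\mathbf{g})=-\E^{(1)}_\mathbf{g}(\mathcal{L}_Yh)$ and $\E^{(2)}_\mathbf{g}(\mathcal{L}_Y\mathbf{g},\mathcal{L}_Y\mathbf{g})=-\E^{(1)}_\mathbf{g}(\mathcal{L}_Y\mathcal{L}_Y\mathbf{g})$. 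Contracting with $(X,n)$ and integrating over $\Sigma$ turns these into $-\mathcal{B}^\Sigma_X(\mathcal{L}_Yh)$ and $-\mathcal{B}^\Sigma_X(\mathcal{L}_Y\mathcal{L}_Y\mathbf{g})$, both zero by item~(1) (which assumes nothing on the $2$-tensor it receives). Hence $\mathcal{T}^\Sigma_X(h+\mathcal{L}_Y\mathbf{g},h+\mathcal{L}_Y\mathbf{g})=\mathcal{T}^\Sigma_X(h,h)$.

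This is essentially bookkeeping on top of the preliminary lemmas, so I do not expect a real obstacle. The points that genuinely need care are: that the hypothesis $\E^{(1)}_\mathbf{g}(h)=0$ is precisely what triggers the second bullet of Proposition~\ref{gauge properties Bianchi}, i.e. the divergence-freeness of $\E^{(2)}_\mathbf{g}(h,h)$; the relative orientation of $\Sigma$ and $\Sigma'$ fixing the sign in item~(2); and, if one wants the statement for a genuinely Einstein $\mathbf{g}$ with $\Lambda\neq0$, that $\E$ must be replaced by the trace-free Ricci tensor $\mathring{\Ric}$ and \eqref{ipp div free 2tensor} by Schoen's Pohozaev identity \eqref{schoen poho} -- exactly the refinement carried out later in the paper for \eqref{obst killing intro}.
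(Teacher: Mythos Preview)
Your arguments for items~(2) and~(3) are correct and in fact more detailed than the paper, which simply refers to \cite{fmm} for these two points; the use of \eqref{gauge and E2} together with bilinearity in item~(3) is exactly the right mechanism.

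There is however a gap in your item~(1). You apply \eqref{ipp div free 2tensor} ``on a domain $\Omega\subset\mathcal{U}$ bounded by $\Sigma$'', but the hypothesis only supplies a domain $\Omega$ with $\partial\Omega=\Sigma\cup\Sigma'$; nothing guarantees that $\Sigma$ alone bounds in $\mathcal{U}$ (take $\mathcal{U}$ an annular region and $\Sigma$ a sphere separating the two ends). Running the divergence theorem on the given $\Omega$ only yields $\mathcal{B}_X^\Sigma(h)=\pm\mathcal{B}_X^{\Sigma'}(h)$, not that either vanishes --- and your ``equivalently'' phrasing (vanishing flux of a divergence-free $1$-form) has the same defect, since such a form has zero flux only through \emph{bounding} hypersurfaces. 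The paper closes this with a cutoff: choose $\chi$ equal to $1$ near $\Sigma$ and $0$ near $\Sigma'$, and feed $T=\E^{(1)}_\mathbf{g}(\chi h)$ into \eqref{ipp div free 2tensor} on $\Omega$. This $T$ is still divergence-free because \eqref{bianchi div 1er ordre} holds for \emph{any} $2$-tensor; the $\Sigma'$ boundary contribution now vanishes identically while the $\Sigma$ contribution equals $\mathcal{B}_X^\Sigma(h)$, forcing it to be zero.
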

\begin{proof}
    Consider $\chi$ a cut-off function vanishing in the neighborhood of $\Sigma'$ and equal to $1$ on a neighborhood of $\Sigma$. We can therefore apply \eqref{ipp div free 2tensor} on $\Omega$ bounded by $\Sigma$ and $\Sigma'$ to the Killing vector field $X$ and $T = \E^{(1)}(\chi h)$ which is divergence-free by \eqref{bianchi div 1er ordre} to find:
    $$ \mathcal{B}_X^\Sigma(h) =\int_{\partial\Omega} \big(\E^{(1)}(\chi h)\big)(X,n)dv_{\partial\Omega} = 0.$$
    See \cite{fmm} for the other equalities.
\end{proof}

\subsection{Integrability of Einstein deformations}

In the Lorentzian context, it is a remarkable result that the quantities $\mathcal{T}_\Sigma^X(h,h)$ for the different Killing vector fields $X$ of $\mathbf{g}$ and $h$ satisfying $\E^{(1)}_\mathbf{g}(h)=0$ completely characterize the integrability of the infinitesimal Einstein deformation $h$. 

\begin{defn}[Integrable $2$-tensor]
    Let $\mathbf{g}$ be an Einstein metric on $\mathcal{U}$.
    A $2$-tensor $h$ is \emph{integrable} if on any compact $K\subset \mathcal{U}$, there exists a smooth curve of Einstein metrics $t\in[0,1]\mapsto\mathbf{g}_t$ on $K$ satisfying $ \partial_t{\mathbf{g}_t}_{|t=0}=h $.
\end{defn}

The link between the integral quantity $\mathcal{T}_\Sigma^X(h,h)$ and the integrability of $h$ is given by the following proposition.

\begin{prop}[{\cite[Proposition 1.7]{fmm}}]\label{integrability taub}
    Assume that $\mathbf{g}$ is a Ricci-flat metric on an open subset $\mathcal{U}$, that $h$ is an integrable $2$-tensor which is the first jet of a curve of Ricci-flat metrics and $X$ is a Killing vector field. Then, for any compact hypersurface $\Sigma\subset \mathcal{U},$ one has 
    \begin{equation}
        \mathcal{T}_X^\Sigma(h,h)=0.\label{obst taub}
    \end{equation}
\end{prop}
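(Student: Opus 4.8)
The plan is to exploit the defining gauge/divergence properties of the first two variations of the Einstein tensor together with the integration by parts of Lemma \ref{divergence killing}. Let $t\mapsto \mathbf{g}_t$ be the curve of Ricci-flat metrics on a compact neighborhood of $\Sigma$ with $\mathbf{g}_0=\mathbf{g}$ and $\partial_t{\mathbf{g}_t}_{|t=0}=h$. First I would write the Taylor expansion $\E(\mathbf{g}_t)=\E(\mathbf{g})+t\,\E^{(1)}_\mathbf{g}(h)+t^2\big(\E^{(2)}_\mathbf{g}(h,h)+\tfrac12\E^{(1)}_\mathbf{g}(\ddot{\mathbf{g}}_0)\big)+\mathcal{O}(t^3)$, where $\ddot{\mathbf{g}}_0=\partial_t^2{\mathbf{g}_t}_{|t=0}$. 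Since every $\mathbf{g}_t$ is Ricci-flat, $\E(\mathbf{g}_t)\equiv 0$ for all $t$, so matching powers of $t$ gives $\E^{(1)}_\mathbf{g}(h)=0$ (so $h$ is indeed an infinitesimal Einstein deformation) and the crucial relation
\begin{equation}
    \E^{(2)}_\mathbf{g}(h,h) = -\tfrac12\,\E^{(1)}_\mathbf{g}(\ddot{\mathbf{g}}_0).\label{plan key}
\end{equation}

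Next I would observe that $\mathcal{T}_X^\Sigma(h,h)$ can be computed by integrating $\E^{(2)}_\mathbf{g}(h,h)$ — equivalently, by \eqref{plan key}, $-\tfrac12\E^{(1)}_\mathbf{g}(\ddot{\mathbf{g}}_0)$ — over $\Sigma$ against $(X,n_\Sigma)$. The point is that $\ddot{\mathbf{g}}_0$ is just \emph{some} symmetric $2$-tensor $k$ on the compact neighborhood, and by the first part of Proposition \ref{gauge properties Bianchi} (specifically \eqref{bianchi div 1er ordre}), $\E^{(1)}_\mathbf{g}(k)$ is divergence-free with respect to the Ricci-flat metric $\mathbf{g}$, \emph{without any assumption on $k$}. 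Therefore $-\tfrac12\E^{(1)}_\mathbf{g}(k)$ is a divergence-free symmetric $2$-tensor on a neighborhood of $\Sigma$, and $X$ is Killing. I can then apply Lemma \ref{divergence killing}, or more directly invoke part (1) of Proposition \ref{prop B T}: for a divergence-free $2$-tensor obtained as $\E^{(1)}_\mathbf{g}$ of something, the flux integral $\int_\Sigma(\cdot)(X,n_\Sigma)\,dv_\Sigma$ over a closed hypersurface vanishes. Concretely, one takes a cutoff $\chi$ supported in the compact neighborhood and equal to $1$ near $\Sigma$, applies \eqref{ipp div free 2tensor} with $T=\E^{(1)}_\mathbf{g}(\chi k)$ on the region between $\Sigma$ and the (empty, after cutoff) outer boundary, and uses that $X$ is Killing so $\mathcal{L}_X\mathbf{g}=0$ makes the bulk term vanish; hence $\mathcal{T}_X^\Sigma(h,h)=\int_\Sigma \E^{(2)}_\mathbf{g}(h,h)(X,n_\Sigma)\,dv_\Sigma = -\tfrac12\int_\Sigma\E^{(1)}_\mathbf{g}(k)(X,n_\Sigma)\,dv_\Sigma = 0$.

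A subtlety I would want to address carefully is that $\E^{(2)}_\mathbf{g}$ here must be understood as the bilinear term in the expansion of $\E$, which matches the second-order coefficient only after accounting for the second-derivative term $\ddot{\mathbf{g}}_0$ — this is exactly why the conclusion is stated at the level of the \emph{integral} $\mathcal{T}$ rather than pointwise: pointwise $\E^{(2)}_\mathbf{g}(h,h)$ need not vanish, only its flux through a closed hypersurface does, because the discrepancy $-\tfrac12\E^{(1)}_\mathbf{g}(\ddot{\mathbf{g}}_0)$ integrates to zero. I expect the main obstacle to be purely bookkeeping: ensuring the curve $\mathbf{g}_t$ is smooth enough in $t$ to legitimately Taylor-expand to second order and differentiate under the integral sign, and checking that the cutoff argument genuinely reduces the flux through $\Sigma$ to a flux through a boundary on which the integrand has been killed (so that no contribution at "infinity" or at an inner boundary is lost). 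There is no hard analysis; the content is entirely the interplay between "Ricci-flat along the curve" forcing the second-order identity \eqref{plan key} and the unconditional divergence-freeness \eqref{bianchi div 1er ordre} of $\E^{(1)}_\mathbf{g}$ on a Ricci-flat background.
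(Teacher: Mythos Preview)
Your proposal is correct and is essentially the paper's own proof: differentiate $\E(\mathbf{g}_t)\equiv 0$ twice at $t=0$ to get $\E^{(2)}_\mathbf{g}(h,h)$ proportional to $\E^{(1)}_\mathbf{g}(k)$ with $k=\partial_t^2\mathbf{g}_t|_{t=0}$, then invoke $\mathcal{B}_X^\Sigma(k)=0$ from Proposition~\ref{prop B T}(1). The paper simply writes $\mathcal{T}_X^\Sigma(h,h)=-\mathcal{B}_X^\Sigma(k)=0$ (its normalization absorbs your factor $\tfrac12$), and your extended discussion of the cutoff and smoothness is already subsumed in the proof of Proposition~\ref{prop B T}.
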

\begin{proof}
    Let $K$ be a compact subset of $\mathcal{U}$ and $t\in[0,1]\mapsto\mathbf{g}_t$ a smooth curve of Einstein metrics on $K$ with $\mathbf{g}_0=\mathbf{g}$ satisfying $\partial_t{\mathbf{g}_t}_{|t=0}=h$ with $\E(\mathbf{g}_t) = 0 $ for all $t\in[0,1]$. We have 
    $ 0=\partial_t (\E(\mathbf{g}_t))_{|t=0} = \E_\mathbf{g}^{(1)}(h), $
    and if we denote $ k:= \partial^2_{t^2}{\mathbf{g}_t}_{|t=0}$, then we have
    $$ 0=\partial^2_{t^2} (\E(\mathbf{g}_t))_{|t=0} = \E_\mathbf{g}^{(1)}(k) + \E_\mathbf{g}^{(2)}(h,h). $$
    In particular, we have:
    $\mathcal{T}_X^\Sigma (h,h) = -\mathcal{B}_X^\Sigma (k)=0 $
    by \eqref{annulation B}.
\end{proof}
We add an extension to Einstein but not necessarily Ricci-flat deformation of a Ricci-flat metric. This time we need to assume that the Killing vector field is tangent to our hypersurface. This will always be satisfied in our applications.

\begin{prop}\label{integrability taub einstein}
    Assume that $\mathbf{g}$ is a Ricci-flat metric on an open subset $\mathcal{U}$, that $h$ is a $2$-tensor which is the first jet of a smooth curve of Einstein metrics starting at $\mathbf{g}$ and $X$ is a Killing vector field for $\mathbf{g}$. Then, for any compact hypersurface $\Sigma\subset \mathcal{U}$ for which $X$ is tangent to $\Sigma$ once restricted to $\Sigma$, one has 
    \begin{equation}
        \int_{\Sigma}\big( \mathring{\Ric}^{(2)}_\mathbf{g}(h,h)\big)(X,n_\Sigma)dv_\Sigma=0.\label{obst taub einstein}
    \end{equation}
    \end{prop}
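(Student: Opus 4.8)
The plan is to exploit that \emph{every} metric along the given curve is exactly traceless--Ricci--flat. This forces $\mathring{\Ric}^{(2)}_\mathbf{g}(h,h)$ to coincide, up to a pure-trace term, with a purely first-order quantity in the second jet of the curve, and such a quantity integrates to zero over $\Sigma$ against a tangent Killing field in exactly the way \eqref{annulation B} is proved.

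Write $d=\dim M$, let $t\mapsto\mathbf{g}_t$ be the given smooth curve of Einstein metrics with $\mathbf{g}_0=\mathbf{g}$, $\partial_t\mathbf{g}_t|_{t=0}=h$, and set $k:=\partial_t^2\mathbf{g}_t|_{t=0}$, so that $\mathbf{g}_t=\mathbf{g}+th+\tfrac{t^2}{2}k+O(t^3)$. Each $\mathbf{g}_t$ satisfies $\Ric(\mathbf{g}_t)=\Lambda(t)\mathbf{g}_t$ for some $\Lambda(t)\in\mathbb{R}$, hence its traceless Ricci tensor vanishes identically:
\[
    \mathring{\Ric}(\mathbf{g}_t)=\Ric(\mathbf{g}_t)-\tfrac{\R(\mathbf{g}_t)}{d}\mathbf{g}_t=\Lambda(t)\mathbf{g}_t-\Lambda(t)\mathbf{g}_t=0 .
\]
Expanding the nonlinear map $g\mapsto\mathring{\Ric}(g)$ at $\mathbf{g}$ along $\mathbf{g}_t$, and using $\mathring{\Ric}(\mathbf{g})=0$ since $\mathbf{g}$ is Ricci-flat, the vanishing of the $t^2$-coefficient of $\mathring{\Ric}(\mathbf{g}_t)$ reads
\[
    \mathring{\Ric}^{(2)}_\mathbf{g}(h,h)+\tfrac12\,\mathring{\Ric}^{(1)}_\mathbf{g}(k)=0 .
\]

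Next I would express the linear term through the linearized Einstein tensor. Since $\mathbf{g}$ is Ricci-flat, hence scalar-flat, the identity of nonlinear operators $\mathring{\Ric}(g)=\E(g)+\tfrac{d-2}{2d}\R(g)g$ linearizes at $\mathbf{g}$ to
\[
    \mathring{\Ric}^{(1)}_\mathbf{g}(k)=\E^{(1)}_\mathbf{g}(k)+\tfrac{d-2}{2d}\,\R^{(1)}_\mathbf{g}(k)\,\mathbf{g},
\]
using $\R(\mathbf{g})=0$. Combining the last two displays,
\[
    \mathring{\Ric}^{(2)}_\mathbf{g}(h,h)=-\tfrac12\,\E^{(1)}_\mathbf{g}(k)-\tfrac{d-2}{4d}\,\R^{(1)}_\mathbf{g}(k)\,\mathbf{g} .
\]
Pairing with $(X,n_\Sigma)$ and integrating over $\Sigma$: as $X$ restricted to $\Sigma$ is tangent to $\Sigma$, one has $\mathbf{g}(X,n_\Sigma)=0$ along $\Sigma$, so the pure-trace term contributes nothing and
\[
    \int_\Sigma\big(\mathring{\Ric}^{(2)}_\mathbf{g}(h,h)\big)(X,n_\Sigma)\,dv_\Sigma=-\tfrac12\int_\Sigma\big(\E^{(1)}_\mathbf{g}(k)\big)(X,n_\Sigma)\,dv_\Sigma=-\tfrac12\,\mathcal{B}_X^\Sigma(k)=0
\]
by \eqref{annulation B} of Proposition~\ref{prop B T}; equivalently, $\E^{(1)}_\mathbf{g}(k)$ is divergence-free by \eqref{bianchi div 1er ordre} while $\delta^*_\mathbf{g}X=\tfrac12\mathcal{L}_X\mathbf{g}=0$, so \eqref{ipp div free 2tensor} applied on the domain bounded by $\Sigma$ gives zero. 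This proves \eqref{obst taub einstein}.

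The one point requiring care is the identity for $\mathring{\Ric}^{(1)}_\mathbf{g}(k)$: here $\mathring{\Ric}^{(1)}_\mathbf{g}(k)$ is the linearization of the \emph{nonlinear} operator $g\mapsto\Ric(g)-\tfrac{\R(g)}{d}g$, in which the trace is taken with the moving metric, and not simply the $\mathbf{g}$-traceless part of $\Ric^{(1)}_\mathbf{g}(k)$; this is precisely the origin of the extra multiple of $\mathbf{g}$, which is then harmless after contracting with the tangent Killing field $X$ and $n_\Sigma$. Everything else is a routine second-order Taylor expansion, using only that $\mathbf{g}$ is Ricci-flat, that $X$ is Killing and tangent to $\Sigma$, and that $\Sigma$ is a closed hypersurface on a neighborhood of whose enclosed domain the curve $\mathbf{g}_t$ is defined (as is the case in all the intended applications, where $\Sigma$ bounds a ball).
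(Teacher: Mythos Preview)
Your proof is correct and follows essentially the same approach as the paper: both reduce $\mathring{\Ric}^{(2)}_\mathbf{g}(h,h)$ to $-\E^{(1)}_\mathbf{g}(k)$ plus a pure-trace term (killed by $\mathbf{g}(X,n_\Sigma)=0$), and then conclude via $\mathcal{B}_X^\Sigma(k)=0$. The only organizational difference is that you expand the identity $\mathring{\Ric}(\mathbf{g}_t)\equiv 0$ directly to second order and then rewrite $\mathring{\Ric}^{(1)}_\mathbf{g}(k)$ in terms of $\E^{(1)}_\mathbf{g}(k)$, whereas the paper expands $\E(\mathbf{g}_t)+\lambda(t)\mathbf{g}_t=0$ and separately expresses $\mathring{\Ric}^{(2)}_\mathbf{g}(h,h)$ in terms of $\E^{(2)}_\mathbf{g}(h,h)$ (deriving the auxiliary formula \eqref{traceless Ric2} reused later); your route is slightly more direct since it never needs to track $\lambda'(0)$ or $\lambda''(0)$.
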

\begin{proof}
    Let $K\subset \mathcal{U}$ be a compact and $t\in[0,1]\mapsto\mathbf{g}_t$ a smooth curve of Einstein metrics on $K$ with $\mathbf{g}_0=\mathbf{g}$ satisfying $\partial_t{\mathbf{g}_t}_{|t=0}=h$ with $\E(\mathbf{g}_t) +\lambda(t)\mathbf{g}_t=0 $ for a smooth function $t\in[0,1]\mapsto \lambda(t)\in \mathbb{R}$ for all $t\in[0,1]$. We have 
    $$ 0=\partial_t (\E(\mathbf{g}_t) +\lambda(t)\mathbf{g}_t)_{|t=0} = \E_\mathbf{g}^{(1)}(h)+\lambda'(0)\mathbf{g}, $$
    and if we denote $ k:= \partial^2_{t^2}{\mathbf{g}_t}_{|t=0}$, then we find
    $$ 0=\partial^2_{t^2} (\E(\mathbf{g}_t) +\lambda(t)\mathbf{g}_t)_{|t=0} = \E_\mathbf{g}^{(1)}(k) + \E_\mathbf{g}^{(2)}(h,h) +\lambda''(0)\mathbf{g}+2\lambda'(0)h. $$
    In particular, since for any metric $g$, one has $\mathring{\Ric}(g) = \E(g) -\frac{\tr_g\E(g)}{d}g$ in dimension $d$, we find:
    \begin{align}
        \mathring{\Ric}^{(2)}_\mathbf{g}(h,h) &= \E^{(2)}_\mathbf{g}(h,h) -\frac{1}{d}\big((\tr \E)_{\mathbf{g}}^{(2)}(h,h)\big)\mathbf{g} -\frac{2}{d}\tr_\mathbf{g}\E^{(1)}_\mathbf{g}(h) h\nonumber \\
      &=\E^{(2)}_\mathbf{g}(h,h) -\frac{1}{d}\big((\tr \E)_{\mathbf{g}}^{(2)}(h,h)\big)\mathbf{g}+2\lambda'(0) h.\label{traceless Ric2}
    \end{align}
    We consequently have: $
        \int_{\Sigma}\big( \mathring{\Ric}^{(2)}_\mathbf{g}(h,h)\big)(X,n_\Sigma)dv_\Sigma = -\int_{\Sigma} \big(\E^{(1)}_\mathbf{g}(k)\big)(X,n_\Sigma)dv_\Sigma = 0$
    because $\mathbf{g}(X,n_\Sigma)=0$ by assumption and thanks to \eqref{annulation B}.
\end{proof}
\begin{quest}
    Can we recover the obstructions to integrability of \cite{koi2} (or new ones) thanks to the above criterion?
\end{quest}

\section{Preserved quantities on Ricci-flat cones and integrability}

We introduce another quantity similar to Taub's adapted to perturbations of Ricci-flat cones $(C(\Sigma),dr^2+r^2g_\Sigma)$. We will mostly work with the Euclidean case $\Sigma=\mathbb{S}^{d-1}$.

\subsection{A preserved quantity on $\mathbb{R}^d$ and other Ricci-flat cones}

Let us now introduce a functional similar to Taub's preserved quantity dealing this time with \emph{conformal} Killing vector fields with constant conformal factor. The main example for us will be the vector field $r\partial_r$ on the Euclidean space $(\mathbb{R}^d,\mathbf{e})$, where $r := d_\mathbf{e}(0,.)$.
\begin{prop}\label{premiere obst conforme}
    Let $h$ be a symmetric $2$-tensor on an open subset $\mathcal{U}\subset\mathbb{R}^d$ containing $\mathbb{S}^{d-1}$. Then, we have the following identity:
    \begin{equation}
        \int_{\mathbb{S}^{d-1}} \big(\E^{(1)}_\mathbf{e}(h)\big)(r\partial_r,\partial_r)+ \frac{d-2}{2} \big(-\delta_\mathbf{e}h-d\tr_\mathbf{e}h\big)(\partial_r) dv_{\mathbb{S}^{d-1}}=0. \label{B cas euclidien}
    \end{equation}
    Moreover, assume that we have $\R^{(1)}_\mathbf{e}(h)= 0$ and $h$ bounded on $B_\mathbf{e}(1+\epsilon)$. Then we have:
    \begin{equation}
        \int_{\mathbb{S}^{d-1}} \big(\E^{(1)}_\mathbf{e}(h)\big)(r\partial_r,\partial_r)=0. \label{B cas euclidien R1 nul}
    \end{equation}
\end{prop}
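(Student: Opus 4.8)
The plan is to reduce both terms on the left of \eqref{B cas euclidien} to one and the same integral of the linearized scalar curvature $\R^{(1)}_\mathbf{e}(h)$ over the unit ball $B_\mathbf{e}(1)$, so that they cancel. The structural ingredients I would use are: (i) $\E^{(1)}_\mathbf{e}(h)$ is divergence-free, by Proposition~\ref{gauge properties Bianchi}, \eqref{bianchi div 1er ordre} (applicable since $\mathbf{e}$ is flat), so that Lemma~\ref{divergence killing} applies to it; (ii) $r\partial_r$ is the Euler vector field, hence conformal Killing with $\mathcal{L}_{r\partial_r}\mathbf{e}=2\mathbf{e}$; (iii) since $\Ric(\mathbf{e})=0$ and $\R_\mathbf{e}=0$, one has $\E^{(1)}_\mathbf{e}(h)=\Ric^{(1)}_\mathbf{e}(h)-\tfrac12\R^{(1)}_\mathbf{e}(h)\mathbf{e}$ and $\tr_\mathbf{e}\Ric^{(1)}_\mathbf{e}(h)=\R^{(1)}_\mathbf{e}(h)$, whence $\tr_\mathbf{e}\E^{(1)}_\mathbf{e}(h)=\tfrac{2-d}{2}\R^{(1)}_\mathbf{e}(h)$; and (iv) the elementary identity $\R^{(1)}_\mathbf{e}(h)=\operatorname{div}_\mathbf{e}\!\big(-\delta_\mathbf{e}h-d\tr_\mathbf{e}h\big)$, i.e. $-\delta_\mathbf{e}$ of that $1$-form, which is just a rewriting of the standard formula $\R^{(1)}_\mathbf{e}(h)=\delta_\mathbf{e}\delta_\mathbf{e}h+\delta_\mathbf{e}d\tr_\mathbf{e}h$.

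First, I would observe that the left-hand side of \eqref{B cas euclidien} depends only on the $2$-jet of $h$ along $\mathbb{S}^{d-1}$, since $\E^{(1)}_\mathbf{e}$, $\delta_\mathbf{e}$, $\tr_\mathbf{e}$ and $d(\cdot)$ are local, constant-coefficient differential operators of order at most $2$. Multiplying $h$ by a cut-off supported in $\mathcal{U}$ and equal to $1$ near $\mathbb{S}^{d-1}$, and extending by zero, I may therefore assume that $h$ is a smooth symmetric $2$-tensor on all of $\overline{B_\mathbf{e}(1)}$ (vanishing near the origin), without changing either side of \eqref{B cas euclidien}.

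Next, applying Lemma~\ref{divergence killing}, identity \eqref{ipp div free 2tensor}, to $T=\E^{(1)}_\mathbf{e}(h)$, $X=r\partial_r$, $\Omega=B_\mathbf{e}(1)$ with outward normal $\partial_r$ on $\mathbb{S}^{d-1}$, and using (ii) and (iii):
\[
\int_{\mathbb{S}^{d-1}}\big(\E^{(1)}_\mathbf{e}(h)\big)(r\partial_r,\partial_r)\,dv_{\mathbb{S}^{d-1}}=\int_{B_\mathbf{e}(1)}\tr_\mathbf{e}\big(\E^{(1)}_\mathbf{e}(h)\big)\,dv_\mathbf{e}=\frac{2-d}{2}\int_{B_\mathbf{e}(1)}\R^{(1)}_\mathbf{e}(h)\,dv_\mathbf{e}.
\]
For the second term, (iv) together with the divergence theorem on $B_\mathbf{e}(1)$ gives
\[
\int_{\mathbb{S}^{d-1}}\big(-\delta_\mathbf{e}h-d\tr_\mathbf{e}h\big)(\partial_r)\,dv_{\mathbb{S}^{d-1}}=\int_{B_\mathbf{e}(1)}\R^{(1)}_\mathbf{e}(h)\,dv_\mathbf{e}.
\]
Adding $\tfrac{d-2}{2}$ times the second display to the first, the two ball integrals cancel, which is exactly \eqref{B cas euclidien}. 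For \eqref{B cas euclidien R1 nul}: when $h$ is defined and bounded on $B_\mathbf{e}(1+\epsilon)$, the cut-off is unnecessary, the first display holds for $h$ itself, and the hypothesis $\R^{(1)}_\mathbf{e}(h)=0$ kills its right-hand side directly; the boundedness assumption serves to allow, in case $h$ is only defined off the origin, passing to the limit in the same identity on the shrinking annuli $\{\delta<r<1\}$, the inner boundary contribution being $O(\delta^{d-2})$.

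I do not expect a serious obstacle: the argument is a direct double integration by parts. The two points that require care are the opening remark that \eqref{B cas euclidien} only sees the germ of $h$ near $\mathbb{S}^{d-1}$ — which is what licenses replacing $h$ by a tensor defined on the full ball — and the sign and normalization bookkeeping in (ii), (iii) and especially (iv), where one should check that $\R^{(1)}_\mathbf{e}(h)=\operatorname{div}_\mathbf{e}(-\delta_\mathbf{e}h-d\tr_\mathbf{e}h)$ is consistent with the conventions fixed for $\delta_\mathbf{e}$ in the paper.
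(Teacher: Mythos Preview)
Your proof is correct and follows essentially the same route as the paper's: both use a cut-off to localize $h$, apply Lemma~\ref{divergence killing} to the divergence-free tensor $\E^{(1)}_\mathbf{e}(h)$ with the conformal Killing field $r\partial_r$ (via $\delta^*_\mathbf{e}(r\partial_r)=\mathbf{e}$) to reduce the first term to $\tfrac{2-d}{2}\int\R^{(1)}_\mathbf{e}(h)\,dv_\mathbf{e}$, and then use $\R^{(1)}_\mathbf{e}(h)=\delta_\mathbf{e}(\delta_\mathbf{e}h+d\tr_\mathbf{e}h)$ and the divergence theorem to identify this with the second boundary term. The only cosmetic difference is that the paper integrates over the annulus between $(1-\epsilon)\mathbb{S}^{d-1}$ and $\mathbb{S}^{d-1}$ while you integrate over the full ball, which is immaterial since the cut-off kills the inner boundary contribution either way.
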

\begin{rem}
    One recognizes (up to a constant) the integrand of the ADM mass in the term $\int_{\mathbb{S}^{d-1}} \big(-\delta_\mathbf{e}h-d\tr_\mathbf{e}h\big)(\partial_r) dv_{\mathbb{S}^{d-1}}$. This is not surprising from the proof and can be seen as a first order version of the proof of the equality between the mass and the so-called \emph{Ricci version of the mass} given in \cite{her}.
\end{rem}
\begin{proof}
    As in the proof of the first point in Proposition \ref{prop B T}, without changing the value of the integral on $\mathbb{S}^{d-1}$, we replace $h$ by another $2$-tensor equal to $h$ in a neighborhood of $\mathbb{S}^{d-1}$ and vanishing in a neighborhood of the sphere $(1-\epsilon)\mathbb{S}^{d-1}\subset \mathcal{U}$ for $\epsilon>0$ small enough. We denote $\Omega$ the open subset bounded by $\mathbb{S}^{d-1}$ and $(1-\epsilon)\mathbb{S}^{d-1}$.

    From the identity \eqref{bianchi div 1er ordre} and Lemma \ref{divergence killing}, we can use the divergence theorem and the equality $\delta^*_\mathbf{e}(r\partial_r) = \mathbf{e}$ to find the identity:
    \begin{equation}
        \int_{\mathbb{S}^{d-1}} \big(\E^{(1)}_\mathbf{e}(h)\big)(r\partial_r,\partial_r)dv_{\mathbb{S}^{d-1}} = \int_\Omega \tr_\mathbf{e} \E^{(1)}_\mathbf{e}(h) dv_\mathbf{e}.\label{identite E(1)}
    \end{equation}
    Now, one recognizes that $\tr_\mathbf{e} \E^{(1)}_\mathbf{e}(h) = \frac{2-d}{2} \R^{(1)}_\mathbf{e}(h)$ by definition of the scalar curvature $\R_g=\tr_g\Ric_g$ and since $\E_\mathbf{e}=0$. From the first variation of the scalar curvature at a Euclidean metric:
    $$ \R^{(1)}_\mathbf{e}(h) = \delta_\mathbf{e}\big(\delta_\mathbf{e}(h) +d\tr_\mathbf{e}(h)\big), $$
    using the divergence theorem, we find the stated formula.
    
    Finally, if $h$ is defined on $B_\mathbf{e}(1+\epsilon)$ for some $\epsilon>0$ and satisfies $\R_\mathbf{e}^{(1)}(h)=0$ then one has $\int_{\mathbb{S}^{d-1}} \big(\E^{(1)}_\mathbf{e}(h)\big)(r\partial_r,\partial_r)dv_{\mathbb{S}^{d-1}}$ by \eqref{identite E(1)}.
\end{proof}

One finds an obstruction similar to that of Proposition \ref{integrability taub} for the second order variation of the metric direction.

\begin{prop}\label{seconde obst conforme}
    Let $h$ be a symmetric $2$-tensor on an open subset $\mathcal{U}\subset\mathbb{R}^d$ containing $\mathbb{S}^{d-1}$ and $B_\mathbf{e}(1)$ satisfying $\E_\mathbf{e}^{(1)}(h) = 0$. Then, we have the following identity:
    \begin{equation}
    \begin{aligned}
        \int_{\mathbb{S}^{d-1}}& \big(\E^{(2)}_\mathbf{e}(h,h)\big)(r\partial_r,\partial_r)\\
        &+ \frac{2-d}{2}\Big[\big((\delta+d\tr)^{(1)}_\mathbf{e}(h)\big)(v) +\frac{\tr_\mathbf{e}h}{2}(\delta_\mathbf{e}v+d\tr_\mathbf{e}v) \Big](\partial_r) dv_{\partial\Omega}=0.
    \end{aligned}
         \label{annulation T cas euclidien}
    \end{equation}
    The same result is true if $h$ is defined on an open subset $\mathcal{U}\subset\mathbb{R}^d$ containing $\mathbb{S}^{d-1}$ and $\mathbb{R}^d\backslash B_\mathbf{e}(1)$ and if it additionally satisfies the decay assumption: 
    $r^k|\nabla^k_\mathbf{e}h|_\mathbf{e}\leqslant r^{-\frac{d-2}{2}}$
    for $k\in \{0,1,2\}$.
\end{prop}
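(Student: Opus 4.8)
The plan is to run the same integration-by-parts machinery as in Proposition \ref{premiere obst conforme}, but now at second order, exploiting the gauge identity \eqref{bianchi div 2nd ordre}: since $\E^{(1)}_\mathbf{e}(h)=0$ (which forces $\Ric^{(1)}_\mathbf{e}(h)$ to be a multiple of $\mathbf{e}$, hence the hypothesis of the second bullet of Proposition \ref{gauge properties Bianchi} applies), the tensor $T:=\E^{(2)}_\mathbf{e}(h,h)$ is divergence-free, $\delta_\mathbf{e}(T)=0$. First I would introduce the cut-off $\chi$ exactly as before, replacing $h$ by $\chi h$ so that $T$ is supported in a neighborhood of $\mathbb{S}^{d-1}$ inside an annular region $\Omega$ bounded by $\mathbb{S}^{d-1}$ and $(1-\epsilon)\mathbb{S}^{d-1}$; the boundary term on the inner sphere vanishes, so $\int_{\mathbb{S}^{d-1}}T(r\partial_r,\partial_r)\,dv = \int_{\partial\Omega}T(r\partial_r,n)\,dv$. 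Then I apply Lemma \ref{divergence killing}, equation \eqref{ipp div free 2tensor}, to the conformal Killing field $X=r\partial_r$, using $\delta^*_\mathbf{e}(r\partial_r)=\mathbf{e}$, to get $\int_{\partial\Omega}T(r\partial_r,n)\,dv = \int_\Omega \langle T,\mathbf{e}\rangle_\mathbf{e}\,dv_\mathbf{e} = \int_\Omega \tr_\mathbf{e}\E^{(2)}_\mathbf{e}(h,h)\,dv_\mathbf{e}$.

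The next step is to identify $\tr_\mathbf{e}\E^{(2)}_\mathbf{e}(h,h)$ in terms of the second variation of scalar curvature. Since $\E(g)=\Ric(g)-\tfrac{\R_g}{2}g$, expanding to second order at $\mathbf{e}$ and taking the $\mathbf{e}$-trace of the quadratic term (being careful with the contributions coming from $\tr_\mathbf{e}$ being the \emph{wrong} trace at second order, i.e. the $h$-dependence of $g^{ij}$), one gets an expression of the form $\tr_\mathbf{e}\E^{(2)}_\mathbf{e}(h,h) = \tfrac{2-d}{2}\R^{(2)}_\mathbf{e}(h,h) + (\text{terms involving }\R^{(1)}_\mathbf{e}(h)\text{ and }h)$; the latter correction terms vanish or simplify because the hypothesis $\E^{(1)}_\mathbf{e}(h)=0$ pins down $\R^{(1)}_\mathbf{e}(h)$. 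Then I would use the contracted second Bianchi identity at second order — equivalently, differentiate $\delta_g(\E(g))=0$ twice, or use \eqref{bianchi div 2nd ordre} — to write $\R^{(2)}_\mathbf{e}(h,h)$, up to lower-order gauge terms, as a \emph{total divergence} $\delta_\mathbf{e}(\text{something linear in }h\text{ and }\nabla h)$. Concretely I expect $\R^{(2)}_\mathbf{e}(h,h)$ to equal $\delta_\mathbf{e}\big((\delta+d\,\tr)^{(1)}_\mathbf{e}(h)(\cdot) + \tfrac{\tr_\mathbf{e}h}{2}(\delta_\mathbf{e}\cdot + d\,\tr_\mathbf{e}\cdot)\big)$ modulo terms killed by $\E^{(1)}_\mathbf{e}(h)=0$, where $v$ in the statement is really $h$ itself (the notation $v$ presumably being inherited from the gauge-fixed perturbation in the applications). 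Applying the divergence theorem once more on $\Omega$ converts this into the stated boundary integral over $\partial\Omega$, with the inner-sphere contribution again dropping out by the cut-off.

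For the exterior case, the argument is identical on the annulus $\Omega$ between $\mathbb{S}^{d-1}$ and $R\,\mathbb{S}^{d-1}$, with the cut-off now vanishing near the large sphere; the decay hypothesis $r^k|\nabla^k_\mathbf{e}h|_\mathbf{e}\leqslant r^{-(d-2)/2}$ is exactly what is needed to make the boundary term on $R\,\mathbb{S}^{d-1}$ tend to $0$ as $R\to\infty$, since $T=\E^{(2)}_\mathbf{e}(h,h)$ is quadratic in $(h,\nabla h,\nabla^2 h)$ and hence decays like $r^{-(d-2)}\cdot r^{-2}$ pointwise while $X=r\partial_r$ grows like $r$ and $dv_{R\mathbb{S}^{d-1}}$ grows like $R^{d-1}$, giving an overall rate $R^{d-1}\cdot R \cdot R^{-d} = R^0$ for the worst terms and strictly negative powers once one tracks the derivative weights carefully — this borderline bookkeeping is the one point that must be done with care, and it is the main obstacle: making sure the critical decay rate genuinely closes rather than merely being bounded, which is where Schoen's Pohozaev-type identity \eqref{schoen poho} and the precise structure of the quadratic terms (not just their size) come in. The compact interior case has no such issue. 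In both cases, assembling the two applications of the divergence theorem yields \eqref{annulation T cas euclidien}.
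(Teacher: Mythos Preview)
Your overall strategy is right, but there are two concrete gaps.

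\textbf{The cut-off does not work at second order.} You propose replacing $h$ by $\chi h$ and then using that $T=\E^{(2)}_\mathbf{e}(\chi h,\chi h)$ is divergence-free. But $\delta_\mathbf{e}\E^{(2)}_\mathbf{e}(k,k)=0$ follows from Proposition~\ref{gauge properties Bianchi} only when $\E^{(1)}_\mathbf{e}(k)$ is a multiple of $\mathbf{e}$, and $\chi h$ does \emph{not} satisfy this where $\nabla\chi\neq 0$. This is the crucial difference from the first-order case in Propositions~\ref{prop B T} and~\ref{premiere obst conforme}, where $\delta_\mathbf{e}\E^{(1)}_\mathbf{e}(\chi h)=0$ holds unconditionally. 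The paper avoids this entirely: since the hypothesis already gives $\E^{(1)}_\mathbf{e}(h)=0$ on an open set containing the closed ball, one simply takes $\Omega=B_\mathbf{e}(1)$ and applies \eqref{ipp div free 2tensor} to $T=\E^{(2)}_\mathbf{e}(h,h)$ directly, with the single boundary $\mathbb{S}^{d-1}$.

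\textbf{Writing $\R^{(2)}_\mathbf{e}(h,h)$ as a boundary term.} Your plan to invoke the second-order Bianchi identity \eqref{bianchi div 2nd ordre} here is misdirected: that identity says $\E^{(2)}_\mathbf{e}(h,h)$ is divergence-free, which you already used; it does not express $\R^{(2)}_\mathbf{e}(h,h)$ as a divergence. The paper's device is cleaner and gives the precise boundary integrand for free: start from the general first-variation formula
\[
\R_g^{(1)}(v)=\delta_g\big(\delta_g v+d\,\tr_g v\big)-\langle v,\Ric(g)\rangle,
\]
integrate it over $\Omega$ to obtain the identity \eqref{ipp courbure scalaire(1) general}, and then differentiate that \emph{integral} identity at $g=\mathbf{e}$ in the direction $h$ (using $\Ric(\mathbf{e})=0$ and $\E^{(1)}_\mathbf{e}(v)=0$). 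Since $\tr_\mathbf{e}\E^{(2)}_\mathbf{e}(h,h)=\tfrac{2-d}{2}\R^{(2)}_\mathbf{e}(h,h)$ under the hypothesis, this produces exactly the extra boundary term in \eqref{annulation T cas euclidien} with $v=h$, without ever writing $\R^{(2)}$ pointwise as a divergence.

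For the exterior case the paper runs the same argument on $\mathbb{R}^d\backslash B_\mathbf{e}(1)$; the decay hypothesis kills all the boundary terms at infinity in both integrations by parts simultaneously. Your remark that the bookkeeping is borderline is fair, but no Pohozaev-type refinement is needed: one just tracks each boundary contribution and uses the assumed rate.
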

\begin{rem}\label{expression delta(1) tr(1)}
    For $h$ and $k$ symmetric $2$-tensors, we have  $$\Big[\big(\delta^{(1)}_\mathbf{e}(h)\big)(k)\Big](\partial_r) = -\delta_\mathbf{e}(h\times k)(\partial_r)-\frac{1}{2}k(d\tr_\mathbf{e}h,\partial_r)+\frac{1}{2}\langle k, \nabla_{\partial_r}h \rangle_\mathbf{e}$$
    where $h\times k$ is the $2$-tensor obtained from the composition of the endomorphisms associated to h and $k$ and
    $$\Big[\big(d\tr^{(1)}_\mathbf{e}(h)\big)(k)\Big](\partial_r) =- d\big(\langle h,k \rangle_\mathbf{e}\big)(\partial_r),$$
    see \cite{cox} for instance. We will try not to use these formulas whenever possible and focus on situations for which this additional boundary term vanishes.
\end{rem}

\begin{rem}
    The two situations of Proposition \ref{seconde obst conforme} model Einstein manifolds or orbifolds at a given point or Einstein metrics asymptotic to a quotient of $\mathbb{R}^d$ at infinity. 
    The decay condition on $h$ is not strong as any $h$ decaying at any rate at infinity and in divergence-free gauge (or Bianchi gauge) which satisfies $\E^{(1)}_\mathbf{e}(h)=0$ automatically satisfies $r^k|\nabla^k_\mathbf{e}h|_\mathbf{e}\leqslant r^{-d+1}$ by \cite{ct}.
\end{rem}

\begin{proof}
    Let us assume that $\E_\mathbf{e}^{(1)}(h)= 0$ on an open set containing $B_\mathbf{e}(1)$. Using the formula \eqref{ipp div free 2tensor} with $T = \E_\mathbf{e}^{(2)}(h,h)$ and $\Omega = B_\mathbf{e}(1)$, we find the equality:
    \begin{equation}
        \begin{aligned}
     \int_{\mathbb{S}^{d-1}}& \big(\E^{(2)}_\mathbf{e}(h,h)\big)(r\partial_r,\partial_r)dv_{\mathbb{S}^{d-1}}= \int_\Omega \tr_\mathbf{e}\E^{(2)}_\mathbf{e}(h,h) dv_{\mathbf{e}}
        \end{aligned}
    \end{equation}
    where we used $\delta_\mathbf{e}^*(r\partial_r)=\mathbf{e}$ and the fact that there is only \emph{one} nonvanishing boundary term at $\mathbb{S}^{d-1}$.

    There remains to understand the term $ \int_\Omega \tr_\mathbf{e}\E^{(2)}_\mathbf{e}(h,h) dv_{\mathbf{e}} $ as a boundary term. Since $\E_\mathbf{e}^{(1)}(h)=0$, then, $\tr_\mathbf{e}\E^{(2)}_\mathbf{e}(h,h) = \frac{2-d}{2} \R^{(2)}_\mathbf{e}(h,h)$ where we recall that the formula for $\R^{(1)}$ is given by
        $\R_g^{(1)}(v) = \delta_g\left(\delta_gv+d\tr_g v\right) -\left\langle v,\Ric(g)\right\rangle$
        for a general metric $g$ and a deformation $v$. This implies that one generally has:
        \begin{equation}
           \int_{\Omega}\R_g^{(1)}(v)dv_g = -\int_{\partial\Omega}\big(\delta_gv+d\tr_g v\big)(\partial_r) dv_{\partial\Omega} -\int_\Omega \left\langle v,\Ric(g)\right\rangle dv_g.\label{ipp courbure scalaire(1) general}
        \end{equation}
        
        By differentiating \eqref{ipp courbure scalaire(1) general} with $\Omega=B_\mathbf{e}(1)$ at $g=\mathbf{e}$ in the direction $h$ and for $v$ satisfying $\E_\mathbf{e}^{(1)}(v)=\Ric^{(1)}_\mathbf{e}(v) = 0$ we obtain:
        \begin{equation}
           \int_{B_\mathbf{e}(1)}\R_\mathbf{e}^{(2)}(h,v)dv_\mathbf{e} = -\int_{\mathbb{S}^{d-1}}\Big[\big((\delta+d\tr)^{(1)}_\mathbf{e}(h)\big)(v) +\frac{\tr_\mathbf{e}h}{2}(\delta_\mathbf{e}v+d\tr_\mathbf{e}v) \Big](\partial_r) dv_{\mathbb{S}^{d-1}}.\label{ipp courbure scalaire(2) general}
        \end{equation}
        Using \eqref{ipp courbure scalaire(2) general} with $v=h$ yields the result.
         
        The proof is exactly the same when $\Omega = \mathbb{R}^d\backslash B_\mathbf{e}(1)$ if we assume that $r^k|\nabla^k_\mathbf{e}h|_\mathbf{e} = o(r^{-\frac{d-2}{2}})$ for $k\leqslant 2$ as this condition ensures that the boundary terms vanish at infinity and one simply has to deal with the boundary $\mathbb{S}^{d-1}$.
\end{proof}

\begin{rem}\label{invariance qté}
    Just as in Proposition \ref{prop B T}, the left-hand side of \eqref{annulation T cas euclidien} has invariance properties on the hypersurface or action by diffeomorphism. The invariance by hypersurface is again a consequence of the divergence theorem, and the invariance by Lie derivative is comes from Proposition \ref{covariance reparam}.
\end{rem}

\begin{rem}
    The conformal Killing vector field $r\partial_r$ is present for any Ricci-flat cone. One can therefore define a similar integral quantity which has to vanish with the exact same proof. 
\end{rem}

\subsection{Deformations with constant scalar curvature}
In our situations of interest, as we will see later, we will consider Einstein deformations for which the second variation of the scalar curvature is constant. In this case, the obstruction becomes much simpler as we may drop the additional boundary term.
\begin{prop}\label{obst hk a scal2 nul}
    Let $h$ and $k$ be infinitesimal Einstein deformations of $\mathbf{e}$ on $B_\mathbf{e}(1+\epsilon)$ or $\mathbb{R}^d \backslash B_\mathbf{e}(1-\epsilon)$ satisfying
    \begin{enumerate}
        \item $\delta_\mathbf{e} h = \delta_\mathbf{e}k =0$
        \item $\E_\mathbf{e}^{(1)}(h) + \lambda \mathbf{e} =0$ and $\E_\mathbf{e}^{(1)}(k) + \mu \mathbf{e}=0$,
        \item and $\R_\mathbf{e}^{(2)}(h,k)$ is constant.
    \end{enumerate}
    Then, we have the obstruction:
    \begin{equation}
        \int_{\mathbb{S}^{d-1}} \big(\mathring{\Ric}_\mathbf{e}^{(2)}(h,k)\big)(r\partial_r,\partial_r)dv_{\mathbb{S}^{d-1}} = 0\label{obst killing hk}
    \end{equation}
    and for any Killing vector field $Y$
    \begin{equation}
        \int_{\mathbb{S}^{d-1}} \big(\mathring{\Ric}_\mathbf{e}^{(2)}(h,k)\big)(Y,\partial_r)dv_{\mathbb{S}^{d-1}} = 0\label{obst conform hk}
    \end{equation}
    where $\mathring{\Ric}$ is the traceless part of the Ricci curvature.
\end{prop}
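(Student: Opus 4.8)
The plan is to deduce Proposition \ref{obst hk a scal2 nul} from the general identity \eqref{annulation T cas euclidien} of Proposition \ref{seconde obst conforme} (for the radial case) together with Proposition \ref{integrability taub einstein} / \eqref{annulation B} (for the Killing case), by showing that under hypotheses (1)--(3) all the extra boundary terms collapse. First, since $h$ and $k$ are infinitesimal Einstein deformations with $\E_\mathbf{e}^{(1)}(h)+\lambda\mathbf{e}=0$, we may run the proof of Proposition \ref{seconde obst conforme} with the bilinear form $\E_\mathbf{e}^{(2)}(h,k)$ in place of $\E_\mathbf{e}^{(2)}(h,h)$ (using polarization, or simply re-deriving \eqref{ipp courbure scalaire(2) general} with two different deformations), noting that the presence of the constants $\lambda,\mu$ only adds terms proportional to $\mathbf{e}$ which do not affect the \emph{traceless} part $\mathring{\Ric}_\mathbf{e}^{(2)}$, exactly as in the computation \eqref{traceless Ric2} in the proof of Proposition \ref{integrability taub einstein}. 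Thus the radial identity takes the schematic form
\begin{equation*}
    \int_{\mathbb{S}^{d-1}}\big(\mathring{\Ric}_\mathbf{e}^{(2)}(h,k)\big)(r\partial_r,\partial_r)\,dv_{\mathbb{S}^{d-1}} = \frac{2-d}{2}\int_{\mathbb{S}^{d-1}}\big(\text{boundary terms in }h,k\big)(\partial_r)\,dv_{\mathbb{S}^{d-1}} + \frac{1}{d}\int_{\mathbb{S}^{d-1}}\big((\tr\E)_\mathbf{e}^{(2)}(h,k)\big)(r\partial_r,\partial_r)\,dv_{\mathbb{S}^{d-1}}.
\end{equation*}

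Next I would use hypotheses (1) and (3) to kill the remaining terms. The divergence-free conditions $\delta_\mathbf{e}h=\delta_\mathbf{e}k=0$ simplify the explicit boundary expressions of Remark \ref{expression delta(1) tr(1)}: the term $-\delta_\mathbf{e}(h\times k)(\partial_r)$ integrates to zero over the closed hypersurface $\mathbb{S}^{d-1}$ by the divergence theorem, and the terms involving $\delta_\mathbf{e}v$ drop. What survives is controlled by $\tr_\mathbf{e}h$, $\tr_\mathbf{e}k$ and the radial derivatives $\langle k,\nabla_{\partial_r}h\rangle_\mathbf{e}$, together with $\R_\mathbf{e}^{(2)}(h,k)$; here I would invoke hypothesis (3), that $\R_\mathbf{e}^{(2)}(h,k)$ is constant, to replace the interior integral $\int_\Omega \R_\mathbf{e}^{(2)}(h,k)\,dv_\mathbf{e}$ (which is where $(\tr\E)^{(2)}$ enters via $\tr_\mathbf{e}\E_\mathbf{e}^{(2)}=\frac{2-d}{2}\R_\mathbf{e}^{(2)}$) by a constant times the volume, and then re-express \emph{that} as a pure boundary term using \eqref{ipp courbure scalaire(1) general} or a direct computation; constancy also lets one absorb the $d\tr$-type terms. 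The upshot should be that the right-hand side is a sum of integrals over $\mathbb{S}^{d-1}$ of radial components of tensors built algebraically from $\tr_\mathbf{e}h$, $\tr_\mathbf{e}k$ and first derivatives, and I expect these to cancel pairwise once one writes everything out, using that $\delta_\mathbf{e}^*(r\partial_r)=\mathbf{e}$ on the nose. For the Killing case \eqref{obst conform hk}, the argument is cleaner: apply \eqref{ipp div free 2tensor} to the divergence-free tensor $\E_\mathbf{e}^{(2)}(h,k)$ (divergence-free by \eqref{bianchi div 2nd ordre}, using that $h,k$ are infinitesimal Einstein) and a Killing field $Y$, for which $\delta_\mathbf{e}^*Y=0$, so the boundary integral over $\mathbb{S}^{d-1}$ equals the boundary integral over the inner (or infinite) boundary, which vanishes by the boundedness/decay hypothesis as in Proposition \ref{integrability taub einstein}; the passage from $\E^{(2)}$ to $\mathring{\Ric}^{(2)}$ again only changes things by multiples of $\mathbf{e}$ and of $h$, which pair to zero against $(Y,\partial_r)$ since $\mathbf{e}(Y,\partial_r)=0$ (as $Y$ is tangent to $\mathbb{S}^{d-1}$) and, for the $h$-term, by hypothesis $\E_\mathbf{e}^{(1)}(h)\propto\mathbf{e}$ contributes nothing — though one must check the coefficient $\lambda'$ analogue here, which is where constancy of scalar curvature may again be used.

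I expect the main obstacle to be the bookkeeping in the radial identity \eqref{obst killing hk}: one has to verify that the genuinely metric-dependent boundary terms produced by $(\delta+d\tr)_\mathbf{e}^{(1)}(h)$ acting on $k$ (and the symmetric term) combine with the interior scalar-curvature term — converted to a boundary term via constancy — to leave \emph{only} the traceless radial integrand, with no leftover contribution. The divergence-free gauge is essential here (it removes the $\delta_\mathbf{e}v$ pieces and makes $-\delta_\mathbf{e}(h\times k)(\partial_r)$ a total divergence on $\mathbb{S}^{d-1}$), and constancy of $\R_\mathbf{e}^{(2)}(h,k)$ is what makes the interior term harmless; the remaining trace terms $\tr_\mathbf{e}h$, $\tr_\mathbf{e}k$ should be handled either by a further integration by parts on $\mathbb{S}^{d-1}$ or by noting they enter both sides identically and cancel. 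Once this cancellation is confirmed, both \eqref{obst killing hk} and \eqref{obst conform hk} follow immediately, and the statement is proved. I would present the radial case in detail and remark that the Killing case is the direct analogue of Proposition \ref{integrability taub einstein} with $\Sigma=\mathbb{S}^{d-1}$.
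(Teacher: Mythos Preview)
Your Killing case is essentially right and matches the paper. The radial case, however, has a genuine gap and misses the paper's key simplification.

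You propose to start from the bilinearized identity \eqref{annulation T cas euclidien} and then argue that the explicit boundary terms of Remark \ref{expression delta(1) tr(1)} cancel under hypotheses (1)--(3). Two problems. First, your claim that $-\delta_\mathbf{e}(h\times k)(\partial_r)$ integrates to zero over $\mathbb{S}^{d-1}$ ``by the divergence theorem'' is not justified: this is the radial component of the \emph{ambient} divergence, not a tangential divergence on the sphere, so there is no immediate reason for it to vanish. Second, you yourself concede the remaining cancellation is only ``expected''; tracking those terms is genuinely messy, and the passage from $\E^{(2)}$ to $\mathring{\Ric}^{(2)}$ introduces further trace corrections you have not accounted for.

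The paper avoids all of this by using Schoen's Pohozaev identity \eqref{schoen poho} instead of \eqref{annulation T cas euclidien}. Applied to the divergence-free tensor $T=\E_\mathbf{e}^{(2)}(h,k)$ with $X=r\partial_r$ (so $(\mathcal{L}_X\mathbf{e})^\circ=0$), it gives directly
\[
\int_{\partial\Omega}\big(\E_\mathbf{e}^{(2)}(h,k)\big)^\circ(r\partial_r,n)\,dv \;=\; -\tfrac{1}{2d}\int_\Omega \mathcal{L}_{r\partial_r}\big(\tr_\mathbf{e}\E_\mathbf{e}^{(2)}(h,k)\big)\,dv_\mathbf{e}.
\]
One then uses $(\E^{(2)})^\circ=\mathring{\Ric}^{(2)}-\lambda\mathring{k}-\mu\mathring{h}$ and $\tr_\mathbf{e}\E^{(2)}=\tfrac{2-d}{2}\R^{(2)}-\lambda\tr k-\mu\tr h$. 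The crucial point is that hypothesis (1), $\delta_\mathbf{e}h=\delta_\mathbf{e}k=0$, lets one apply \eqref{schoen poho} \emph{a second time}, now to $h$ and $k$ themselves, yielding $\int_{\partial\Omega}\mathring{h}(r\partial_r,n)=-\tfrac{1}{2d}\int_\Omega\mathcal{L}_{r\partial_r}(\tr h)$ and likewise for $k$. Substituting, everything collapses to
\[
\int_{\partial\Omega}\mathring{\Ric}^{(2)}(h,k)(r\partial_r,n)\,dv \;=\; -\tfrac{2-d}{4d}\int_\Omega \mathcal{L}_{r\partial_r}\big(\R_\mathbf{e}^{(2)}(h,k)\big)\,dv_\mathbf{e},
\]
which vanishes by hypothesis (3). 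No boundary-term bookkeeping from Remark \ref{expression delta(1) tr(1)} is needed at all; that is precisely what \eqref{schoen poho} is designed to bypass.
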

\begin{rem}
    The first assumption is a gauge-fixing condition which we will always be able to assume up to acting by a diffeomorphism. Moreover it will always be satisfied if $h=k$ is an integrable Einstein deformation which is not Ricci-flat. Indeed, differentiating $\E(g_t)+\lambda(t) = 0$ starting at $g_0 = \mathbf{e}$ with $\partial_{t|t=0} {g_t} = h$, and $\partial^2_{t^2|t=0} {g_t} = h'$ twice yields $$\E_\mathbf{e}^{(1)}(h') + \E_\mathbf{e}^{(2)}(h,h) + \lambda''(0)\mathbf{e} + 2\lambda'(0)h = 0 $$
    where every term but maybe $2\lambda'(0) h = 2\lambda h$ is divergence-free by Lemma \ref{gauge properties Bianchi}.
\end{rem}
\begin{rem}
    The second assumption just means that we consider infinitesimal Einstein deformations.
    We will see how to ensure that the third assumption is satisfied in dimension $4$ in the next sections.
\end{rem}
\begin{proof}
For this situation, we use Schoen's Pohozaev identity \eqref{schoen poho} applied to the $2$-tensor $ \E_{\mathbf{e}}^{(2)}(h,k) $ which is divergence-free because $h$ and $k$ are infinitesimal Einstein deformations, see Lemma \ref{gauge properties Bianchi}:
\begin{equation}
\begin{aligned}
    \int_{\partial\Omega}\Big(\E_{\mathbf{e}}^{(2)}(h,k)\Big)^\circ(r\partial_r,n) dv_{{\mathbf{e}}_{|\partial\Omega}} &= -\frac{1}{2d}\int_\Omega \mathcal{L}_{r\partial_r} \tr_{\mathbf{e}} \Big(\E_{\mathbf{e}}^{(2)}(h,k)\Big) dv_{\mathbf{e}}.
\end{aligned}\label{resultat poho}
\end{equation}
Let us now express both $(\E_{\mathbf{e}}^{(2)}(h,k))^\circ$ and $\tr_{\mathbf{e}} (\E_{\mathbf{e}}^{(2)}(h,k))$ in terms of $\mathring{\Ric}^{(2)}(h,k)$ and $\R^{(2)}_\mathbf{e}(h,k)$. For this, we first note that since $\tr_g \E_g = \frac{2-d}{2}\R_g$ for any $g$, we have: 
\begin{equation}
    \tr_{\mathbf{e}} \Big(\E_{\mathbf{e}}^{(2)}(h,k)\Big) = \frac{2-d}{2}\R^{(2)}_\mathbf{e}(h,k) - \lambda \tr_\mathbf{e}k - \mu \tr_\mathbf{e}h, \label{R^2}
\end{equation} 
and then, by \eqref{traceless Ric2}, we find:
\begin{equation}
    \Big(\E_{\mathbf{e}}^{(2)}(h,k)\Big)^\circ  = \mathring{\Ric}^{(2)}(h,k) - \lambda \mathring{k} - \mu \mathring{h}. \label{traceless E^2}
\end{equation}
Finally, since by assumption $\delta_\mathbf{e}h = \delta_\mathbf{e}k =0$, we can again use \eqref{schoen poho}:
\begin{equation}
\begin{aligned}
    \int_{\partial\Omega}\mathring{h}(r\partial_r,n) dv_{{\mathbf{e}}_{|\partial\Omega}}= -\frac{1}{2d}\int_\Omega \mathcal{L}_{r\partial_r} (\tr_{\mathbf{e}}h) dv_{\mathbf{e}} \text{ and }\int_{\partial\Omega}\mathring{k}(r\partial_r,n) dv_{{\mathbf{e}}_{|\partial\Omega}}= -\frac{1}{2d}\int_\Omega \mathcal{L}_{r\partial_r} (\tr_{\mathbf{e}}k) dv_{\mathbf{e}}.
\end{aligned}\label{resultat poho h, k}
\end{equation}
Putting \eqref{resultat poho}, \eqref{R^2}, \eqref{traceless E^2} and \eqref{resultat poho h, k}, we find 
$$ \int_{\partial\Omega}\mathring{\Ric}^{(2)}(h,k)(r\partial_r,n) dv_{{\mathbf{e}}_{|\partial\Omega}} = -\frac{2-d}{4d}\int_\Omega \mathcal{L}_{r\partial_r} (\R^{(2)}_\mathbf{e}(h,k)) dv_{\mathbf{e}}. $$
and since $\R_\mathbf{e}^{(2)}(h,k)$ is assumed to be constant, we find the stated equality. The easier case of a Killing vector field $Y$ is treated similarly.
\end{proof}

\subsection{Einstein metrics closing-up inside a hypersurface}\label{closing up EH}
Let us consider a topology $ N = (\mathbb{R}^4\slash\Gamma)\backslash \{0\}\cup \Sigma $ for $\Sigma$ a lower dimensional submanifold ``closing up'' $N$ where $\{0\}$ should be. This should be the topology of a general Ricci-flat ALE, see \cite{andL2}. We see that $(\mathbb{R}^4\slash\Gamma,\mathbf{e})$ is isometric to a \emph{degenerate} metric on $N$ which we still denote $\mathbf{e}$ for which $\mathbf{e}_{|\Sigma}=0$ (meaning the induced metric on $\Sigma$, not the restriction of $\mathbf{e}$ at $\Sigma$), the vector field $r\partial_r$ also extends to $N$ and vanishes on $\Sigma$. 
    
    The typical situation is that of a minimal resolution of $\mathbb{C}^2\slash\Gamma$ for $\Gamma \subset SU(2)$, for instance: $T^*\mathbb{S}^2 =(\mathbb{R}^4\slash\mathbb{Z}_2)\backslash \{0\}\cup \mathbb{S}^2$ is the topology of the Eguchi-Hanson metric:
    \begin{equation}
    \mathbf{eh}:= \sqrt{\frac{r^4}{1+r^4}}(dr^2+r^2\alpha_1^2) + \sqrt{1+r^4}(\alpha_2^2+\alpha_3^2).\label{eguchihanson}
\end{equation}
with metric $ \alpha_2^2+\alpha_3^2 $ on $\mathbb{S}^2$.
Let us restrict ourselves to this situation and keep our discussion at a somewhat informal level as the last section of the article will prove these obstructions rigorously. Assume that there exists a smooth curve of Einstein metrics $t\in [0,1]\mapsto \mathbf{g}_t$ on $N = T^*\mathbb{S}^2$ with $ \mathbf{g}_t $ nondegenerate for $t>0$ and with $\mathbf{g}_0 =\mathbf{e}$. This implies by \cite{and,bkn,nak} that some rescaling of $\mathbf{g}_t$ converges to the Eguchi-Hanson metric \eqref{eguchihanson}
    where, schematically, one has \emph{in the coordinates} of \eqref{eguchihanson}:
    $$\mathbf{eh} = \mathbf{e} + H^4 +\ldots$$ with $|H^4|_\mathbf{e} \sim r^{-4}$, while another rescaling of $\mathbf{g}_t$ converges to an Einstein orbifold $$\mathbf{g}_o = \mathbf{e} +H_2 +\ldots$$ with $|H_2|_\mathbf{e} \sim r^{2}$, and with singularity $\mathbb{R}^4\slash\mathbb{Z}_2$. Up to some gauge conditions, we are therefore in the situation of Theorem \ref{obstruction desing taub's} but we may use the integration by parts of Proposition \ref{obst hk a scal2 nul} to interpret this as an obstruction to the existence of an Einstein metric ``closing-up'' inside the hypersurface $\mathbb{S}^3\slash\mathbb{Z}_2$. 
    
    By \cite{ozu3}, up to rescaling and reparametrizing the curve $t\mapsto\mathbf{g}_t$ in well-chosen coordinates, we have a development $\mathbf{g}_t = \mathbf{e} + H_2 + t^2H^4 + t^2H^4_2+...$ with $H^4_2 = \mathcal{O}(r^{-2})$ with remaining term negligible in a region where $\sqrt{t}\ll r\ll 1$, hence in particular for $r$ close to $t^{1/4}$ for small $t$. We again consider the integration by parts
        \begin{align}
    \int_{\{r=t^{1/4}\}}& \Big(\E^{(1)}(H^4_2)+\E^{(2)}_\mathbf{e}(H_2,H^4)+\lambda H^4\Big)(r\partial_r,\partial_r) dv_{t^{1/4}\mathbb{S}^3\slash\mathbb{Z}_2}\nonumber\\
    &= \int_{\{r<t^{1/4}\}} \tr_\mathbf{e}\Big(\E^{(1)}(H^4_2)+\E^{(2)}_\mathbf{e}(H_2,H^4)+\lambda H^4\Big) dv_\mathbf{e}.\label{dvp eh desing}
\end{align}
It was proven in \cite{ozu3} that in the coordinates of \eqref{eguchihanson}, one actually has $H^4_2=0$. This yields: 
\begin{align*}
    \int_{\{r=t^{1/4}\}}& \Big(\mathring{\Ric}^{(2)}_\mathbf{e}(H_2,H^4)\Big)(r\partial_r,\partial_r) dv_{t^{1/4}\mathbb{S}^3\slash\mathbb{Z}_2}
    = \frac{2-d}{2}\int_{\{r<t^{1/4}\}} \R^{(2)}_\mathbf{e}(H_2,H^4) dv_\mathbf{e}.
\end{align*}
Now, by Corollary \ref{courbure si lun est seldual} proven below, one has $\R^{(2)}_\mathbf{e}(H_2,H^4) = 0$ because $H^4$ is anti-selfdual, and we recover the obstruction \eqref{obst killing hk} in this situation. The other obstructions with Killing vector fields are recovered in the same way.

\begin{rem}
    The coordinates of \eqref{eguchihanson} correspond to the \emph{volume gauge} of Definition \ref{def volume gauge} below and one of their properties is that the vector field $r\partial_r$ is harmonic, hence $\mathcal{L}_{r\partial_r}\mathbf{eh}$ is an infinitesimal Einstein deformation -- up to a trace term.
\end{rem}

\section{The $4$-dimensional situation}\label{section 4d}

Let us now specialize our discussion to the dimension $d=4$ where most of our applications are. In this section, we will test the obstruction to the integrability of infinitesimal Einstein deformations on the development of Einstein $4$-manifolds at the infinity of a Ricci-flat ALE metric. See Appendix \ref{appendix mf} for the case of a neighborhood of a given point of an Einstein manifold or orbifold. We will see that the obstructions \emph{always} vanish and do not add any restriction.

In order to show this, we will prove the existence of good gauges in which the quadratic term of $h\mapsto \Ric(\mathbf{e}+h)$ are easily computable. This is an important step towards the next Section \ref{section obst desing} where obstructions to the desingularization of some Einstein metrics are found. 

\subsection{Notations}

In dimension $4$, the space of $2$-forms decomposes into selfdual and anti-selfdual $2$-forms which are elements of the eigenspaces of Hodge star operator $*$ (which satisfies $*^2 = \textup{Id}$) respectively associated to the eigenvalues $1$ and $-1$. We denote $\Omega^+$ and $\Omega^-$ the associated eigenspaces.

Denote $(x_1,x_2,x_3,x_4)$ coordinates in an orthonormal basis of $\mathbb{R}^4$. We define the $2$-forms $$\omega^\pm_1 := dx^1\wedge dx^2\pm dx^3\wedge dx^4$$ and similarly $\omega_2^\pm$ and $\omega_3^\pm$ by cyclic permutations. The $\omega_i^+$ form an orthogonal basis of the space of selfdual $2$-forms, $\Omega^+$, and the $\omega_i^-$ form an orthogonal basis of the space of anti-selfdual $2$-forms, $\Omega^-$. 

Thanks to them, we define the following basis of the Killing vector fields preserving $0$ on $\mathbb{R}^4$:
\begin{equation}
   Y_i^\pm:= \omega_i^\pm(r\partial_r).
\end{equation}
 The other Killing vector fields of $\mathbb{R}^4$ are given by constant vector fields representing translations. Note that those will not be $\Gamma$-invariant for any $\Gamma\subset SO(4)$ with $\Gamma\neq\{\Id\}$.

Note that in each orientation, the frame $(r\partial_r,Y_1^\pm,Y_2^\pm,Y_3^\pm) $ is dual to the coframe $ (dr/r,\alpha_1^\pm,\alpha_2^\pm,\alpha_3^\pm ) $ where we define $\alpha_i^\pm:=\omega_i^\pm(dr/r)$. 
We also define the following $2$-forms which also form bases of the spaces of selfdual or anti-selfdual $2$-forms:
$$\theta_1^\mp :=  r dr\wedge \alpha_1^\pm \mp r^2\alpha_2^\pm\wedge\alpha_3^\pm,$$ and similarly $\theta_2^\mp$ and $\theta_3^\mp$ by cyclic permutations.
\begin{rem}
    The notation $\theta_i^+$ meant something else in \cite{ozu3}. 
\end{rem}
 For the above $2$-forms, we have the following formula: for $x = (x_1,x_2,x_3,x_4)$
    \begin{align}
        \theta_i^\mp(x) = \sum_{j=1}^3 \frac{x^T(\omega_i^\pm\circ\omega_j^\mp)x}{|x|^2}\omega_j^\mp =- \frac{\langle Y_i^\pm, Y_j^\mp\rangle}{r^2}\omega_j^\mp,\label{rotation 2 forms -}
    \end{align}
    where $ \omega_i^\pm\circ\omega_j^\mp $ is the symmetric traceless matrix given by the (commuting) product of the antisymmetric matrices associated to $\omega_i^\pm$ and $\omega_j^\mp$, and where $x^T$ is the transpose of $x$.
We also have the following equalities: $\omega_1^\pm = r dr\wedge \alpha_1^\pm \pm r^2\alpha_2^\pm\wedge\alpha_3^\pm$
    and similar equalities for $\omega_i^\pm$ for $i \in \{2,3\}$ by cyclic permutations.

\subsubsection{Orbifolds and ALE spaces}

We will be interested in two types of geometries: Einstein orbifolds and Ricci-flat ALE metrics. They respectively correspond to the singular limits and the singularity models of the degeneration of Einstein $4$-manifolds.

\begin{defn}[Orbifold (with isolated singularities)]\label{orb Ein}
    We will say that a metric space $(M_o,g_o)$ is an orbifold of dimension $d\geqslant 2$ if there exists $\epsilon_0>0$ and a finite number of points $(p_k)_k$ of $M_o$ called \emph{singular} such that we have the following properties:
    \begin{enumerate}
        \item the space $(M_o\backslash\{p_k\}_k,g_o)$ is a manifold of dimension $d$,
        \item for each singular point $p_k$ of $M_o$, there exists a neighborhood of $p_k$, $ U_k\subset M_o$, a finite subgroup acting freely on $\mathbb{S}^{d-1}$, $\Gamma_k\subset SO(n)$, and a diffeomorphism $ \Phi_k: B_\mathbf{e}(0,\epsilon_0)\subset\mathbb{R}^d\slash\Gamma_k \to U_k\subset M_o $ for which, the pull-back of $\Phi_k^*g_o$  on the covering $\mathbb{R}^d$ is smooth.
    \end{enumerate}
\end{defn}

\begin{rem}
    Note that smooth Einstein metrics are Einstein orbifolds. Einstein orbifold metrics are smooth up to taking a finite local cover at the singular point as seen in \cite{bkn}.
\end{rem}

\begin{defn}[ALE orbifold (with isolated singularities)]\label{def orb ale}
    An ALE orbifold of dimension $d\geqslant 4$, $(N,g_b)$ is a metric space for which there exists $\epsilon_0>0$, singular points $(p_k)_k$ and a compact $K\subset N$ for which we have:
    \begin{enumerate}
        \item $(N,g_b)$ is an orbifold of dimension $d$,
        \item there exists a diffeomorphism $\Psi_\infty: (\mathbb{R}^d\slash\Gamma_\infty)\backslash \overline{B_\mathbf{e}(0,\epsilon_0^{-1})} \to N\backslash K$ such that we have $r^l|\nabla^l(\Psi_\infty^* g_b - \mathbf{e})|_{\mathbf{e}}\leqslant C_l r^{-d}.$
    \end{enumerate}
\end{defn}

\begin{note}
    We will often identify $\mathbb{R}^d\slash\Gamma$ and its cover $\mathbb{R}^d$ when writing ALE of orbifold spaces in coordinates.
\end{note}

\subsubsection{Curvature of Einstein $4$-manifolds}

Thanks to the direct sum of selfdual and anti-selfdual $2$-forms, the symmetric endomorphism on $2$-forms, $\mathbf{R}$ given by the Riemannian curvature decomposes into blocks,
\[
\mathbf{R}=:
  \begin{bmatrix}
     \mathbf{R}^+ & \mathring{\Ric} \\
     \mathring{\Ric} & \mathbf{R}^-
  \end{bmatrix},
\]
 where the $\mathring{\Ric}$ is the traceless part of the Ricci curvature, and where $\mathbf{R}^\pm$ are the selfdual and anti-selfdual parts of the curvature.

Let us do a quick recap on the curvature of Einstein perturbations of the Euclidean space $(\mathbb{R}^4,\mathbf{e})$. The starting point is the identification of the set of traceless symmetric $2$-tensors $\textup{Sym}_0^2TM$ with $\Omega^+\otimes\Omega^-$ thanks to the map:
$$\omega^+\otimes\omega^-\in\Omega^+\otimes\Omega^- \mapsto \omega^+\circ \omega^- =\omega^-\circ \omega^+\in\textup{Sym}_0^2TM$$
where $\omega^+\circ \omega^-$ is the $2$-tensor associated to the composition of the anti-symmetric endomorphisms of $TM$ associated to $\omega^+$ and $\omega^-$ by the metric. Therefore, any $2$-tensor $h$ on $(\mathbb{R}^4,\mathbf{e})$ has unique decompositions:
$$h = \lambda \mathbf{e} + \sum_i \phi_i^-\circ\omega_i^+ =\lambda \mathbf{e} + \sum_j \phi_j^+\circ\omega_j^-$$
for a scalar function $\lambda$, and $\phi_i^-\in \Omega^-$ and $\phi_j^+\in \Omega^+$. According to \cite{biq2}, the Bianchi gauge condition for $h$ rewrites:
\begin{equation}
    d\lambda + \sum_{i= 1}^3*\left(\omega_i^+\wedge(*d\phi_i^-)\right) = d\lambda + \sum_{j= 1}^3*\left(\omega_j^-\wedge(*d\phi_j^+)\right) = 0. \label{biachi gauge}
\end{equation}
Extending the computations of \cite{biq2}, we prove the following result.
\begin{prop}
    Let $h$ be a symmetric $2$-tensor on $\mathbb{R}^4$ decomposed as:
    $$h = \lambda \mathbf{e} + \sum_i \phi_i^-\circ\omega_i^+ =\lambda \mathbf{e} + \sum_j \phi_j^+\circ\omega_j^-$$
    and satisfying the condition \eqref{biachi gauge}.
    
    Then, defining $a^{+,(1)}_\mathbf{e}(h) = \sum_i*d\phi_i^-\otimes\omega_i^+$ and $a^{-,(1)}_\mathbf{e}(h) = \sum_j*d\phi_j^+\otimes\omega_j^-$ the infinitesimal variations of connections, we have:
    \begin{itemize}
        \item $\mathbf{R}^{+,(1)}_\mathbf{e}(h) = -d_+a^{+,(1)}_\mathbf{e}(h) = -\sum_{i} d_+*d\phi_i^-\otimes \omega_i^+$ where $d_+$ is the exterior differential composed with the projection on $\Omega^+$,
        \item $\mathbf{R}^{-,(1)}_\mathbf{e}(h) = -d_-a^{-,(1)}_\mathbf{e}(h) = -\sum_{i} d_-*d\phi_i^+\otimes \omega_i^-$ where $d_-$ is the exterior differential composed with the projection on $\Omega^-$, and
        \item $\mathring{\Ric}^{(1)}_\mathbf{e}(h) = d_-a^{+,(1)}_\mathbf{e}(h)= d_+a^{-,(1)}_\mathbf{e}(h) $ and in particular, $h$ is an infinitesimal Einstein deformation if and only if for all $i\in\{1,2,3\}$,
        $d_-*d\phi_i^-=0$
        or equivalently if and only if for all $j\in\{1,2,3\}$,
        $d_+*d\phi_j^+=0.$
    \end{itemize}
    Moreover, if $h$ is an infinitesimal Einstein deformation, then we have:
    \begin{itemize}
        \item $\mathbf{R}^{+,(2)}_\mathbf{e}(h,h) =- \frac{1}{2}\big[a_\mathbf{e}^{+,(1)}(h),a_\mathbf{e}^{+,(1)}(h)\big]_+$,
        \item $\mathbf{R}^{-,(2)}_\mathbf{e}(h,h) =- \frac{1}{2}\big[a_\mathbf{e}^{-,(1)}(h),a_\mathbf{e}^{-,(1)}(h)\big]_-$ and
        \item defining linear maps $\phi^- : \Omega^+\to \Omega^-$ and $\phi^+ : \Omega^-\to \Omega^+$ by for any $i$ and $j$ in $\{1,2,3\}$, $ \phi^-(\omega_i^+) = \phi_i^- $ and $ \phi^+(\omega_j^-) = \phi_j^+ $, we have
        \begin{align*}
            \mathring{\Ric}^{(2)}_\mathbf{e}(h,h) &= \frac{1}{2}\big[a_\mathbf{e}^{+,(1)}(h),a_\mathbf{e}^{+,(1)}(h)\big]_- +\sum_i\phi^-\big(\mathbf{R}^{+,(1)}_\mathbf{e}(h)(\omega_i^+)\big)\otimes \omega_i^+\\ &=\frac{1}{2}\big[a_\mathbf{e}^{-,(1)}(h),a_\mathbf{e}^{-,(1)}(h)\big]_+ +\sum_j\phi^+\big(\mathbf{R}^{-,(1)}_\mathbf{e}(h)(\omega_j^-)\big)\otimes \omega_j^-.
        \end{align*}
    \end{itemize}
\end{prop}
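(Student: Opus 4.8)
The plan is to realize the Riemann curvature operator of $g=\mathbf{e}+h$ through the curvature of the connection it induces on self-dual and anti-self-dual $2$-forms, extending the first-order computation of \cite{biq2} to the quadratic order. Keep $\Lambda^2=\Omega^+\oplus\Omega^-$ fixed (defined by $\mathbf{e}$), and note that the Levi-Civita connection of $g$, an $\mathfrak{so}(T)\cong\Lambda^2$-valued $1$-form, splits off a connection on $\Omega^+$ of the form $\nabla^{g,+}=\nabla^{\mathbf{e},+}+a^+$ for an $\mathfrak{so}(\Omega^+)\cong\Omega^+$-valued $1$-form $a^+=a^+(h)$. By \cite{biq2}, and crucially using the Bianchi gauge \eqref{biachi gauge} (which is exactly what kills the terms carrying $d\lambda$ and the trace of $h$), its first variation is $a^{+,(1)}_\mathbf{e}(h)=\sum_i *d\phi_i^-\otimes\omega_i^+$, and symmetrically for $a^{-,(1)}_\mathbf{e}(h)$.

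The second ingredient is the standard fact (Singer--Thorpe / Atiyah--Hitchin--Singer) that the curvature $2$-form $F^+$ of $\nabla^{g,+}$, which is $\mathfrak{so}(\Omega^+)\cong\Omega^+$-valued, is identified under $\Lambda^2=\Omega^+\oplus\Omega^-$ with the block row $\big(\mathbf{R}^+_g,\ \mathring{\Ric}_g\big)$ of the curvature operator, up to the fixed signs occurring in the statement. Since the flat connection has no curvature, $F^+=da^++\tfrac12[a^+,a^+]$, and expanding $a^+=a^{+,(1)}+a^{+,(2)}+\cdots$ gives at first order $F^{+,(1)}=da^{+,(1)}$, whence $\mathbf{R}^{+,(1)}_\mathbf{e}(h)=-d_+a^{+,(1)}_\mathbf{e}(h)$ and $\mathring{\Ric}^{(1)}_\mathbf{e}(h)=d_-a^{+,(1)}_\mathbf{e}(h)$; running the same argument with $a^{-,(1)}$ yields $\mathring{\Ric}^{(1)}_\mathbf{e}(h)=d_+a^{-,(1)}_\mathbf{e}(h)$, and comparing the two identifies the infinitesimal Einstein condition with $d_-*d\phi_i^-=0$, equivalently $d_+*d\phi_j^+=0$. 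The anti-self-dual statements are the same under $+\leftrightarrow-$.

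The genuinely new work is at second order, where $F^{+,(2)}=da^{+,(2)}+\tfrac12[a^{+,(1)},a^{+,(1)}]$, and two things must be checked. First, that the self-dual projection $d_+a^{+,(2)}$ vanishes when $h$ is an infinitesimal Einstein deformation: I would get this by computing (the relevant projection of) $a^{+,(2)}$ from the second variation of the identification $\mathfrak{so}(T)\cong\Lambda^2$ and of the Christoffel symbols, then using $d_-*d\phi_i^-=0$ together with \eqref{biachi gauge}; this leaves $\mathbf{R}^{+,(2)}_\mathbf{e}(h,h)=-\tfrac12[a^{+,(1)}_\mathbf{e}(h),a^{+,(1)}_\mathbf{e}(h)]_+$ and, symmetrically, the formula for $\mathbf{R}^{-,(2)}_\mathbf{e}(h,h)$. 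Second, for the mixed block one must add to $\tfrac12[a^{+,(1)},a^{+,(1)}]_-$ the contribution of the first-order variation of the bundle identification $\Omega^+_g\cong\Omega^+_\mathbf{e}$ (equivalently of the map $h\mapsto a^+$), which to leading order is governed precisely by $\phi^-$; pairing this variation with the first-order curvature produces the term $\sum_i\phi^-\big(\mathbf{R}^{+,(1)}_\mathbf{e}(h)(\omega_i^+)\big)\otimes\omega_i^+$, giving the stated formula for $\mathring{\Ric}^{(2)}_\mathbf{e}(h,h)$; its second expression follows from the $+\leftrightarrow-$ symmetry of the construction together with the compatibility $\sum_i\phi_i^-\circ\omega_i^+=\sum_j\phi_j^+\circ\omega_j^-$ expressing that $h$ is a genuine symmetric $2$-tensor. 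I expect the main obstacle to be exactly this last step: pinning down $a^{+,(2)}$ and the first-order variation of the self-dual projection precisely enough to see \emph{simultaneously} the cancellation of $d_+a^{+,(2)}$ and the emergence of the $\phi^-\circ\mathbf{R}^{+,(1)}$ correction, since several a priori nonzero terms must be shown to drop out by the gauge and Einstein conditions.
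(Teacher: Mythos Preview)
Your proposal is essentially correct and follows the same approach as the paper: both reduce to the curvature formula $F^+=da^++\tfrac12[a^+,a^+]$ for the induced connection on $\Omega^+$, extract the bracket term at second order, and identify the extra $\phi^-\circ\mathbf{R}^{+,(1)}$ contribution to $\mathring{\Ric}^{(2)}$ as coming from the first-order variation of the splitting $\Omega^+_g\oplus\Omega^-_g$.

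The one difference worth noting is how the $\phi^-$ correction is obtained and how the $da^{+,(2)}$ issue is handled. You plan to compute $a^{+,(2)}$ from the second variation of Christoffel symbols and the identification $\mathfrak{so}(T)\cong\Lambda^2$, then check $d_+a^{+,(2)}=0$ under the Einstein and gauge conditions; you correctly flag this as the delicate point. The paper bypasses this computation entirely: it quotes from \cite{biq2} the explicit first-order variation of the frame $(\omega_i^\pm)$ as the matrix $\begin{pmatrix}\lambda&-\phi^-\\-\phi^+&\lambda\end{pmatrix}$, and then simply conjugates the block-diagonal first-order curvature $\mathrm{diag}(\mathbf{R}^{+,(1)},\mathbf{R}^{-,(1)})$ by $\mathrm{Id}+\begin{pmatrix}\lambda&-\phi^-\\-\phi^+&\lambda\end{pmatrix}$. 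Since the diagonal part $\lambda\,\mathrm{Id}$ commutes and the off-diagonal part produces only off-diagonal second-order terms, this immediately yields $\phi^-(\mathbf{R}^{+,(1)})$ in the $\mathring{\Ric}^{(2)}$ block while leaving $\mathbf{R}^{\pm,(2)}$ as the pure bracket term. So the paper's argument is shorter precisely at the step you anticipated as the obstacle: the explicit frame-change matrix from \cite{biq2} does the work that you propose to do by hand via $a^{+,(2)}$.
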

\begin{proof}
    The only points that are not proven in \cite{biq2} are the values of $\mathbf{R}^{\pm,(2)}_\mathbf{e}(h,h)$. For this, we recall that $\mathbf{R}^+$ is the opposite of the curvature $d a^+ + \frac{1}{2}\big[a^+,a^+\big]$ of the bundle $\Omega^+$. The origin of the term $- \frac{1}{2}\big[a_\mathbf{e}^{+,(1)}(h),a_\mathbf{e}^{+,(1)}(h)\big]_+$ is therefore clear. There is another source of variation of $\mathbf{R}^+$ which are the variations of $\Omega^+$ and $\Omega^-$ at $\mathbf{e}$ in the direction $h = \lambda \mathbf{e} + \sum_i \phi_i^-\circ\omega_i^+=\lambda \mathbf{e} + \sum_j \phi_j^+\circ\omega_j^-$. According to \cite{biq2}, by blocks given by the direct sum $\Omega^+_\mathbf{e}\oplus \Omega^-_\mathbf{e}$, the variation of the bases $(\omega_i^\pm)_i$ as bases of $\Omega^\pm$ rewrites:
    \begin{equation}
        \omega_{i,\mathbf{e}}^{\pm,(1)}(h)=\begin{bmatrix}
           \lambda & -\phi^{-}\\
           -\phi^+& \lambda
        \end{bmatrix}\omega_i^+,
    \end{equation}
    and therefore, the conjugation by $\textup{Id}+\begin{bmatrix}
           \lambda & -\phi^{-}\\
           -\phi^+& \lambda
        \end{bmatrix} +\mathcal{O}(|h|_\mathbf{e}^2)$ of the first order curvature variation $
        \begin{bmatrix}
           \mathbf{R}^{+,(1)}_\mathbf{e}(h)&0\\
           0&\mathbf{R}^{-,(1)}_\mathbf{e}(h)
        \end{bmatrix}$
        leaves at second order term:
        $
            \begin{bmatrix}
              0 &\phi^{-}(\mathbf{R}^{+,(1)}_\mathbf{e}(h))\\
           \phi^{+}(\mathbf{R}^{-,(1)}_\mathbf{e}(h))&0
            \end{bmatrix},$ which yields the result.
\end{proof}

\subsubsection{An application to the variation of scalar curvature.}

    Let $h = \lambda \mathbf{e} + \sum_i \phi_i^-\circ\omega_i^+$ and $k=\mu \mathbf{e} + \sum_i \psi_i^-\circ\omega_i^+$ for $\lambda$ and $\mu$ scalar functions and $\phi_i^-,\psi_i^-\in\Omega_-$. 
\begin{cor}\label{courbure si lun est seldual}
Assume that $h$ and $k$ are infinitesimal Einstein deformations satisfying \eqref{biachi gauge} and assume moreover that $h$ is anti-selfdual in the sense that $a^{+,(1)}_\mathbf{e}(h)=0$. 
    
    Then, we have: $\mathbf{R}_\mathbf{e}^{+,(2)}(h,k)=0,$
    and moreover:
    \begin{equation}
        \mathring{\Ric}_\mathbf{e}^{(2)}(h,k)= \sum_i \phi_i^-\big(\mathbf{R}_\mathbf{e}^{+,(1)}(k)(\omega_i^+)\big)\otimes\omega_i^+.\label{Ric2 si scal cst}
    \end{equation}
\end{cor}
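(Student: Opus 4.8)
The plan is to derive Corollary \ref{courbure si lun est seldual} directly from the second-order curvature formulas of the preceding Proposition, specializing them to the anti-selfdual hypothesis $a^{+,(1)}_\mathbf{e}(h)=0$. First I would recall the two expressions obtained there for $\mathbf{R}^{+,(2)}_\mathbf{e}$ and $\mathring{\Ric}^{(2)}_\mathbf{e}$, polarizing them so that they apply to a pair $(h,k)$ rather than to $(h,h)$: writing $a^\pm := a^{\pm,(1)}_\mathbf{e}$ for brevity, we have
\begin{equation*}
    \mathbf{R}^{+,(2)}_\mathbf{e}(h,k) = -\tfrac{1}{2}\big[a^+(h),a^+(k)\big]_+ \quad\text{(symmetrized)},
\end{equation*}
and correspondingly
\begin{equation*}
    \mathring{\Ric}^{(2)}_\mathbf{e}(h,k) = \tfrac{1}{2}\big[a^+(h),a^+(k)\big]_- + \tfrac{1}{2}\sum_i\Big(\phi^-\big(\mathbf{R}^{+,(1)}_\mathbf{e}(k)(\omega_i^+)\big) + \psi^-\big(\mathbf{R}^{+,(1)}_\mathbf{e}(h)(\omega_i^+)\big)\Big)\otimes\omega_i^+,
\end{equation*}
where $\phi^-,\psi^-$ are the linear maps $\Omega^+\to\Omega^-$ attached to $h$ and $k$. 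One small subtlety I want to be careful about is the exact normalization of the symmetrization, but since we will kill the bracket terms entirely this constant is irrelevant to the final statement.

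Next I would invoke the hypothesis $a^+(h)=0$. This immediately forces $\big[a^+(h),a^+(k)\big]_\pm=0$, so both the $\mathbf{R}^{+,(2)}_\mathbf{e}(h,k)$ term and the bracket contribution to $\mathring{\Ric}^{(2)}_\mathbf{e}(h,k)$ vanish; this gives the first claimed identity $\mathbf{R}_\mathbf{e}^{+,(2)}(h,k)=0$ at once. It also simplifies the Ricci term: since $a^+(h) = \sum_i *d\phi_i^-\otimes\omega_i^+$, the condition $a^+(h)=0$ says $d\phi_i^-=0$ (equivalently $*d\phi_i^-=0$) for every $i$, and hence by the first part of the Proposition $\mathbf{R}^{+,(1)}_\mathbf{e}(h) = -\sum_i d_+*d\phi_i^-\otimes\omega_i^+ = 0$. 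Therefore the term $\psi^-\big(\mathbf{R}^{+,(1)}_\mathbf{e}(h)(\omega_i^+)\big)$ drops out, leaving only $\mathring{\Ric}^{(2)}_\mathbf{e}(h,k) = \tfrac12\sum_i\phi^-\big(\mathbf{R}^{+,(1)}_\mathbf{e}(k)(\omega_i^+)\big)\otimes\omega_i^+$, i.e. the formula \eqref{Ric2 si scal cst} up to fixing the polarization constant.

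The remaining point is purely bookkeeping on the constant: one should check whether the ``$\tfrac12$'' from polarization is compensated by a factor $2$ coming from the two symmetric ways the pair $(h,k)$ enters (recall $h$ being anti-selfdual means its own contribution is absent, so only the term involving $\mathbf{R}^{+,(1)}_\mathbf{e}(k)$ survives, and it appears with multiplicity matching the $(h,h)$ formula when $h=k$). Matching \eqref{Ric2 si scal cst} as written — with coefficient $1$ in front of the sum — against the diagonal formula of the Proposition pins down the normalization, so I would just state the polarization convention at the outset so that the constant comes out as claimed.

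I do not expect a genuine obstacle here: the corollary is an immediate specialization of the Proposition, and the only thing requiring care is the polarization/symmetrization constant and the observation that $a^+(h)=0$ implies $\mathbf{R}^{+,(1)}_\mathbf{e}(h)=0$ (not merely that the bracket vanishes), which is what removes the second summand in the Ricci formula. Everything else — the vanishing of $\mathbf{R}^{+,(2)}_\mathbf{e}(h,k)$ and the surviving shape of $\mathring{\Ric}^{(2)}_\mathbf{e}(h,k)$ — follows by substitution.
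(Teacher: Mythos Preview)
Your approach is correct and is exactly what the paper intends: the corollary is stated without proof in the paper, as an immediate specialization of the preceding Proposition, and your argument---polarize the second-order formulas, use $a^{+,(1)}_\mathbf{e}(h)=0$ to kill both the bracket terms and, via $\mathbf{R}^{+,(1)}_\mathbf{e}(h)=-\sum_i d_+*d\phi_i^-\otimes\omega_i^+=0$, the $\psi^-$-term---is the only reasonable way to fill it in. Your caution about the polarization constant is well placed; the paper's later use of \eqref{Ric2 si scal cst} (e.g.\ in deriving \eqref{Ric2 H4H2}) fixes the normalization to coefficient~$1$, so you should simply adopt the paper's convention for the symmetric bilinear extension of $F^{(2)}$ and verify consistency against that application rather than against a separate $\tfrac12$-polarization formula.
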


\subsection{Asymptotic curvature of $4$-dimensional Ricci-flat ALE metrics}

Let us now study the infinity of Ricci-flat ALE metrics of dimension $4$.

\subsubsection{Development of Ricci-flat ALE metrics}
Let us start by understanding the asymptotic terms of Ricci-flat ALE metrics.

Let $ (N,\mathbf{g}_b) $ be a $4$-dimensional Ricci-flat ALE orbifold asymptotic to $ (\mathbb{R}^4\slash\Gamma,\mathbf{e}) $ for $\Gamma\subset SO(4)$, and let $(\Sigma_s)_{s>s_0}$ for some $s_0>0$ be a CMC (Constant Mean Curvature) foliation of $ (N,\mathbf{g}_b) $ in a neighborhood of infinity as in \cite{ce,bh} where the mean curvature of $\Sigma_s$ is equal to $\frac{3}{s}$ (like a sphere of radius $s$ in $(\mathbb{R}^4\slash\Gamma,\mathbf{e})$). 
\begin{defn}[CMC gauge]\label{def cmc gauge}
Then, by \cite{bh} there exist a compact $K\subset N$, $s_0>0$ and a local diffeomorphism: $\Phi: (\mathbb{R}^4\backslash B_\mathbf{e}(0,s_0))\slash\Gamma\mapsto N\backslash K $ with:
\begin{itemize}
    \item for all $s>s_0$,
    $\Phi(S_\mathbf{e}(s)) = \Sigma_s,$
    \item $\Phi^*\mathbf{g}_b-\mathbf{e} = H^4 + \mathcal{O}(r^{-5})$ at infinity for $|H^4|_\mathbf{e}\sim r^{-4}$,
    \item $ \delta_\mathbf{e}H^4 = 0 $, $\tr_\mathbf{e} H^4 = 0$, $H^4(\partial_r,\partial_r) = 0$
    \item and one can even arrange $\Phi$ so that $H^4$ is more precisely of the form:
$$H^4 = \frac{\sum_{ij}h_{ij}^+\theta_i^-\circ\omega_j^+ + \sum_{kl}h_{kl}^-\theta_k^+\circ\omega_l^-}{r^4}.$$
with $\sum_{i}h_{ii}^+=0$ and $ \sum_{k}h_{kk}^- = 0$.

We call these coordinates a \emph{CMC gauge}.
\end{itemize}
\end{defn}
\begin{rem}
    Instead of that last point, in \cite{bh}, another decomposition is used with so-called \emph{reduced Kronheimer's terms} which are projections of the terms $\theta_i^\mp\circ\omega_j^\pm$ on their part without $dr$ using some \emph{gauge terms} of the form $\mathcal{L}_V$ for $V = \frac{1}{r^4}L(x)$ for $L$ either proportional to the identity or skew-symmetric.
\end{rem}
Denote $\Omega_s$ the interior of the hypersurface $\Sigma_s$. The limit:
\begin{equation}
   \mathcal{V}(N,\mathbf{g}_b):=\lim_{s\to \infty } \vol_{\mathbf{g}_b}\Omega_s - \vol_\mathbf{e}(B_\mathbf{e}(s)\slash\Gamma)\leqslant 0 \label{reduced volume}
\end{equation}
exists and is called the \emph{renormalized volume} of $(N,\mathbf{g}_b)$ in \cite{bh}. It is vanishing \emph{if and only if} $(N,\mathbf{g}_b)$ is flat.
\begin{rem}
    An interesting remark of Hans-Joachim Hein is the following. There are examples of Kronheimer's instantons of \cite{kro} described in Remark \ref{rem kro gauge} below for which one has $H^4=0$ in CMC gauge. It is a way to see that the notion of reduced volume is \emph{not} an asymptotic quantity. It essentially does not tell anything about the asymptotics of Ricci-flat ALE metric at infinity which at this point could vanish at any order. It is a global and subtle quantity. This reduced volume is the core quantity of the obstruction \eqref{premiere equation rdr} proven later.
\end{rem}

We will also need a particular gauge very close to one introduced by Biquard and Hein in some unpublished notes which led to \cite{bh}. This is a so-called \emph{volume gauge} in which the volume form of the ALE metric is asymptotically equal to that of the asymptotic flat cone.
\begin{exmp}\label{rem eguchihanson}
    The volume form of the Eguchi-Hanson metric in its usual form \eqref{eguchihanson} is equal to that of the asymptotic cone $\mathbb{R}^4\slash\mathbb{Z}_2$ with its flat metric $dr^2 + r^2 (\alpha_1^2+\alpha_2^2+\alpha_3^2)$.
\end{exmp}

Precomposing the above local diffeomorphism $\Phi$ of Definition \ref{def cmc gauge} with the flow of $r^{-3}\partial_r$ for an adapted amount of time $t=C(\Gamma)\mathcal{V}(N,\mathbf{g}_b)$ for $C(\Gamma)>0$, one may choose another diffeomorphism $\Psi$ between neighborhoods of infinities with the following properties.
\begin{defn}[Volume gauge]\label{def volume gauge}
There exist a compact $K'\subset N$, $r'_0>0$ and a local diffeomorphism: $\Psi: (\mathbb{R}^4\slash\Gamma)\backslash B_\mathbf{e}(0,r'_0)\mapsto N\backslash K' $ with:
\begin{itemize}
    \item $\Psi^*\mathbf{g}_b-\mathbf{e} = H^4 + \mathcal{O}(r^{-5})$ at infinity for $|H^4|_\mathbf{e}\sim r^{-4}$,
    \item Denoting $\Omega'_s$ the interior of $\Psi^*(s\mathbb{S}^3\slash\Gamma)$, $$\lim_{s\to \infty } \vol_{\mathbf{g}_b}\Omega_s' - \vol_\mathbf{e}(B_\mathbf{e}(s)\slash\Gamma)=0, $$
    \item $ \delta_\mathbf{e}H^4 = 0 $, $\tr_\mathbf{e} H^4 = 0$, $H^4(\partial_r,\partial_r) =c(\Gamma)\mathcal{V}(N,\mathbf{g}_b) $ for $c(\Gamma)>0$.
\item more precisely, 
$$H^4 = \frac{\sum_{ij}h_{ij}^+\theta_i^-\circ\omega_j^+ + \sum_{kl}h_{kl}^-\theta_k^+\circ\omega_l^-}{r^4}.$$
with $\sum_{i}h_{ii}^+ + \sum_{k}h_{kk}^- = c(\Gamma)\mathcal{V}(N,\mathbf{g}_b)\leqslant 0$ as in the previous point.
\end{itemize}
\end{defn}

As in Proposition \ref{terme quadratique orbifold} in the Appendix, we see the term $H^4$ in this volume gauge as only determined by the curvature at infinity and the reduced volume.
\begin{prop}\label{asympt ALE jauge volume}
    Let $(N,\mathbf{g}_b)$ be a non flat Ricci-flat ALE orbifold asymptotic to $\mathbb{R}^4\slash\Gamma$ for $\Gamma \subset SO(4)$ with reduced volume $\mathcal{V}(N,\mathbf{g}_b)<0$.

    Then, in volume gauge as in Definition \ref{def volume gauge}, up to changing the bases $(\omega_i^+)_i$ and $(\omega_k^-)_k$ to diagonalize the curvature, one has the asymptotic $\mathbf{g}_b = \mathbf{e} + H^4+\mathcal{O}(r^{-5})$ with \begin{equation}
        H^4 = -\frac{\sum_{i}h_{ii}^+\theta_i^-\circ\omega_i^+ + \sum_{k}h_{kk}^-\theta_k^+\circ\omega_lk-}{r^4}
    \end{equation}
    with $\sum_i h_{ii}^+ + \sum_k h_{kk}^- = - c(\Gamma)\mathcal{V}(N,\mathbf{g}_b)>0$ and the induced asymptotic curvature in the basis $(\theta_i^\pm)_i$ satisfies 
    $$\mathbf{R}^{\mp,(1)}_\mathbf{e}(H^4) = \frac{8}{r^6}\begin{bmatrix}
       2h^\pm_{11} -h^\pm_{22}-h^\pm_{33}&0&0\\
       0&-h^\pm_{11} +2h^\pm_{22}-h^\pm_{33}&0\\
       0&0&-h^\pm_{11} -h^\pm_{22}+2h^\pm_{33}
    \end{bmatrix}.$$
\end{prop}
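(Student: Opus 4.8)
The plan is to treat $H^4$ as nothing more than the leading asymptotic term of $\mathbf{g}_b$ in the volume gauge of Definition \ref{def volume gauge}. Since $\mathbf{g}_b$ is Ricci-flat and $\mathbf{g}_b=\mathbf{e}+H^4+\mathcal{O}(r^{-5})$ with $H^4$ exactly homogeneous, one has $\Ric^{(1)}_\mathbf{e}(H^4)=0$ (the quadratic and remainder contributions being $\mathcal{O}(r^{-7})$), and Definition \ref{def volume gauge} moreover gives $\delta_\mathbf{e}H^4=0$, $\tr_\mathbf{e}H^4=0$ and the shape
\[
H^4=\frac{\sum_{ij}h_{ij}^{+}\,\theta_i^{-}\circ\omega_j^{+}+\sum_{kl}h_{kl}^{-}\,\theta_k^{+}\circ\omega_l^{-}}{r^4}
\]
for two $3\times 3$ matrices $(h_{ij}^{+})$ and $(h_{kl}^{-})$. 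I would then establish, in order: (i) that each of the two families of building blocks $\tfrac{\theta_i^{-}\circ\omega_j^{+}}{r^{4}}$ and $\tfrac{\theta_k^{+}\circ\omega_l^{-}}{r^{4}}$ contributes to only one chirality of the linearized curvature; (ii) the exact value of that contribution; (iii) that $(h^{+}_{ij})$ and $(h^{-}_{kl})$ may be diagonalized together with the asymptotic curvature; and (iv) the value of their traces.

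For (i), the key elementary fact is that the forms $\tfrac{\theta_i^{\mp}}{r^{4}}$ are \emph{closed}: this follows from $\theta_1^{\mp}=rdr\wedge\alpha_1^{\pm}\mp r^{2}\alpha_2^{\pm}\wedge\alpha_3^{\pm}$ together with the structure equations $d\alpha_1^{\pm}=\pm 2\,\alpha_2^{\pm}\wedge\alpha_3^{\pm}$, both immediate in Cartesian coordinates. Decomposing the block $\tfrac{\theta_i^{-}\circ\omega_j^{+}}{r^{4}}$ as $\sum_a\phi_a^{-}\circ\omega_a^{+}$ one has $\phi_a^{-}=\delta_{aj}\tfrac{\theta_i^{-}}{r^{4}}$, so by the previous Proposition its first variation of connection equals $a^{+,(1)}_\mathbf{e}=*d\big(\tfrac{\theta_i^{-}}{r^{4}}\big)\otimes\omega_j^{+}=0$, whence $\mathbf{R}^{+,(1)}_\mathbf{e}=-d_{+}a^{+,(1)}_\mathbf{e}=0$ on this block; symmetrically $\mathbf{R}^{-,(1)}_\mathbf{e}$ vanishes on the blocks $\tfrac{\theta_k^{+}\circ\omega_l^{-}}{r^{4}}$. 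Hence $\mathbf{R}^{-,(1)}_\mathbf{e}(H^4)$ depends only on $(h^{+}_{ij})$ and $\mathbf{R}^{+,(1)}_\mathbf{e}(H^4)$ only on $(h^{-}_{kl})$, as the statement requires.

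For (ii) and (iii) I would compute $\mathbf{R}^{-,(1)}_\mathbf{e}$ on the $(h^{+})$-part. Writing $H^4=\sum_m\phi_m^{+}\circ\omega_m^{-}$ and using \eqref{rotation 2 forms -} to re-express each $\theta_i^{-}\circ\omega_j^{+}$ in the $(\omega^{-})$-basis, $\phi_m^{+}$ becomes a linear combination of terms $\tfrac{x^{T}(\omega_i^{+}\circ\omega_m^{-})x}{r^{6}}\,\omega_j^{+}$; since $\omega_j^{+}$ is closed this reduces $\mathbf{R}^{-,(1)}_\mathbf{e}$ to $d_{-}*d$ of such terms, i.e.\ to the Hessian of the harmonic homogeneous degree $-4$ functions $x\mapsto\tfrac{x^{T}Mx}{r^{6}}$ with $M=\omega_i^{+}\circ\omega_m^{-}$ symmetric traceless. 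This computation identifies the linear map $(h^{+})\mapsto r^{6}\mathbf{R}^{-,(1)}_\mathbf{e}(H^4)$ with $h^{+}\mapsto 8\big(3\,h^{+,\mathrm{sym}}-(\tr h^{+})\,\textup{Id}\big)$, whose antisymmetric kernel corresponds — by the Remark after Definition \ref{def cmc gauge} — to the pure-gauge terms $\mathcal{L}_V\mathbf{e}$ with $V=\tfrac{1}{r^{4}}L(x)$, $L$ skew-symmetric; these $V$ are harmonic, so the $\mathcal{L}_V\mathbf{e}$ are transverse-traceless, and they preserve the spheres, hence removing them leaves the volume gauge and the asymptotic curvature unchanged and we may assume $h^{+}$ and $h^{-}$ symmetric. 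Since $\mathbf{g}_b$ is Ricci-flat its curvature operator is block-diagonal with symmetric blocks $\mathbf{R}^{\pm}_{\mathbf{g}_b}$, so we choose $(\omega_i^{+})$ and $(\omega_k^{-})$ to be the corresponding asymptotic eigenbases; as $h\mapsto 3h-(\tr h)\,\textup{Id}$ preserves the off-diagonal part of a symmetric matrix, diagonality of the curvature forces $h^{+}$ and $h^{-}$ to be diagonal, and reading off the diagonal entries yields precisely the stated expression for $\mathbf{R}^{\mp,(1)}_\mathbf{e}(H^4)$ (in particular the trace part of $h^{\pm}$, which carries no Weyl curvature, drops out).

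For (iv), pairing $H^4$ with $\partial_r$, computing $(\theta_i^{\mp}\circ\omega_i^{\pm})(\partial_r,\partial_r)$ and using $H^4(\partial_r,\partial_r)=c(\Gamma)\mathcal{V}(N,\mathbf{g}_b)$ from Definition \ref{def volume gauge} identifies $\sum_i h^{+}_{ii}+\sum_k h^{-}_{kk}$ with $-c(\Gamma)\mathcal{V}(N,\mathbf{g}_b)>0$ once the global sign of $H^4$ is adjusted to the convention of the statement (which merely relabels $h^{\pm}_{ij}\mapsto -h^{\pm}_{ij}$ consistently in both the metric and the curvature formulas). The main obstacle is the Hessian computation of step (ii): one must fix conventions for the Hodge star, the projections $d_{\pm}$, the composition $\circ$ and the two dual decompositions $\sum\phi^{-}\circ\omega^{+}=\sum\phi^{+}\circ\omega^{-}$, and then extract the precise factor $8/r^{6}$ and the matrix $3h-(\tr h)\,\textup{Id}$; once this linear map is known, steps (i), (iii) and (iv) are routine, and one may alternatively invoke the ``reduced Kronheimer terms'' normalization of \cite{bh} to bypass the gauging-away of the off-diagonal parts.
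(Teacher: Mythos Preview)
Your argument is correct and shares step~(i) with the paper: both note that $d\big(\tfrac{\theta_i^{\mp}}{r^4}\big)=0$, hence $a^{\pm,(1)}_\mathbf{e}$ vanishes on the $\tfrac{\theta^{\mp}\circ\omega^{\pm}}{r^4}$-blocks, so each chirality of $H^4$ contributes to only one side of the linearized curvature. The approaches diverge at step~(ii). The paper does not compute $\mathbf{R}^{-,(1)}_\mathbf{e}$ directly: instead it \emph{calibrates} on the Eguchi-Hanson metric, whose leading term in volume gauge is $H^4=-\tfrac{1}{2r^4}\theta_1^-\circ\omega_1^+$ and whose curvature $\tfrac{4}{r^6}\,\mathrm{diag}(2,-1,-1)$ is known from \cite{eh}; since $H^4\mapsto \mathbf{R}^{\mp,(1)}_\mathbf{e}(H^4)$ is linear, this single example pins down the matrix $h^{\pm}\mapsto 8\big(3h^{\pm}-(\tr h^{\pm})\,\textup{Id}\big)$ on diagonal $h^{\pm}$ without any Hessian computation or convention-tracking. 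Your direct route via $d_-*d$ of the harmonic functions $x\mapsto r^{-6}x^T M x$ is more self-contained and yields the full linear map on all of $\mathfrak{gl}(3)$, which in turn gives you a cleaner justification of step~(iii): you see explicitly that the antisymmetric part of $h^{\pm}$ is the kernel and identify it with the sphere-preserving gauge terms $\mathcal{L}_{r^{-4}L(x)}\mathbf{e}$, $L$ skew. The paper leaves this diagonalization step implicit (simply ``up to changing the bases''), so your treatment of (iii) is in fact more thorough. The trade-off is that the paper's calibration argument sidesteps exactly the convention-laden computation you flag as the main obstacle.
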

\begin{rem}\label{asymptotic gauge term kro}
    It is possible that the curvature of $(N,\mathbf{g}_b)$ is decaying faster than $r^{-6}$. In this case, the $H^4$ term is purely a gauge term of the form: 
    $$ \frac{1}{r^4}\sum_i\theta_i^-\circ\omega_i^+ =\frac{1}{r^4}\sum_k\theta_k^+\circ\omega_k^- = \frac{3 dr^2 - r^2g_{\mathbb{S}^3}}{r^4} = \frac{1}{2}\Hess_\mathbf{e}(r^{-2}).$$
    See Remark \ref{rem kro gauge} below for an example.
\end{rem}
\begin{proof}
    We first check that 
    $H^4_+:=-\sum_{i}h_{ii}^+\frac{\theta_i^-\circ\omega_i^+}{r^4}$
    only induces anti-selfdual curvature at the first order. The linearization at $\mathbf{e}$ of the induced connection on $\Omega^+$ is actually zero since $d(\frac{\theta_i^+}{r^4})=0$ and therefore the induced selfdual curvature vanishes as well. This also shows that the term is in Bianchi gauge.
    
    In order to compute the induced curvature, one uses Kronheimer's examples. In particular the curvature of the Eguchi-Hanson metric which with $H^4 = -\frac{\theta_1^-\circ\omega_1^+}{2r^4}$ induces the curvature
    $$\mathbf{R}^{\mp,(1)}_\mathbf{e}(H^4) = \frac{4}{r^6}\begin{bmatrix}
       2&0&0\\
       0&-1&0\\
       0&0&-1
    \end{bmatrix}.$$
    by \cite{eh}. By linearity, one attains any $H^4$ as above, and the result follows.
\end{proof}

In the CMC gauge of Definition \ref{def cmc gauge}, we find a simpler expression.
\begin{cor}\label{courbure coordonnées CMC}
    Let $(N,\mathbf{g}_b)$ be a Ricci-flat ALE orbifold asymptotic to $\mathbb{R}^4\slash\Gamma$ for $\Gamma \subset SO(4)$. Then, in CMC gauge as in Definition \ref{def cmc gauge}, up to changing the bases $(\omega_i^+)_i$ and $(\omega_k^-)_k$ to diagonalize the curvature, one has $\mathbf{g}_b = \mathbf{e} + H^4+\mathcal{O}(r^{-5})$ with \begin{equation}
        H^4 = -\frac{\sum_{i}h_{ii}^+\theta_i^-\circ\omega_i^+ + \sum_{k}h_{kk}^-\theta_k^+\circ\omega_lk-}{r^4}
    \end{equation}
    with $\sum_i h_{ii}^\pm=0$ and the induced asymptotic curvature satisfies in the basis $(\theta_i^\pm)_i$
    $$\mathbf{R}^{\mp,(1)}_\mathbf{e}(H^4) = \frac{24}{r^6}\begin{bmatrix}
       h^\pm_{11}&0&0\\
       0& h^\pm_{22}&0\\
       0&0&h^\pm_{33}
    \end{bmatrix}.$$
\end{cor}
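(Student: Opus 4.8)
The plan is to derive Corollary~\ref{courbure coordonnées CMC} from Proposition~\ref{asympt ALE jauge volume} by comparing the two gauges. Recall from the discussion preceding Definition~\ref{def volume gauge} that the volume gauge chart $\Psi$ is obtained from the CMC gauge chart $\Phi$ by precomposition with the flow $\phi_t$ of the vector field $r^{-3}\partial_r$, the time $t$ being chosen proportional to $\mathcal V(N,\mathbf{g}_b)$ precisely so that the renormalized volume of $\Psi^*\mathbf{g}_b$ with respect to coordinate balls vanishes (cf.\ \eqref{reduced volume}). Since $r^{-3}\partial_r=-\tfrac12\nabla^{\mathbf{e}}(r^{-2})$, one has $\mathcal L_{r^{-3}\partial_r}\mathbf{e}=-\Hess_\mathbf{e}(r^{-2})$, which by Remark~\ref{asymptotic gauge term kro} is, up to a nonzero constant, the ``radial gauge tensor''
\[
G:=\frac{1}{r^4}\sum_i \theta_i^-\circ\omega_i^+ \;=\; \frac{1}{r^4}\sum_k \theta_k^+\circ\omega_k^- \;=\;\tfrac12\Hess_\mathbf{e}(r^{-2}).
\]
Pulling back $\mathbf{g}_b=\mathbf{e}+H^4_{\mathrm{CMC}}+\mathcal O(r^{-5})$ by $\phi_t$ affects the $r^{-4}$--term only through $\phi_t^*\mathbf{e}-\mathbf{e}$ (the Lie derivative of the $\mathcal O(r^{-4})$ part along the $\mathcal O(r^{-3})$ field $r^{-3}\partial_r$ is $\mathcal O(r^{-8})$), so I obtain that $H^4_{\mathrm{CMC}}$ and the volume-gauge term $H^4_{\mathrm{vol}}$ of Proposition~\ref{asympt ALE jauge volume} differ by a multiple of $G$, with coefficient proportional to $\mathcal V(N,\mathbf{g}_b)$.

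The next step is to observe that $G$ is a pure gauge deformation: being $-\tfrac12\mathcal L_{r^{-3}\partial_r}\mathbf{e}$, it induces vanishing first order curvature, $\mathbf R^{+,(1)}_\mathbf{e}(G)=\mathbf R^{-,(1)}_\mathbf{e}(G)=0$; equivalently, $G$ is the special case of the formulas of Proposition~\ref{asympt ALE jauge volume} in which all $h^\pm_{ii}$ are equal, so that the traceless combinations $2h^\pm_{11}-h^\pm_{22}-h^\pm_{33}$, etc., vanish. Two consequences follow. First, by the linearization formulas $\mathbf R^{\mp,(1)}_\mathbf{e}(H^4_{\mathrm{CMC}})=\mathbf R^{\mp,(1)}_\mathbf{e}(H^4_{\mathrm{vol}})$, so the orthonormal bases $(\omega_i^+)$, $(\omega_k^-)$ that diagonalize the asymptotic curvature are the same in both gauges, and since $G$ is diagonal in every such basis (indeed it does not depend on the basis at all), $H^4_{\mathrm{CMC}}$ is in the diagonal form asserted in the statement. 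Second, passing from $H^4_{\mathrm{vol}}$ to $H^4_{\mathrm{CMC}}$ merely shifts each diagonal coefficient $h^\pm_{ii}$ by a common constant; the normalizations $\tr_\mathbf{e}H^4=0$ and $H^4(\partial_r,\partial_r)=0$ of Definition~\ref{def cmc gauge} then pin down that constant and force $\sum_i h^+_{ii}=\sum_k h^-_{kk}=0$ in CMC gauge.

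It remains to compute the curvature. By the above, $\mathbf R^{\mp,(1)}_\mathbf{e}(H^4)=\mathbf R^{\mp,(1)}_\mathbf{e}(H^4_{\mathrm{vol}})$, and Proposition~\ref{asympt ALE jauge volume} gives this as $\tfrac{8}{r^6}$ times $\operatorname{diag}\big(2h^\pm_{11}-h^\pm_{22}-h^\pm_{33},\,-h^\pm_{11}+2h^\pm_{22}-h^\pm_{33},\,-h^\pm_{11}-h^\pm_{22}+2h^\pm_{33}\big)$. Each entry of this matrix is unchanged when the $h^\pm_{ii}$ are shifted by a common constant, so it may be evaluated with the CMC coefficients; using $h^\pm_{11}+h^\pm_{22}+h^\pm_{33}=0$, the $i$-th entry becomes $2h^\pm_{ii}-\sum_{j\neq i}h^\pm_{jj}=3h^\pm_{ii}$, whence
\[
\mathbf R^{\mp,(1)}_\mathbf{e}(H^4)=\frac{24}{r^6}\operatorname{diag}\big(h^\pm_{11},h^\pm_{22},h^\pm_{33}\big),
\]
which is the claimed formula; the factor $24=3\times 8$ is exactly this traceless simplification.

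I expect the step needing the most care to be the bookkeeping of the gauge ambiguity. Because $\sum_i\theta_i^-\circ\omega_i^+=\sum_k\theta_k^+\circ\omega_k^-$, the individual block traces $\sum_i h^+_{ii}$ and $\sum_k h^-_{kk}$ are not intrinsically attached to $H^4$ (only their sum is), so one must check that the CMC normalization -- vanishing of $H^4(\partial_r,\partial_r)$ together with the convention distributing that trace between the two blocks -- is precisely what the addition of the appropriate multiple of $G$ produces, and, as a consistency check, that this matches the Eguchi--Hanson model \eqref{eguchihanson}, whose $H^4$ passes from $-\tfrac{\theta_1^-\circ\omega_1^+}{2r^4}$ in volume gauge to a traceless multiple of $\operatorname{diag}(2,-1,-1)$ in CMC gauge. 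The remaining points -- the value of $\mathcal L_{r^{-3}\partial_r}\mathbf{e}$, the decay of the pulled-back correction, the compatibility of the change of basis with \eqref{rotation 2 forms -}, and the vanishing of $\mathbf R^{\pm,(1)}_\mathbf{e}(G)$ -- are routine given the material recalled above.
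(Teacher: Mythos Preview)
Your proposal is correct and follows essentially the same route the paper intends: since the corollary is stated without proof immediately after Proposition~\ref{asympt ALE jauge volume}, the implicit argument is precisely the one you give --- pass from volume gauge to CMC gauge by subtracting the appropriate multiple of the pure gauge term $G=\tfrac{1}{r^4}\sum_i\theta_i^-\circ\omega_i^+$ (which carries no first-order curvature by Remark~\ref{asymptotic gauge term kro}), invoke the tracelessness $\sum_i h^\pm_{ii}=0$ built into Definition~\ref{def cmc gauge}, and simplify $8(2h^\pm_{ii}-\sum_{j\neq i}h^\pm_{jj})=24\,h^\pm_{ii}$. Your caveat about the non-uniqueness of the split between the $(+)$ and $(-)$ blocks (since $G$ lives in both) is well taken and is exactly the convention being fixed in the last bullet of Definition~\ref{def cmc gauge}.
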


\subsubsection{(Anti-)Selfdual Einstein deformations}

\begin{prop}
    If $H^4$, the $r^{-4}$-asymptotic term of a Ricci-flat ALE metric only induces an anti-selfdual (or selfdual) curvature, then it satisfies the obstructions \eqref{obst taub} and \eqref{annulation T cas euclidien}. 
\end{prop}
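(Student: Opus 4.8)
The plan is to reduce the statement to the vanishing results of the previous section — specifically Proposition~\ref{integrability taub} (the Taub obstruction \eqref{obst taub}) and Proposition~\ref{seconde obst conforme} (the radial identity \eqref{annulation T cas euclidien}) — by exploiting the fact that when $H^4$ induces only, say, anti-selfdual curvature, the relevant quadratic terms of Ricci curvature are controlled by the structure equations of the previous subsection. First I would fix the volume gauge of Definition~\ref{def volume gauge}, in which $\delta_\mathbf{e}H^4=0$ and $\tr_\mathbf{e}H^4=0$, so that $H^4$ is automatically an infinitesimal Einstein deformation in Bianchi gauge and satisfies the decay hypothesis $r^k|\nabla^k_\mathbf{e}H^4|_\mathbf{e}\lesssim r^{-4}$ needed to apply Proposition~\ref{seconde obst conforme} on the exterior domain $\mathbb{R}^4\setminus B_\mathbf{e}(1)$ (the decay exponent $4$ is well below the threshold $\tfrac{d-2}{2}=1$).

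The key step is the observation, from the Proposition computing $\mathbf{R}^{\pm,(2)}_\mathbf{e}$, that if $a^{+,(1)}_\mathbf{e}(H^4)=0$ (the anti-selfdual case), then $\mathbf{R}^{+,(2)}_\mathbf{e}(H^4,H^4)=-\tfrac12[a^{+,(1)}_\mathbf{e}(H^4),a^{+,(1)}_\mathbf{e}(H^4)]_+=0$, and more importantly that by Corollary~\ref{courbure si lun est seldual} one has $\R^{(2)}_\mathbf{e}(H^4,H^4)=0$ and $\mathring{\Ric}^{(2)}_\mathbf{e}(H^4,H^4)=\sum_i\phi_i^-\big(\mathbf{R}^{+,(1)}_\mathbf{e}(H^4)(\omega_i^+)\big)\otimes\omega_i^+$ with $\mathbf{R}^{+,(1)}_\mathbf{e}(H^4)=0$, so in fact $\mathring{\Ric}^{(2)}_\mathbf{e}(H^4,H^4)=0$ outright. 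Consequently $\E^{(2)}_\mathbf{e}(H^4,H^4)$ has vanishing traceless part, and since $\tr_\mathbf{e}\E^{(2)}_\mathbf{e}(H^4,H^4)=\tfrac{2-d}{2}\R^{(2)}_\mathbf{e}(H^4,H^4)=0$ as well (using $\E^{(1)}_\mathbf{e}(H^4)=0$), the tensor $\E^{(2)}_\mathbf{e}(H^4,H^4)$ vanishes identically. Then every boundary integrand appearing in \eqref{obst taub} and in \eqref{annulation T cas euclidien} is zero pointwise: the Taub quantity $\mathcal{T}^\Sigma_X(H^4,H^4)=\int_\Sigma\big(\E^{(2)}_\mathbf{e}(H^4,H^4)\big)(X,n_\Sigma)dv_\Sigma$ vanishes for every Killing field $X$, and the radial identity \eqref{annulation T cas euclidien} holds since its leading integrand $\big(\E^{(2)}_\mathbf{e}(H^4,H^4)\big)(r\partial_r,\partial_r)$ vanishes while the remaining boundary term involves $\delta^{(1)}$, $d\tr^{(1)}$ and $\tr_\mathbf{e}H^4=0$, which one checks vanishes using Remark~\ref{expression delta(1) tr(1)} together with $\delta_\mathbf{e}H^4=0$ and the explicit form of $H^4$ in terms of the divergence-free, closed forms $\theta_i^\mp/r^4$. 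The selfdual case is symmetric, exchanging the roles of $\omega_i^+$ and $\omega_j^-$.

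The main obstacle I anticipate is not the vanishing of $\mathring{\Ric}^{(2)}$ — that is essentially Corollary~\ref{courbure si lun est seldual} — but verifying carefully that the \emph{auxiliary} boundary term in \eqref{annulation T cas euclidien}, namely $\tfrac{2-d}{2}\big[\big((\delta+d\tr)^{(1)}_\mathbf{e}(H^4)\big)(H^4)+\tfrac{\tr_\mathbf{e}H^4}{2}(\delta_\mathbf{e}H^4+d\tr_\mathbf{e}H^4)\big](\partial_r)$, genuinely vanishes in this gauge and at this hypersurface, rather than merely being finite. Here one must use more than $\tr_\mathbf{e}H^4=0$: the formula of Remark~\ref{expression delta(1) tr(1)} leaves a term $-\delta_\mathbf{e}(H^4\times H^4)(\partial_r)+\tfrac12\langle H^4,\nabla_{\partial_r}H^4\rangle_\mathbf{e}$, and I would argue this integrates to zero over $\mathbb{S}^{d-1}$ either by the divergence-theorem invariance of the whole quantity under change of hypersurface (Remark~\ref{invariance qté}) combined with the $r^{-8}$ decay of $H^4\times H^4$ as $r\to\infty$ — pushing $\Sigma$ to infinity kills the integral — or by a direct computation exploiting that $H^4=\tfrac12\Hess_\mathbf{e}(\text{scalar})$-type structure forces $\nabla_{\partial_r}H^4$ to be $\mathbf{e}$-parallel-transported in a compatible way. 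The decay argument is cleaner and is the route I would take, since it only requires the $O(r^{-8})$ bound on the quadratic expression, which is immediate.
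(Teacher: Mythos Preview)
Your approach is correct but differs from the paper's. The paper does not compute $\E^{(2)}_\mathbf{e}(H^4,H^4)$ at all; instead it gauge-fixes $H^4$ to a diagonal combination $-\sum_i h_{ii}^+\frac{\theta_i^-\circ\omega_i^+}{r^4}$, adds a further pure-gauge term (Remark~\ref{asymptotic gauge term kro}) to make every $h_{ii}^+\geqslant 0$, and then observes that such an $H^4$ is exactly the asymptotic term of one of Kronheimer's hyperk\"ahler ALE instantons. Since those are genuine Ricci-flat metrics, the dilation family gives a curve of Ricci-flat metrics with first jet $H^4$, so $H^4$ is integrable and \eqref{obst taub} vanishes by Proposition~\ref{integrability taub}; gauge terms are harmless by Proposition~\ref{prop B T}. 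Your route via Corollary~\ref{courbure si lun est seldual}---getting $\mathring{\Ric}^{(2)}_\mathbf{e}(H^4,H^4)=0$ and $\R^{(2)}_\mathbf{e}(H^4,H^4)=0$, hence $\E^{(2)}_\mathbf{e}(H^4,H^4)=0$ pointwise---is more elementary, avoids Kronheimer's examples, and yields a strictly stronger conclusion.

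One small correction: ``only induces anti-selfdual curvature'' means $\mathbf{R}^{+,(1)}_\mathbf{e}(H^4)=0$, which is weaker than your working assumption $a^{+,(1)}_\mathbf{e}(H^4)=0$. In volume gauge a residual $H^4_-$ piece with all $h^-_{kk}$ equal may survive with $\mathbf{R}^{+,(1)}=0$ but nonzero $a^{+,(1)}$; by Remark~\ref{asymptotic gauge term kro} that piece is $\frac{1}{2}\Hess_\mathbf{e}(r^{-2})$, a pure Lie-derivative term. Strip it off first using the gauge invariance of Proposition~\ref{prop B T} and Remark~\ref{invariance qt\'e}---exactly the reduction the paper performs before invoking Kronheimer---after which $a^{+,(1)}_\mathbf{e}(H^4_+)=0$ holds because $d(\theta_i^-/r^4)=0$, and your argument goes through verbatim.
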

\begin{proof}
    By \cite{bh} or Proposition \ref{asympt ALE jauge volume}, up to a gauge term, we can assume that $H^4$ is a linear combination of $\frac{\theta_i^-\circ\omega_i^+}{r^4}$ terms.
    
    Thanks to Kronheimer's examples, we can reach any term of the form $-\sum_ih_{ii}^+\frac{\theta_i^-\omega_i^+}{r^4}$ with $h_{ii}^+\geqslant 0$. This condition $h_{ii}^+\geqslant 0$ can always be arranged up to adding a gauge term as in Remark \ref{asymptotic gauge term kro}. Since gauge terms do not matter in satisfying the obstructions \eqref{obst taub} and \eqref{annulation T cas euclidien} by Proposition \ref{prop B T}, we obtain the result.
\end{proof}

\subsubsection{General deformations}

\begin{prop}
    Let $(N,\mathbf{g}_b)$ be a Ricci-flat ALE metric whose asymptotic term in CMC gauge is: $H^4_++H^4_-$ where
    $$H_{+}^4 := -\sum_i h_{ii}^+\frac{\theta_i^-\circ\omega_i^+}{r^4} \;\text{ and }\; H_-^4:=-\sum_k h_{kk}^-\frac{\theta_k^+\circ\omega_k^-}{r^4}.$$
    Then, the obstructions \eqref{obst taub einstein} and \eqref{annulation T cas euclidien} are satisfied.
\end{prop}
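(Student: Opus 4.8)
The statement asks to show that the obstructions \eqref{obst taub einstein} and \eqref{annulation T cas euclidien} hold for a Ricci-flat ALE metric $(N,\mathbf{g}_b)$ whose asymptotic term in CMC gauge decomposes as $H^4 = H^4_+ + H^4_-$ with $H^4_\pm$ inducing only anti-selfdual, respectively selfdual, curvature. The strategy is to apply Proposition \ref{integrability taub einstein} and Proposition \ref{seconde obst conforme} (or rather its CMC-cone analogue together with Proposition \ref{obst hk a scal2 nul}) to the infinitesimal Einstein deformation $h = H^4$ on the exterior region $\mathbb{R}^4 \backslash B_\mathbf{e}(1)$. First I would recall that $H^4$ is an infinitesimal Einstein deformation in Bianchi (indeed divergence-free and traceless) gauge with decay $r^k|\nabla^k_\mathbf{e} H^4|_\mathbf{e} = \mathcal{O}(r^{-4})$, so it satisfies the decay hypothesis $r^k|\nabla^k_\mathbf{e} h|_\mathbf{e} \leqslant r^{-\frac{d-2}{2}} = r^{-1}$ of Proposition \ref{seconde obst conforme} in dimension $d=4$. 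The Killing vector fields of $\mathbb{R}^4\slash\Gamma$ are tangent to the spheres $r\mathbb{S}^3\slash\Gamma$, so the hypothesis of Proposition \ref{integrability taub einstein} is met for those; for the conformal vector field $r\partial_r$ one uses Proposition \ref{obst hk a scal2 nul} instead.

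The core of the argument is to verify the third hypothesis of Proposition \ref{obst hk a scal2 nul}, namely that $\R^{(2)}_\mathbf{e}(H^4, H^4)$ is constant. This is where the splitting $H^4 = H^4_+ + H^4_-$ is essential. Expanding, $\R^{(2)}_\mathbf{e}(H^4,H^4) = \R^{(2)}_\mathbf{e}(H^4_+,H^4_+) + 2\R^{(2)}_\mathbf{e}(H^4_+,H^4_-) + \R^{(2)}_\mathbf{e}(H^4_-,H^4_-)$. By Corollary \ref{courbure si lun est seldual}, since $H^4_+$ is anti-selfdual (i.e. $a^{+,(1)}_\mathbf{e}(H^4_+)=0$) one gets $\mathbf{R}^{+,(2)}_\mathbf{e}(H^4_+, \cdot) = 0$ and symmetrically $\mathbf{R}^{-,(2)}_\mathbf{e}(H^4_-,\cdot) = 0$; tracing the block decomposition of the curvature, the scalar curvature variation $\R^{(2)}$ is (a multiple of) $\tr \mathbf{R}^{+,(2)} + \tr \mathbf{R}^{-,(2)}$, and the cross terms in $\R^{(2)}_\mathbf{e}(H^4_+,H^4_-)$ contribute only through these, hence vanish. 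The diagonal terms $\R^{(2)}_\mathbf{e}(H^4_\pm, H^4_\pm)$ must then be handled: since $H^4_+$ induces no selfdual curvature even at first order (as noted in the proof of Proposition \ref{asympt ALE jauge volume}, $d(\theta_i^+/r^4)=0$), the bracket $[a^{+,(1)}_\mathbf{e}(H^4_+), a^{+,(1)}_\mathbf{e}(H^4_+)]_+$ vanishes, so $\mathbf{R}^{+,(2)}_\mathbf{e}(H^4_+,H^4_+) = 0$; conversely $\mathbf{R}^{-,(2)}_\mathbf{e}(H^4_+,H^4_+) = -\tfrac12[a^{-,(1)}_\mathbf{e}(H^4_+),a^{-,(1)}_\mathbf{e}(H^4_+)]_-$ is a quadratic expression in terms of order $r^{-5}$ so it is $\mathcal{O}(r^{-10})$, but crucially it is trace-free as a curvature operator correction only if... — here one must actually compute, using the explicit form $H^4_+ = -\sum_i h_{ii}^+ \theta_i^-\circ\omega_i^+/r^4$ and the structure equations for the $\theta_i^\mp$, that $\tr\big(\mathbf{R}^{-,(2)}_\mathbf{e}(H^4_+,H^4_+)\big)$, which a priori decays like $r^{-10}$, in fact vanishes identically (or, more economically, invoke that for each Kronheimer instanton model the scalar curvature vanishes and then argue by linearity and homogeneity). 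This shows $\R^{(2)}_\mathbf{e}(H^4,H^4) \equiv 0$, in particular constant, and Proposition \ref{obst hk a scal2 nul} applies.

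The remaining bookkeeping is to pass from the statement "the obstruction integral vanishes" on the exterior domain to the stated obstructions \eqref{obst taub einstein} and \eqref{annulation T cas euclidien}: this is immediate since \eqref{annulation T cas euclidien} \emph{is} the exterior-domain conclusion of Proposition \ref{seconde obst conforme}, and \eqref{obst taub einstein} for the Killing vector fields $Y$ tangent to $\mathbb{S}^3\slash\Gamma$ is exactly \eqref{obst conform hk} of Proposition \ref{obst hk a scal2 nul}, whose hypotheses (divergence-free, traceless, infinitesimally Einstein, constant $\R^{(2)}$) we have just checked — here with $h=k=H^4$ and $\lambda=\mu=0$ since $H^4$ is Ricci-flat at first order, not merely Einstein. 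One should also note, as in the proof of the preceding proposition, that adding a gauge term $\mathcal{L}_V \mathbf{e}$ of the form $V = r^{-4}L(x)$ to pass between the CMC decomposition and the cleaner decomposition does not affect the obstructions, by the invariance of $\mathcal{T}$ and of the quantity in \eqref{annulation T cas euclidien} under Lie derivatives (Proposition \ref{prop B T} and Remark \ref{invariance qté}).

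**Main obstacle.** The delicate point is verifying that $\R^{(2)}_\mathbf{e}(H^4,H^4)$ is constant — more precisely that the second-order scalar curvature contribution coming from the ``wrong-chirality'' pieces $\mathbf{R}^{-,(2)}_\mathbf{e}(H^4_+,H^4_+)$ and $\mathbf{R}^{+,(2)}_\mathbf{e}(H^4_-,H^4_-)$ has vanishing trace. The cleanest route is probably not a direct tensor computation but rather: reduce by linearity to the building blocks $H^4 = -h^+_{ii}\,\theta_i^-\circ\omega_i^+/r^4$ realized (up to gauge) by rescalings of Eguchi–Hanson-type Kronheimer metrics, which are hyperkähler hence scalar-flat, so that $\R^{(2)}_\mathbf{e}$ of their asymptotic term must vanish; then argue that mixed products between distinct building blocks also contribute no scalar curvature because $\R^{(2)}$ only sees $\tr\mathbf{R}^\pm$ and Corollary \ref{courbure si lun est seldual} kills the relevant pieces. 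Getting this linearity-plus-realizability argument watertight — in particular handling the ``faster than $r^{-6}$ decay'' degenerate case of Remark \ref{asymptotic gauge term kro}, where $H^4$ is a pure gauge term and the obstruction is trivial by Proposition \ref{prop B T} — is the technical heart of the proof.
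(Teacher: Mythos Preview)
Your plan is in the right spirit for the \emph{diagonal} pieces $(H^4_\pm,H^4_\pm)$: reducing by linearity and gauge invariance to Kronheimer-type models is exactly what the paper does in the \emph{preceding} proposition on (anti-)selfdual $H^4$, and the present proof tacitly relies on that. But for the cross term $(H^4_+,H^4_-)$ your route diverges from the paper's and misses the key mechanism.

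The paper does not try to verify that $\R^{(2)}_\mathbf{e}(H^4,H^4)$ is constant as an abstract hypothesis of Proposition~\ref{obst hk a scal2 nul}. Instead it computes $\mathring{\Ric}^{(2)}_\mathbf{e}(H^4_+,H^4_-)$ \emph{directly}: since $H^4_+$ satisfies $a^{+,(1)}_\mathbf{e}(H^4_+)=0$, formula~\eqref{Ric2 si scal cst} applies with $k=H^4_-$, and together with the explicit curvature of Corollary~\ref{courbure coordonnées CMC} this yields
\[
\mathring{\Ric}^{(2)}_\mathbf{e}(H^4_+,H^4_-)=24\sum_{i,k}h_{ii}^+h_{kk}^-\,\frac{\langle \omega_k^-(\partial_r),\omega_i^+(\partial_r)\rangle\,\theta_i^-\circ\theta_k^+}{r^{10}}.
\]
Plugging $r\partial_r$ and $\partial_r$ (using $\theta_i^\mp(\partial_r)=\omega_i^\pm(\partial_r)$) and integrating over $\mathbb{S}^3$ gives a quantity proportional to
\[
\sum_{i,k}h_{ii}^+h_{kk}^-\int_{\mathbb{S}^3}\langle \omega_k^-(\partial_r),\omega_i^+(\partial_r)\rangle^2\,dv_{\mathbb{S}^3}.
\]
The integral is independent of $(i,k)$ by symmetry, so the sum factors as $\big(\sum_ih_{ii}^+\big)\big(\sum_kh_{kk}^-\big)\cdot\text{const}$, which vanishes because in CMC gauge $\sum_ih_{ii}^+=\sum_kh_{kk}^-=0$. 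The Killing obstructions are handled identically.

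This is the point your proposal misses entirely: the CMC traceless conditions $\sum_ih_{ii}^\pm=0$ are what kill the cross term, and you never invoke them. Your ``main obstacle'' --- showing $\R^{(2)}$ is constant via chirality arguments on $\mathbf{R}^{\pm,(2)}$ --- is a detour that the paper bypasses by computing the obstruction integral explicitly. (Your worry about relating $\tr\mathbf{R}^{\pm,(2)}$ to $\R^{(2)}$ through the varying bundle $\Omega^\pm$ is legitimate and would indeed make your route harder than necessary.)
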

\begin{proof}
    Let us consider $H_{+}^4 := -\sum_i h_{ii}^+\frac{\theta_i^-\circ\omega_i^+}{r^4}$ and $H_-^4:=-\sum_k h_{kk}^-\frac{\theta_k^+\circ\omega_k^-}{r^4}$ satisfying $\sum_i h_{ii}^+=\sum_k h_{kk}^-=0$. Recall that these terms only induce a nonvanishing connection respectively in the anti-selfdual and selfdual orientation and the formula $$\theta_i^\pm =\sum_k \langle \omega_i^\mp(\partial_r),\omega_l^\pm(\partial_r)\rangle \omega_l^\pm.$$ From the formula \eqref{Ric2 si scal cst} and Corollary \ref{courbure coordonnées CMC}, we therefore find:
    \begin{align*}
        \mathring{\Ric}_\mathbf{e}^{(2)}(H_{+}^4,H_{-}^4) 
        &= 24\sum_i\sum_kh_{ii}^+h_{kk}^-\frac{\langle \omega_k^-(\partial_r),\omega_i^+(\partial_r)\rangle\theta_i^-\circ \theta_k^+}{r^{10}}
    \end{align*}
    
    Now, using the fact that $\theta_i^\mp(\partial_r) = \omega_i^\mp(\partial_r)$, we see that the obstruction of Proposition \ref{obst hk a scal2 nul} is proportional to 
    $$ \sum_{i,k}h_{ii}^+h_{kk}^- \int_{\mathbb{S}^3} \langle \omega_k^-(\partial_r),\omega_i^+(\partial_r)\rangle_\mathbf{e}^2 dv_{\mathbb{S}^3}.$$
    Since both $\sum_i h_{ii}^+= 0$ and $\sum_kh_{kk}^- = 0$, the obstruction \eqref{obst conform hk} is satisfied. The proof is similar for the other obstructions \eqref{obst killing hk}.
\end{proof}

\section{Obstruction to the desingularization of Einstein metrics}\label{section obst desing}
We finally recover the obstructions of \cite{biq1} to the desingularization of Einstein metrics and many of the additional obstructions of \cite{ozu2} (but not the higher order obstructions of \cite{ozu3}). We conclude this section by proving that some Einstein orbifolds cannot be desingularized by smooth Einstein manifolds.

\subsection{Infinitesimal Einstein deformations of Ricci-flat ALE metrics}\label{section deformation ricci plat ale}
On $(\mathbb{R}^4\slash\Gamma,g_\mathbf{e})$, the vector field $ r\partial_{r}$ is a conformal Killing vector field. It is moreover half of the gradient of the function $u:=r^2$ which is a solution to $-\nabla_\mathbf{e}^*\nabla_\mathbf{e} u = 8$, and we have $\frac{1}{2}\mathcal{L}_{\nabla_\mathbf{e} u}\mathbf{e} = \textup{Hess}_{\mathbf{e}}u = 2\mathbf{e}$. On a Ricci-flat ALE orbifold we can extend this situation on the whole space as follows.

\begin{prop}[{\cite{bh,ozu2}}]\label{def by scaling}
    Let $(N,\mathbf{g}_b)$ be a Ricci-flat ALE orbifold asymptotic to $\mathbb{R}^4\slash\Gamma$. Then, there exists a unique vector field $X$ on $(N,\mathbf{g}_b)$ such that $\Phi^{*}X = r\partial_{r} + o(r)$, and $\nabla_{\mathbf{g}_b}^*\nabla_{\mathbf{g}_b} X =0$. We actually have $X=\frac{1}{2}\nabla_{\mathbf{g}_b} u$, where $u$ is the unique solution of $-\nabla_{\mathbf{g}_b}^*\nabla_{\mathbf{g}_b} u = 8$, such that $u = r^2 + o(1).$ Moreover, $(\mathcal{L}_X\mathbf{g}_b)^\circ = \mathcal{L}_X\mathbf{g}_b-2\mathbf{g}_b$, the traceless part of $\mathcal{L}_X\mathbf{g}_b$ is an infinitesimal Ricci-flat deformation of $\mathbf{g}_b$ which is trace-free and divergence-free.
\end{prop}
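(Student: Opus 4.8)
The plan is to verify each of the three claimed properties of $X$ directly: existence and uniqueness of the harmonic vector field with the prescribed asymptotics, its identification with $\frac12\nabla_{\mathbf{g}_b}u$, and the fact that $(\mathcal{L}_X\mathbf{g}_b)^\circ$ is a divergence-free, trace-free infinitesimal Ricci-flat deformation. First I would set up the function $u$: since $(N,\mathbf{g}_b)$ is ALE asymptotic to $\mathbb{R}^4\slash\Gamma$ with decay $\Phi^*\mathbf{g}_b-\mathbf{e}=\mathcal{O}(r^{-4})$, the Laplacian $-\nabla_{\mathbf{g}_b}^*\nabla_{\mathbf{g}_b}$ is a uniformly elliptic operator whose mapping properties on weighted Hölder (or Sobolev) spaces on ALE manifolds are standard (e.g. via the Fredholm theory in \cite{bh}, or Bartnik-type analysis). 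On the Euclidean model, $r^2$ solves $-\nabla_\mathbf{e}^*\nabla_\mathbf{e}(r^2)=8$; so I would first solve for a correction $w$ with $-\nabla_{\mathbf{g}_b}^*\nabla_{\mathbf{g}_b}w = 8+\nabla_{\mathbf{g}_b}^*\nabla_{\mathbf{g}_b}(\chi r^2)$ where $\chi$ is a cutoff equal to $1$ near infinity, the right-hand side being $\mathcal{O}(r^{-2})$ and compactly-supported-plus-fast-decaying, hence in the range of the Laplacian into a space of functions decaying like $o(1)$ (indeed like $r^{-2}$, modulo the possible log/constant obstruction which does not occur in this weight range on a $4$-dimensional ALE space). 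Then $u:=\chi r^2 + w$ satisfies $-\nabla_{\mathbf{g}_b}^*\nabla_{\mathbf{g}_b}u=8$ and $u=r^2+o(1)$, and uniqueness follows because the difference of two solutions is a bounded (decaying) harmonic function on an ALE manifold, hence constant, hence $0$ given the $o(1)$ normalization actually forces the constant to vanish — here I would be slightly careful and note that uniqueness of $u$ is up to the prescribed asymptotic expansion.

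Next I would set $X:=\frac12\nabla_{\mathbf{g}_b}u$. Since $u$ is harmonic-with-source and $\mathbf{g}_b$ is Ricci-flat, the Bochner identity gives $\nabla_{\mathbf{g}_b}^*\nabla_{\mathbf{g}_b}(\nabla_{\mathbf{g}_b}u) = \nabla_{\mathbf{g}_b}(\nabla_{\mathbf{g}_b}^*\nabla_{\mathbf{g}_b}u) + \Ric(\mathbf{g}_b)(\nabla_{\mathbf{g}_b}u) = \nabla_{\mathbf{g}_b}(-8) + 0 = 0$, so $X$ is harmonic as a vector field. Its asymptotics: from $u=r^2+o(1)$ and the metric decay, $\nabla_{\mathbf{g}_b}u = \nabla_\mathbf{e}(r^2)+o(r) = 2r\partial_r + o(r)$, so $X=r\partial_r+o(r)$, as required. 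Uniqueness of $X$ among harmonic vector fields with this asymptotic: the difference $Z$ of two such is a harmonic vector field with $Z=o(r)$; on an ALE manifold a harmonic $1$-form of sublinear growth lies in $L^2$ after the standard weighted argument (or: one integrates by parts $\int |\nabla Z|^2 + \Ric(Z,Z)$ with the boundary term vanishing by the decay), and an $L^2$ harmonic $1$-form on a Ricci-flat ALE space is parallel by Bochner, hence zero since it decays — giving $Z=0$.

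Finally, for the deformation statement: $\mathcal{L}_X\mathbf{g}_b = 2\Hess_{\mathbf{g}_b}u$ since $X=\frac12\nabla_{\mathbf{g}_b}u$ is a gradient. Its trace is $2\,\tr_{\mathbf{g}_b}\Hess_{\mathbf{g}_b}u = -2\nabla_{\mathbf{g}_b}^*\nabla_{\mathbf{g}_b}u = 16$, which is constant, so $\mathcal{L}_X\mathbf{g}_b - 2\mathbf{g}_b$ is precisely the trace-free part $(\mathcal{L}_X\mathbf{g}_b)^\circ$ (using $\dim = 4$, trace of $2\mathbf{g}_b$ is $8$, matching). It is trace-free by construction. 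For divergence-freeness I would compute $\delta_{\mathbf{g}_b}(\mathcal{L}_X\mathbf{g}_b) = \delta_{\mathbf{g}_b}\delta_{\mathbf{g}_b}^*(2X)$ and use the Bochner-type identity $\delta_{\mathbf{g}_b}\delta_{\mathbf{g}_b}^*X = \tfrac12\nabla_{\mathbf{g}_b}^*\nabla_{\mathbf{g}_b}X - \tfrac12 d\,\delta_{\mathbf{g}_b}X - \Ric(\mathbf{g}_b)(X)$ (the standard Weitzenböck relation for $\delta^*$), noting $\nabla_{\mathbf{g}_b}^*\nabla_{\mathbf{g}_b}X=0$, $\Ric(\mathbf{g}_b)=0$, and $\delta_{\mathbf{g}_b}X = -\tfrac12\Delta_{\mathbf{g}_b}u = \tfrac12\nabla_{\mathbf{g}_b}^*\nabla_{\mathbf{g}_b}u = 4$ is constant so $d\,\delta_{\mathbf{g}_b}X=0$; hence $\delta_{\mathbf{g}_b}(\mathcal{L}_X\mathbf{g}_b)=0$, and subtracting the parallel tensor $2\mathbf{g}_b$ does not change this. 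That $(\mathcal{L}_X\mathbf{g}_b)^\circ$ is an infinitesimal Ricci-flat deformation is then automatic: $\Ric^{(1)}_{\mathbf{g}_b}(\mathcal{L}_X\mathbf{g}_b)=\mathcal{L}_X(\Ric(\mathbf{g}_b))=0$ by Proposition \ref{covariance reparam}, and $\Ric^{(1)}_{\mathbf{g}_b}(\mathbf{g}_b)=0$ since $\mathbf{g}_b$ is Ricci-flat (scaling), so $\Ric^{(1)}_{\mathbf{g}_b}((\mathcal{L}_X\mathbf{g}_b)^\circ)=0$.

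The main obstacle I anticipate is the analytic bookkeeping in the first step: pinning down the correct weighted function spaces on the $4$-dimensional ALE orbifold so that the Laplacian is surjective onto the relevant decay class and so that the solution $u$ has exactly the expansion $r^2+o(1)$ (ruling out an unwanted $\log r$ or slowly-decaying harmonic term), and likewise justifying that a sublinear harmonic vector field is $L^2$. These are by now routine on ALE spaces — and in the present excerpt they are explicitly attributed to \cite{bh,ozu2} — so I would cite those references for the linear theory rather than reprove it, and concentrate the written proof on the Bochner/Weitzenböck identities that give harmonicity of $X$ and the divergence-free, trace-free properties of $(\mathcal{L}_X\mathbf{g}_b)^\circ$.
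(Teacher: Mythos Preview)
The paper gives no proof here: the proposition is attributed to \cite{bh,ozu2} and the proof environment is left empty. Your proposal therefore supplies exactly the standard argument those references contain, and the strategy is correct throughout --- construct $u$ via weighted Fredholm theory on the ALE space, obtain harmonicity of $X=\tfrac12\nabla_{\mathbf{g}_b}u$ from Bochner and Ricci-flatness, and deduce the trace-free and divergence-free properties of $(\mathcal{L}_X\mathbf{g}_b)^\circ$ from Weitzenb\"ock-type identities together with Proposition~\ref{covariance reparam}.

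One small computational slip to fix: since $X=\tfrac12\nabla_{\mathbf{g}_b}u$, one has $\mathcal{L}_X\mathbf{g}_b=\Hess_{\mathbf{g}_b}u$, not $2\Hess_{\mathbf{g}_b}u$; its trace is then $-\nabla_{\mathbf{g}_b}^*\nabla_{\mathbf{g}_b}u=8$, which matches $\tr_{\mathbf{g}_b}(2\mathbf{g}_b)=8$ and gives $(\mathcal{L}_X\mathbf{g}_b)^\circ=\mathcal{L}_X\mathbf{g}_b-2\mathbf{g}_b$ as stated. With your factor of $2$ the traces would not cancel ($16\neq 8$). This does not affect the logic, only the arithmetic.
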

\begin{proof}[]
    
\end{proof}
\begin{prop}[{\cite[Section 4]{bh}}]\label{def par rotation}
        Let $(N,\mathbf{g}_b)$ be a Ricci-flat ALE orbifold asymptotic to $\mathbb{R}^4\slash\Gamma$. Then for any Killing vector field $Y$ on $\mathbb{R}^4\slash\Gamma$, there exists a unique vector field $Y'$ on $(N,\mathbf{g}_b)$ such that $Y' = Y + o(r).$ Moreover the infinitesimal deformation $\mathcal{L}_{Y'}\mathbf{g}_b$ is divergence-free and trace-free. It vanishes if and only if $Y'$ is a Killing vector field of $\mathbf{g}_b$.
\end{prop}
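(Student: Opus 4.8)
The plan is to obtain $Y'$ exactly as the vector field $X$ was obtained in Proposition \ref{def by scaling}: as the unique harmonic vector field on $(N,\mathbf{g}_b)$ asymptotic to $Y$, that is, the solution of $\nabla_{\mathbf{g}_b}^*\nabla_{\mathbf{g}_b}Y'=0$ with $Y'=Y+o(r)$, and then to read off the divergence-free/trace-free conclusion and the last equivalence from the Ricci-flatness of $\mathbf{g}_b$. The starting observation is that a Killing field $Y$ of $(\mathbb{R}^4\slash\Gamma,\mathbf{e})$ is already $\mathbf{e}$-harmonic, since the Bochner formula for Killing fields gives $\nabla_\mathbf{e}^*\nabla_\mathbf{e}Y=\Ric(\mathbf{e})(Y)=0$.

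For existence, I would fix a cutoff $\chi$ equal to $1$ near infinity and vanishing on a compact set and set $f:=\nabla_{\mathbf{g}_b}^*\nabla_{\mathbf{g}_b}(\chi Y)$. In the ALE coordinates of Definition \ref{def cmc gauge}, $|Y|\sim r$ while $\mathbf{g}_b-\mathbf{e}$ and its derivatives decay like $r^{-4},r^{-5},r^{-6}$; since $\chi Y$ is $\nabla_\mathbf{e}^*\nabla_\mathbf{e}$-harmonic near infinity, comparing the two connections shows $f=O(r^{-5})$ at infinity, with an extra compactly supported piece. Because $\mathbf{g}_b$ is Ricci-flat, $\nabla_{\mathbf{g}_b}^*\nabla_{\mathbf{g}_b}$ coincides with the Hodge Laplacian on $1$-forms, and a Bochner integration by parts (with cutoffs, since $L^2$ decay is enough) shows it has no $L^2$-kernel, hence admits a Green's operator. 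As $f$ is moreover integrable on $N$, the solution $Z$ of $\nabla_{\mathbf{g}_b}^*\nabla_{\mathbf{g}_b}Z=-f$ obtained from it satisfies $Z=O(r^{-2})$, the decay rate of the Green's function in dimension $4$. Then $Y':=\chi Y+Z$ is harmonic and $Y'=Y+O(r^{-2})=Y+o(r)$. Equivalently one can run this in weighted Hölder spaces $C^{2,\alpha}_\delta$ with $\delta\in(-1,0)$ non-exceptional, the cokernel vanishing by the same Bochner argument at the dual weight.

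For uniqueness, the difference $W$ of two such fields is a harmonic $1$-form that is $o(r)$; by the weighted elliptic analysis on the ALE end this forces $W$ to decay, and here one uses that $\Gamma$ acts freely on $\mathbb{S}^3$, so it fixes no nonzero vector and the flat cone carries no surviving constant (translation) harmonic $1$-form. Once $W$ decays fast enough, a Bochner integration by parts with vanishing boundary terms makes it parallel, hence $W\equiv 0$. For the gauge properties: since $\delta_{\mathbf{g}_b}$ commutes with $\Delta_{\mathrm{Hodge}}=\nabla_{\mathbf{g}_b}^*\nabla_{\mathbf{g}_b}$ on the Ricci-flat $\mathbf{g}_b$, the function $\delta_{\mathbf{g}_b}Y'$ is harmonic, and it tends to $0$ at infinity because $\delta_\mathbf{e}Y=0$ and the corrections decay, so $\delta_{\mathbf{g}_b}Y'=0$ by the maximum principle; this is the trace-free statement since $\tr_{\mathbf{g}_b}\mathcal{L}_{Y'}\mathbf{g}_b=-2\,\delta_{\mathbf{g}_b}Y'$. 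The divergence-free statement then follows because $\delta_{\mathbf{g}_b}\delta_{\mathbf{g}_b}^*Y'$ is a universal combination of $\nabla_{\mathbf{g}_b}^*\nabla_{\mathbf{g}_b}Y'$, $d(\delta_{\mathbf{g}_b}Y')$ and $\Ric(\mathbf{g}_b)(Y')$, all of which vanish. That $\mathcal{L}_{Y'}\mathbf{g}_b$ is moreover an infinitesimal Ricci-flat deformation is automatic from $\Ric^{(1)}_{\mathbf{g}_b}(\mathcal{L}_{Y'}\mathbf{g}_b)=\mathcal{L}_{Y'}\Ric(\mathbf{g}_b)=0$ (Proposition \ref{covariance reparam}), and the last assertion is the definition of a Killing field: $\mathcal{L}_{Y'}\mathbf{g}_b=0$ if and only if $Y'$ is a Killing field of $\mathbf{g}_b$.

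The main obstacle is the existence step, namely solving $\nabla_{\mathbf{g}_b}^*\nabla_{\mathbf{g}_b}Z=-f$ with the right decay: this is where Ricci-flatness (turning $\nabla^*\nabla$ into the Hodge Laplacian), the absence of $L^2$ harmonic $1$-forms, and the analysis of the Green's operator (or the ALE weighted Fredholm theory) all enter — and it is the same circle of ideas, together with the freeness of the $\Gamma$-action, that secures uniqueness.
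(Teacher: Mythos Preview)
The paper gives no proof of this proposition: it simply cites \cite[Section 4]{bh} and leaves the proof environment empty. Your outline is a correct reconstruction of the standard argument, and it matches both the parallel construction in Proposition~\ref{def by scaling} and what is done in the cited reference: build $Y'$ as the unique $\mathbf{g}_b$-harmonic extension of $Y$, then use Ricci-flatness and Bochner/Weitzenb\"ock identities to get the trace-free and divergence-free properties.

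A couple of small remarks. First, the statement as written in the paper omits the word ``harmonic'' in the uniqueness clause; you correctly inferred from context (and from the introduction and the proof of Theorem~\ref{obstruction desing taub's}, where the $Y_i'$ are explicitly called harmonic with $Y_i'=Y_i+\mathcal{O}(r^{-3})$) that this is the intended condition. Second, your decay $Z=O(r^{-2})$ is slightly weaker than what actually holds: since $f=O(r^{-5})$ lies well below the indicial gap, the weighted theory gives $Z=O(r^{-3})$, which is what the paper uses elsewhere. This does not affect the argument, but it is worth noting for consistency with the rest of the paper. Finally, your appeal to the freeness of the $\Gamma$-action on $\mathbb{S}^3$ to rule out translation-type harmonic $1$-forms in the uniqueness step is exactly the right point, and is implicit in the paper's standing assumption that $\Gamma\neq\{\Id\}$ in all applications.
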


\begin{proof}[]
    
\end{proof}
\begin{rem}
    \emph{All} of the infinitesimal Einstein deformations of the Eguchi-Hanson metric \eqref{eguchihanson} are of the above types.
\end{rem}

\subsection{Obstructions to the desingularization}

Let us now show that we recover the obstructions of \cite{biq1,ozu2} and find another expression for them. This new expression will further highlight the link between these obstructions and the lack of integrability of Einstein desingularizations of \cite[Chapter 4]{ozuthese}.

\begin{lem}\label{lem volume gauge et asymptotiques o_i}
    Let $(N,\mathbf{g}_b)$ be a Ricci-flat ALE metric in volume gauge as in Definition \ref{def volume gauge}. And consider its development at infinity in this gauge:
    $ \mathbf{g}_b=\mathbf{e}+ H^4 + o(r^{-4}). $ Then, for $(\mathcal{L}_{X}\mathbf{g}_b)^\circ\in \mathbf{O}(\mathbf{g}_b)$ defined in Proposition \ref{def by scaling}, we have: $$(\mathcal{L}_{X}\mathbf{g}_b)^\circ = -2 H^4 + o(r^{-4})$$
\end{lem}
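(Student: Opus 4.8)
The plan is to compute the asymptotic expansion of $X$ in the volume gauge by exploiting that $X$ is $\mathbf{g}_b$-harmonic (Proposition~\ref{def by scaling}) and that $\mathbf{g}_b = \mathbf{e} + H^4 + o(r^{-4})$ with $H^4 = \mathcal{O}(r^{-4})$. First I would write $X = r\partial_r + W$, where $W = o(r)$ is the correction, and expand the harmonic equation $\nabla_{\mathbf{g}_b}^*\nabla_{\mathbf{g}_b} X = 0$ around the flat connection: since $r\partial_r$ is $\mathbf{e}$-harmonic (indeed $\frac12\nabla_\mathbf{e}(r^2)$ with $-\Delta_\mathbf{e} r^2 = 8$ constant, hence $\nabla_\mathbf{e}^*\nabla_\mathbf{e}(r\partial_r)$ vanishes up to the Bochner term which also vanishes on $\mathbb{R}^4$), the leading source term for $W$ comes from the difference $\nabla_{\mathbf{g}_b}^*\nabla_{\mathbf{g}_b} - \nabla_\mathbf{e}^*\nabla_\mathbf{e}$ applied to $r\partial_r$. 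The Christoffel symbols of $\mathbf{g}_b$ differ from those of $\mathbf{e}$ by $\mathcal{O}(\nabla H^4) = \mathcal{O}(r^{-5})$, and applying this to $r\partial_r$ produces a source of order $r^{-4}$ (one factor of $r$ from $r\partial_r$ times $r^{-5}$). So $W$ solves a Laplace-type equation with $\mathcal{O}(r^{-4})$ right-hand side and $W = o(r)$, which by standard ALE elliptic theory (e.g. \cite{bh,ct}) forces $W = \mathcal{O}(r^{-2})$; in particular $\mathcal{L}_W\mathbf{g}_b = \mathcal{O}(r^{-3})$, which is negligible at the order $r^{-4}$ we care about.

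Next I would compute $\mathcal{L}_{r\partial_r}\mathbf{g}_b$ directly. Since $\mathcal{L}_{r\partial_r}\mathbf{e} = 2\mathbf{e}$ and $\mathcal{L}_{r\partial_r}$ acts on the $r^{-4}$-homogeneous tensor $H^4$ by $\mathcal{L}_{r\partial_r}H^4 = (-4+2)H^4 = -2H^4$ (the $-4$ from the radial homogeneity of the coefficients $r^{-4}\times(\text{angular})$, the $+2$ from the two covariant indices; equivalently, the scaling weight of a $2$-tensor scaling like $r^{-4}$ under dilations is $-2$), we get
\begin{equation}
    \mathcal{L}_{r\partial_r}\mathbf{g}_b = \mathcal{L}_{r\partial_r}(\mathbf{e} + H^4 + o(r^{-4})) = 2\mathbf{e} - 2H^4 + o(r^{-4}).
\end{equation}
Therefore $\mathcal{L}_X\mathbf{g}_b = \mathcal{L}_{r\partial_r}\mathbf{g}_b + \mathcal{L}_W\mathbf{g}_b = 2\mathbf{e} - 2H^4 + o(r^{-4})$. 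Now I subtract the trace part: by Proposition~\ref{def by scaling}, $(\mathcal{L}_X\mathbf{g}_b)^\circ = \mathcal{L}_X\mathbf{g}_b - 2\mathbf{g}_b = 2\mathbf{e} - 2H^4 - 2(\mathbf{e} + H^4 + o(r^{-4})) = -4H^4 + o(r^{-4})$. This is off by a factor of $2$ from the claim, so the correct bookkeeping must absorb one factor: the resolution is that in the volume gauge the relation is $(\mathcal{L}_X\mathbf{g}_b)^\circ = \mathcal{L}_X\mathbf{g}_b - 2\mathbf{g}_b$ only to leading order, or more likely $\mathcal{L}_{r\partial_r}\mathbf{g}_b = 2\mathbf{e} + 0\cdot H^4$-type cancellation combined with the definition $(h)^\circ = h - \frac{\tr_{\mathbf{g}_b} h}{4}\mathbf{g}_b$; I would recompute $\tr_{\mathbf{g}_b}(\mathcal{L}_X\mathbf{g}_b)$ carefully using $H^4(\partial_r,\partial_r) = c(\Gamma)\mathcal{V} = \mathcal{O}(1)$ constant and $\tr_\mathbf{e} H^4 = 0$, so that the precise traceless projection yields exactly $-2H^4 + o(r^{-4})$.

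The main obstacle I anticipate is precisely this constant-factor/trace bookkeeping: $H^4$ is trace-free with respect to $\mathbf{e}$ but $\tr_{\mathbf{g}_b} H^4 = \tr_\mathbf{e} H^4 + \mathcal{O}(|H^4|^2) = \mathcal{O}(r^{-8})$ is negligible, yet the interplay between $\mathcal{L}_X\mathbf{g}_b - 2\mathbf{g}_b$ (the formula from Proposition~\ref{def by scaling}) and the naive traceless projection needs to be reconciled, and the homogeneity weight of $\mathcal{L}_{r\partial_r}$ acting on $H^4$ must be pinned down with the right sign and magnitude (it is genuinely $-2$, not $-4$: under the dilation $\delta_\mu$, $\delta_\mu^* H^4 = \mu^{-2} H^4$ because the metric-scaling weight is two less than the radial-decay rate, exactly as $\delta_\mu^*\mathbf{e} = \mu^2\mathbf{e}$ has weight $+2$ while $\mathbf{e}$ "decays like $r^0$"). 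Once the weight is $-2$, $\mathcal{L}_{r\partial_r}H^4 = -2H^4$ and the computation gives $\mathcal{L}_X\mathbf{g}_b = 2\mathbf{g}_b - 4H^4 + \dots$, whence $(\mathcal{L}_X\mathbf{g}_b)^\circ = \mathcal{L}_X\mathbf{g}_b - 2\mathbf{g}_b = -4H^4 + \dots$; reconciling this with the stated $-2H^4$ is exactly the point where the volume-gauge normalization $H^4(\partial_r,\partial_r) = c(\Gamma)\mathcal{V}$ and the precise definition of the $o(r^{-4})$ error must be invoked, and I would resolve it by matching both sides against the Eguchi–Hanson model \eqref{eguchihanson}, for which all quantities are explicit.
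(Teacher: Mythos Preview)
Your argument has a genuine gap in the treatment of the correction vector field $W = X - r\partial_r$. You claim that generic ALE elliptic theory gives $W = \mathcal{O}(r^{-2})$, hence $\mathcal{L}_W\mathbf{g}_b = \mathcal{O}(r^{-3})$, and then assert this is ``negligible at the order $r^{-4}$ we care about.'' This is backwards: a term of order $r^{-3}$ decays \emph{slower} than $r^{-4}$ and therefore dominates, not disappears. Even with the sharper (and correct) count that the source is $\mathcal{O}(r^{-5})$ and $W = \mathcal{O}(r^{-3})$, you only get $\mathcal{L}_W\mathbf{e} = \mathcal{O}(r^{-4})$, which is the \emph{same} order as $H^4$ and cannot be discarded. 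This is exactly why you cannot close the argument and are left chasing a phantom factor of $2$: the missing $\mathcal{L}_W$ contribution lives at the leading order and you have not computed it.

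The paper's proof is organized differently and uses the volume gauge in an essential way, which you never invoke. Rather than treating $X$ as an abstract harmonic perturbation of $r\partial_r$, the paper writes $X = \tfrac{1}{2}\nabla_{\mathbf{g}_b}u$ with $u = r^2 + b r^{-2} + o(r^{-2})$ and determines the constant $b$ by integrating $\Delta_{\mathbf{g}_b}u = 8$ over $\Omega_s'$ and using the defining property of the volume gauge, $\vol_{\mathbf{g}_b}\Omega_s' = \vol_\mathbf{e}(B_\mathbf{e}(s)/\Gamma) + o(1)$. This fixes $b = -r^4 H^4(\partial_r,\partial_r)$, and expanding $\nabla_{\mathbf{g}_b}u$ (with the inverse-metric correction from $H^4$) produces a cancellation between the $b r^{-2}$ term and the $H^4$ correction, so that $X = r\partial_r$ to higher order than the generic estimate allows. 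Only after this cancellation does the computation $\mathcal{L}_{r\partial_r}H^4 = -2H^4$ become relevant. Your proposal to ``resolve it by matching against Eguchi--Hanson'' is in fact what the paper does as a sanity check (where $X = r\partial_r$ exactly), but the general argument requires the integration-by-parts step tying $b$ to the volume normalization; without it the result is simply false in other gauges.
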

\begin{proof}
Let us denote $\Omega_s'$ the open interior of the hypersurface $\Psi(s\mathbb{S}^3\slash\Gamma)$ for the volume gauge diffeomorphism $\Psi:(\mathbb{R}^4\slash\Gamma)\backslash B_\mathbf{e}(s_0) \mapsto N$ of Definition \ref{def volume gauge} for $s_0$ large enough. All along the proof we will abusively omit the diffeomorphism $\Psi$ when pulling back tensors on the infinity of $N$ to $\mathbb{R}^4\slash\Gamma$. By Definition \ref{def volume gauge}, we have as $s\to \infty$
\begin{equation}
    \vol_\mathbf{e}(B_\mathbf{e}(s)\slash\Gamma)= \vol_{\mathbf{g}_b}\Omega'_s+o(1)\label{propriété volume gauge}
\end{equation}

The first step in order to find the asymptotic of $(\mathcal{L}_{X}\mathbf{g}_b)^\circ$ is to recall that $X = \frac{1}{2} \nabla_{\mathbf{g}_b}u$ where $u$ satisfies $ \Delta_{\mathbf{g}_b}u = 8 $ with $u = r^2 + \frac{b}{r^2}+o(r^{-2})$ for some $b\in \mathbb{R}$ similarly to \cite{bh}. We determine $b$ by integrating by parts $\Delta_{\mathbf{g}_b} u = 8$ as in \cite{bh}:
    \begin{align*}
        8\vol_\mathbf{e}(B_\mathbf{e}(s)\slash\Gamma)&= 8 \vol_{\mathbf{g}_b}\Omega'_s+o(1)\\
        &=\int_{\Omega_s'} \Delta_{\mathbf{g}_b} u dv_{\mathbf{g}_b}+o(1)\\ 
        &= 8\vol_\mathbf{e}(B_\mathbf{e}(s)\slash\Gamma) -2b|\mathbb{S}^3\slash\Gamma| -2rH^4(\partial_r,\partial_r) + o(1).
    \end{align*}
    which gives us $ b= -r^4H^4(\partial_r,\partial_r) $ again mimicking computations of \cite{bh} in our slightly different coordinates. 
    
    \begin{rem}
        This is consistent with $\mathbf{eh}$ for which $X = r\partial_r = \frac{1}{2} \nabla_\mathbf{eh} \sqrt{1+r^4}$ where $u = \sqrt{1+r^4}$ satisfies $\Delta_{\mathbf{eh}} u=8$, and $\sqrt{1+r^4} = r^2 +\frac{1}{2r^2}+o(r^{-2})$. We moreover have $\Hess_{\mathbf{eh}}\sqrt{1+r^4} = 2(\mathbf{eh}+(\mathcal{L}_{X}\mathbf{eh})^\circ)$ according to \cite[Proof of Proposition 2.1]{biq1}.
    \end{rem}
     As in the case of $\mathbf{eh}$ in \eqref{eguchihanson}, where $\nabla_{\mathbf{eh}}\sqrt{1+r^4} = 2r\partial_r$, developing $\nabla_{\mathbf{g}_b} = \nabla_\mathbf{e} + \nabla_\mathbf{e}^{(1)}(H^4) +...$, we find 
    $\nabla_{\mathbf{g}_b}u = 2r\partial_r + o(r^{-4}).$
    This implies that at infinity, we have the stated result, namely: $$(\mathcal{L}_X\mathbf{g}_b)^\circ = \mathcal{L}_{r\partial_r} H^4 + o(r^{-4})= -2H^4+ o(r^{-4}).$$ 
\end{proof}
This lets us state the following obstruction result.
\begin{thm}\label{obstruction desing taub's}
    Let $(M_o,\mathbf{g}_o)$ be an Einstein orbifold with a singularity $\mathbb{R}^4\slash\Gamma$ and let $(N,\mathbf{g}_b)$ be a Ricci-flat ALE manifold with integrable Ricci-flat ALE deformations asymptotic to $\mathbb{R}^4\slash\Gamma$. Let us denote $H^4$ the asymptotic term of $\mathbf{g}_b$ in \emph{volume gauge} as in Definition \ref{def volume gauge}:
    $ \mathbf{g}_b = \mathbf{e} + H^4 + \mathcal{O}(r^{-5}), $
    and let $H_2$ be the quadratic terms of $\mathbf{g}_o = \mathbf{e}+H_2+\mathcal{O}(r^3)$ in a coordinate system in which $B_\mathbf{e}(H_2)=0$. 
    
    Assume that there exists a sequence of Einstein metrics, $(M,\mathbf{g}_n)_n$, $d_{GH}$-converging to $(M_o,\mathbf{g}_o)$ and such that there exists a sequence $(t_n)_{n\in\mathbb{N}}$ with $t_n>0$ satisfying:
    $ \big(M,\frac{\mathbf{g}_n}{t_n}\big) \xrightarrow[GH]{}(N,\mathbf{g}_b).$ Then, we have the following obstructions:
    \begin{itemize}
        \item for any Killing vector field $Y_i^\pm$,
        \begin{equation}
            \int_{\mathbb{S}^{3}} \big(\mathring{\Ric}^{(2)}_\mathbf{e}(H_2,H^4)\big)(Y_i^\pm,\partial_r)dv_{\mathbb{S}^{3}} = 0,\label{premiere equation yi}
        \end{equation}
        \item for the conformal Killing vector field $r\partial_r$, we have
        \begin{equation}
          \int_{\mathbb{S}^{3}} \big(\mathring{\Ric}^{(2)}_\mathbf{e}(H_2,H^4)\big)(r\partial_r,\partial_r)dv_{\mathbb{S}^{3}} = 0.\label{premiere equation rdr}
        \end{equation}
    \end{itemize}
\end{thm}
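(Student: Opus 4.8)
\emph{Strategy.} Assuming the sequence $(M,\mathbf{g}_n)$ exists, I will transplant the integration by parts behind Propositions \ref{integrability taub einstein} and \ref{seconde obst conforme} onto the ``neck'' joining the bubble to the orbifold, and read off \eqref{premiere equation yi}--\eqref{premiere equation rdr} from the Einstein equation satisfied by $\mathbf{g}_n$ there. The first step is to record the precise asymptotics of $\mathbf{g}_n$ near $p$. Fixing coordinates on $(\mathbb{R}^4/\Gamma)\setminus\{0\}$ near $p$ in which the bubble $(N,\mathbf{g}_b)$ sits at scale $\sqrt{t_n}$, the gluing--perturbation construction of \cite{ozu1,ozu2} together with the refined expansion of \cite{ozu3} --- this is exactly where the integrability of the infinitesimal deformations of $\mathbf{g}_b$ is used --- gives, after applying diffeomorphisms, that $\mathbf{g}_o=\mathbf{e}+H_2+\mathcal{O}(r^3)$ with $B_\mathbf{e}(H_2)=0$, that $\mathbf{g}_b=\mathbf{e}+H^4+\mathcal{O}(r^{-5})$ is in the volume gauge of Definition \ref{def volume gauge}, and that on the neck $\sqrt{t_n}\ll r\ll 1$ one has the development $\mathbf{g}_n=\mathbf{e}+H_2+t_n^2H^4+t_n^2H^4_2+(\textup{remainder})$, where $|H^4_2|_\mathbf{e}=\mathcal{O}(r^{-2})$ and the remainder contributes nothing that is simultaneously of order $t_n^2$ and homogeneous of degree $r^{-4}$ as $n\to\infty$.

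\emph{Extracting the identity.} Next I would use that $\mathbf{g}_n$ is Einstein, so $\mathring{\Ric}(\mathbf{g}_n)=0$, and expand this in $h_n:=\mathbf{g}_n-\mathbf{e}$, collecting the contributions of order $t_n^2$ and homogeneous of degree $r^{-4}$. All such contributions vanish except $2\,\mathring{\Ric}^{(2)}_\mathbf{e}(H_2,H^4)$ and $\mathring{\Ric}^{(1)}_\mathbf{e}(H^4_2)$: indeed $\mathring{\Ric}^{(1)}_\mathbf{e}(H_2)=0$ since $\mathbf{g}_o$ is Einstein, $\mathring{\Ric}^{(1)}_\mathbf{e}(H^4)=0$ since $\mathbf{g}_b$ is Ricci-flat, $\mathring{\Ric}^{(2)}_\mathbf{e}(H_2,H_2)$ is cancelled by the $r^2$-homogeneous term of $\mathbf{g}_o$, and every remaining term carries a different power of $t_n$ or a different homogeneity. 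Matching homogeneities yields the exact identity of homogeneous tensors on $(\mathbb{R}^4/\Gamma)\setminus\{0\}$
\[
2\,\mathring{\Ric}^{(2)}_\mathbf{e}(H_2,H^4)+\mathring{\Ric}^{(1)}_\mathbf{e}(H^4_2)=0 ,
\]
and the same bookkeeping applied to $\R(\mathbf{g}_n)$ being constant gives $\R^{(1)}_\mathbf{e}(H^4_2)+2\,\R^{(2)}_\mathbf{e}(H_2,H^4)=0$.

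\emph{Conclusion by integration by parts.} Contracting the displayed identity with $(X,\partial_r)$ and integrating over $\mathbb{S}^3$ --- equivalently, by scale invariance, over any sphere $\{r=\epsilon\}$ inside the neck --- reduces each obstruction to showing $\int_{\mathbb{S}^3}\mathring{\Ric}^{(1)}_\mathbf{e}(H^4_2)(X,\partial_r)\,dv_{\mathbb{S}^3}=0$. Since $\mathring{\Ric}^{(1)}_\mathbf{e}(H^4_2)$ differs from the divergence-free tensor $\E^{(1)}_\mathbf{e}(H^4_2)$ only by a multiple of $\mathbf{e}$, for $X=Y_i^\pm$ (which is tangent to $\mathbb{S}^3$, so the $\mathbf{e}$-term drops) the integral equals $\mathcal{B}^{\mathbb{S}^3}_{Y_i^\pm}(H^4_2)$, which vanishes by Proposition \ref{prop B T}(1) --- its cutoff argument applies verbatim because $\E^{(1)}_\mathbf{e}(H^4_2)$ is divergence-free, even though $H^4_2$ is only defined on the neck; this is \eqref{premiere equation yi}. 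For $X=r\partial_r$ I would instead invoke Proposition \ref{premiere obst conforme}, together with $\R^{(1)}_\mathbf{e}(H^4_2)=-2\R^{(2)}_\mathbf{e}(H_2,H^4)$ and the fact, special to the volume gauge, that the residual boundary contributions involving $\delta_\mathbf{e}H^4_2$ and $\tr_\mathbf{e}H^4_2$ cancel --- the concrete manifestation of the remark that in the volume gauge $r\partial_r$ is $\mathbf{g}_b$-harmonic to higher order --- which is controlled by the Schoen--Pohozaev identity \eqref{schoen poho} exactly as in the proof of Proposition \ref{seconde obst conforme} and the computation of Section \ref{closing up EH}; this gives \eqref{premiere equation rdr}.

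\emph{Main obstacle.} The crux is the first step: producing the neck development with the remainder controlled finely enough to legitimize the homogeneity-matching, which rests on \cite{ozu1,ozu2,ozu3} and on the integrability hypothesis on $\mathbf{g}_b$ (without it one meets instead the leading-order non-integrability defect of Proposition \ref{prop orthogonal lie der}). The secondary difficulty is the radial case of the last step, where one must check that the trace and gauge boundary terms in the Pohozaev-type integration by parts indeed cancel in the volume gauge; conceptually, \eqref{premiere equation yi} and \eqref{premiere equation rdr} are the $L^2(\mathbf{g}_b)$-components of the gluing error along the deformations $(\mathcal{L}_X\mathbf{g}_b)^\circ$ and $\mathcal{L}_{Y'}\mathbf{g}_b$ of Propositions \ref{def by scaling} and \ref{def par rotation}, which must vanish if $\mathbf{g}_n$ is to exist.
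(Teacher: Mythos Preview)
Your approach is genuinely different from the paper's. The paper does \emph{not} derive \eqref{premiere equation yi}--\eqref{premiere equation rdr} from the Einstein equation on the neck via homogeneity matching. Instead it proceeds by explicit comparison: it recalls that the obstructions to solving \eqref{equation h2} are the $L^2(\mathbf{g}_b)$-pairings $\int_N\langle\mathring{\Ric}^{(1)}_{\mathbf{g}_b}(h_2),(\mathcal{L}_X\mathbf{g}_b)^\circ\rangle$ and $\int_N\langle\mathring{\Ric}^{(1)}_{\mathbf{g}_b}(h_2),\mathcal{L}_{Y_i'}\mathbf{g}_b\rangle$ from \cite{biq1,ozu2}, expresses these via Lemma \ref{lem volume gauge et asymptotiques o_i} and the curvature interpretation \eqref{obst a la biq1} as conditions on $\mathbf{R}^{\pm,(1)}_\mathbf{e}(H_2)$, then separately computes $\mathring{\Ric}^{(2)}_\mathbf{e}(H^4,H_2)$ directly from Corollary \ref{courbure si lun est seldual} (formula \eqref{Ric2 H4H2}) and evaluates the sphere integrals termwise. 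The two computations yield identical curvature conditions, which is the theorem. The integration-by-parts route you describe appears in the paper only as an \emph{illustration} after the proof, and only for Eguchi--Hanson (equations \eqref{dvp eh desing} and \eqref{annulation ipp}), where the special facts $H^4_2=0$ and $\R^{(2)}_\mathbf{e}(H_2,H^4)=0$ hold.

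Your Killing-field case looks sound given the neck expansion. The gap is in the $r\partial_r$ case: you need $\int_{\mathbb{S}^3}\mathring{\Ric}^{(1)}_\mathbf{e}(H^4_2)(r\partial_r,\partial_r)=0$, but $H^4_2$ is defined only on an annulus, so neither side of the Pohozaev identity has an interior domain on which to integrate. Writing $\mathring{\Ric}^{(1)}_\mathbf{e}=\E^{(1)}_\mathbf{e}+\tfrac{1}{4}\R^{(1)}_\mathbf{e}\,\mathbf{e}$ and invoking Proposition \ref{premiere obst conforme} leaves a bulk term $\int\R^{(1)}_\mathbf{e}(H^4_2)=-2\int\R^{(2)}_\mathbf{e}(H_2,H^4)$ that does not vanish in general (only when $H^4$ is anti-selfdual, cf.\ Corollary \ref{courbure si lun est seldual}), plus boundary terms at the \emph{inner} sphere that you have not controlled. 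You assert these cancel ``in the volume gauge'' but do not show it; the paper's Section \ref{closing up EH} discussion that you cite relies precisely on the Eguchi--Hanson-specific vanishings. The paper's explicit computation avoids this difficulty entirely, which is what it buys; your route, if it could be completed, would be more conceptual but requires a cancellation you have not established for general $\mathbf{g}_b$.
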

\begin{rem}
    As we will see in Corollary \ref{obst sph hyp base} proven later, the obstruction \eqref{premiere equation rdr} is \emph{never} trivial if $(N,\mathbf{g}_b)$ is not flat. That is, if $(N,\mathbf{g}_b)$ is not flat, then there are some $H_2$ as above for which \eqref{premiere equation rdr} is not satisfied.
\end{rem}
\begin{rem}\label{rem kro gauge}
    By \cite{auvray}, the asymptotic terms of Kronheimer's instantons in volume gauge are generally of the form:
    $ H^4 = -\sum_{i,j} \langle\zeta_i,\zeta_j\rangle \frac{\theta_i^-\circ\omega_j^+}{2r^4} $
    for $\zeta_1,\zeta_2,\zeta_3\in \mathbb{R}^k$ for some $k$ arbitrarily large depending on the group at infinity ($k=1$ for the Eguchi-Hanson metric). For $k\geqslant 3$ we can construct $\zeta_1,\zeta_2,\zeta_3\in \mathbb{R}^k$ with
    $\langle\zeta_i,\zeta_j\rangle = \delta_{ij}$ and we check that the obstructions \eqref{premiere equation yi} vanishes for \emph{any} $H_2$.
\end{rem}
\begin{proof}
    Let us come back to the origin of the obstructions of \cite{ozu2,ozuthese}. They are the obstructions to solving:
        \begin{equation}
        \left\{\begin{aligned}
             \mathring{\Ric}_{\mathbf{g}_b}^{(1)}(h_2) &=0.\\
             h_2&=H_2 + H^4_2 + \mathcal{O}(r^{-3+\epsilon}),
        \end{aligned}\right.\label{equation h2}
    \end{equation}
    where $|H^4_2|_\mathbf{e}\sim r^{-2}$, see \cite{ozu3}.

    Let us consider the harmonic vector field $X = r\partial_r + \mathcal{O}(r^{-3})$ and the harmonic vector fields $Y_i' = Y_i+\mathcal{O}(r^{-3})$ defined in Section \ref{section deformation ricci plat ale}. Following the computation of \cite{biq1,ozu2}, noting that in volume gauge, we have $(\mathcal{L}_X\mathbf{g}_b)^\circ = -2 H^4+o(r^{-4})$ by Lemma \ref{lem volume gauge et asymptotiques o_i} and we moreover have:  
    \begin{equation}
    \begin{aligned}
        0&=\frac{1}{2}\lim_{R\to+\infty}\int_{\{r\leqslant R\}} \left\langle \mathring{\Ric}_{\mathbf{g}_b}^{(1)}(h_2)\;,\;(\mathcal{L}_X\mathbf{g}_b)^\circ \right\rangle_{\mathbf{g}_b}dv_{\mathbf{g}_b}\\
       &=2\int_{\mathbb{S}^3\slash\Gamma}3\left\langle H_2, H^4\right\rangle + H^4(B_\mathbf{e}H_2,\partial_r) dv_{\mathbb{S}^3\slash\Gamma}
    \end{aligned}\label{obst o1 biq1}
    \end{equation}
    which is manifestly linear in $H^4$. Similarly, for any Killing vector field $Y_i$ and its harmonic extension $Y_i'$, we find the obstructions: 
    \begin{equation}
        \int_{\mathbb{S}^3\slash\Gamma}3\left\langle H_2, \mathcal{L}_{Y_i}H^4\right\rangle+ \mathcal{L}_{Y_i}H^4(B_\mathbf{e}H_2,\partial_r) dv_{\mathbb{S}^3\slash\Gamma}=0.\label{obst o23 biq1}
    \end{equation}
    
    Now, the interpretation of \cite{biq1,biq2} of the obstruction tells us that for some $C=C(\Gamma)\neq 0$, for any $k,l\in\{1,2,3\}$
    \begin{equation}
        \int_{\mathbb{S}^3\slash\Gamma} \left\langle H_2, \frac{\theta_k^\mp\circ\omega_l^\pm}{r^4}\right\rangle  dv_{\mathbb{S}^3\slash\Gamma} = C \left\langle\mathbf{R}^{\pm,(1)}_\mathbf{e}(H_2)(\omega_k^\pm),\omega_l^\pm \right\rangle.\label{obst a la biq1}
    \end{equation}
    By linearity, this lets us compute the obstruction for any $H^4$ coming from a volume gauge.
    
    On the other hand, let us use the formalism of \cite{biq2} and compute the second variation $\mathring{\Ric}^{(2)}_\mathbf{e}(H_2,H^4)$ when $B_\mathbf{e}(H_2) = 0$. For the quadratic terms $H_2$ of the orbifold, we decompose 
    $$\mathbf{R}^{+,(1)}_\mathbf{e}(H_2) = \sum_{ij}R^+_{ij}\omega_i^+\otimes\omega_j^+,$$
    and we will consider another term of the form
    $H^4 = \frac{\theta_l^-\circ\omega_k^+}{r^4}$. Using the formula \eqref{Ric2 si scal cst} for the second variation of the traceless part of the Ricci curvature denoted $\mathring{\Ric}$ applied to the hyperkähler flat metric $\mathbf{e}$ in the direction $H^4+H_2$, we find 
    \begin{equation}
        \mathring{\Ric}_\mathbf{e}^{(2)}(H^4,H_2) = \sum_{j}R^+_{kj}\frac{\theta_l^-\circ\omega_j^+}{r^4}\label{Ric2 H4H2}.
    \end{equation}
    Now, this lets us compute the obstruction \eqref{premiere equation yi}. For any $i\in \{1,2,3\}$, define $(il)\in\{1,2,3\}$ by $\omega_i^+\circ\omega_{(il)}^+ = \pm \omega_l^+.$ We find the following value for \eqref{premiere equation yi} with Killing vector field $Y_{(il)}^+=\omega_{(il)}^+(r\partial_r)$:
    \begin{equation}
    \begin{aligned}
        0&=\sum_{j}R^+_{kj}\int_{\mathbb{S}^3} \langle\theta_l^-(\partial_r),\omega_j^+(\omega_{(il)}^+(r\partial_r))\rangle_\mathbf{e}dv_{\mathbb{S}^3}\\
        &= \pm R_{ki}^+\int_{\mathbb{S}^3} \langle Y_l^-,Y_l^-\rangle_\mathbf{e}dv_{\mathbb{S}^3}.
    \end{aligned}
    \end{equation}
    We therefore see that if \eqref{obst o23 biq1} with $Y_{(il)}^+$, is satisfied, then, one has \eqref{premiere equation yi} satisfied thanks to \eqref{obst a la biq1}. Similarly, from the obstruction \eqref{premiere equation rdr}, we find:
    \begin{equation}
    \begin{aligned}
        0&=\sum_{j}R^+_{kj}\int_{\mathbb{S}^3} \langle\theta_l^-(\partial_r),\omega_j^+(r\partial_r)\rangle_\mathbf{e}dv_{\mathbb{S}^3}\\
        &= \sum_{j}R^+_{kj}\int_{\mathbb{S}^3} \langle Y_l^+,Y_j^+\rangle_\mathbf{e}dv_{\mathbb{S}^3}\\
        &= R^+_{kl}\int_{\mathbb{S}^3} \langle Y_l^+,Y_l^+\rangle_\mathbf{e}dv_{\mathbb{S}^3},
    \end{aligned}
    \end{equation}
    and we consequently see that the condition \eqref{premiere equation rdr} is the same as \eqref{obst o1 biq1} thanks to \eqref{obst a la biq1}. 
\end{proof}

\begin{rem}
    It might seem like the proof of Theorem \ref{obstruction desing taub's} is completely disjoint from the obstruction of Theorem \ref{obst hk a scal2 nul} or those of Section \ref{closing up EH}. We illustrate below that they actually build on estimates of the exact same quantity but rely on different integrations by parts. Indeed, as noticed along the proof of Lemma \ref{lem volume gauge et asymptotiques o_i} a special feature of the volume gauge is that $ \nabla_{\mathbf{g}_b}u \sim r\partial_r$ at an order higher than expected. 
\end{rem}

Let us illustrate this with the Eguchi-Hanson metric $\mathbf{eh}$ which was already discussed in Section \ref{closing up EH}. Coming back to the integral quantity in \eqref{obst o1 biq1} and integrating by parts \emph{in the other direction} thanks to \eqref{ipp div free 2tensor}, we find: 
    \begin{align}
        0&=\lim_{R\to+\infty}\int_{\{r=R\}}\big(\mathring{\Ric}_{\mathbf{eh}}^{(1)}(h_2)\big)(r\partial_r,\partial_r)dv_{\mathbf{eh}|\{r=R\}}\nonumber \\
        &= \frac{1}{2}\lim_{R\to+\infty}\int_{\{r\leqslant R\}} \left\langle \big(\mathring{\Ric}_{\mathbf{eh}}^{(1)}(h_2)\big),\mathcal{L}_{r\partial_r}\mathbf{eh} \right\rangle_{\mathbf{eh}}dv_{\mathbf{eh}}\nonumber\\
        &= \frac{1}{2}\lim_{R\to+\infty}\int_{\{r\leqslant R\}} \left\langle \big(\mathring{\Ric}_{\mathbf{eh}}^{(1)}(h_2)\big),(\mathcal{L}_{r\partial_r}\mathbf{eh})^\circ \right\rangle_{\mathbf{eh}}dv_{\mathbf{eh}}\label{annulation ipp}
    \end{align}
    The left-hand side term of \eqref{annulation ipp} has the following limit as $R\to +\infty$:
    \begin{align*}
        0&=\int_{\mathbb{S}^3\slash\mathbb{Z}_2}\Big( \E^{(1)}_{\mathbf{e}}(H_2^4) + \E_{\mathbf{e}}^{(2)}(H^4,H_2)+\lambda H^4\Big)(r\partial_r,\partial_r) dv_{\mathbb{S}^3\slash\mathbb{Z}_2}\\
        &=\int_{\mathbb{S}^3\slash\mathbb{Z}_2}\Big( \E^{(1)}_{\mathbf{e}}(H_2^4)\Big)(r\partial_r,\partial_r)dv_{\mathbb{S}^3\slash\mathbb{Z}_2} +\int_{\mathbb{S}^3\slash\mathbb{Z}_2}\Big(\mathring{\Ric}_{\mathbf{e}}^{(2)}(H^4,H_2)\Big)(r\partial_r,\partial_r) dv_{\mathbb{S}^3\slash\mathbb{Z}_2}.
    \end{align*}
    which is exactly the quantity obtained in \eqref{dvp eh desing}. As in Section \ref{closing up EH}, we may use $H^4_2 = 0$.
    \\
    
We recover \emph{all} of the obstructions of \cite{biq1}.
\begin{cor}\label{obst EH}
    With the assumptions of Theorem \ref{obstruction desing taub's} with $(N,\mathbf{g}_b) = (T^*\mathbb{S}^2,\mathbf{eh})$ defined in \eqref{eguchihanson} with $H^4 = -\frac{\theta_1^-\circ \omega_1^+}{2 r_\mathbf{e}^4}$ and any $H_2$ satisfying $B_\mathbf{e}(H_2)=0$, the following propositions are equivalent:
    \begin{enumerate}
        \item $\mathbf{R}^{+,(1)}_\mathbf{e}(H_2)\omega_1^+ = 0$,
        \item
        $
            \int_{\mathbb{S}^3\slash\mathbb{Z}_2} \big(\mathring{\Ric}^{(2)}_\mathbf{e}(H^4,H_2)\big)(r\partial_r,\partial_r) dv_{\mathbb{S}^3\slash\mathbb{Z}_2} = 0,$
        and for any $i\in\{2,3\}$ for the Killing vector field $Y_i^+$,
        $ \int_{\mathbb{S}^3\slash\mathbb{Z}_2} \big(\mathring{\Ric}^{(2)}_\mathbf{e}(H^4,H_2)\big)(Y_i^+,\partial_r) dv_{\mathbb{S}^3\slash\mathbb{Z}_2} = 0, $
        \item there is a solution $h_2$ to the equation \eqref{equation h2},
        \item 
        $\int_{\mathbb{S}^3\slash\mathbb{Z}_2}\left\langle H_2, H^4\right\rangle dv_{\mathbb{S}^3\slash\mathbb{Z}_2} = 0,$ and $i\in\{2,3\}$:
        $\int_{\mathbb{S}^3\slash\mathbb{Z}_2}\left\langle H_2, \mathcal{L}_{Y_i^+}H^4\right\rangle dv_{\mathbb{S}^3\slash\mathbb{Z}_2} = 0.$
    \end{enumerate}
\end{cor}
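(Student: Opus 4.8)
The plan is to prove the fourfold equivalence by establishing the cycle $(1)\Leftrightarrow(2)$, $(1)\Leftrightarrow(4)$ and $(3)\Leftrightarrow(4)$. Each of these steps is an assembly of results already available above, specialized to the model term $H^4=-\tfrac12 r_\mathbf{e}^{-4}\,\theta_1^-\circ\omega_1^+$, together with the special features of $\mathbf{eh}$ (it is hyperkähler, hence has integrable deformations; it is already in volume gauge by Example \ref{rem eguchihanson}; its scaling field is $X=r\partial_r$ exactly; and $Y_1^+$ is an isometry). The only genuinely delicate point is tracking how the Lie derivatives $\mathcal{L}_{Y_i^+}$ act on $H^4$.

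For $(1)\Leftrightarrow(2)$, I would apply Theorem \ref{obstruction desing taub's} and formula \eqref{Ric2 H4H2} with $k=l=1$. Writing $\mathbf{R}^{+,(1)}_\mathbf{e}(H_2)=\sum_{ij}R^+_{ij}\,\omega_i^+\otimes\omega_j^+$, formula \eqref{Ric2 H4H2} gives (with the extra factor $-\tfrac12$ coming from the normalization of $H^4$)
\begin{equation*}
\mathring{\Ric}^{(2)}_\mathbf{e}(H^4,H_2)=-\tfrac12\sum_j R^+_{1j}\,\frac{\theta_1^-\circ\omega_j^+}{r_\mathbf{e}^4}.
\end{equation*}
Pairing this with $r\partial_r$ and with $Y_2^+,Y_3^+$ and integrating over $\mathbb{S}^3\slash\mathbb{Z}_2$, exactly as in the last display block of the proof of Theorem \ref{obstruction desing taub's} (using $\theta_j^\mp(\partial_r)=\omega_j^\mp(\partial_r)$ and $\int_{\mathbb{S}^3}\langle Y_j^+,Y_l^+\rangle_\mathbf{e}\,dv=0$ for $j\neq l$), the radial condition in $(2)$ becomes proportional to $R^+_{11}$ and the two Killing conditions to $R^+_{12}$ and $R^+_{13}$. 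Hence $(2)$ holds iff $R^+_{1j}=0$ for all $j$, i.e. iff $\mathbf{R}^{+,(1)}_\mathbf{e}(H_2)(\omega_1^+)=\sum_j R^+_{1j}\omega_j^+=0$, which is $(1)$.

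For $(1)\Leftrightarrow(4)$, I would invoke the identity \eqref{obst a la biq1}, which for $H^4=-\tfrac12 r_\mathbf{e}^{-4}\theta_1^-\circ\omega_1^+$ yields $\int_{\mathbb{S}^3\slash\mathbb{Z}_2}\langle H_2,H^4\rangle\,dv=-\tfrac{C}{2}\langle \mathbf{R}^{+,(1)}_\mathbf{e}(H_2)(\omega_1^+),\omega_1^+\rangle$. For the rotated terms, $\mathcal{L}_{Y_i^+}H^4=-\tfrac12 r_\mathbf{e}^{-4}\,\theta_1^-\circ(\mathcal{L}_{Y_i^+}\omega_1^+)$, because the flow of $Y_i^+$ lies in the $SU(2)_+$-factor of $SO(4)$, which fixes every anti-selfdual $2$-form (so $\mathcal{L}_{Y_i^+}\theta_1^-=0$) and rotates the $\omega_j^+$ among themselves; concretely $\mathcal{L}_{Y_2^+}\omega_1^+$ and $\mathcal{L}_{Y_3^+}\omega_1^+$ are nonzero multiples of $\omega_3^+$ and $\omega_2^+$ respectively, while $\mathcal{L}_{Y_1^+}\omega_1^+=0$. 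Applying \eqref{obst a la biq1} again, the conditions $\int_{\mathbb{S}^3\slash\mathbb{Z}_2}\langle H_2,\mathcal{L}_{Y_i^+}H^4\rangle\,dv=0$ for $i\in\{2,3\}$ become $\langle \mathbf{R}^{+,(1)}_\mathbf{e}(H_2)(\omega_1^+),\omega_3^+\rangle=0$ and $\langle \mathbf{R}^{+,(1)}_\mathbf{e}(H_2)(\omega_1^+),\omega_2^+\rangle=0$; together with the condition coming from $H^4$ itself these say exactly $\mathbf{R}^{+,(1)}_\mathbf{e}(H_2)(\omega_1^+)=0$. This step is the one I expect to require the most care: one must pin down that $\mathcal{L}_{Y_i^+}$ leaves $\theta_1^-$ untouched and merely rotates $\omega_1^+$ inside $\mathrm{span}(\omega_2^+,\omega_3^+)$ with nonzero multiples, so that the three conditions in $(4)$ genuinely detect all three components of $\mathbf{R}^{+,(1)}_\mathbf{e}(H_2)(\omega_1^+)$.

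Finally, $(3)\Leftrightarrow(4)$ is the Fredholm alternative of \cite{biq1,ozu2}: equation \eqref{equation h2} with $\mathbf{g}_b=\mathbf{eh}$ is solvable iff the obstruction integrals \eqref{obst o1 biq1} and \eqref{obst o23 biq1} vanish, the relevant cokernel being spanned by the leading parts of $(\mathcal{L}_X\mathbf{g}_b)^\circ$ and of the $\mathcal{L}_{Y_i'}\mathbf{g}_b$. For Eguchi--Hanson, $X=r\partial_r$ exactly and $Y_1^+$ is an isometry (so its obstruction is vacuous), and since $B_\mathbf{e}(H_2)=0$ by hypothesis the terms $H^4(B_\mathbf{e}H_2,\partial_r)$ and $\mathcal{L}_{Y_i^+}H^4(B_\mathbf{e}H_2,\partial_r)$ drop out of \eqref{obst o1 biq1}--\eqref{obst o23 biq1}. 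Thus \eqref{obst o1 biq1} reduces to $\int_{\mathbb{S}^3\slash\mathbb{Z}_2}\langle H_2,H^4\rangle\,dv=0$ and \eqref{obst o23 biq1} for $i\in\{2,3\}$ to $\int_{\mathbb{S}^3\slash\mathbb{Z}_2}\langle H_2,\mathcal{L}_{Y_i^+}H^4\rangle\,dv=0$, which is precisely $(4)$. Chaining $(3)\Leftrightarrow(4)\Leftrightarrow(1)\Leftrightarrow(2)$ closes the loop.
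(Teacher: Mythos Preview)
Your overall plan and the equivalences $(1)\Leftrightarrow(2)$ and $(3)\Leftrightarrow(4)$ are correct and are exactly the specialization to Eguchi--Hanson of the computations in the proof of Theorem \ref{obstruction desing taub's} together with the Fredholm alternative from \cite{biq1,ozu2}. The paper does not give a separate proof of the corollary; it is meant to be read off from those computations and from \eqref{obst a la biq1}, which is what you do.

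There is, however, a genuine error in your argument for $(1)\Leftrightarrow(4)$. You claim that $\mathcal{L}_{Y_i^+}\theta_1^-=0$ on the grounds that the flow of $Y_i^+$ lies in $SU(2)_+$ and $SU(2)_+$ fixes anti-selfdual $2$-forms. This reasoning only applies to the \emph{constant} anti-selfdual forms $\omega_j^-$. The form $\theta_1^-$ is anti-selfdual pointwise, but by \eqref{rotation 2 forms -} it is $\theta_1^-=-\sum_j r^{-2}\langle Y_1^+,Y_j^-\rangle\,\omega_j^-$, and the coefficients $\langle Y_1^+,Y_j^-\rangle$ are \emph{not} $SU(2)_+$-invariant: the index ``$1$'' in $\theta_1^-$ refers to the \emph{selfdual} basis and rotates under $SU(2)_+$. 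Concretely, from $[Y_2^+,Y_1^+]=2Y_3^+$ one finds $\mathcal{L}_{Y_2^+}\theta_1^-=2\theta_3^-\neq 0$, and similarly $\mathcal{L}_{Y_3^+}\theta_1^-=-2\theta_2^-$. Hence
\[
\mathcal{L}_{Y_2^+}\Big(\frac{\theta_1^-\circ\omega_1^+}{r^4}\Big)=\frac{2}{r^4}\big(\theta_3^-\circ\omega_1^+ + \theta_1^-\circ\omega_3^+\big),
\]
not the single term you wrote. Your conclusion is nonetheless correct, but for a reason you did not state: applying \eqref{obst a la biq1} to each piece gives $C\,R^+_{31}$ and $C\,R^+_{13}$, and these coincide by the \emph{symmetry} of $\mathbf{R}^{+,(1)}_\mathbf{e}(H_2)$. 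So the integral against $\mathcal{L}_{Y_2^+}H^4$ is indeed a nonzero multiple of $R^+_{13}$, and likewise $\mathcal{L}_{Y_3^+}H^4$ detects $R^+_{12}$. With this correction your cycle $(3)\Leftrightarrow(4)\Leftrightarrow(1)\Leftrightarrow(2)$ goes through.
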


\subsection{Spherical and hyperbolic orbifolds}

Using the above new interpretation of the obstruction to the desingularization of Einstein $4$-manifolds as well as the integration by parts \eqref{ipp div free 2tensor}, we prove one of the main conjectures of \cite{ozu2,ozuthese} and answer the long-standing question of whether or not Einstein orbifolds can always be $d_{GH}$-desingularized by smooth Einstein metrics. 

The starting point is that the obstruction \eqref{premiere equation rdr} never vanishes when the orbifold is spherical or hyperbolic.
\begin{cor}\label{obst sph hyp base}
    If the obstruction \eqref{premiere equation rdr} vanishes, then, the orbifold $(M_o,\mathbf{g}_o)$ is not spherical or hyperbolic.
\end{cor}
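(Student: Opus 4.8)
The plan is to prove the contrapositive: if $(M_o,\mathbf{g}_o)$ is spherical or hyperbolic, then the left-hand side of \eqref{premiere equation rdr} is \emph{strictly} nonzero. The first step is to pin down the quadratic term $H_2$ of $\mathbf{g}_o$ at the singular point $p$. A space form is Einstein with $\Lambda\neq0$ and has $W^\pm_{\mathbf{g}_o}=0$, so the full Riemann curvature of $\mathbf{g}_o$ at $p$ is $\frac{\R_{\mathbf{g}_o}}{12}\,\Id=\frac{\Lambda}{3}\,\Id$ on $2$-forms (with $\R_{\mathbf{g}_o}=4\Lambda$ in dimension $4$). Since the linearized curvature $\mathbf{R}^{\pm,(1)}_\mathbf{e}(H_2)$ read off from the quadratic jet $H_2$ is precisely this curvature at $p$ — insensitive to the gauge used for $H_2$, as two gauges differ by a pure-gauge term that does not affect the linearized curvature — I would record $\mathbf{R}^{+,(1)}_\mathbf{e}(H_2)=\mathbf{R}^{-,(1)}_\mathbf{e}(H_2)=\frac{\Lambda}{3}\,\Id$.

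Next I would compute $\mathring{\Ric}^{(2)}_\mathbf{e}(H^4,H_2)$ from this. Writing $H^4$ in the volume gauge of Definition \ref{def volume gauge} as a linear combination of the terms $\frac{\theta_i^-\circ\omega_j^+}{r^4}$ and $\frac{\theta_k^+\circ\omega_l^-}{r^4}$, each of which is anti-selfdual (resp. selfdual) in the sense of Corollary \ref{courbure si lun est seldual}, the formula \eqref{Ric2 H4H2} and its selfdual analogue apply term by term: on $\frac{\theta_l^-\circ\omega_k^+}{r^4}$ one gets $\sum_j R^+_{kj}\frac{\theta_l^-\circ\omega_j^+}{r^4}$, which collapses to $\frac{\Lambda}{3}\frac{\theta_l^-\circ\omega_k^+}{r^4}$ because $\mathbf{R}^{+,(1)}_\mathbf{e}(H_2)=\frac{\Lambda}{3}\Id$ is scalar, and similarly on the selfdual pieces. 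Summing over all terms of $H^4$ gives the clean identity $\mathring{\Ric}^{(2)}_\mathbf{e}(H^4,H_2)=\frac{\Lambda}{3}\,H^4$.

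The obstruction is then immediate: substituting into \eqref{premiere equation rdr} and using $r=1$ on $\mathbb{S}^3$, its left-hand side equals $\frac{\Lambda}{3}\int_{\mathbb{S}^3}H^4(\partial_r,\partial_r)\,dv_{\mathbb{S}^3}$; by the last item of Definition \ref{def volume gauge} the integrand is the constant $c(\Gamma)\,\mathcal{V}(N,\mathbf{g}_b)$ with $c(\Gamma)>0$, so this quantity equals $\frac{\Lambda}{3}\,c(\Gamma)\,\mathcal{V}(N,\mathbf{g}_b)\,\vol(\mathbb{S}^3/\Gamma)$. Now $(N,\mathbf{g}_b)$, being a Ricci-flat ALE \emph{manifold} asymptotic to the nontrivial quotient $\mathbb{R}^4/\Gamma$, cannot be flat, hence $\mathcal{V}(N,\mathbf{g}_b)<0$ strictly by \eqref{reduced volume}; and $\Lambda\neq0$ for a spherical or hyperbolic orbifold. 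Therefore the left-hand side of \eqref{premiere equation rdr} is nonzero, as claimed. This is exactly the specialization $W^\pm_{\mathbf{g}_o}=0$ of the rewriting \eqref{obst weyl volume}, which then reads $\Lambda\,\mathcal{V}(N,\mathbf{g}_b)=0$.

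The only genuinely delicate point is the identity $\mathring{\Ric}^{(2)}_\mathbf{e}(H^4,H_2)=\frac{\Lambda}{3}H^4$: one must check that the pieces of $H^4$ produced by the volume gauge are precisely the (anti-)selfdual tensors to which Corollary \ref{courbure si lun est seldual} and \eqref{Ric2 H4H2} apply, and that $\mathbf{R}^{\pm,(1)}_\mathbf{e}(H_2)=\frac{\Lambda}{3}\Id$ being scalar is what collapses the sum in \eqref{Ric2 H4H2} to a multiple of the same tensor. Everything else is substitution and a normalization count, which I would cross-check against the Eguchi-Hanson model.
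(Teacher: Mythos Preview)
Your proof is correct and follows essentially the same route as the paper's. The only difference is one of presentation: the paper first derives the general identity \eqref{obst en terme de courbure et Weyl}, expressing the obstruction as $\Lambda\, c(\Gamma)\mathcal{V}(N,\mathbf{g}_b) + \sum_{ij}\mathring{h}_{ij}^+ W^+_{ji} + \sum_{kl}\mathring{h}_{kl}^- W^-_{lk}$, and then specializes to $W^\pm=0$; you impose $W^\pm=0$ from the start, which yields the cleaner intermediate identity $\mathring{\Ric}^{(2)}_\mathbf{e}(H^4,H_2)=\frac{\Lambda}{3}H^4$ and then reads off $H^4(\partial_r,\partial_r)=c(\Gamma)\mathcal{V}(N,\mathbf{g}_b)$ directly from the volume gauge. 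Both arguments rest on exactly the same ingredients: formula \eqref{Ric2 H4H2}, the volume-gauge normalization of Definition~\ref{def volume gauge}, and the strict negativity of $\mathcal{V}(N,\mathbf{g}_b)$ for a nonflat ALE manifold.
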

\begin{proof}
Let us consider $H^4 = \sum_{ij}h_{ij}^+ \frac{\theta_i^-\circ\omega_j^+}{r^4}+\sum_{kl}h_{kl}^- \frac{\theta_k^+\circ\omega_l^-}{r^4}$. From \eqref{Ric2 H4H2}, by linearity, we have the following expression:
    \begin{equation}
        \mathring{\Ric}_\mathbf{e}^{(2)}(H^4,H_2) = \sum_{ijm}h_{ij}^+R^+_{jm}\frac{\theta_i^-\circ\omega_m^+}{r^4} +\sum_{kln}h_{kl}^-R^-_{ln}\frac{\theta_k^+\circ\omega_n^-}{r^4},\label{Ric2 H4H2 bis}
    \end{equation}
    hence, the obstruction \eqref{premiere equation rdr} rewrites:
    \begin{equation}
        \sum_{ij}h_{ij}^+R^+_{ji} +\sum_{kl}h_{kl}^-R^-_{lk} = 0.\label{somme obst}
    \end{equation}
    Denoting $\Lambda = \sum_iR^+_{ii} = \sum_k R^-_{kk}$, $(W^\pm_{ij})_{ij} $ the traceless part of $(R^\pm_{ij})_{ij}$, and $ \mathring{h}_{ij}^\pm $ the traceless part of $(h_{ij}^\pm)_{i,j\in\{1,2,3\}}$ (note that the $\mathring{h}_{ij}^\pm$ are proportional to the asymptotic curvature of $\mathbf{g}_b$ by Proposition \ref{asympt ALE jauge volume}) and recall that $\sum_{i}h_{ii}^++\sum_kh_{kk}^- = c(\Gamma) \mathcal{V}(N,\mathbf{g}_b)<0$.
    We therefore write \eqref{somme obst} as
    \begin{equation}
        0=\Lambda c(\Gamma) \mathcal{V}(N,\mathbf{g}_b) + \sum_{ij}\mathring{h}_{ij}^+ W^+_{ji} + \sum_{kl}\mathring{h}_{kl}^- W^-_{lk}.\label{obst en terme de courbure et Weyl}
    \end{equation}
    In the case of a spherical or hyperbolic orbifold, we have $W^\pm=0$ and $\Lambda\neq 0$, therefore, since $ c(\Gamma) \mathcal{V}(N,\mathbf{g}_b)<0 $, the obstruction \eqref{obst en terme de courbure et Weyl} is not satisfied.
\end{proof}

\subsubsection{Cokernel of the linearization of $\E$ at a Ricci-flat ALE metric}
Let us consider a Ricci-flat ALE orbifold $ (N,\mathbf{g}_b)$, the operator
$$g\mapsto \mathbf{\Phi}_{\mathbf{g}_b}(g) := \E(g)+\delta_{\mathbf{g}_b}^*\delta_{\mathbf{g}_b} g, $$
and $\mathbf{O}(\mathbf{g}_b)$ the $L^2(\mathbf{g}_b)$-kernel of $\mathbf{\Phi}_{\mathbf{g}_b}^{(1)}$, the linearization of $\mathbf{\Phi}_{\mathbf{g}_b}$ at $\mathbf{g}_b$. Note that the elements of $\mathbf{O}(\mathbf{g}_b)$ are traceless and divergence-free, see \cite{ozu2} for instance. According to \cite{ozu2,ozuthese}, for any small enough $v\in \mathbf{O}(\mathbf{g}_b)$, there exists a unique metric $g_v$ which satisfies both $$
        \mathbf{\Phi}_{\mathbf{g}_b}(g_v) = \E(g_v) + \delta^*_{\mathbf{g}_b}\delta_{\mathbf{g}_b}g_v\in \mathbf{O}(\mathbf{g}_b)
\;\text{ and }\; g_v - (\mathbf{g}_b+v)\perp_{L^2(\mathbf{g}_b)} \mathbf{O}(\mathbf{g}_b). $$

We will use the following Lemma in order to prove Proposition \ref{prop orthogonal lie der}.

\begin{lem}\label{terme bord premiere derivees}\label{claim proof}
    Assume that for all $k\leqslant l-1$,  $\partial^k_{s^k|s=0}\mathbf{\Phi}_{\mathbf{g}_b}(g_{sv}) =0$. Then, we have 
    \begin{enumerate}
    \item for all $k\leqslant l-1$, we have:
    $ \partial_{s^k|s=0}^k\E(g_{sv}) = 0, $ and for all $k\leqslant l$, we have $\delta_{\mathbf{g}_b}\partial_{s^k|s=0}^kg_{sv} = 0$,
        \item  $\delta_{\mathbf{g}_b}\Big(\partial^l_{s^l|s=0}\mathbf{\Phi}_{\mathbf{g}_b}(g_{sv}) - \mathbf{\Phi}_{\mathbf{g}_b}^{(1)}(\partial^l_{s^l|s=0}g_{sv})\Big)= 0$, and
        \item for some multilinear function $ Q_g^{(l)}(h_1,...,h_l) = \sum_{m\geqslant 2}\sum_{j_1,...,j_m\geqslant 1}^l \nabla_gh_{j_1}*h_{j_2}*...*h_{j_m}, $
where $*$ denotes various contractions of the tensors, we have:
$$\int_\Omega  \left\langle \mathbf{g}_b,\partial_{s^l{|s= 0}}^l \mathbf{\Phi}_{\mathbf{g}_b}(g_{sv}) - \mathbf{\Phi}_{\mathbf{g}_b}^{(1)}(\partial_{s^l{|s= 0}}^lg_{sv}) \right\rangle_{\mathbf{g}_b}dv_{\mathbf{g}_b} = \int_{\partial\Omega} Q^{(l)}(\partial_{s{|s= 0}}g_{sv},...,\partial_{s^{l-1}{|s= 0}}^{l-1}g_{sv}). $$
    \end{enumerate}
\end{lem}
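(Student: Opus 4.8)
The plan is to prove the three items in order, each feeding into the next, the whole argument being a careful bookkeeping of the Taylor expansion of the nonlinear map $s\mapsto\mathbf{\Phi}_{\mathbf{g}_b}(g_{sv})$ at $s=0$ combined with the structural identity that the second Bianchi identity forces on $\E$.

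\emph{Item (1).} This is a downward induction on $k$ using the defining property of $g_v$. By construction $\mathbf{\Phi}_{\mathbf{g}_b}(g_{sv})\in\mathbf{O}(\mathbf{g}_b)$ for every small $s$, hence every $s$-derivative $\partial_{s^k|s=0}^k\mathbf{\Phi}_{\mathbf{g}_b}(g_{sv})$ lies in $\mathbf{O}(\mathbf{g}_b)$ as well (differentiating a curve valued in a fixed linear space). Similarly $g_{sv}-(\mathbf{g}_b+sv)\perp_{L^2(\mathbf{g}_b)}\mathbf{O}(\mathbf{g}_b)$ gives, after differentiating $k$ times at $s=0$, that $\partial_{s^k|s=0}^kg_{sv}$ is $L^2(\mathbf{g}_b)$-orthogonal to $\mathbf{O}(\mathbf{g}_b)$ for $k\geqslant 2$ (and for $k=1$ it equals $v$ plus something orthogonal, but $v\in\mathbf{O}(\mathbf{g}_b)$, so all of $\partial_{s|s=0}g_{sv}$ matters only through its $\mathbf{O}$-part). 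Now recall that $\mathbf{\Phi}_{\mathbf{g}_b}(g)=\E(g)+\delta^*_{\mathbf{g}_b}\delta_{\mathbf{g}_b}g$ and that $\delta_{\mathbf{g}_b}\delta^*_{\mathbf{g}_b}$ is, on the relevant space, an invertible elliptic operator detecting exactly the gauge; combining the hypothesis $\partial^k_{s^k|s=0}\mathbf{\Phi}_{\mathbf{g}_b}(g_{sv})=0$ for $k\leqslant l-1$ with the elliptic estimate and the orthogonality to $\mathbf{O}(\mathbf{g}_b)$, a standard bootstrap (as in \cite{ozu2,koi}) gives $\delta_{\mathbf{g}_b}\partial^k_{s^k|s=0}g_{sv}=0$ for all $k\leqslant l$ and then $\partial^k_{s^k|s=0}\E(g_{sv})=0$ for all $k\leqslant l-1$. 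The point is that in divergence-free gauge $\mathbf{\Phi}_{\mathbf{g}_b}$ and $\E$ coincide to the order we control.

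\emph{Item (2).} Apply $\delta_{\mathbf{g}_b}$ to the order-$l$ Taylor coefficient. On the one hand, $\delta_{\mathbf{g}_b}\mathbf{\Phi}_{\mathbf{g}_b}(g_{sv}) = \delta_{\mathbf{g}_b}\E(g_{sv}) + \delta_{\mathbf{g}_b}\delta^*_{\mathbf{g}_b}\delta_{\mathbf{g}_b}g_{sv}$; by item (1) the $s^l$-coefficient of $\delta_{\mathbf{g}_b}g_{sv}$ vanishes, so $\delta_{\mathbf{g}_b}\partial^l_{s^l|s=0}\mathbf{\Phi}_{\mathbf{g}_b}(g_{sv}) = \delta_{\mathbf{g}_b}\partial^l_{s^l|s=0}\E(g_{sv})$. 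Now expand $\partial^l_{s^l|s=0}\E(g_{sv})$ by the multilinear formula from the \textbf{Note}: it equals $\E_{\mathbf{g}_b}^{(1)}(\partial^l_{s^l|s=0}g_{sv})$ plus a sum of terms $\E^{(m)}_{\mathbf{g}_b}(\partial^{j_1}_{\ldots}g_{sv},\ldots,\partial^{j_m}_{\ldots}g_{sv})$ with $m\geqslant 2$ and each $j_i\leqslant l-1$; call that sum $R$. Since $\partial^{j}_{s^j|s=0}g_{sv}$ for $1\leqslant j\leqslant l-1$ is (by item (1) again) a divergence-free infinitesimal Ricci-flat deformation — i.e. $\E^{(1)}_{\mathbf{g}_b}$ of it vanishes — the higher gauge identity \eqref{bianchi div 2nd ordre} of Proposition \ref{gauge properties Bianchi}, together with multilinearity (polarization of the $h=h$ statement there), forces $\delta_{\mathbf{g}_b}$ of each such multilinear term to vanish; hence $\delta_{\mathbf{g}_b}R=0$. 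Subtracting, $\delta_{\mathbf{g}_b}\big(\partial^l_{s^l|s=0}\mathbf{\Phi}_{\mathbf{g}_b}(g_{sv})-\mathbf{\Phi}^{(1)}_{\mathbf{g}_b}(\partial^l_{s^l|s=0}g_{sv})\big) = \delta_{\mathbf{g}_b}R + \delta_{\mathbf{g}_b}\big(\E^{(1)}_{\mathbf{g}_b}-\mathbf{\Phi}^{(1)}_{\mathbf{g}_b}\big)(\partial^l_{s^l|s=0}g_{sv}) = 0$, the last bracket vanishing because $\mathbf{\Phi}^{(1)}_{\mathbf{g}_b}-\E^{(1)}_{\mathbf{g}_b}=\delta^*_{\mathbf{g}_b}\delta_{\mathbf{g}_b}$ and $\delta_{\mathbf{g}_b}\partial^l_{s^l|s=0}g_{sv}=0$.

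\emph{Item (3).} Here I integrate by parts. The difference $D_l:=\partial^l_{s^l|s=0}\mathbf{\Phi}_{\mathbf{g}_b}(g_{sv})-\mathbf{\Phi}^{(1)}_{\mathbf{g}_b}(\partial^l_{s^l|s=0}g_{sv})$ equals, by the same expansion as in item (2), the purely multilinear remainder $R$ in the $\E$-expansion (the linear-in-$\partial^l g$ terms cancel by definition of $\mathbf{\Phi}^{(1)}$ up to the gauge piece, which is linear and drops out after pairing with $\mathbf{g}_b$ since $\mathbf{g}_b$ is divergence-free and traceless pairing kills it appropriately — more precisely $\langle\mathbf{g}_b,\delta^*_{\mathbf{g}_b}\delta_{\mathbf{g}_b}h\rangle$ integrates to a boundary term). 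So $\int_\Omega\langle\mathbf{g}_b,D_l\rangle_{\mathbf{g}_b}\,dv_{\mathbf{g}_b} = \int_\Omega\langle\mathbf{g}_b,R\rangle_{\mathbf{g}_b}\,dv_{\mathbf{g}_b}$. Now each summand of $R$ is a bilinear-or-higher expression in the lower jets $h_j:=\partial^j_{s^j|s=0}g_{sv}$, $1\leqslant j\leqslant l-1$; the schematic structure of the second variation of $\Ric$ (two derivatives total, distributed on the arguments) means $\langle\mathbf{g}_b,\E^{(m)}_{\mathbf{g}_b}(h_{j_1},\ldots,h_{j_m})\rangle$ is a sum of terms each containing exactly two covariant derivatives; using that every $h_j$ is divergence-free and that $\E^{(1)}_{\mathbf{g}_b}(h_j)=0$, one integrates by parts to move both derivatives off and collect a pure divergence, whose integral is the claimed boundary term $\int_{\partial\Omega}Q^{(l)}(h_1,\ldots,h_{l-1})$ with $Q^{(l)}$ of exactly the advertised form $\sum_{m\geqslant2}\sum_{j_i}\nabla_{\mathbf{g}_b}h_{j_1}*h_{j_2}*\cdots*h_{j_m}$. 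This is the same mechanism as the integration by parts behind Lemma \ref{divergence killing} and the proof of Theorem \ref{thm intro}; concretely one writes $\langle\mathbf{g}_b, \E^{(2)}_{\mathbf{g}_b}(h,h)\rangle = \tr_{\mathbf{g}_b}\E^{(2)}_{\mathbf{g}_b}(h,h)$ and uses that $\tr_{\mathbf{g}_b}\E^{(2)}$ is, modulo terms killed by $\E^{(1)}_{\mathbf{g}_b}(h)=0$ and $\delta_{\mathbf{g}_b}h=0$, a total divergence — the second variation of the scalar curvature in Bianchi/divergence-free gauge being a divergence up to curvature contractions which here vanish since $\mathbf{g}_b$ is Ricci-flat.

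\textbf{Main obstacle.} The delicate point is item (3): verifying that \emph{every} term in $\tr_{\mathbf{g}_b}$ of the multilinear remainder is genuinely a total $\mathbf{g}_b$-divergence, with no leftover bulk term. This requires using, in combination, Ricci-flatness of $\mathbf{g}_b$ (to kill curvature contractions), the gauge conditions $\delta_{\mathbf{g}_b}h_j=0$ for the lower jets, and the infinitesimal Einstein conditions $\E^{(1)}_{\mathbf{g}_b}(h_j)=0$ (from item (1)); the algebra is the $\mathbf{g}_b$-curved analogue of the flat computation in Proposition \ref{seconde obst conforme} and is exactly where the earlier Pohozaev-type identities are reused. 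Checking that the decay of the $h_j$ and the geometry of $\Omega$ make all integrations by parts legitimate (no hidden boundary contributions at infinity when $\Omega$ is an exhausting domain) is the remaining technical care.
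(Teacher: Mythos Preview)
There is a genuine gap in your argument for items (2) and (3): you assert that for $1\leqslant j\leqslant l-1$ the jets $h_j:=\partial^j_{s^j|s=0}g_{sv}$ are infinitesimal Ricci-flat deformations, i.e.\ $\E^{(1)}_{\mathbf{g}_b}(h_j)=0$. This is false for $j\geqslant 2$. Item (1) gives $\partial^j_{s^j|s=0}\E(g_{sv})=0$, which by Fa\`a di Bruno equals $\E^{(1)}_{\mathbf{g}_b}(h_j)$ \emph{plus} the multilinear remainder in the lower jets; for instance $\E^{(1)}_{\mathbf{g}_b}(h_2)=-\E^{(2)}_{\mathbf{g}_b}(v,v)$, generally nonzero. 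Consequently you cannot invoke Proposition~\ref{gauge properties Bianchi} (or any polarized/higher-order version of it) term by term on $\E^{(m)}_{\mathbf{g}_b}(h_{j_1},\ldots,h_{j_m})$, and your mechanism for both $\delta_{\mathbf{g}_b}R=0$ in (2) and the ``each summand is a total divergence'' step in (3) breaks down.

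The paper sidesteps this entirely. For (2) it never decomposes $R$: it differentiates the full Bianchi identity $\delta_{g_{sv}}\E(g_{sv})=0$ and uses $\partial^k_{s^k|s=0}\E(g_{sv})=0$ for $k\leqslant l-1$ to kill every Leibniz term except $\delta_{\mathbf{g}_b}\partial^l_{s^l|s=0}\E(g_{sv})$, which is therefore zero; together with the linearized Bianchi identity $\delta_{\mathbf{g}_b}\E^{(1)}_{\mathbf{g}_b}(\cdot)=0$ this gives (2). For (3) the paper uses a variational trick you are missing: since $\tr_g\E(g)=\tfrac{2-d}{2}\R(g)$ and the lower $\E$-derivatives vanish, one has $\int_\Omega\tr_{\mathbf{g}_b}\partial^l_{s^l|s=0}\E(g_{sv})\,dv_{\mathbf{g}_b}=\tfrac{2-d}{2}\,\tfrac{d^l}{ds^l}\big|_{s=0}\int_\Omega\R(g_{sv})\,dv_{g_{sv}}$; now differentiate the first variation $\tfrac{d}{ds}\int_\Omega\R(g_{sv})dv_{g_{sv}}=-\int_\Omega\langle\E(g_{sv}),\partial_sg_{sv}\rangle dv_{g_{sv}}-\int_{\partial\Omega}(\delta_{g_{sv}}+d\tr_{g_{sv}})(\partial_sg_{sv})(n)$ a further $l-1$ times. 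The bulk term dies because every $\partial^k\E|_{s=0}$ with $k\leqslant l-1$ vanishes, leaving only the $(l-1)$-th derivative of the boundary integrand, which has exactly the schematic form $Q^{(l)}$. Your item (1) sketch is roughly right in spirit, but the actual inductive engine is again the differentiated Bianchi identity $\partial^k(\delta_{g_{sv}}\E(g_{sv}))=0$ together with invertibility of $\delta_{\mathbf{g}_b}\delta^*_{\mathbf{g}_b}$; the orthogonality to $\mathbf{O}(\mathbf{g}_b)$ that you invoke plays no role there.
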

\begin{proof}

Let us show these three properties in order.
\begin{enumerate}
    \item Notice that for any $k$, $\partial_{s^k}^k \mathbf{\Phi}_{\mathbf{g}_b}(g_{sv}) = \partial_{s^k|s=0}^k\E(g_{sv}) + \delta_{\mathbf{g}_b}^*\delta_{\mathbf{g}_b}(\partial_{s^k|s=0}^kg_{sv})$. Therefore, since $ \delta_{\mathbf{g}_b}\mathbf{\Phi}(g_{sv})=0 $ because $\mathbf{\Phi}(g_{sv})\in \mathbf{O}(\mathbf{g}_b)$, then one has: 
    $$\delta_{\mathbf{g}_b}\Big(\partial_{s^k|s=0}^k\E(g_{sv}) + \delta_{\mathbf{g}_b}^*\delta_{\mathbf{g}_b}(\partial_{s^k|s=0}^kg_{sv})\Big) = 0.$$
    If we moreover assume $\partial_{s^j}^j\E(g_{sv}) = 0$ for all $j\leqslant k-1$, we find
    $\delta_{\mathbf{g}_b}\delta_{\mathbf{g}_b}^*\delta_{\mathbf{g}_b}(\partial_{s^k|s=0}^kg_{sv}) = 0$ since $0 = \partial^k_{s^k|s=0}(\delta_{g_{sv}}\E(g_{sv})) = \delta_{\mathbf{g}_b}\partial^k_{s^k|s=0}\E(g_{sv})$. Since $\delta_{\mathbf{g}_b}\delta_{\mathbf{g}_b}^*$ is invertible on vector fields (or $1$-forms) decaying at infinity by \cite{biq1,ozu2}, we find $ \delta_{\mathbf{g}_b}(\partial_{s^k|s=0}^kg_{sv}) =0 $, but that means that $\partial_{s^k|s=0}^k\E(g_{sv}) = \partial_{s^k|s=0}^k \mathbf{\Phi}_{\mathbf{g}_b}(g_{sv}) - \delta_{\mathbf{g}_b}^*\delta_{\mathbf{g}_b}(\partial_{s^k|s=0}^kg_{sv}) =0$. We can iterate this up to $k = l-1$ since by assumption $ \partial_{s^k|s=0}^k \mathbf{\Phi}_{\mathbf{g}_b}(g_{sv}) =0$ for all $k\leqslant l-1$. This lets us also find $\delta_{\mathbf{g}_b}(\partial_{s^l|s=0}^lg_{sv}) =0$.
    \item For the equality $\delta_{\mathbf{g}_b}\Big(\partial^l_{s^l|s=0}\mathbf{\Phi}_{\mathbf{g}_b}(g_{sv}) - \mathbf{\Phi}_{\mathbf{g}_b}^{(1)}(\partial^l_{s^l|s=0}g_{sv})\Big)= 0$, we first see that $$\partial^l_{s^l|s=0}\mathbf{\Phi}_{\mathbf{g}_b}(g_{sv}) - \mathbf{\Phi}_{\mathbf{g}_b}^{(1)}(\partial^l_{s^l|s=0}g_{sv}) = \partial^l_{s^l|s=0}\E(g_{sv}) - \E_{\mathbf{g}_b}^{(1)}(\partial^l_{s^l|s=0}g_{sv})$$ because $\mathbf{\Phi} = \E + \delta_{\mathbf{g}_b}^*\delta_{\mathbf{g}_b}$. We conclude by noticing as above that $$\delta_{\mathbf{g}_b} \partial^l_{s^l|s=0}\E(g_{sv}) = \partial^l_{s^l|s=0}\Big(\delta_{g_{sv}} \E(g_{sv})\Big) = 0$$ because $\partial^k_{s^k|s=0}\E(g_{sv}) = 0$ for all $k\leqslant l-1$ and $\delta_{\mathbf{g}_b}\E_{\mathbf{g}_b}^{(1)}(\partial^l_{s^l|s=0}g_{sv})=0$ by Proposition \ref{gauge properties Bianchi}. 
    \item The first remark is that
    \begin{align*}
        \partial_{s^l{|s= 0}}^l \mathbf{\Phi}_{\mathbf{g}_b}(g_{sv}) =\;& \Big(\partial_{s^l{|s= 0}}^l \mathbf{\Phi}_{\mathbf{g}_b}(g_{sv}) - \mathbf{\Phi}_{\mathbf{g}_b}^{(1)}(\partial_{s^l{|s= 0}}^lg_{sv})\Big)\\
        &+ \E_{\mathbf{g}_b}^{(1)}(\partial_{s^l{|s= 0}}^lg_{sv})\\
        &+ \delta_{\mathbf{g}_b}^*\delta_{\mathbf{g}_b}\partial_{s^l{|s= 0}}^lg_{sv},
    \end{align*}
    and we will treat separately each of these terms.
    
    We then show, using \eqref{ipp courbure scalaire(1) general}, that the integral of the trace of $\E_{\mathbf{g}_b}^{(1)}(\partial_{s^l{|s= 0}}^lg_{sv})$ is a boundary term:
\begin{equation}
    \begin{aligned}
         \frac{2}{2-d}\int_\Omega  &\left\langle \mathbf{g}_b, \E_{\mathbf{g}_b}^{(1)}(\partial_{s^l{|s= 0}}^lg_{sv}) \right\rangle_{\mathbf{g}_b}dv_{\mathbf{g}_b} \\
         &= - \int_{\partial \Omega} \big(\delta_{\mathbf{g}_b}\partial_{s^l{|s= 0}}^lg_{sv}+d\tr_{\mathbf{g}_b} \partial_{s^l{|s= 0}}^lg_{sv}\big)(n_{\partial\Omega}) dv_{{\mathbf{g}_b}_{|\partial_\Omega}}.
    \end{aligned}\label{terme bord ordre 1}
\end{equation}
Let us now turn to the remaining linear term $\delta^*_{\mathbf{g}_b}\delta_{\mathbf{g}_b}\partial_{s^l{|s= 0}}^lg_{sv}$ in the expression of $\mathbf{\Phi}_{\mathbf{g}_b}^{(1)}(\partial_{s^l{|s= 0}}^lg_{sv})$. By integration by parts, denoting $V = \delta_{\mathbf{g}_b}\partial_{s^l{|s= 0}}^lg_{sv}$, we get:
$$ \int_\Omega  \left\langle \mathbf{g}_b,\delta^*_{\mathbf{g}_b}V \right\rangle_{\mathbf{g}_b}dv_{\mathbf{g}_b} = -2\int_\Omega \delta_{\mathbf{g}_b} V dv_{\mathbf{g}_b} = 2 \int_{\partial\Omega} \langle V,n\rangle_{\mathbf{g}_b} dv_{\mathbf{g}_b|\partial\Omega}, $$
which is again a boundary term. 

Noticing that $$ \partial_{s^{l}{|s= 0}}^{l} \mathbf{\Phi}_{\mathbf{g}_b}(g_{sv}) - \mathbf{\Phi}_{\mathbf{g}_b}^{(1)}(\partial_{s^l{|s= 0}}^lg_{sv})= \partial_{s^{l}{|s= 0}}^{l}\E(g_{sv}) -\E_{\mathbf{g}_b}^{(1)}(\partial_{s^l{|s= 0}}^lg_{sv}), $$ we can therefore focus on showing that the integral of $\partial_{s^{l}{|s= 0}}^{l} \E_{\mathbf{g}_b}(g_{sv})$ against $\mathbf{g}_b$ is equal to a boundary term. Since the $l-1$ first derivatives vanish, we have:
\begin{equation}
    \begin{aligned}
         \frac{2}{2-d}\int_\Omega  \left\langle \mathbf{g}_b,\partial_{s^{l}{|s= 0}}^{l} \E(g_{sv}) \right\rangle_{\mathbf{g}_b}dv_{\mathbf{g}_b} &= \frac{d^l}{ds^l}_{|s= 0}\int_\Omega \R(g_{sv})dv_{g_{sv}} \\
         &= -\frac{d^{l-1}}{ds^{l-1}}_{|s= 0}\int_\Omega \left\langle \E(g_{sv}),\partial_sg_{sv} \right\rangle_{g_{sv}} dv_{g_{sv}} \\
         &\;\;\;\;- \frac{d^{l-1}}{ds^{l-1}}_{|s= 0}\int_{\partial \Omega} \big(\delta_{g_{sv}}(\partial_{s}g_{sv})+d\tr_{g_{sv}} (\partial_{s}g_{sv})\big)(n_{\partial\Omega}) dv_{{g_{sv}}_{|\partial_\Omega}},
    \end{aligned}
\end{equation}
where the first term vanishes because $\partial^k_{s^k{|s= 0}}\E(g_{sv}) = 0$ for all $k\leqslant l-1$ by the first point. Together with \eqref{terme bord ordre 1}, this proves the result.
\end{enumerate}
\end{proof}
The point of Lemma \ref{terme bord premiere derivees} is that if the different $\partial^k_{s^k|s= 0}g_{sv}$ decay faster than $r^{-2+\epsilon}$ at infinity for some small $\epsilon>0$ in dimension $4$, then, the boundary term
$$ \lim_{r\to \infty}\int_{r\mathbb{S}^3\slash\Gamma}Q^{(l)}(\partial_{s{|s= 0}}g_{sv},...,\partial_{s^{l-1}{|s= 0}}^{l-1}g_{sv}) $$
which will appear in the following proof will always vanish.

\begin{proof}[Proof of Proposition \ref{prop orthogonal lie der}]
Using the results of Appendix \ref{sec dep analytic} (very close to the proof of \cite{koi}), we know that for any $v\in \mathbf{O}(\mathbf{g}_b)$ small enough, the map $s\in(-1,1)\mapsto g_{sv}$ is real-analytic in the so-called $ C^{2,\alpha}_{\beta}(\mathbf{g}_b) $-topology defined in Appendix \ref{sec fct space} for $0<\beta<2$ close to $2$, say $\beta = 1.9$. We mainly use the following consequence. We have a $C^{2,\alpha}_{\beta}(\mathbf{g}_b)$-converging development $g_{sv} = \mathbf{g}_b + sv +\sum_{k\geqslant 2}s^kw_k$ around $s=0$ where for any $k$, there exists $C= C(k)>0$ with for $l\in\{0,1,2\}$:
\begin{equation}
    r^{1.9+l}|\nabla_{\mathbf{g}_b}^lw_k|_{\mathbf{g}_b}\leqslant C.\label{controle wk}
\end{equation}
This induces a $ C^\alpha_{\beta+2} $-converging development (see again Appendix \ref{appendix norm analyticity}):
\begin{align*}
    \mathbf{\Phi}_{\mathbf{g}_b}(g_{sv}) =&\; s^2\big(\mathbf{\Phi}_{\mathbf{g}_b}^{(1)}(w_2)+\mathbf{\Phi}_{\mathbf{g}_b}^{(2)}(v,v)\big)&\in \mathbf{O}(\mathbf{g}_b)\\
    &+s^3\big(\mathbf{\Phi}_{\mathbf{g}_b}^{(1)}(w_3)+\big(2\mathbf{\Phi}_{\mathbf{g}_b}^{(2)}(w_2,v)+\mathbf{\Phi}_{\mathbf{g}_b}^{(2)}(v,v,v)\big)&\in \mathbf{O}(\mathbf{g}_b)\\
    &+...&\in \mathbf{O}(\mathbf{g}_b)\\
    &+\frac{s^l}{l!}\partial^l_{s^l|s=0}\mathbf{\Phi}_{\mathbf{g}_b}(g_{sv})&\in \mathbf{O}(\mathbf{g}_b)\\
    &+...&\in \mathbf{O}(\mathbf{g}_b)\\
\end{align*}

Now, by the analysis of the Fredholm properties of $\mathbf{\Phi}_{\mathbf{g}_b}$ in \cite{ozu2}, we know that the $\mathbf{\Phi}_{\mathbf{g}_b}^{(1)}(w_l) = \frac{1}{l!}\mathbf{\Phi}_{\mathbf{g}_b}^{(1)}(\partial^l_{s^l|s=0}g_{sv})$ are $L^2(\mathbf{g}_b)$-orthogonal to $\mathbf{O}(\mathbf{g}_b)$, hence we need to study the $L^2(\mathbf{g}_b)$-projection of $\partial^l_{s^l|s=0}\mathbf{\Phi}_{\mathbf{g}_b}(g_{sv})-\mathbf{\Phi}_{\mathbf{g}_b}^{(1)}(\partial^l_{s^l|s=0}g_{sv})$ on $\mathbf{O}(\mathbf{g}_b)$.

If $\mathbf{\Phi}_{\mathbf{g}_b}(g_{sv})$ is identically vanishing, then we are done. If not, assume that $\partial^k_{s^k|s=0}\mathbf{\Phi}_{\mathbf{g}_b}(g_{sv}) = 0$ for all $k\leqslant l-1$, but $\partial^l_{s^l|s=0}\mathbf{\Phi}_{\mathbf{g}_b}(g_{sv}) =: \mathbf{o}_v\neq 0$. We want to show that $\mathbf{o}_v\perp (\mathcal{L}_X\mathbf{g}_b)^\circ$ and $\mathbf{o}_v\perp \mathcal{L}_{Y'}\mathbf{g}_b$ for $Y'$ harmonic vector field asymptotic to a Killing vector field. 

Let us now apply our integration by parts formula \eqref{ipp div free 2tensor} to the divergence-free $2$-tensor $S^{(l)}(v):=\partial^l_{s^l|s=0}\mathbf{\Phi}_{\mathbf{g}_b}(g_{sv}) - \mathbf{\Phi}_{\mathbf{g}_b}^{(1)}(\partial^l_{s^l|s=0}g_{sv})$, this gives
\begin{align*}
    0=&\;\lim_{r\to\infty}\int_{r\mathbb{S}^3/\Gamma} S^{(l)}(v)(X,\partial_r)dv_{r\mathbb{S}^3/\Gamma}\\
    =&\; \frac{1}{2}\int_N \left\langle S^{(l)}(v),(\mathcal{L}_{X}\mathbf{g}_b)^\circ \right\rangle_{\mathbf{g}_b}dv_{\mathbf{g}_b} \\
&+\int_N  \tr_{\mathbf{g}_b}S^{(l)}(v)dv_{\mathbf{g}_b},
\end{align*}
where the first boundary term vanishes because it is a finite linear combination of terms of the form $\E^{(k)}_{\mathbf{g}_b}(\partial_{s^{j_1}}^{j_1}g_{sv},...,\partial_{s^{j_k}}^{j_k}g_{sv})$ with $k\geqslant 2$ and $j_i\geqslant 1$. Indeed, we know from the results of Section \ref{sec dep analytic} in the appendix (or \eqref{controle wk}) that $\partial_{s^{j}}^{j}g_{sv}\in C^{2,\alpha}_{1.9}$ for every $j\geqslant 1$, which implies that for any $a\in \{0,1,2\}$, $r^{a+1.9}|\nabla^a\partial_{s^{j}}^{j}g_{sv}|\leqslant C_j$ for some $C_j>0$. This gives: $\E^{(k)}_{\mathbf{g}_b}(\partial_{s^{j_1}}^{j_1}g_{sv},...,\partial_{s^{j_k}}^{j_k}g_{sv}) = \mathcal{O}(r^{-2-k\cdot 1.9}) = o(r^{-4})$.
Similarly, the last term $\int_N  \tr_{\mathbf{g}_b}S^{(l)}(v)dv_{\mathbf{g}_b}$ is a boundary term of the same type by Lemma \ref{terme bord premiere derivees} and it vanishes for the same reason.

The proof for the different $\mathcal{L}_{Y'}\mathbf{g}_b$ is similar and easier because there is no trace term to deal with.
\end{proof}
\begin{rem}
    Again, the proof is very close to that of Theorem \ref{obst hk a scal2 nul} because in volume gauge, one has $ \nabla_{\mathbf{g}_b}u \sim r\partial_r$ at an order higher than expected.
\end{rem}

\subsubsection{Obstruction to the desingularization of spherical and hyperbolic orbifolds.}
Let us now prove our main result.
\begin{proof}[Proof of Theorem \ref{obst sph hyp}]
Let us assume that there exists a sequence of Einstein metrics $(M,\mathbf{g}_n)_n$ converging to an Einstein orbifold $(M_o,\mathbf{g}_o)$ with $\E(\mathbf{g}_o)+\lambda\mathbf{g}_o=0$. Then, according to \cite{ozu1,ozu2,ozu3}, up to taking a subsequence, there exist $(t_n)_n$, $t_n>0$, $(v_n)_n$, $v_n\in \mathbf{O}(\mathbf{g}_b)$ such that $(M,\mathbf{g}_n/t_n)_n$ is close to $(N,\mathbf{g}_b) $ in the following sense: for all $r\ll t_n^{-\frac{1}{2}}$, we have for some $0<\beta<1$
\begin{equation}
    (1+r)^{k+\beta}\Big|\nabla_{\mathbf{g}_b}^k\Big(\frac{\mathbf{g}_n}{t_n} - \big(g_{v_n} + t_n h_2\big)\Big)\Big|_{\mathbf{g}_b} = o(t_n)\label{controle dégén einstein}
\end{equation}
where $h_2$ is a solution to the following equation:
\begin{equation}
        \mathbf{\Phi}_{\mathbf{g}_b}^{(1)}(h_2)+\lambda \mathbf{g}_b \in  \mathbf{O}(\mathbf{g}_b).
\end{equation}

It was shown in \cite{ozu2} that the first obstruction against $(\mathcal{L}_{X}\mathbf{g}_b)^\circ$ to the desingularization is:
\begin{equation}
    \int_N\left\langle \mathbf{\Phi}_{\mathbf{g}_b}(\mathbf{g}_n/t_n)+t_n\lambda (\mathbf{g}_n/t_n) , \chi_{t_n}(\mathcal{L}_{X}\mathbf{g}_b)^\circ \right\rangle_{\mathbf{g}_b}dv_{\mathbf{g}_b}=0.\label{obst ozu2 seconde ecriture}
\end{equation}
Or goal is to show that it cannot be satisfied when desingularizing a spherical or hyperbolic orbifold.
\\

Recall from Proposition \ref{prop orthogonal lie der} that one has $\mathbf{\Phi}(g_{v_n}) = \mathbf{o}_{v_n} + \mathcal{O}(\|v_n\|_{L^2(\mathbf{g}_b)}\|\mathbf{o}_{v_n}\|_{L^2(\mathbf{g}_b)})\in \mathbf{O}(\mathbf{g}_b)$. If $ \mathbf{o}_{v_n} = 0$, then we are done by \cite{ozuthese}, let us assume that it is not satisfied. The obstruction against the deformation $\mathbf{w}_{v_n}:=\frac{\mathbf{o}_{v_n}}{\|\mathbf{o}_{v_n}\|_{L^2(\mathbf{g}_b)}}$ is \begin{equation}
    \int_N\left\langle \mathbf{\Phi}_{\mathbf{g}_b}(\mathbf{g}_n/t_n)+t_n\lambda (\mathbf{g}_n/t_n) , \chi_{t_n}\mathbf{w}_{v_n} \right\rangle_{\mathbf{g}_b}dv_{\mathbf{g}_b}=0.\label{obst ozu2 seconde ecriture vn}
\end{equation}

Let us now estimate \eqref{obst ozu2 seconde ecriture}. We use the control \eqref{controle dégén einstein} which tells us that:
\begin{equation}
    \mathbf{\Phi}_{\mathbf{g}_b}(\mathbf{g}_n/t_n)+t_n\lambda(\mathbf{g}_n/t_n) = \mathbf{\Phi}_{\mathbf{g}_b}(g_{v_n}) + t_n\mathbf{\Phi}^{(1)}_{\mathbf{g}_b}(h_2) + t_n\lambda \mathbf{g}_b +o(t_n (1+r)^{-2-\beta}).\label{dvp Phi h_n}
\end{equation}
We will therefore decompose \eqref{obst ozu2 seconde ecriture} and estimate each part of the integral.
\begin{enumerate}
    \item For the first integral
    $\int_N\left\langle\mathbf{\Phi}(g_{v_n}) , \chi_{t_n} (\mathcal{L}_{X}\mathbf{g}_b)^\circ \right\rangle_{\mathbf{g}_b}dv_{\mathbf{g}_b}$
    we use the estimate $$\int_N\left\langle\mathbf{\Phi}(g_{v_n}) , (\mathcal{L}_{X}\mathbf{g}_b)^\circ \right\rangle_{\mathbf{g}_b}dv_{\mathbf{g}_b}=\mathcal{O}(\|v_n\|_{L^2}\|\mathbf{o}_{v_n}\|_{L^2})$$ together with $\mathbf{\Phi}(g_{v_n}) = \mathcal{O}(\|\mathbf{o}_{v_n}\|_{L^2}r^{-4})$, which gives
    \begin{align*}
        \int_N\left\langle\mathbf{\Phi}(g_{v_n}) , (1-\chi_{t_n}) (\mathcal{L}_{X}\mathbf{g}_b)^\circ \right\rangle_{\mathbf{g}_b}dv_{\mathbf{g}_b} &= \mathcal{O}\Big(\int_{t_n^{-\frac{1}{4}}}^\infty\|\mathbf{o}_{v_n}\|_{L^2}r^{-6} r^{-4} r^3dr \Big)
        \\
        &=  \mathcal{O}(t_n\|\mathbf{o}_{v_n}\|_{L^2}).
    \end{align*}
    We finally find
    \begin{equation}
        \int_N\left\langle\mathbf{\Phi}(g_{v_n}) , \chi_{t_n} (\mathcal{L}_{X}\mathbf{g}_b)^\circ \right\rangle_{\mathbf{g}_b}dv_{\mathbf{g}_b} = \mathcal{O}((\|v_n\|_{L^2}+t_n)\|\mathbf{o}_{v_n}\|_{L^2}),
    \end{equation}
    \item let us denote $\lambda_1:=\int_N\left\langle\mathbf{\Phi}^{(1)}_{\mathbf{g}_b}(h_2)+ \lambda \mathbf{g}_b, (\mathcal{L}_{X}\mathbf{g}_b)^\circ \right\rangle_{\mathbf{g}_b}dv_{\mathbf{g}_b}$. We find:
    $$t_n\int_N \left\langle\mathbf{\Phi}^{(1)}_{\mathbf{g}_b}(h_2) + \lambda \mathbf{g}_b, \chi_{t_n}(\mathcal{L}_{X}\mathbf{g}_b)^\circ \right\rangle_{\mathbf{g}_b}dv_{\mathbf{g}_b} = t_n\lambda_1 + o(t_n), $$
    \item for the error term, we have $o(t_n)\int_N \left\langle (1+r)^{-2-\beta}, \chi_{t_n}(\mathcal{L}_{X}\mathbf{g}_b)^\circ \right\rangle_{\mathbf{g}_b}dv_{\mathbf{g}_b} = o(t_n)$.
\end{enumerate}

From \eqref{obst ozu2 seconde ecriture}, we finally find the estimate: 
\begin{equation}
    0 = t_n \lambda_1+\mathcal{O}(\|v_n\|_{L^2}\|\mathbf{o}_{v_n}\|_{L^2})+o(t_n).\label{estimate o1}
\end{equation}

Similarly, using again \eqref{dvp Phi h_n}, we then estimate \eqref{obst ozu2 seconde ecriture vn} in three parts:
\begin{enumerate}
    \item For the first integral
    $\int_N\left\langle\mathbf{\Phi}(g_{v_n}) , \chi_{t_n}\mathbf{w}_{v_n} \right\rangle_{\mathbf{g}_b}dv_{\mathbf{g}_b}$, we use Proposition \ref{prop orthogonal lie der}
    which implies that $$\int_N\left\langle\mathbf{\Phi}(g_{v_n}) , \mathbf{w}_{v_n} \right\rangle_{\mathbf{g}_b}dv_{\mathbf{g}_b} = \|\mathbf{o}_{v_n}\|_{L^2(\mathbf{g}_b)}+\mathcal{O}(\|v_n\|_{L^2(\mathbf{g}_b)}\|\mathbf{o}_{v_n}\|_{L^2(\mathbf{g}_b)}).$$ Since $\mathbf{\Phi}(g_{v_n}) = \mathcal{O}(\|\mathbf{o}_{v_n}\|_{L^2}r^{-4})$, we estimate the difference
    \begin{align*}
        \int_N\left\langle\mathbf{\Phi}(g_{v_n}) , (1-\chi_{t_n})\mathbf{w}_{v_n} \right\rangle_{\mathbf{g}_b}dv_{\mathbf{g}_b} &= \mathcal{O}\Big(\int_{t_n^{-\frac{1}{4}}}^\infty\|\mathbf{o}_{v_n}\|_{L^2}r^{-6} r^{-4} r^3dr \Big)
        \\
        &= \mathcal{O}(t_n\|\mathbf{o}_{v_n}\|_{L^2}),
    \end{align*}
    and finally
    \begin{equation}
        \int_N\left\langle\mathbf{\Phi}(g_{v_n}) , \chi_{t_n}\mathbf{w}_{v_n} \right\rangle_{\mathbf{g}_b}dv_{\mathbf{g}_b} = \|\mathbf{o}_{v_n}\|_{L^2} + \mathcal{O}((t_n+\|{v_n}\|_{L^2})\|\mathbf{o}_{v_n}\|_{L^2}),
    \end{equation}
    \item let us denote $\mu_{v_n}:=\int_N\left\langle\mathbf{\Phi}^{(1)}_{\mathbf{g}_b}(h_2)+ \lambda \mathbf{g}_b,\mathbf{w}_{v_n} \right\rangle_{\mathbf{g}_b}dv_{\mathbf{g}_b}$. We find:
    $$t_n\int_N\left\langle\mathbf{\Phi}^{(1)}_{\mathbf{g}_b}(h_2) + \lambda \mathbf{g}_b, \chi_{t_n}\mathbf{w}_{v_n}\right\rangle_{\mathbf{g}_b}dv_{\mathbf{g}_b} = t_n\mu_{v_n} + o(t_n), $$
    \item for the error term, we have $o(t_n)\int_N \left\langle (1+r)^{-2-\beta}, \chi_{t_n}\mathbf{w}_{v_n}\right\rangle_{\mathbf{g}_b}dv_{\mathbf{g}_b} = o(t_n)$.
\end{enumerate}
From \eqref{obst ozu2 seconde ecriture vn}, we finally find the estimate 
\begin{equation}
    0 = \|\mathbf{o}_{v_n}\|_{L^2(\mathbf{g}_b)} + t_n \mu_{v_n}+o(t_n+\|\mathbf{o}_{v_n}\|_{L^2(\mathbf{g}_b)}).\label{estimate vn}
\end{equation}
which gives $\|\mathbf{o}_{v_n}\|_{L^2(\mathbf{g}_b)} =\mathcal{O}(t_n)$ because $\mu_{v_n}$ is bounded since $\mathbf{O}(\mathbf{g}_b)$ is finite-dimensional. 

Finally by plugging $\|\mathbf{o}_{v_n}\|_{L^2(\mathbf{g}_b)} =\mathcal{O}(t_n)$ in \eqref{estimate o1}, we find 
$ |\lambda_1| =\mathcal{O}(\|v_n\|_{L^2}) $.
Now, since $\lambda_1$ is a constant, and since $(t_n,v_n)\to 0$, we obtain $\lambda_1 = 0$. It is impossible to satisfy if $H_2$ is the quadratic term of the development of a spherical or hyperbolic orbifold by Corollary \ref{obst sph hyp base}.
\end{proof}
\begin{rem}
    The above proof applies indifferently to other Einstein orbifolds and to the other deformations $\mathcal{L}_{Y'}\mathbf{g}_b$. In particular, it shows that the integrability assumption in Theorem \ref{obstruction desing taub's} is superfluous.
\end{rem}

We believe that the result should hold for spherical and hyperbolic orbifolds with more general singularities than $\mathbb{R}^4\slash\mathbb{Z}_2$, but this requires dealing with trees of singularities. The main difficulty is that it is not known whether the projection on the obstruction is a real-analytic map or not in this degenerate situation.
\begin{conj}
    Let $(M_o,\mathbf{g}_o)$ be a singular spherical or hyperbolic compact orbifold. Then, it is not limit of smooth Einstein metrics in the Gromov-Hausdorff sense.
\end{conj}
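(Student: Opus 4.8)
The plan is to run the argument proving Theorem \ref{obst sph hyp} at the root of the bubble tree of a degenerating sequence, the only genuinely new point being that for a general orbifold singularity $\mathbb{R}^4/\Gamma$ the Ricci-flat ALE bubble formed there need no longer be smooth. The conceptual obstruction is already in place and unchanged: if $(N,\mathbf{g}_b)$ is the bubble produced at the singular point $p$ with group $\Gamma$, then testing the Einstein-modulo-obstructions equation against $\chi_{t_n}(\mathcal{L}_X\mathbf{g}_b)^\circ$ as in \eqref{obst ozu2 seconde ecriture} produces the leading coefficient $\lambda_1$ of the obstruction \eqref{premiere equation rdr}, and by Corollary \ref{obst sph hyp base} and \eqref{obst en terme de courbure et Weyl} this $\lambda_1$ equals, up to a nonzero constant, $\Lambda\, c(\Gamma)\,\mathcal{V}(N,\mathbf{g}_b)$, because a spherical or hyperbolic orbifold has $W^\pm_{\mathbf{g}_o}=0$ while $\Lambda\neq 0$. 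Since the first bubble is necessarily non-flat, $\mathcal{V}(N,\mathbf{g}_b)<0$ by \cite{bh}, so $\lambda_1\neq 0$. The entire content is therefore to show that the existence of a desingularizing sequence would force $\lambda_1=0$.

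First I would set up the bubble-tree structure following \cite{ozu1,ozu2,ozu3}: along $(M,\mathbf{g}_n)\to(M_o,\mathbf{g}_o)$, each orbifold point gives rise to a finite rooted tree whose nodes carry Ricci-flat ALE orbifolds $(N_v,\mathbf{g}_{b,v})$ asymptotic to $\mathbb{R}^4/\Gamma_v$, glued along neck regions to their children, the leaves being smooth ALE manifolds. At the root node attached to $p$ one has the Einstein-modulo-obstructions description \eqref{controle dégén einstein} with some $v_n\in\mathbf{O}(\mathbf{g}_b)$, and the two integrations by parts of the proof of Theorem \ref{obst sph hyp}, against $\chi_{t_n}(\mathcal{L}_X\mathbf{g}_b)^\circ$ and against the deformation $\mathbf{w}_{v_n}$ built from the leading-order obstruction $\mathbf{o}_{v_n}$, yield the two estimates \eqref{estimate o1} and \eqref{estimate vn}. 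Plugging $\|\mathbf{o}_{v_n}\|_{L^2}=\mathcal{O}(t_n)$ from \eqref{estimate vn} into \eqref{estimate o1} and letting $(t_n,v_n)\to 0$ then forces $\lambda_1=0$, the desired contradiction with Corollary \ref{obst sph hyp base}.

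For this cascade to run one must upgrade the two analytic inputs used in the $\mathbb{R}^4/\mathbb{Z}_2$ case to the situation where $(N,\mathbf{g}_b)$ is itself a singular ALE orbifold carrying a non-trivial bubble tree. First, the analogue of Proposition \ref{prop orthogonal lie der}: the leading-order obstruction $\mathbf{o}_v$ must be $L^2(\mathbf{g}_b)$-orthogonal to $(\mathcal{L}_X\mathbf{g}_b)^\circ$ and to the $\mathcal{L}_{Y'}\mathbf{g}_b$. Its proof is the integration by parts \eqref{ipp div free 2tensor} applied to the divergence-free tensor $S^{(l)}(v)$, and it should survive verbatim provided all the $\partial_{s^j}^j g_{sv}$ still decay like $o(r^{-2})$ at the infinity of $N$ and provided the extra moduli carried by the deeper bubbles produce only boundary terms of order $o(r^{-4})$; this should follow from the weighted Schauder estimates of \cite{ozu2}, but has to be established uniformly over the whole tree rather than for a single smooth ALE space. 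Second, and more seriously, one needs the \emph{real-analyticity} in $s$ of $s\mapsto\mathbf{\Phi}_{\mathbf{g}_b}(g_{sv})$ and of the projection of the gluing map onto $\mathbf{O}(\mathbf{g}_b)$: in the smooth case this is the Koiso-type argument reproduced in the Appendix, but here one is gluing a whole tree of ALE orbifolds parametrized by a large collection of scale and gauge moduli, and the resulting Kuranishi map is not manifestly analytic. Granting both inputs, the estimates cascade from the leaves of the tree upward, giving $\|\mathbf{o}_{v_n}\|_{L^2}=\mathcal{O}(t_n)$ at every node and $\lambda_1=0$ at the root.

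The main obstacle, exactly as flagged after the proof of Theorem \ref{obst sph hyp}, is the second input: establishing real-analyticity, or a usable substitute such as a finite-determinacy statement or a {\L}ojasiewicz-type inequality for the obstruction projection strong enough to run \eqref{estimate o1}--\eqref{estimate vn}, for the gluing construction over a tree of singularities. A secondary technical difficulty is to control the decay of all intermediate deformations sharply enough that the neck regions joining $N$ to its child bubbles contribute no surviving boundary term in the integration by parts of the first input; this is precisely where the weighted estimates of \cite{ozu2} would have to be proved for singular ALE orbifolds and not just for smooth ALE manifolds.
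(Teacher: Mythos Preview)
The statement you are attempting to prove is a \emph{conjecture} in the paper, not a theorem. The paper does not provide a proof; it explicitly leaves the general case open and states precisely the obstacle: ``The main difficulty is that it is not known whether the projection on the obstruction is a real-analytic map or not in this degenerate situation.'' Your proposal is therefore not being compared against a proof in the paper but against the paper's own acknowledgment that the problem is open.

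Your proposal correctly identifies the intended strategy and the genuine obstacles. You have accurately located the two missing ingredients: the extension of Proposition \ref{prop orthogonal lie der} to singular ALE orbifolds, and, more seriously, the real-analyticity (or a substitute) of the Kuranishi-type map over a full bubble tree. You also correctly flag the secondary issue of controlling boundary terms across neck regions. This is exactly the gap the paper flags.

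However, your proposal is a proof \emph{plan}, not a proof. You write ``Granting both inputs, the estimates cascade\ldots'' and then devote the final paragraph to explaining why these inputs are not currently available. A proof cannot grant its own hypotheses. The real-analyticity input in particular is not a technicality: the Appendix argument relies on the implicit function theorem between fixed Banach spaces, but for a tree of singularities the relevant function spaces themselves degenerate as the gluing scales go to zero, and there is no obvious single Banach manifold on which to run the argument. Neither a \L{}ojasiewicz inequality nor a finite-determinacy statement is supplied; you merely name them as possible substitutes. Until one of these is established, the conjecture remains open, and your proposal does not close it.
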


\subsection{Higher dimensional Einstein orbifolds with isolated singularities}

The work of \cite{ozu1,ozu2} extends almost verbatim to the degeneration of Einstein $d$-manifolds ($d\geqslant 5$) satisfying the (non natural) assumption that the $L^{\frac{d}{2}}$-norm of its curvature is bounded. It shows that, exactly like in dimension $4$, the possible Gromov-Hausdorff limits are Einstein orbifolds with isolated singularities and the singularity models are Ricci-flat ALE orbifolds. Indeed the results of \cite{ozu1,ozu2,ozuthese} only use the fact that the dimension is $4$ to obtain a bound on the $L^2$-norm of the Riemannian curvature from the noncollapsedness assumption.

An obstruction to the desingularization under essentially the same assumptions as \cite{biq1} was proven in \cite{mv} for higher dimensional desingularizations. Namely, one considers the desingularization by the so-called \emph{Calabi metric} denoted $\mathbf{g}_{cal}$ which is $2d$-dimensional, Ricci-flat ALE and asymptotic to $\mathbb{R}^{2d}\slash\mathbb{Z}_d$.

\begin{lem}
    The kernel $\mathbf{O}(\mathbf{g}_{cal})$ is $1$-dimensional and spanned by $(\mathcal{L}_X\mathbf{g}_{cal})^\circ$, where $X$ is a harmonic vector field asymptotic to $r\partial_r$.
\end{lem}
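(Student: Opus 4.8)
The plan is to use that $\mathbf{g}_{cal}$ is Ricci-flat Kähler (Calabi--Yau): it lives on the crepant resolution $X_d=\mathcal{O}_{\mathbb{CP}^{d-1}}(-d)\to\mathbb{C}^d/\mathbb{Z}_d$ (the blow-up of the origin, with exceptional divisor $E=\mathbb{CP}^{d-1}$ and $N_E=\mathcal{O}(-d)$), carrying a complex structure $J$ and Kähler form $\omega_{cal}$. Since $\mathbf{g}_{cal}$ is Ricci-flat and elements of $\mathbf{O}(\mathbf{g}_{cal})$ are trace- and divergence-free, $\mathbf{\Phi}^{(1)}_{\mathbf{g}_{cal}}$ coincides on them with half the Lichnerowicz Laplacian $\Delta_L$, so $\mathbf{O}(\mathbf{g}_{cal})=\{h\in L^2:\ \delta_{\mathbf{g}_{cal}}h=0,\ \tr_{\mathbf{g}_{cal}}h=0,\ \Delta_L h=0\}$. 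First I would split $h=h^{\mathrm{inv}}+h^{\mathrm{anti}}$ into its $J$-invariant and $J$-anti-invariant parts; on a Ricci-flat Kähler manifold the three defining conditions are each preserved by this splitting, so $\mathbf{O}(\mathbf{g}_{cal})=\mathbf{O}^{\mathrm{inv}}\oplus\mathbf{O}^{\mathrm{anti}}$. The standard Kähler correspondences (Koiso; cf. Besse) then identify $\mathbf{O}^{\mathrm{inv}}$, via $h\mapsto h(J\cdot,\cdot)$, with the space $\mathcal{H}^{1,1}_{\mathrm{prim},L^2}$ of $L^2$-harmonic real primitive $(1,1)$-forms, and $\mathbf{O}^{\mathrm{anti}}$, via Kodaira--Spencer, with the real points of the space $\mathcal{H}^{0,1}_{L^2}(\Theta_{X_d})$ of $L^2$-harmonic $\Theta_{X_d}$-valued $(0,1)$-forms. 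The one non-formal input here is to upgrade these compact statements to the complete ALE Calabi--Yau category, which goes through by standard $L^2$-Hodge theory once the Weitzenböck identities and the compatibility of the isomorphisms with $L^2$-decay are checked.

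Next I would compute the two summands. For $\mathbf{O}^{\mathrm{inv}}$: an $L^2$-harmonic $(1,1)$-form $\varphi$ has $\Lambda\varphi$ harmonic (Kähler identities), hence constant, hence $0$, so $\varphi$ is automatically primitive; and the holonomy $SU(d)$ with $d\geqslant 3$ excludes $L^2$-harmonic $(2,0)$- and $(0,2)$-forms, so $\mathcal{H}^{1,1}_{\mathrm{prim},L^2}=\mathcal{H}^2_{L^2}(X_d)$, the reduced degree-$2$ $L^2$-de Rham cohomology. As $2d\geqslant 6$, degree $2$ lies below the middle, so $H^2_{L^2}(X_d)\cong\mathrm{Im}\big(H^2_c(X_d)\to H^2_{dR}(X_d)\big)$; since $X_d$ retracts onto $E=\mathbb{CP}^{d-1}$ we get $H^2_{dR}(X_d)=\mathbb{R}$, and the link $S^{2d-1}/\mathbb{Z}_d$ has vanishing real $H^2$, so the restriction map is onto and $\dim\mathbf{O}^{\mathrm{inv}}=1$. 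For $\mathbf{O}^{\mathrm{anti}}$: I would show $\mathcal{H}^{0,1}_{L^2}(\Theta_{X_d})=0$. This follows from rigidity of the complex manifold $X_d$: $\mathbb{C}^d/\mathbb{Z}_d$ is an isolated quotient singularity of dimension $\geqslant 3$, hence rigid (Schlessinger), so its crepant resolution satisfies $H^1(X_d,\Theta_{X_d})=0$ and a fortiori has no $L^2$-harmonic Beltrami differentials. (One can instead compute $H^1(X_d,\Theta_{X_d})=0$ directly from the ideal-sheaf and normal-bundle exact sequences along $E=\mathbb{CP}^{d-1}$, with $N_E=\mathcal{O}(-d)$, and the pushforward along the projection $X_d\to\mathbb{CP}^{d-1}$.) Thus $\mathbf{O}(\mathbf{g}_{cal})=\mathbf{O}^{\mathrm{inv}}$ is $1$-dimensional. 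The case $d=2$ is genuinely different, since the $A_1$-singularity smooths and $\mathbf{O}(\mathbf{eh})$ acquires extra directions, but that is the Eguchi--Hanson metric, already treated in \cite{biq1}.

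Finally I would identify the spanning element. The construction of \cite{bh} of a harmonic vector field asymptotic to $r\partial_r$ works verbatim in dimension $2d$: $X=\tfrac12\nabla_{\mathbf{g}_{cal}}u$ with $\Delta_{\mathbf{g}_{cal}}u$ constant and $u=r^2+o(1)$, and $(\mathcal{L}_X\mathbf{g}_{cal})^\circ=\mathcal{L}_X\mathbf{g}_{cal}-2\mathbf{g}_{cal}\in\mathbf{O}(\mathbf{g}_{cal})$ as in Proposition \ref{def by scaling}. This deformation is $J$-invariant, because on the Calabi metric $\nabla u$ is twice the real part of the Euler vector field generating the fibrewise $\mathbb{C}^\ast$-action on $\mathcal{O}(-d)$, which is holomorphic, so $\mathcal{L}_XJ=0$ and $\mathcal{L}_X\mathbf{g}_{cal}$ is Hermitian; hence $(\mathcal{L}_X\mathbf{g}_{cal})^\circ\in\mathbf{O}^{\mathrm{inv}}$. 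It is nonzero: $\tr_{\mathbf{g}_{cal}}(\mathcal{L}_X\mathbf{g}_{cal})=2\,\mathrm{div}_{\mathbf{g}_{cal}}X$ is constant, so if $(\mathcal{L}_X\mathbf{g}_{cal})^\circ=0$ then $X$ would be a non-Killing homothetic field on a complete Ricci-flat manifold, forcing $\mathbf{g}_{cal}$ flat, a contradiction. Therefore $(\mathcal{L}_X\mathbf{g}_{cal})^\circ$ spans the $1$-dimensional space $\mathbf{O}(\mathbf{g}_{cal})$. The main obstacle is packaged in the first two paragraphs: making the $L^2$-Hodge and Kodaira--Spencer reduction precise in the non-compact ALE Calabi--Yau category, and establishing $\mathcal{H}^{0,1}_{L^2}(\Theta_{X_d})=0$ via complex rigidity; the rest is bookkeeping around already-available facts.
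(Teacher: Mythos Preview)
Your proposal is correct, but it takes a substantially different route from the paper on the first and hardest point. The paper does \emph{not} prove that $\dim\mathbf{O}(\mathbf{g}_{cal})=1$; it simply cites \cite{mv} for this fact and then only supplies the construction of $X$ and the nonvanishing of $(\mathcal{L}_X\mathbf{g}_{cal})^\circ$. You instead reprove the one-dimensionality from scratch via the K\"ahler splitting $\mathbf{O}=\mathbf{O}^{\mathrm{inv}}\oplus\mathbf{O}^{\mathrm{anti}}$, the identification $\mathbf{O}^{\mathrm{inv}}\cong\mathcal{H}^2_{L^2}$, and the vanishing $\mathbf{O}^{\mathrm{anti}}=0$ by complex rigidity of the quotient singularity. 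This is essentially the content of the argument in \cite{mv}, so you are reproducing what the paper outsources; your version is self-contained but correspondingly heavier, and you correctly flag that the passage from compact Koiso theory to the ALE $L^2$ setting needs care.

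For the identification of the generator, the two arguments coincide: both build $X=\tfrac12\nabla u$ with $\Delta u$ constant and $u\sim r^2$, and both rule out $(\mathcal{L}_X\mathbf{g}_{cal})^\circ=0$ by observing that it would make $X$ a genuine homothety on a complete non-flat Ricci-flat manifold. One small remark: your extra step asserting that $\nabla_{\mathbf{g}_{cal}}u$ is (twice the real part of) the holomorphic Euler field, hence that $(\mathcal{L}_X\mathbf{g}_{cal})^\circ\in\mathbf{O}^{\mathrm{inv}}$, is not obviously justified---$u$ is determined by a Laplace equation for $\mathbf{g}_{cal}$, not by the complex structure---but it is also unnecessary: once $\dim\mathbf{O}(\mathbf{g}_{cal})=1$ and the element is nonzero, it spans regardless of its $J$-type.
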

\begin{proof}
    The proof that $\mathbf{O}(\mathbf{g}_{cal})$ is $1$-dimensional is found in \cite{mv}. We therefore simply have to prove the existence of $X$ and the fact that $(\mathcal{L}_X\mathbf{g}_{cal})^\circ$ is divergence-free.
    
    Like in \cite{ozuthese,bh}, we consider the unique function $u=r^2+\mathcal{O}(r^{-2d+2})$ satisfying $\Delta_{\mathbf{g}_{cal}} u = 2d$. One then defines $X = \frac{1}{2}\nabla_{\mathbf{g}_{cal}} u = r\partial_r+\mathcal{O}(r^{-2d+1})$ which satisfies 
    $$\delta_{\mathbf{g}_{cal}}(\mathcal{L}_X\mathbf{g}_{cal})^\circ = 0$$
    by construction.
    
    A last step is to ensure that $(\mathcal{L}_X\mathbf{g}_{cal})^\circ\neq 0$ following \cite{bh}. If $(\mathcal{L}_X\mathbf{g}_{cal})^\circ = 0$, then since $\Delta_{\mathbf{g}_{cal}} u = 2d$, $X$ would generate $1$-parameter group of homotheties. By considering the maximum of the curvature tensor, this is impossible for the non flat metric $\mathbf{g}_{cal}$.
\end{proof}

This implies that the obstruction found in \cite{mv} is of the same type.
\begin{cor}\label{mv obst}
    Let $(M_o,\mathbf{g}_o)$ be an Einstein orbifold with a singularity $\mathbb{R}^{2d}\slash\mathbb{Z}_d$ at $p_o$. The obstruction to the desingularization of $(M_o,\mathbf{g}_o)$ at $p_o$ by $\mathbf{g}_{cal}$ found in \cite{mv}, namely
    $$ d\cdot\left\langle\mathbf{R}_{\mathbf{g}_o}(p_o)\omega,\omega \right\rangle + 2(d-2) \R_{\mathbf{g}_o}(p_o) = 0$$
    is equivalent to the obstruction against $(\mathcal{L}_{X}\mathbf{g}_b)^\circ$.
\end{cor}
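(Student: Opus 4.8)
The plan is to reduce the statement to the integration-by-parts mechanism already used in Theorem~\ref{obstruction desing taub's}, now carried out in dimension $2d$, and then to match the resulting boundary integral with the explicit quantity of \cite{mv}. By the Lemma above, $\mathbf{O}(\mathbf{g}_{cal})$ is one-dimensional and spanned by $(\mathcal{L}_X\mathbf{g}_{cal})^\circ$, where $X=\tfrac{1}{2}\nabla_{\mathbf{g}_{cal}}u$ with $u=r^2+\mathcal{O}(r^{-2d+2})$ solving $\Delta_{\mathbf{g}_{cal}}u=2d$. On the other hand, exactly as in \eqref{obst ozu2 seconde ecriture} (and as set up in \cite{mv}), the obstruction to desingularizing $(M_o,\mathbf{g}_o)$ at $p_o$ by $\mathbf{g}_{cal}$ is the vanishing of the $L^2(\mathbf{g}_{cal})$-projection of the Einstein error $\mathbf{\Phi}_{\mathbf{g}_{cal}}(\mathbf{g}_n/t_n)+t_n\lambda(\mathbf{g}_n/t_n)$ onto $\mathbf{O}(\mathbf{g}_{cal})$; since this space is spanned by $(\mathcal{L}_X\mathbf{g}_{cal})^\circ$, the obstruction \emph{is} the obstruction against $(\mathcal{L}_X\mathbf{g}_{cal})^\circ$, and the space of obstruction conditions is one-dimensional. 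Hence it suffices to check that this single condition coincides, up to a nonzero scalar, with $d\langle\mathbf{R}_{\mathbf{g}_o}(p_o)\omega,\omega\rangle+2(d-2)\R_{\mathbf{g}_o}(p_o)$, which is itself a nonzero linear condition on $\mathbf{R}_{\mathbf{g}_o}(p_o)$ proved necessary in \cite{mv}.

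To make the identification explicit I would first establish the $2d$-dimensional analogue of the volume gauge of Definition~\ref{def volume gauge}: precomposing the ALE chart with the flow of $r^{-(2d-1)}\partial_r$ for a time proportional to the (negative) reduced volume $\mathcal{V}(\mathbf{g}_{cal})$ of \eqref{reduced volume}, one gets coordinates at infinity in which $\mathbf{g}_{cal}=\mathbf{e}+H^{2d}+\mathcal{O}(r^{-2d-1})$ with $\delta_\mathbf{e}H^{2d}=0$, $\tr_\mathbf{e}H^{2d}=0$ and $H^{2d}(\partial_r,\partial_r)$ a fixed nonzero multiple of $\mathcal{V}(\mathbf{g}_{cal})$. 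The proof of Lemma~\ref{lem volume gauge et asymptotiques o_i} is dimension-independent and gives $\nabla_{\mathbf{g}_{cal}}u=2r\partial_r+o(r^{-2d+1})$, hence $(\mathcal{L}_X\mathbf{g}_{cal})^\circ=-2H^{2d}+o(r^{-2d})$. Writing $\mathbf{g}_o=\mathbf{e}+H_2+\mathcal{O}(r^{3})$ near $p_o$ in Bianchi gauge, with $H_2$ the quadratic jet (a universal linear function of $\mathbf{R}_{\mathbf{g}_o}(p_o)$, whose trace $\tr_\mathbf{e}H_2$ is a universal multiple of $\R_{\mathbf{g}_o}(p_o)\,r^2$), and running the integration by parts of \eqref{obst o1 biq1} — Lemma~\ref{divergence killing} together with \eqref{ipp courbure scalaire(1) general} applied to the divergence-free tensor $\mathring{\Ric}^{(1)}_{\mathbf{g}_{cal}}(h_2)$ against $(\mathcal{L}_X\mathbf{g}_{cal})^\circ$, where $h_2$ solves the linearized equation asymptotic to $H_2$ — turns the obstruction into the boundary integral $\int_{\mathbb{S}^{2d-1}/\mathbb{Z}_d}\big(\mathring{\Ric}^{(2)}_\mathbf{e}(H_2,H^{2d})\big)(r\partial_r,\partial_r)\,dv$; the leftover trace term is killed by the volume gauge exactly as in Proposition~\ref{obst hk a scal2 nul}, since $\mathbf{g}_{cal}$ is K\"ahler Ricci-flat and hence $\R^{(2)}_\mathbf{e}(H_2,H^{2d})$ is constant along $r\partial_r$.

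It then remains to evaluate this integral. Since $\mathbf{g}_{cal}$ is K\"ahler, $H^{2d}$ splits, modulo a pure gauge term, into a multiple of $\Hess_\mathbf{e}(r^{-2d+2})$ — which carries the $H^{2d}(\partial_r,\partial_r)\sim\mathcal{V}(\mathbf{g}_{cal})$ datum — and a curvature part built from the K\"ahler form $\omega$; contracting $H_2$ against the first piece produces a term proportional to $\R_{\mathbf{g}_o}(p_o)\,\mathcal{V}(\mathbf{g}_{cal})$ and against the second a term proportional to $\langle\mathbf{R}_{\mathbf{g}_o}(p_o)\omega,\omega\rangle\,\mathcal{V}(\mathbf{g}_{cal})$, and one divides out $\mathcal{V}(\mathbf{g}_{cal})\neq 0$. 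I expect this last explicit computation to be the main obstacle: in dimension $2d$ there is no $\Omega^+\otimes\Omega^-$ splitting as in Section~\ref{section 4d}, so $\mathring{\Ric}^{(2)}_\mathbf{e}(H_2,H^{2d})$ (equivalently $\R^{(2)}_\mathbf{e}$) has to be computed directly, and the two universal constants must be tracked carefully enough to read off that their ratio equals $d:2(d-2)$, matching \cite{mv}. For the \emph{equivalence} alone one can bypass the exact constants: the space of obstruction conditions being one-dimensional, any nonzero linear functional of $\mathbf{R}_{\mathbf{g}_o}(p_o)$ arising as the above projection is automatically proportional to the \cite{mv} expression, so the constants are needed only if one wants the identity on the nose.
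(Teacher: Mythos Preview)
Your proposal is correct, and in fact your first paragraph together with the final sentence already constitute the entirety of the paper's argument: once the preceding Lemma establishes that $\mathbf{O}(\mathbf{g}_{cal})$ is one-dimensional and spanned by $(\mathcal{L}_X\mathbf{g}_{cal})^\circ$, the corollary is immediate, since the obstruction of \cite{mv} is by construction the $L^2$-projection of the Einstein error onto this cokernel. The paper provides no further proof.

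The explicit computations you sketch in your second and third paragraphs --- setting up a $2d$-dimensional volume gauge, reproducing the boundary-integral form $\int_{\mathbb{S}^{2d-1}/\mathbb{Z}_d}(\mathring{\Ric}^{(2)}_\mathbf{e}(H_2,H^{2d}))(r\partial_r,\partial_r)$, and trying to read off the ratio $d:2(d-2)$ --- go well beyond what the paper attempts. This would give an independent verification of the constants in \cite{mv}, which is interesting in its own right, but as you yourself note at the end it is unnecessary for the equivalence asserted in the corollary. The paper relies purely on the dimension count; your extra work buys an explicit identity rather than mere proportionality, at the cost of a genuine computation in the absence of the four-dimensional $\Omega^+\otimes\Omega^-$ machinery.
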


\appendix

\section{Development of Einstein $4$-metrics}\label{appendix mf}

Let $(M_o,\mathbf{g}_o)$ be an Einstein orbifold (smooth or singular) and assume that at a point $p$, it has a development:
$\mathbf{g}_o=\mathbf{e}+H_2+\mathcal{O}(r^3).$
We start by showing that up to a \emph{gauge} term, the term $H_2$ has a good correspondence with the curvature.
\subsection{A local gauge for Einstein metrics}
\begin{prop}\label{terme quadratique orbifold}
    Let $H_2$ be a quadratic symmetric $2$-tensor satisfying 
    $\Ric^{(1)}_\mathbf{e}(H_2) = \Lambda\mathbf{e},$
    for $\Lambda\in \mathbb{R}$ and such that:
    $$ \mathbf{R}^{\pm,(1)}_\mathbf{e}(H_2) = \frac{\Lambda}{3}\sum_{i}\omega_i^\pm\otimes\omega_i^\pm + \sum_{ij}W_{ij}^\pm\omega_i^\pm\otimes\omega_j^\pm $$
    where we identified $\Omega^+_\mathbf{e}\otimes\Omega^+_\mathbf{e} \sim (\Omega^+_\mathbf{e})^*\otimes\Omega^+_\mathbf{e} \sim \operatorname{End}(\Omega^+_\mathbf{e})$ and where the $W^\pm_{ij}$ are the coefficients of the (anti-)selfdual Weyl curvature.
    
    Then, there exists a cubic vector field $V_3$ such that 
    \begin{equation}
        H_2 =-\frac{\Lambda}{9}r^4g_{\mathbb{S}^3} +\frac{r^2}{6}\Big(\sum_{ij}W_{ij}^+\theta_i^{-}\circ\omega_j^+ + \sum_{kl}W_{kl}^-\theta_k^{+}\circ\omega_l^-\Big) + \mathcal{L}_{V_3}\mathbf{e}.
    \end{equation}
    where $g_{\mathbb{S}^3}$ is the usual round metric on the unit $3$-sphere.
\end{prop}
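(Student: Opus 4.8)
The plan is to reduce the statement to a pure curvature computation. Set
\[
\widehat H_2 := -\frac{\Lambda}{9}r^4 g_{\mathbb{S}^3} + \frac{r^2}{6}\Big(\sum_{ij}W_{ij}^+\theta_i^{-}\circ\omega_j^+ + \sum_{kl}W_{kl}^-\theta_k^{+}\circ\omega_l^-\Big),
\]
a quadratic symmetric $2$-tensor. Since $\mathbf{e}$ is flat, Proposition \ref{covariance reparam} gives $\mathbf{R}^{(1)}_\mathbf{e}(\mathcal{L}_V\mathbf{e}) = \mathcal{L}_V\mathbf{R}(\mathbf{e}) = 0$ for every vector field $V$, so the statement will follow once I establish two things: (i) $\mathbf{R}^{(1)}_\mathbf{e}(\widehat H_2) = \mathbf{R}^{(1)}_\mathbf{e}(H_2)$; and (ii) every quadratic symmetric $2$-tensor $k$ with $\mathbf{R}^{(1)}_\mathbf{e}(k) = 0$ equals $\mathcal{L}_{V_3}\mathbf{e}$ for some cubic vector field $V_3$. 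Then applying (ii) to $k = H_2 - \widehat H_2$ (whose linearized curvature vanishes by (i), using that the hypothesis $\Ric^{(1)}_\mathbf{e}(H_2) = \Lambda\mathbf{e}$ forces $\mathring{\Ric}^{(1)}_\mathbf{e}(H_2)=0$ and prescribes $\mathbf{R}^{\pm,(1)}_\mathbf{e}(H_2)$) yields the claimed decomposition. Point (ii) is the linearized version of ``a metric variation with vanishing linearized curvature is pure gauge'': on the star-shaped domain $\mathbb{R}^4$, $\mathbf{R}^{(1)}_\mathbf{e}(k)=0$ implies $k = \delta_\mathbf{e}^*\omega$ for a $1$-form $\omega$, and since $k$ is homogeneous of degree $2$ one may retain only the degree-$3$ homogeneous part of $\omega$, whose metric dual is the cubic $V_3$. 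As a consistency check, this identifies quadratic symmetric $2$-tensors modulo $\{\mathcal{L}_{V_3}\mathbf{e}\}$ with the $20$-dimensional space of algebraic curvature tensors on $\mathbb{R}^4$, on which $\mathbf{R}^{(1)}_\mathbf{e}$ then acts isomorphically.

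For (i) I would compute the linearized curvature of $\widehat H_2$ term by term. The tensor $-\frac{\Lambda}{9}r^4 g_{\mathbb{S}^3}$ is, after writing $r^4 g_{\mathbb{S}^3} = r^2\mathbf{e} - (r\,dr)^{\otimes 2}$, exactly the degree-$2$ Taylor coefficient at the origin, in geodesic normal coordinates, of the space form of sectional curvature $\Lambda/3$ (which is Einstein with constant $\Lambda$); hence its linearized curvature at $\mathbf{e}$ is the corresponding constant-curvature algebraic curvature tensor, whose selfdual and anti-selfdual blocks are $\frac{\Lambda}{3}\sum_i\omega_i^\pm\otimes\omega_i^\pm$ and whose $\mathring{\Ric}^{(1)}$ and Weyl parts vanish. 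For the Weyl contributions I would use the explicit formulas, in the $\Omega^+\otimes\Omega^-$ formalism, of the Proposition computing the curvature of Einstein perturbations of $\mathbf{e}$: writing $r^2\theta_i^-\circ\omega_j^+$ as $\sum_m\phi_m^-\circ\omega_m^+$ with $\phi_j^- = r^2\theta_i^-$ and $\phi_m^- = 0$ for $m\neq j$ gives $a^{+,(1)}_\mathbf{e}(r^2\theta_i^-\circ\omega_j^+) = *d(r^2\theta_i^-)\otimes\omega_j^+$, hence $\mathbf{R}^{+,(1)}_\mathbf{e}(r^2\theta_i^-\circ\omega_j^+) = -d_+*d(r^2\theta_i^-)\otimes\omega_j^+$ and $\mathring{\Ric}^{(1)}_\mathbf{e}(r^2\theta_i^-\circ\omega_j^+) = d_-*d(r^2\theta_i^-)\otimes\omega_j^+$. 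A direct computation with the structure relations among $dr,\alpha_i^\pm,\omega_i^\pm,\theta_i^\mp$ of Section \ref{section 4d} should give $d_-*d(r^2\theta_i^-)=0$ (so these building blocks are infinitesimal Einstein deformations) and $d_+*d(r^2\theta_i^-) = -c\,\omega_i^+$ for a universal constant $c>0$. Rewriting the same tensor via \eqref{rotation 2 forms -} as $r^2\theta_i^-\circ\omega_j^+ = -\sum_k\langle Y_i^+,Y_k^-\rangle\,\omega_j^+\circ\omega_k^-$ and computing its anti-selfdual contribution using $d\iota_{\nabla c}\omega_j^+ = \mathcal{L}_{\nabla c}\omega_j^+$ (for a quadratic function $c$) and the quaternionic relations for the $\omega_m^+$, I expect $\mathbf{R}^{-,(1)}_\mathbf{e}(r^2\theta_i^-\circ\omega_j^+)$ to be a multiple of $\delta_{ij}\sum_k\omega_k^-\otimes\omega_k^-$. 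Summing against the symmetric \emph{traceless} matrix $W^+$ then annihilates these $\delta_{ij}$-terms, so $\frac{r^2}{6}\sum_{ij}W_{ij}^+\theta_i^-\circ\omega_j^+$ has vanishing $\mathbf{R}^{-,(1)}_\mathbf{e}$ and $\mathring{\Ric}^{(1)}_\mathbf{e}$ and selfdual curvature $\frac{c}{6}\sum_{ij}W_{ij}^+\omega_i^+\otimes\omega_j^+$; the constant $\frac16$ in the statement is exactly $1/c$ (equivalently, it is calibrated against the curvature of an explicit model such as Eguchi--Hanson, as in Corollary \ref{courbure coordonnées CMC}), so the sum equals $\sum_{ij}W_{ij}^+\omega_i^+\otimes\omega_j^+$. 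The $W^-$-term is treated symmetrically. Adding the three contributions gives $\mathbf{R}^{\pm,(1)}_\mathbf{e}(\widehat H_2) = \frac{\Lambda}{3}\sum_i\omega_i^\pm\otimes\omega_i^\pm + \sum_{ij}W_{ij}^\pm\omega_i^\pm\otimes\omega_j^\pm$ and $\mathring{\Ric}^{(1)}_\mathbf{e}(\widehat H_2) = 0$, which is exactly $\mathbf{R}^{(1)}_\mathbf{e}(H_2)$ by hypothesis.

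The main obstacle will be the explicit identity $d_+*d(r^2\theta_i^-) = -c\,\omega_i^+$ together with the vanishing of the anti-selfdual and traceless-Ricci parts of the individual building blocks --- that is, pinning down both the structural form and the value of the constant $c$, which is what forces the normalization $\tfrac16$. This requires carefully unwinding the definitions of $\theta_i^\mp$ and $\alpha_i^\pm$ and the action of $d$ and $*$ on these homogeneous forms, and it is essentially the only place a genuine computation enters. A shortcut avoiding the sign and constant bookkeeping, parallel to the treatment of the $r^{-4}$ terms in Proposition \ref{asympt ALE jauge volume} and Corollary \ref{courbure coordonnées CMC}, is to realize each needed algebraic curvature tensor as the degree-$2$ development of an explicit Einstein metric and to conclude by linearity together with the injectivity-modulo-gauge statement (ii).
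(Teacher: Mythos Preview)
Your approach matches the paper's: the same two-step reduction (compute $\mathbf{R}^{(1)}_\mathbf{e}(\widehat H_2)$ explicitly, then show that a quadratic tensor with vanishing linearized curvature is pure gauge), the same round-sphere expansion for the $\Lambda$ part, and the same target identity $-d*d(r^2\theta_i^-)=6\,\omega_i^+$ (so your constant is $c=6$). Two small divergences are worth flagging. First, your expectation that $\mathbf{R}^{-,(1)}_\mathbf{e}(r^2\theta_i^-\circ\omega_j^+)$ is a multiple of $\delta_{ij}\,\mathrm{Id}_{\Omega^-}$ is too optimistic: the paper's argument for the vanishing of $\mathbf{R}^{-,(1)}_\mathbf{e}$ on the full sum uses not only $\sum_i W^+_{ii}=0$ but also the symmetry $W^+_{ij}=W^+_{ji}$ together with the quaternion relation $\omega_j^+\circ\omega_i^+=-\omega_i^+\circ\omega_j^+$, so the individual building blocks carry off-diagonal pieces that cancel in pairs only after summation against a \emph{symmetric} $W^+$; relatedly, the curvature formulas you invoke are stated under the Bianchi condition \eqref{biachi gauge}, and the paper checks this for the full $W^+$-weighted sum (again using symmetry and tracelessness), not for single terms. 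Second, for step (ii) the paper does not appeal to your abstract ``vanishing linearized curvature on a star-shaped domain implies pure gauge'' fact; it instead passes to the radial gauge of \cite[(28)]{biq1}, writes the remainder as $H''_2=r^4\sum_{ij} H_{ij}\alpha_i^+\alpha_j^+$, and reads off $H_{ij}=0$ from the explicit formula \cite[(38)]{biq1} for $\mathbf{R}^{+,(1)}_\mathbf{e}$ of such radial tensors. Your route for (ii) is conceptually cleaner, while the paper's is more self-contained within its existing toolbox.
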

\begin{proof}
    Let us first show that the infinitesimal curvature induced by $$\tilde{H}_2:=-\frac{\Lambda}{9}r^4g_{\mathbb{S}^3}+\frac{r^2}{6}\Big(\sum_{ij}W_{ij}^+\theta_i^{-}\circ\omega_j^+ + \sum_{kl}W_{kl}^-\theta_k^{+}\circ\omega_l^-\Big)$$ is the same as that of $H_2$. For the spherical metric in geodesic coordinates, $ g_{\mathbb{S}^4} $ with $\Lambda =3$, one has the development:
    \begin{equation}
        g_{\mathbb{S}^4} = \mathbf{e} - \frac{1}{3}r^4g_{\mathbb{S}^3} + \mathcal{O}(r^{3}),\label{dvp gS4 1}
    \end{equation}
    and therefore, by linearity of $H_2\mapsto \mathbf{R}^{\pm,(1)}_\mathbf{e}(H_2)$, we just have to deal with Ricci-flat deformations and their Weyl curvature. We rely on the formalism of \cite{biq2} for this computation. 
    
    We first note that each term $r^2\theta_i^\mp\circ\omega_j^\pm$ is traceless and harmonic, hence is in the kernel of $\mathring{\Ric}_\mathbf{e}^{(1)}$. We have the following formula
    $d(r^2\theta_i^-) = 6r dr\wedge \theta_i^-,$
    hence 
    \begin{equation}
        *d(r^2\theta_i^-) = -6r^2\alpha_i^+.\label{d r2theta}
    \end{equation}
    From this, we see that the term is in the Bianchi gauge \eqref{biachi gauge} since $W^+_{ij}=W^+_{ji}$, and
    $ -\frac{r^2}{6}\sum_{i}W^+_{ii}(-6\omega_i^+(\alpha_i^+)) = -r\sum_iW_{ii}^+dr = 0 $
    because $r\omega_i^+(\alpha_i^+) = -dr$ and $ \sum_iW_{ii}^+=0$. 
    
    From \eqref{d r2theta}, we moreover obtain:
    \begin{equation}
        -d*d(r^2\theta_i^-) = 6\omega_i^+,\label{d*d r2 thetai+}
    \end{equation}
    and therefore we have the following curvature induced by $H_2^+:=\frac{r^2}{6}\sum_{ij}W_{ij}^+\theta_i^{-}\circ\omega_j^+$:
    \begin{itemize}
        \item $\mathring{\Ric}^{(1)}_\mathbf{e}(H_2^+) = 0$,
        \item $\mathbf{R}^{+,(1)}_\mathbf{e}(H_2^+) = \begin{bmatrix}
       W^+_{11}&W^+_{12}&W^+_{13}\\
       W^+_{21}&W^+_{22}&W^+_{23}\\
       W^+_{31}&W^+_{32}&W^+_{33}
    \end{bmatrix}$.
    \end{itemize}

    Let us now prove that the induced anti-selfdual curvature $\mathbf{R}^{-,(1)}_\mathbf{e}(H_2^+)$ vanishes. On the other hand that seeing, $\mathbf{e}$ as a hyperkähler metric with the opposite orientation, we have the expression
    $r^2\theta_i^- = \sum_l \left\langle\omega_i^+(r\partial_r),\omega_l^-(r\partial_r)\right\rangle\omega_l^-, $
    hence $$r^2\theta_i^-\circ \omega_j^+ = \sum_l \left\langle\omega_i^+(r\partial_r),\omega_l^-(r\partial_r)\right\rangle\omega_l^-\circ \omega_j^+,$$ and we also find
    \begin{equation}
        d\Big(\sum_l \left\langle\omega_i^+(r\partial_r),\omega_l^-(r\partial_r)\right\rangle\omega_l^-\Big) = -\sum_l\big(\omega_l^-\circ\omega_i^+(rd_r)\big)\wedge\omega_j^+.\label{exp d(...)}
    \end{equation}
    Now, recall that for any $1$-form $\beta$, $ * (\beta\wedge\omega_j^+) = \omega_j^+(\beta)$, where we identify $\omega_j^+$ and the associated endomorphism by the metric. By \eqref{exp d(...)}, this gives
    $$*d\Big(\sum_l \left\langle\omega_i^+(r\partial_r),\omega_l^-(r\partial_r)\right\rangle\omega_j^+\Big) = -\sum_l\omega_j^+\circ\omega_i^+\circ\omega_l^-(rd_r). $$
    From the expression of
    $\mathbf{R}^{-,(1)}_\mathbf{e}(H_2^+) (\omega_l^-)$ and using that $W^+_{ij} = W^+_{ji}$, $\omega_j^+\circ\omega_i^+ =-\omega_i^+\circ\omega_j^+$ as well as $\sum_i W^+_{ii}= 0$, we find: $\mathbf{R}^{-,(1)}_\mathbf{e}(H_2^+) = 0$. The proof is exactly the same for the rest of the tensor coming from $W^-$.
    \\
    
    Let us now show that a quadratic $2$-tensor satisfying $\mathbf{R}_\mathbf{e}^{(1)}(H'_2)=0$ is necessarily of gauge type, that is: $H'_2 = \mathcal{L}_{V_3}\mathbf{e}$ for some cubic vector field $V_3$. According to \cite[(28)]{biq1}, there exists $V_3$ such that $H''_2:=H'_2 - \mathcal{L}_{V_3}\mathbf{e}$ is \emph{radial}, that is $H''_2(\partial_r,.)=0$, and in particular, there exist $H_{ij}$ with 
    \begin{equation}
        H''_2 = r^4\sum_{ij}H_{ij}\alpha_i^+\alpha_j^+\label{dvp radial}
    \end{equation}
    and still $\mathbf{R}_\mathbf{e}^{(1)}(H''_2)=0$. We now need to prove that $H''_2=0$.
    
    Now according to \cite[(38)]{biq1}, from \eqref{dvp radial}, one has:
    \begin{equation}
        0=\mathbf{R}_\mathbf{e}^{+,(1)}(H''_2)= -6\sum_{ij}H_{ij}\omega_{i}^+\otimes\omega_{j}^++(H_{11}+H_{22}+H_{33}) \textup{Id}_{\Omega_\mathbf{e}^+}.\label{courbure biquard}
    \end{equation}
    We therefore directly find $H_{ij}=0$ when $i\neq j$, and taking the trace of \eqref{courbure biquard}, we get
    $ 0=-6(H_{11}+H_{22}+H_{33}) + 3(H_{11}+H_{22}+H_{33}) $ and consequently $H_{11}+H_{22}+H_{33} = 0$. Finally, we see that $ H''_2 =0 $ and this ends the proof.
\end{proof}

\subsection{Vanishing of the obstructions to Einstein deformations}
Let us first deal with the case of Einstein deformations.

\begin{prop}
    For any quadratic $2$-tensor $H_2$ on $(\mathbb{R}^4,\mathbf{e})$ with: $\Ric_\mathbf{e}(H_2) = \Lambda \mathbf{e}$ for $\Lambda \in \mathbb{R}$, the obstructions  \eqref{obst taub} and \eqref{annulation T cas euclidien} vanish.
\end{prop}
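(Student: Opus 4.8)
The plan is to reduce the claim to Proposition \ref{terme quadratique orbifold} together with the general identities of Proposition \ref{obst hk a scal2 nul} (or directly \ref{seconde obst conforme} and \ref{integrability taub einstein}). The key observation is that the obstructions \eqref{obst taub} and \eqref{annulation T cas euclidien} only depend on $H_2$ through the value of the integral of a divergence-free $2$-tensor $\E^{(2)}_\mathbf{e}(H_2,H_2)$ (resp.\ $\mathring{\Ric}^{(2)}_\mathbf{e}(H_2,H_2)$) against $\partial_r$ over $\mathbb{S}^3$, and both are invariant under modifying $H_2$ by a Lie derivative $\mathcal{L}_{V_3}\mathbf{e}$ (Proposition \ref{prop B T}, item 3, and Remark \ref{invariance qté}). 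So first I would invoke Proposition \ref{terme quadratique orbifold} to write, modulo such a gauge term,
$$H_2 = -\frac{\Lambda}{9}r^4 g_{\mathbb{S}^3} + \frac{r^2}{6}\Big(\sum_{ij}W^+_{ij}\theta_i^-\circ\omega_j^+ + \sum_{kl}W^-_{kl}\theta_k^+\circ\omega_l^-\Big),$$
and reduce to checking the obstructions on this explicit representative, which I will denote $\widetilde H_2$.

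Next I would split $\widetilde H_2 = \widetilde H_2^{sph} + \widetilde H_2^{W}$ with $\widetilde H_2^{sph} := -\frac{\Lambda}{9}r^4 g_{\mathbb{S}^3}$ the "spherical part" and $\widetilde H_2^W$ the Weyl part. For the spherical part, I would use that, up to scaling, it is the $r^2$-jet of the round metric $g_{\mathbb{S}^4}$ by \eqref{dvp gS4 1}; since $g_{\mathbb{S}^4}$ is a genuine Einstein metric and a smooth curve of such (rescalings) exists, Proposition \ref{integrability taub einstein} applies with $\mathbf{g}=\mathbf{e}$ and shows $\int_{\mathbb{S}^3}\big(\mathring{\Ric}^{(2)}_\mathbf{e}(H^{sph}_2,H^{sph}_2)\big)(X,\partial_r)=0$ for the relevant Killing fields; a direct SO$(4)$-symmetry argument also kills the radial ($r\partial_r$) obstruction since $\mathring{\Ric}^{(2)}_\mathbf{e}(H^{sph}_2,H^{sph}_2)$ is $SO(4)$-invariant and traceless, hence proportional to $\frac{3dr^2-r^2g_{\mathbb{S}^3}}{\cdots}$, whose pairing with $(r\partial_r,\partial_r)$ integrates to the right constant — in fact one checks $\R^{(2)}_\mathbf{e}(H^{sph}_2,H^{sph}_2)$ is constant so Proposition \ref{obst hk a scal2 nul} applies verbatim and its conclusion reads: the obstruction equals a multiple of $\int_{\mathbb{S}^3}\mathcal{L}_{r\partial_r}(\text{const})=0$.

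For the Weyl part $\widetilde H_2^W$, the cleanest route is to note (as established in Proposition \ref{terme quadratique orbifold} and in \cite{biq2}) that each $r^2\theta_i^-\circ\omega_j^+$ is traceless, divergence-free, satisfies $\R^{(1)}_\mathbf{e}=0$, and is an infinitesimal Einstein deformation in Bianchi gauge; the same for the $W^-$ terms with opposite orientation. Then I would verify that $\R^{(2)}_\mathbf{e}(\widetilde H_2^W,\widetilde H_2^W)$ is constant — indeed by Corollary \ref{courbure si lun est seldual}, since the $W^+$-part and $W^-$-part respectively induce only selfdual and only anti-selfdual first-order curvature, the cross terms drop and the remaining second variation of scalar curvature, being an $SO(4)$-equivariant quadratic invariant of $H^4$-type tensors on $\mathbb{R}^4$, must be a constant multiple of $|W|^2$, hence constant on the relevant hypersurfaces; more precisely one applies \eqref{Ric2 si scal cst} to see $\mathring{\Ric}^{(2)}_\mathbf{e}$ lies in the span of the $r^2\theta_i^-\circ\omega_m^+$ and $r^2\theta_k^+\circ\omega_n^-$, whose contractions with $\partial_r$ are orthogonal to constants over $\mathbb{S}^3$ after the homogeneity in $r$ is accounted for. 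With constancy of $\R^{(2)}_\mathbf{e}$ in hand, Proposition \ref{obst hk a scal2 nul} (applied on $B_\mathbf{e}(1+\epsilon)$, $h=k=\widetilde H_2$) directly yields \eqref{obst killing hk} and \eqref{obst conform hk}, which are exactly the statements \eqref{obst taub} and \eqref{annulation T cas euclidien} in this setting. The main obstacle is the bookkeeping needed to confirm that $\R^{(2)}_\mathbf{e}(\widetilde H_2,\widetilde H_2)$ is genuinely constant (not merely radial) — this is where one must be careful to use that the cross-orientation terms vanish by Corollary \ref{courbure si lun est seldual} and that the surviving pieces contribute no nonconstant function of the angular variables; once that is pinned down, everything else is an application of the already-proven integration-by-parts identities and gauge invariance.
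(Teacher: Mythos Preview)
Your approach has a genuine gap in the step you yourself flag as ``the main obstacle'': the claim that $\R^{(2)}_\mathbf{e}(\widetilde H_2,\widetilde H_2)$ is constant. By homogeneity, $\widetilde H_2$ has components that are quadratic polynomials in $x$, so $\R^{(2)}_\mathbf{e}(\widetilde H_2,\widetilde H_2)$ --- built from terms of type $\nabla h\cdot\nabla h$ and $h\cdot\nabla^2 h$ --- is a polynomial of degree $2$, not a constant. Your $SO(4)$-equivariance heuristic would at best force it to be radial, i.e.\ proportional to $r^2$, which still does not satisfy the hypothesis of Proposition~\ref{obst hk a scal2 nul}. The invocation of Corollary~\ref{courbure si lun est seldual} does not rescue this: the $W^+$-part $H_{W_+}$ has $a^{+,(1)}_\mathbf{e}\neq 0$ (indeed $*d(r^2\theta_i^-)=-6r^2\alpha_i^+$ from \eqref{d r2theta}), so it is not ``anti-selfdual'' in the required sense, and the corollary gives no direct control on $\R^{(2)}$ of the self-pairings. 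A second, smaller gap is that the bilinear cross terms $\mathring{\Ric}^{(2)}_\mathbf{e}(\widetilde H_2^{sph},\widetilde H_2^{W})$ are never treated: you handle the spherical part and the Weyl part separately and then jump to $h=k=\widetilde H_2$ without justifying the mixed contributions.

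The paper's argument is different and avoids computing $\R^{(2)}$ altogether. After the same gauge reduction via Proposition~\ref{terme quadratique orbifold}, it splits $H_2=H_{\R}+H_{W_+}+H_{W_-}+H_0$ and uses \emph{bilinearity} together with the existence of actual Einstein metrics realizing each combination of curvature components: the sphere for $(H_\R,H_\R)$, Fubini--Study on $\mathbb{C}P^2$ for $(H_\R,H_{W_\pm})$, orbifold hyperk\"ahler examples for $(H_{W_\pm},H_{W_\pm})$, and Euclidean Schwarzschild or $\mathbb{S}^2\times\mathbb{S}^2$ for $(H_{W_+},H_{W_-})$. Since each such $H_2$ is the genuine first jet of a curve of Einstein metrics, Propositions~\ref{integrability taub einstein} and~\ref{seconde obst conforme} apply directly and the obstructions vanish pairing by pairing. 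This sidesteps entirely the question of whether $\R^{(2)}_\mathbf{e}$ is constant.
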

\begin{rem}
    The result is true from Propositions \ref{integrability taub} and \ref{seconde obst conforme} and by \cite{gas}. We however prefer to give another much simpler way to see that it holds.
\end{rem}
\begin{proof}[Sketch of proof]
    We only sketch the proof as the result can essentially be found by bilinearity of the obstructions \eqref{annulation T cas euclidien} and \eqref{obst taub einstein} and thanks to the curvature of the known examples of Einstein metrics.
    
    Let us use the bilinear nature of our obstructions and decompose any quadratic $2$-tensor as in Proposition \ref{terme quadratique orbifold}, that is as $H_2 = H_{\R}+ H_{W_+}+ H_{W_-} + H_{0}$ where $ H_{\R} =\frac{\Lambda r^4}{9}g_{\mathbb{S}^3} $, $H_{W_\pm} = \frac{r^2}{6}\sum W^\pm_{ij}\theta_i^\mp\circ \omega_j^\pm$ and $H_{0}=\mathcal{L}_V\mathbf{e}$ for some vector field $V$ satisfying $|V|_\mathbf{e}\sim r^3$.
    
    Note that any term combined with a gauge term $H_0$ will make the obstructions vanish by invariance, see Proposition \ref{prop B T} and Remark \ref{invariance qté}. There remain several situations which can be settled thanks to the known examples of Einstein metrics:
    \begin{itemize}
        \item the obstructions for $(H_{\R},H_{\R})$ vanish because they do on the sphere,
        \item the obstructions of the form $(H_{\R}, H_{W_\pm})$ vanish by bilinearity and because of the examples of  (anti-)selfdual Kähler-Einstein metrics such as the Fubini-Study metric on $\mathbb{C}P^2$,
        \item the obstructions of the form $(H_{W_\pm}, H_{W_\pm})$ vanish by bilinearity and because of the examples of orbifold hyperkähler metrics such as the ones produced in \cite{ozu3}, where it is clear that the (anti-)selfdual curvature can take arbitrary values,
        \item lastly, the obstructions of the form $(H_{W_\pm}, H_{W_\mp})$ vanish by bilinerarity and because of the examples of non selfdual Einstein metrics, like the Euclidean Schwarzschild metric or the product metric $\mathbb{S}^2\times \mathbb{S}^2$.
    \end{itemize}
\end{proof}
\begin{rem}
    One can also use the expression of the quadratic terms of the Ricci curvature directly from \eqref{Ric2 si scal cst}. The computations are not straightforward but the different terms remarkably cancel out as expected.
\end{rem}

\section{Function spaces and analyticity on ALE spaces}\label{appendix norm analyticity}

In this appendix, we define function spaces from \cite{ozuthese} and use it to show the analytic dependence of Einstein modulo obstructions deformations of Ricci-flat ALE metrics needed in the last section of the article. The proofs in the compact situation can be found in \cite{koi} and \cite[Chapter 1, Section 3.1]{ozuthese}.

\subsection{Function spaces}\label{sec fct space}

For a tensor $s$, a point $x$, $\alpha>0$ and a Riemannian manifold $(M,g)$. The Hölder seminorm is defined as
$$ [s]_{C^\alpha(g)}(x):= \sup_{\{y\in T_xM,|y|< \textup{inj}_g(x)\}} \Big| \frac{s(x)-s(\exp^g_x(y))}{|y|^\alpha} \Big|_g.$$

For ALE manifolds, we will consider a norm which is bounded for tensors decaying at infinity. Denote $r$ a smooth positive function equal to the parameter $d_\mathbf{e}(0,.)$ in a neighborhood of infinity where $(N,\mathbf{g}_b)$ has ALE coordinates.

\begin{defn}[Weighted Hölder norms on an ALE manifold]\label{norme ALE}
Let $\beta\in \mathbb{R}$, $k\in\mathbb{N}$, $0<\alpha<1$ and $(N,g_b)$ be an ALE manifold. Then, for any tensor $s$ on $N$, we define
   \begin{align*}
       \| s \|_{C^{k,\alpha}_{\beta}}:= \sup_{N}(1+r)^\beta\Big( \sum_{i=0}^k(1+r)^{i}|\nabla_{g_b}^i s|_{g_b} + (1+r)^{k+\alpha}[\nabla_{g_b}^ks]_{C^\alpha({g_b})}\Big).
   \end{align*}
\end{defn}
\begin{lem}[{\cite[Lemma 2.1]{biq1},\cite{ozuthese}}]
    Let $(N,\mathbf{g}_b)$ be an ALE orbifold. Then, for any $\beta \in (0,2)\cup (2,4)$ there exists $C>0$ such that for any $h\in C^{2,\alpha}_{\beta}$, $h\perp \mathbf{O}(\mathbf{g}_b)$, we have
    \begin{equation}
        \|h\|_{C^{2,\alpha}_{\beta}}\leqslant C \|\mathbf{\Phi}^{(1)}_{\mathbf{g}_b}h\|_{C^{\alpha}_{\beta+2}}.\label{inverse ALE term 4}
    \end{equation}
\end{lem}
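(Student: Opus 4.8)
The estimate \eqref{inverse ALE term 4} is the weighted Schauder estimate for the gauge-fixed linearized Einstein operator on an ALE orbifold, and I will only lay out the architecture, following \cite{biq1,ozuthese}. The plan is to treat $\mathbf{\Phi}^{(1)}_{\mathbf{g}_b}$ as a second-order elliptic operator on sections of $\textup{Sym}^2 T^*N$ which is asymptotic at infinity to an explicit constant-coefficient model on $\mathbb{R}^4\slash\Gamma$, and then invoke the Fredholm theory of elliptic operators on asymptotically conical manifolds (Bartnik, Lockhart--McOwen, or the exposition in \cite{biq1}). First I would record that $\mathbf{\Phi}^{(1)}_{\mathbf{g}_b}h = \E^{(1)}_{\mathbf{g}_b}(h)+\delta^*_{\mathbf{g}_b}\delta_{\mathbf{g}_b}h$ is, on the Ricci-flat background $\mathbf{g}_b$, a formally self-adjoint operator of Laplace type: its principal symbol is $|\xi|^2\Id$ and its zero-order term is a contraction with $\Rm_{\mathbf{g}_b}$. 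Since $(N,\mathbf{g}_b)$ is ALE with $\mathbf{g}_b-\mathbf{e}=\mathcal{O}(r^{-4})$ and $\Rm_{\mathbf{g}_b}=\mathcal{O}(r^{-6})$ in the ALE chart, the coefficients of $\mathbf{\Phi}^{(1)}_{\mathbf{g}_b}$ differ from those of the flat model operator $P_\infty$ on $\mathbb{R}^4\slash\Gamma$ (the componentwise Euclidean Laplacian on $\textup{Sym}^2$, the curvature term vanishing) by quantities decaying like $r^{-4}$, together with the derivative bounds carrying the appropriate extra powers of $r$.

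Next I would invoke the computation of the indicial roots of $P_\infty$ on $\mathbb{R}^4\slash\Gamma$: decomposing $\textup{Sym}^2 T^*\mathbb{R}^4$ into irreducible $SO(4)$-summands and separating variables on the link $\mathbb{S}^3\slash\Gamma$, the homogeneous solutions occur at integer rates, and the only exceptional weights in the relevant window are $\beta\in\{0,2,4\}$; in particular none lies in $(0,2)\cup(2,4)$, which is exactly the hypothesis on $\beta$. With this in place the standard package yields, for such non-exceptional $\beta$, that $\mathbf{\Phi}^{(1)}_{\mathbf{g}_b}\colon C^{2,\alpha}_\beta\to C^{\alpha}_{\beta+2}$ is Fredholm and satisfies the a priori estimate with a compact error term
\begin{equation*}
\|h\|_{C^{2,\alpha}_\beta}\leqslant C\big(\|\mathbf{\Phi}^{(1)}_{\mathbf{g}_b}h\|_{C^\alpha_{\beta+2}}+\|h\|_{L^2(K)}\big)
\end{equation*}
for a large compact $K\subset N$; this is obtained by patching interior Schauder estimates with rescaled Schauder estimates in dyadic annuli near infinity, using the asymptotic agreement of $\mathbf{\Phi}^{(1)}_{\mathbf{g}_b}$ with $P_\infty$.

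To upgrade this to the clean estimate \eqref{inverse ALE term 4} I would argue by contradiction. A sequence $h_i$ with $\|h_i\|_{C^{2,\alpha}_\beta}=1$, $h_i\perp_{L^2(\mathbf{g}_b)}\mathbf{O}(\mathbf{g}_b)$ and $\|\mathbf{\Phi}^{(1)}_{\mathbf{g}_b}h_i\|_{C^\alpha_{\beta+2}}\to 0$ subconverges, by Arzel\`a--Ascoli on compacts together with the uniform weighted decay at infinity supplied by the estimate with error, in $C^{2,\alpha}_{\beta'}$ for every $\beta'<\beta$ to a limit $h_\infty$ with $\mathbf{\Phi}^{(1)}_{\mathbf{g}_b}h_\infty=0$ and $h_\infty\perp_{L^2(\mathbf{g}_b)}\mathbf{O}(\mathbf{g}_b)$; moreover $\|h_\infty\|_{L^2(K)}\neq 0$ for $K$ large, since otherwise the compact term would be negligible and $\|h_i\|_{C^{2,\alpha}_\beta}$ could not equal $1$, so $h_\infty\neq 0$. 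As $\beta>0$, $h_\infty$ decays; by elliptic regularity and the indicial-root analysis (no exceptional weight strictly between $\beta$ and $4$ other than $2$, and the low-rate modes being exactly the trace/gauge directions excluded by the trace-free, divergence-free normalization built into $\mathbf{O}(\mathbf{g}_b)$), $h_\infty$ admits an asymptotic expansion whose would-be $r^{-2}$ and $r^{-3}$ contributions vanish, hence $h_\infty=\mathcal{O}(r^{-4-\epsilon})$, so $h_\infty\in L^2(\mathbf{g}_b)$ and therefore $h_\infty\in\mathbf{O}(\mathbf{g}_b)$. Combined with $h_\infty\perp_{L^2(\mathbf{g}_b)}\mathbf{O}(\mathbf{g}_b)$ this forces $h_\infty=0$, a contradiction.

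The main obstacle is the final decay-improvement step: showing that a solution of $\mathbf{\Phi}^{(1)}_{\mathbf{g}_b}h=0$ lying in some $C^{2,\alpha}_\beta$ with $\beta\in(0,4)$ is in fact $L^2$. This is not automatic for a general Laplace-type operator; it relies on the precise list of indicial roots of $\mathbf{\Phi}^{(1)}_{\mathbf{g}_b}$ on $\mathbb{R}^4\slash\Gamma$ and on the fact that the slow modes are exactly the gauge and trace directions, so that an honest kernel element cannot persist at rate $r^{-2}$ or $r^{-3}$. Granting this, the remaining ingredients are routine asymptotically conical elliptic analysis, and the details for this operator are precisely those of \cite[Lemma 2.1]{biq1} and \cite[Chapter 1]{ozuthese}.
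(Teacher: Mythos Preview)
The paper does not supply its own proof of this lemma: it is stated with attribution to \cite[Lemma~2.1]{biq1} and \cite{ozuthese} and then used as a black box. Your outline is precisely the standard argument one finds in those references---model operator on $\mathbb{R}^4\slash\Gamma$, indicial weights, Fredholm package with compact error, contradiction to remove the error---so there is nothing to compare and your approach is the intended one.

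A couple of imprecisions are worth flagging. First, your claim that the exceptional weights in $(0,4)$ are exactly $\{2\}$ is not quite right in general: for the flat model Laplacian on symmetric $2$-tensors in dimension $4$ the decaying homogeneous solutions appear at rates $r^{-2},r^{-3},r^{-4},\dots$, so $\beta=3$ is a priori also exceptional; it is the $\Gamma$-invariance (with $\Gamma\neq\{\Id\}$ acting without fixed points on $\mathbb{S}^3$) that kills the odd-degree modes and makes the statement hold as written. Second, in the decay-improvement step you should not expect $h_\infty=\mathcal{O}(r^{-4-\epsilon})$: elements of $\mathbf{O}(\mathbf{g}_b)$ decay exactly like $r^{-4}$, which is already $L^2$ in dimension $4$ (the volume element is $r^3\,dr$), so $h_\infty\in C^{2,\alpha}_{\beta'}$ for any $\beta'<4$ suffices to conclude $h_\infty\in L^2$ once you have crossed the only genuine indicial jump at $\beta=2$. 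With these adjustments your sketch is correct.
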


\subsection{Real-analytic dependence of Einstein modulo obstructions metrics}\label{sec dep analytic}

Let us consider $\beta\in(0,2)$ to ensure that the kernel \emph{and} cokernel of the linearization of $\mathbf{\Phi}_{\mathbf{g}_b}$ is reduced to $\mathbf{O}(\mathbf{g}_b)$. The map $g\in C^{2,\alpha}_{\beta}\mapsto \mathbf{\Phi}_{\mathbf{g}_b}(g) \in C^{\alpha}_{\beta+2}$ is a real-analytic map between Banach spaces because the ``weights'' $(1+r)^{\beta}$ or $(1+r)^{2+\beta}$ in both the starting and target spaces are larger than $1$. This condition on the weight ensures that multilinear operations are continuous in this topology, see the theory of \cite{pal} for the source of this requirement and \cite[Proof of Lemma 8.2]{biq1} for a discussion of the weight larger than $1$ in the case of weighted Hölder norms.

We can therefore apply the implicit function theorem to the following analytic map:  $\mathbf{\Psi}:C^{2,\alpha}_{\beta}\times \mathbf{O}(\mathbf{g}_b)\times \mathbf{O}(\mathbf{g}_b) \mapsto  C^{\alpha}_{\beta+2}\times C^{2,\alpha}_{\beta}$ defined by
$$ \mathbf{\Psi}(g,\mathbf{o},v)\mapsto \Big(\mathbf{\Phi}_{\mathbf{g}_b}(g)+\mathbf{o}, \pi_{\mathbf{O}(\mathbf{g}_b)}(g-\mathbf{g}_b-v)\Big), $$
where $\pi_{\mathbf{O}(\mathbf{g}_b)}$ is the $L^2(\mathbf{g}_b)$-projection on $ \mathbf{O}(\mathbf{g}_b)$ which is linear (hence real-analytic).

The map $\mathbf{\Psi}$ is real-analytic between Banach spaces and it satisfies the assumptions of the implicit function theorem for real-analytic maps between Banach spaces of \cite{whi} about $\mathbf{g}_b$. Namely, it satisfies:
\begin{enumerate}
    \item $\mathbf{\Psi}(\mathbf{g}_b,0,0) =0$, and
    \item the linearization $(g,\mathbf{o})\mapsto \mathbf{\Psi}(g,\mathbf{o}, 0)$ is a homeomorphism by construction.
\end{enumerate}
We conclude that for any $v\in \mathbf{O}(\mathbf{g}_b)$ small enough, there exists a unique $(\bar{g}_v, \bar{\mathbf{o}}_v)\in (\mathbf{g}_b+C^{2,\alpha}_{\beta})\times \mathbf{O}(\mathbf{g}_b)$ satisfying:
$$ \mathbf{\Psi}(\bar{g}_v, \bar{\mathbf{o}}_v,v)=0 $$
and that $v\mapsto (\bar{g}_v, \bar{\mathbf{o}}_v)$ is real-analytic.

\Addresses

\end{document}